\documentclass[a4paper, 12pt, final]{amsart}

\makeatletter
\numberwithin{equation}{section}
\numberwithin{figure}{section}

\usepackage{amsmath,amsfonts,amssymb,amsthm,epsfig}

\voffset=-1.5cm \textheight=23cm \hoffset=-.5cm \textwidth=16cm
\oddsidemargin=1cm \evensidemargin=-.1cm
\footskip=35pt \linespread{1.10}
\parindent=20pt

\usepackage{color}
\usepackage[final,linkcolor = blue,citecolor = blue,colorlinks=true]{hyperref}
\usepackage[leqno]{amsmath}

\usepackage{amscd}
\usepackage{mathrsfs}
\usepackage{}
\usepackage[OT1]{fontenc}
\usepackage[latin1]{inputenc}
\usepackage{amssymb,latexsym}
\usepackage{type1cm,ae}
\usepackage[notref,notcite]{showkeys}
\usepackage{graphicx}
\usepackage{xspace}
\usepackage{setspace}
\usepackage[english]{babel}
\usepackage[all,cmtip]{xy}

\theoremstyle{definition}
		\newtheorem{theorem}{Theorem}[section]
				\newtheorem{proposition}[theorem]{Proposition}
				\newtheorem{lemma}[theorem]{Lemma}
				\newtheorem{thmA}{Theorem}[section]
				\newtheorem{corollary}[theorem]{Corollary}

     	        \newtheorem{definition}[theorem]{Definition}
	            \newtheorem{remark}[theorem]{Remark}
\numberwithin{equation}{section}

\newcommand*{\bR}{\ensuremath{\mathbb{R}}}

\newcommand*{\loc}{\mathrm{loc}}
\newcommand*{\closure}[1]{\overline{#1}}
\newcommand*{\bdary}[1]{\partial #1}

\newcommand*{\Wert}{\mathord{\mbox{|\kern-1.5pt|\kern-1.5pt|}}}
\newcommand*{\ie}{\mbox{i.e.}\xspace}

\newcommand\numberthis{\addtocounter{equation}{1}\tag{\theequation}}

\newcommand{\N}{\mathbb{N}}	
\newcommand{\R}{\mathbb{R}}	

\newcommand{\Co}{\mathscr{C}}	

\renewcommand{\Vec}{\mathrm{Vec}}	
\newcommand{\Span}{\mathrm{span}}	

\newcommand{\dd}{\,\mathrm{d}}	
\newcommand{\mD}{\mathcal{D}}

\DeclareMathOperator{\dist}{dist}
\DeclareMathOperator{\Modd}{Mod}

\DeclareMathOperator{\diam}{diam}

\DeclareMathOperator{\modulus}{Mod}

\DeclareMathOperator{\lip}{lip}

\DeclareMathOperator{\Vol}{Vol}

\DeclareMathOperator{\esssup}{esssup}
\DeclareMathOperator{\ess}{ess}

\newcommand{\scr}[1]{\mathscr{#1}}

\def\Xint#1{\mathchoice
  {\XXint\displaystyle\textstyle{#1}}%
  {\XXint\textstyle\scriptstyle{#1}}%
  {\XXint\scriptstyle\scriptscriptstyle{#1}}%
  {\XXint\scriptscriptstyle\scriptscriptstyle{#1}}%
  \!\int}
\def\XXint#1#2#3{{\setbox0=\hbox{$#1{#2#3}{\int}$}
  \vcenter{\hbox{$#2#3$}}\kern-.5\wd0}}

\def\dashint{\Xint-}

\dedicatory{Dedicated to Seppo Rickman and Jussi V\"ais\"al\"a on the occasion of their $80^\text{th}$ birthdays}
\makeatother

\usepackage{babel}
\begin{document}

\title[Geometric function theory: the art of pullback factorization]{Geometric function theory: the art of pullback factorization}

\author{Chang-Yu Guo}
\date{\today}
\address[Chang-Yu Guo]{Department of Mathematics and Statistics, University of Jyv\"askyl\"a, P.O. Box 35, FI-40014, Jyv\"askyl\"a, Finland and Department of Mathematics, University of Fribourg, Fribourg, Switzerland}
\email{changyu.guo@unifr.ch}

\author{Marshall Williams}

\subjclass[2010]{53C17,30C65,58C06,58C25}
\keywords{quasiregular mappings, BLD mappings, branched covering, locally linearly connected, Poincar\'e inequality, locally bounded geometry}
\thanks{C.Y.Guo was supported by the Finnish Cultural Foundation--Central Finland Regional Fund No.~30151735, the Academy of Finland No.~131477, and the Swiss National Science Foundation grant No.~153599.}

\begin{abstract}
 In this paper, we develop the foundations of the theory of quasiregular mappings in general metric measure spaces. In particular, nine definitions of quasiregularity for a discrete open mapping with locally bounded multiplicity are proved to be quantitatively equivalent when the metric measure spaces have locally bounded geometry. We also demonstrate that some, though not all, of these implications remain true under fairly general hypotheses. 
 
 The major new tool appeared in our approach is a powerful factorization, termed the pullback factorization, of a quasiregular mapping into the composition of a 1-BLD mapping and a quasiconformal mapping in locally compact complete metric measure spaces. This factorization also brings fundamental new point of view of the theory of quasiregular mappings in Euclidean spaces, in particular, a branched counterpart of quasisymmetric mappings is introduced and is shown to be locally equivalent with quasiregular mappings, quantitatively.
 
 As applications of our new techniques, we answer some well-known open problems in this field and characterize BLD mappings in metric spaces with locally bounded geometry. In particular, a conjecture of Heinonen--Rickman is shown to be false and an open problem of Heinonen--Rickman gets solved affirmatively.

\end{abstract}

\maketitle
\tableofcontents{}

\section{Introduction}\label{sec:introduction}

A continuous mapping $f\colon X\to Y$ between topological spaces is said to be a \textit{branched covering} if $f$ is \textit{discrete}, \textit{open} and \textit{of locally bounded multiplicity}. Recall that $f$ is open if it maps each open set in $X$ to an open set $f(X)$ in $Y$; $f$ is discrete if for each $y\in Y$ the preimage $f^{-1}(y)$ is a discrete subset of $X$; $f$ has locally bounded multiplicity if for each $x\in X$, there exists a neighborhood $U_x$ of $x$ and a positive constant $M_x$ such that $N(f,U_x)\leq M_x<\infty$, where $N(f,U_x):=\sup_{y\in Y}\sharp\{f^{-1}(y)\cap U_x\}$ is the multiplicity of $f$ in $U_x$. It is also clear that $f$ is discrete whenever it has locally bounded multiplicity and when $X$ and $Y$ are \textit{generalized $n$-manifolds}, a continuous, discrete and open mapping $f\colon X\to Y$ will necessarily have locally bounded multiplicity.

For a branched covering $f\colon X\to Y$ between two metric spaces, $x\in X$ and $r>0$, set
\begin{equation*}
H_f(x,r)=\frac{L_f(x,r)}{l_f(x,r)},
\end{equation*}
where
\begin{equation*}
L_f(x,r):=\sup_{y\in Y}{\{d(f(x),f(y)):d(x,y)= r\}},
\end{equation*}
and
\begin{equation*}
l_f(x,r):=\inf_{y\in Y}{\{d(f(x),f(y)):d(x,y)= r\}}.
\end{equation*}
Then the \textit{linear dilatation function} of $f$ at $x$ is defined pointwise by 
\begin{equation*}
H_f(x)=\limsup_{r\to0}H_f(x,r).
\end{equation*}

\begin{definition}\label{def:metric quasiregular map}
A branched covering $f\colon X\to Y$ between two metric measure spaces is termed \textit{metrically $H$-quasiregular} if the linear dilatation function $H_f$ is finite everywhere and essentially bounded from above by $H$. 
\end{definition}

If $f\colon X\to Y$, in Definition~\ref{def:metric quasiregular map}, is additionally assumed to be a homeomorphism, then $f$ is called \textit{metrically $H$-quasiconformal}. We will call $f$ a metrically quasiregular/quasiconformal mapping if it is metrically $H$-quasiregular/quasiconformal for some $H\in [1,\infty)$. Note that our definition of metric quasiconformality allows an exceptional set for the boundedness of the linear dilatation, which is in general weaker than the more commonly used definition in literature, where the linear dilatation is required to be bounded everywhere. However, for mappings between sufficiently regular spaces, for instance, \textit{spaces of locally bounded geometry}, the two definitions coincide. It seems for us more reasonable to require everywhere finiteness and essentially boundedness, rather than everywhere boundedness, for quasiconformal mappings between general metric measure spaces.

The importance of quasiconformal mappings in complex analysis was realized by Ahlfors, Teichm\"uller and Morrey in the 1930s. Ahlfors used quasiconformal
mappings in his geometric approach to Nevanlinna's value distribution theory that earned him one of the first two Fields medals. Teichm\"uller used quasiconformal mappings to measure a distance between two conformally inequivalent compact Riemann surfaces, starting what
is now called Teichm\"uller theory. Morrey proved a powerful existence theorem, called the measurable Riemann mapping theorem, which has had tremendous impact on complex analysis and dynamics, Teichm\"uller theory, and low dimensional
topology, inverse problems and partial differential equations. 

The higher-dimensional theory of quasiconformal mappings was
initiated in earnest by Reshetnyak, Gehring and
V\"ais\"al\"a in the early 1960s~\cite{g62,g63,re89}. The generalisations to non-injective quasiregular mappings was initiated with Reshetnyak, 
and the basic theory was comprehensively laid and significantly advanced
in a sequence of papers from the Finnish school of Martio, Rickman and
V\"ais\"al\"a in the late 1960s~\cite{mrv69,mrv70,mrv71}.

Historically, there are three different definitions of quasiconformality, namely, the metric definition, the analytic definition, and the geometric definition (see Section~\ref{subsec:Definitions of quasireguarity in general metric measure spaces} below for the detailed description). Apparently, the metric definition is of infinitesimal flavor, the analytic definition is a point-wise condition, and the geometric definition is more of global nature. It is a rather deep fact, due to Gehring and V\"ais\"al\"a, that all the three definitions of quasiregularity are quantitatively equivalent. The interplay of all three aspects of quasiconformality/quasiregularity is an
important feature of the theory; see~\cite{bi83,im01,re89,r93,v71} for more information on the Euclidean theory of these mappings. 

In the Euclidean spaces, another remarkable result of Heinonen and Koskela~\cite{hk95} implies that in the definition of metrically quasiconformal mappings, the linear dilation $H_f(x)$ can be replaced with $h_f(x)=\liminf_{r\to 0} H_f(x,r)$, which we will refer as \textit{weak metrically quasiconformal mappings} (see also Section~\ref{subsec:Definitions of quasireguarity in general metric measure spaces} below for the precise formulation). This result was later generalized to mappings between metric spaces with locally bounded geometry in~\cite{bkr07}.

The study of quasiconformal mappings beyond Riemannian spaces was first appeared in the celebrated work of Mostow~\cite{m73} on \textit{strong rigidity of locally symmetric spaces}. The boundary of rank-one symmetric spaces can be identified as certain Carnot groups of step two, and Mostow has developed the basic metrically quasiconformal theory in these groups. Inspired by Mostow's work, Pansu~\cite{p89} used the theory of quasiconformal mappings to study \textit{quasi-isometries of rank-one symmetric spaces}. In particular, he has shown that the metrically quasiconformal mappings are absolutely continuous on almost every lines. The systematic study of metrically quasiconformal mappings on the Heisenberg group was later done by Kor\'anyi and Reimann~\cite{kr85,kr95}. Margulis and Mostow~\cite{mm95} studied the absolute continuity of metrically quasiconformal mappings along horizontal lines in the setting of \textit{equiregular subRiemannian manifolds} and proved that metrically quasiconformal mappings between two equiregular subRiemannian manifolds are $P$-differentiable\footnote{$P$-differentiability was refered as cc-differentiability in their paper} almost everywhere.
By the break-through work of Heinonen and Koskela~\cite{hk98}, a full-fledged metrically quasiconformal mappping theory exists in rather general metric measure spaces. In particular, the equivalence of metrically quasiconformal mappings and \textit{quasisymmetric mappings} was derived under very mild assumptions.  This theory has subsequently been applied to new rigidity studies in geometric group theory, geometric topology and geometric paratrization of metric spaces; see for instance~\cite{bk02,bk05,hr02,hk11,r14} and the references therein. This theory also initiated a new way of looking at \textit{weakly differentiable mappings} between non-smooth metric measure spaces. Based on this theory, in~\cite{hkst01}, the Sobolev class of Banach space valued mappings was studied and several characterizations of quasiconformal mappings between metric spaces of locally bounded geometry were established. In particular, the equivalences of all the three definitions of quasiconformality were proved in their setting; see also~\cite{t98,t00,t01}. 

The study of quasiregular mappings beyond Riemannian spaces was initiated by Heinonen and Holopainen~\cite{hh97}. The importance of the study of quasiregular mappings beyond the Riemannian spaces was recognized later in the fundamental paper of Heinonen and Rickman~\cite{hr02} as well as in the sequential remarkable papers of Heinonen and Sullivan~\cite{hs02} and Heinonen and Keith~\cite{hk11}. Initiated by these works, there are  some recent advances of the theory of quasiregular mappings in various other settings; see for instance~\cite{or09,gnw15,gl15,a15}.

The main obstacle in establishing the theory of quasiregular mappings in general metric spaces lies in the \textit{branch set} $\mathcal{B}_f$, i.e., the set of points in $X$ where $f\colon X\to Y$ fails to be a local homeomorphism. The difficulty is somehow hidden in the Euclidean planar case, as the celebrated \textit{Stoilow factorization theorem} (cf.~\cite{aim09}) asserts that a quasiregular mapping $f\colon \Omega\to \R^2$
admits a factorization $f=\varphi\circ g$, where $g\colon \Omega\to g(\Omega)$ is quasiconformal and $\varphi\colon g(\Omega)\to \R^2$ is analytic. This factorization, together with our more or less complete understanding of the structure of analytic functions in the plane, connects quasiregular and quasiconformal mappings strongly. In particular, the branch set $\mathcal{B}_f$ of $f$ must be discrete. In higher dimensions or more general metric measure spaces, the branch set of a quasiregular mappings can be very wild. This makes the homeomorphic theory and the non-homeomorphic theory substantially different. Indeed,  the most delicate part of establishing the theory of quasiregular mappings in various settings as mentioned above is to show that the branch set and its image have null measure. For instance, in the very recent paper~\cite{flp14}, in order to derive the basic properties of quasiregular mappings from that of quasiconformal mappings in equiregular subRiemannian manifolds, it was assumed a priori in the definition of a quasiregular mapping that both the branch set and its image are null sets with respect to the underlying Hausdorff measure. 

Somewhat surprisingly, in our previous paper~\cite{gw15}, we have successfully generalized the well-known Bonk--Heinonen--Sarvas Theorem, regarding the \textit{quantitative porosity} of the branch set and its image of a quasiregular mapping, to a large class of metric spaces.  One important point there was that we had an argument to prove the porosity results on $\mathcal{B}_f$ and $f(\mathcal{B}_f)$ directly, without knowing a priori the deep analytic facts (such as differentiability, Condition $N$ and Condition $N^{-1}$) of quasiregular mappings, so that many basic properties (in particular those local properties) follow directly from that of quasiconformal mappings. In other words, we have provided a quick approach to establish the basic properties of quasiregular mappings from that of quasiconformal mappings. The result we have obtained also promotes the important \textit{V\"ais\"al\"a's inequality} in most general settings. On the other hand, to obtain the preceding null property of the branch set and its image with respect to the given measures, one necessarily has to impose certain extra assumptions from \textit{quantitative topology} for the underlying metric measure spaces that we do want to dispose of. Indeed, our prime motivation of this paper is to prove the mutual implications of different definitions of quasiregularity in the most general metric space setting.

Our starting point is to find a powerful \textit{factorization} result, similar as the Stoilow factorization theorem in the plane, for quasiregular mappings in general metric measure spaces, so that the theory of quasiconformal mappings can be employed. Fortunately, such kind of factorization do exist and we will call it the \textit{pullback factorization} (see~Section~\ref{sec:the pullback factorization} below). More precisely, given a quasiregular mapping $f\colon X\to Y$ between two locally compact complete metric measure spaces, there exists a canonical factorization $f=\pi\circ g$, where $g\colon X\to X^f$ is a quasiconformal mapping and $\pi\colon X^f\to Y$ is a \textit{mapping of 1-bounded length distortion} (1-BLD for short). Due to the fine properties of the projection mapping $\pi$, $f$ and $g$ share many common analytic and geometric properties. Indeed, one of the key steps in our main equivalence theorems in Section~\ref{sec:foundations of QR mappings in mms} is to characterize the \textit{geometric $K_O$-inequality} and \textit{$K_I$-inequality} (also known as the \textit{Poletsky's inequality}) of $f$ via the corresponding inequalities of $g$.

There are two new observations on the theory of quasiregular mappings, even in Euclidean spaces, out from the pullback factorization. First, there are two natural groups, namely, the \textit{forward group} and the \textit{inverse group}, each consists of four different definitions of quasiregularity. To gain some intuition of what is going on here, let us, for simplicity, only look at the quasiconformal case. The forward group consists of the following four  definitions: weak metric definition ($m$), metric definition $(M)$, analytic definition ($A$), and the $K_O$-inequality ($G$). While the inverse group consists of the ``inverse" definitions: the weak inverse metric definition $(m^*)$, the inverse metric definition ($M^*$), the inverse analytic definition ($A^*$), and the $K_I$-inequality ($G^*$). It is useful to observe that the inverse group is exactly the forward group for $f^{-1}$. From the technical point of view, the group in this way makes the implications clear, namely, $(A)\Longleftrightarrow (G)$ and $(A^*)\Longleftrightarrow (G^*)$ in arbitrary metric measure spaces, a stronger version of $(m)$ or $(m^*)$ will quantitatively imply both $(A)$ and $(A^*)$ in Ahlfors regular spaces, and all these eight definitions will be quantitatively equivalent when the metric measure spaces have locally bounded geometry. We will use exactly the same groups in the quasiregular case, but some care need to pay to define the inverse analytic definition ($A^*$) of $f$ due to the branching. This will be done with the help of the pullback factorization. The novel here is that the equivalence of $(A)^*$ and $G^*$ provides an analytic characterization of the Poletsky's inequality, which together with $K_O$-inequality and the assumption $\nu(f(\mathcal{B}_f))=0$ will imply the stronger V\"ais\"al\"a's inequality (see~Theorem~\ref{thm:update poletsky to vaisala}).

Secondly, in Ahlfors regular spaces, for the implication of (a stronger form of) $(m)$ or $(m^*)$ to $(A)$ and $(A^*)$, we \textit{do not} really need the fact that the branch set (and its image) are null with respect to the Ahlfors regular measures, but that for the \textit{essential branch set}, which is a subset of the branch set and consists of all points with \textit{local essential index} strictly larger than one. If the underlying metric measure spaces are \textit{doubling}, then one can show that the essential branch set of $f$ is null with respect to certain \textit{pullback measure}, which is sufficient to conclude our implications in Ahlfors regular spaces. 

\subsection{Main results}

Our first main result concerns the relations between all the different definitions of quasiregularity. It can be regarded as a complete analog of that of quasiconformal mappings~\cite{hkst01,bkr07}.

\begin{thmA}\label{thm:thmA}
	Let $f\colon X\to Y$ be a onto branched covering between two metric measure spaces and $Q>1$. Then the following conclusions hold:
	\begin{itemize}
		\item[i).] $f$ is analytically $K_O$-quasiregular with exponent $Q$ if and only if it is geometrically $K_O$-quasiregular with exponent $Q$. Similarly, $f$ is inverse analytically $K_I$-quasiregular with exponent $Q$ if and only if it is strong inverse geometrically $K_I$-quasiregular with exponent $Q$.
		
		\item[ii).] If $X$ and $Y$ are locally Ahlfors $Q$-regular, and $Y$ has $c$-bounded turning, then either of the following two conditions
		\begin{itemize}
			\item[a).] $h_f(x)\leq h$ for all $x\in X$;
			\item[b).] $h_f^*(x)\leq h$ for all $x\in X$,
		\end{itemize}
		implies that $f$ is analytically $K_O$-quasiregular with exponent $Q$ and inverse analytically $K_I$-quasiregular with exponent $Q$, with both constants $K_O$ and $K_I$ depending only on the constant of Ahlfors $Q$-regularity, and on $c$ and $h$.
		
		\item[iii).] If both $X$ and $Y$ have locally $Q$-bounded geometry, then all of the metric, geometric and analytic definitions are quantitatively equivalent.
		
		\item[iv).] If $f$ satisfies the $K_I$-inequality with exponent $Q$ and if $\nu(f(\mathcal{B}_f))=0$, then $f$ satisfies the V\"ais\"al\"a's inequality with exponent $Q$ and with the same constant.
	\end{itemize}
\end{thmA}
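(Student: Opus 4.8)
The statement to be proved is V\"ais\"al\"a's inequality: for every path family $\Gamma$ in $X$, every path family $\Gamma'$ in $Y$, and every $m\in\N$ such that each $\beta\in\Gamma'$ admits partial $f$-lifts $\alpha_1,\dots,\alpha_m\in\Gamma$ with $\sum_{j=1}^{m}\chi_{\alpha_j}(x)\le i(x,f)$ for all $x\in X$ (where $\chi_{\alpha_j}$ is the characteristic function of the image of $\alpha_j$), one has $\modulus_Q(\Gamma')\le\frac{K_I}{m}\,\modulus_Q(\Gamma)$. The case $m=1$, with $\Gamma$ the family of all maximal $f$-lifts of members of $\Gamma'$, is precisely the $K_I$-inequality (Poletsky's inequality) that is assumed, so the only real content is to improve a single lift to $m$ essentially separated lifts while retaining the constant $K_I$; the measure hypothesis $\nu(f(\mathcal{B}_f))=0$ must be what supplies this improvement.

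My plan is to reduce, when $X$ and $Y$ are locally compact and complete, to the $1$-BLD case via the pullback factorization $f=\pi\circ g$ of Section~\ref{sec:the pullback factorization} ($g\colon X\to X^f$ quasiconformal, $\pi\colon X^f\to Y$ a $1$-BLD map); in the general case the same argument is run directly on $f$. Since $g$ is a homeomorphism, $\mathcal{B}_\pi=g(\mathcal{B}_f)$ and $\pi(\mathcal{B}_\pi)=f(\mathcal{B}_f)$, so $\nu(\pi(\mathcal{B}_\pi))=0$; the $K_I$-inequality transfers between $f$ and $g$ with unchanged exponent and constant through the comparison afforded by the factorization, and a $1$-BLD map satisfies the $K_I$-inequality with constant $1$. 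A V\"ais\"al\"a triple $(\Gamma,\Gamma',m)$ for $f$ yields a V\"ais\"al\"a triple $(g\Gamma,\Gamma',m)$ for $\pi$, since $g\alpha_j$ is a partial $\pi$-lift of $\beta$ and $i(x,f)=i(g(x),\pi)$. Granting V\"ais\"al\"a's inequality for $\pi$ with constant $1$, one concludes $\modulus_Q(\Gamma')\le\frac1m\modulus_Q(g\Gamma)\le\frac{K_I}{m}\modulus_Q(\Gamma)$, the last step being the $K_I$-inequality for the homeomorphism $g$. It thus suffices to prove that a $1$-BLD map $\pi$ with $\nu(\pi(\mathcal{B}_\pi))=0$ satisfies V\"ais\"al\"a's inequality with constant $1$.

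For the reduced statement (now $\Gamma$ is a path family in $X^f$) I would first discard negligible paths. The non-locally-rectifiable members of $\Gamma'$ form a family of $Q$-modulus zero, and, because $\nu(\pi(\mathcal{B}_\pi))=0$, Fuglede's lemma (valid in any metric measure space) shows that the members $\beta$ with $\int_\beta\chi_{\pi(\mathcal{B}_\pi)}\,ds>0$ also form a family of $Q$-modulus zero; hence $\modulus_Q(\Gamma')=\modulus_Q(\Gamma'_0)$, where $\Gamma'_0$ consists of the locally rectifiable $\beta$ meeting $\pi(\mathcal{B}_\pi)$ only on a length-null set. For $\beta\in\Gamma'_0$ and almost every parameter $t$ one has $\beta(t)\notin\pi(\mathcal{B}_\pi)$, so by the very definition of the branch set $\pi$ is a local homeomorphism at each point of $\pi^{-1}(\beta(t))$, all of index $1$; therefore $\alpha_1(t),\dots,\alpha_m(t)$ are pairwise distinct, and a compactness argument on subintervals decomposes the lifts into finitely many arcs, over each of which the $m$ lifts traverse pairwise disjoint normal neighbourhoods mapped homeomorphically by $\pi$. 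As $\pi$ is $1$-BLD these pieces have equal length upstairs and downstairs, which permits transferring line integrals between $\beta$ and its lifts. Given $\rho$ admissible for $\Gamma$, I would let $\rho'(y)$ equal $1/m$ times the sum of the $m$ largest values taken by $\rho$ on the fibre $\pi^{-1}(y)$ (for $y\notin\pi(\mathcal{B}_\pi)$). Then $m\,\rho'(\beta(t))\ge\sum_j\rho(\alpha_j(t))$ a.e., so $\int_\beta\rho'\,ds\ge\frac1m\sum_j\int_{\alpha_j}\rho\,ds\ge1$ and $\rho'$ is admissible for $\Gamma'_0$; moreover $\rho'(y)^Q\le\frac1m\sum_{x\in\pi^{-1}(y)}\rho(x)^Q$ by the power-mean inequality ($Q\ge1$), so the multiplicity-counted change of variables for $\pi$ (which is locally measure preserving off the null set $\pi(\mathcal{B}_\pi)$) yields $\int_Y(\rho')^Q\,d\nu\le\frac1m\int_{X^f}\rho^Q\,d\mu$. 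Taking the infimum over $\rho$ completes the argument.

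I expect the decisive difficulty to lie in the structural step that separates the $m$ prescribed lifts over a good path into pairwise disjoint, evenly covered sheets, and in arranging the decomposition so that the \emph{single} function $\rho'$ above is simultaneously admissible for all of $\Gamma'_0$; this rests on the normal-neighbourhood theory for discrete open maps established earlier and, in the factorization route, on the length preservation of $1$-BLD maps. A related technical point is to check that the maximal lifts furnished by the $K_I$-inequality for $\pi$ are long enough to be compared with the prescribed lifts $\alpha_j$ (the $K_O$-inequality, when available, gives convenient extra control here). In the general metric setting, where the factorization is unavailable, the plan is to repeat the argument directly on $f$: the Fuglede-type reduction and the disjoint-sheet decomposition are unchanged, and the sheet-by-sheet estimate is obtained by mimicking the proof of the $K_I$-inequality on each normal neighbourhood — which already carries the correct length-distortion weighting of $f$, so no Jacobian need be exhibited — while tracking the $m$ disjoint sheets to produce the factor $m^{-1}$; since that factor arises only from disjointness and convexity, the constant $K_I$ is preserved.
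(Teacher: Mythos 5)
You address only part (iv) of Theorem~A; parts (i), (ii), and (iii) -- the equivalence of analytic and geometric definitions, the passage from the weak metric conditions to the analytic/inverse-analytic ones under Ahlfors regularity, and the full equivalence in spaces of locally bounded geometry -- are not touched. Those are proved in the paper by Theorem~\ref{thm:equivalence of G and A}, Theorem~\ref{thm:metric implies all the other}, and Theorems~\ref{thm:bounded geometry I}--\ref{thm:bounded geometry III} respectively, and they carry the bulk of the technical work (the characterisation of Sobolev membership via $\liminf_{\varepsilon\to0}\varepsilon^p\Modd_p(\pi(\mathcal{C}_\varepsilon h))$, the covering theorem for pullback neighbourhoods, the key geometric modulus estimate in Proposition~\ref{prop:key geometric BG case}). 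This is the main gap in your proposal.

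For part (iv), your argument is essentially the same route as the paper's Theorem~\ref{thm:update poletsky to vaisala}, and it is sound. Both proofs rest on the pullback factorisation $f=\pi\circ g$, the fact that the Poletsky inequality transfers from $f$ to $g$ with the same constant (via Theorem~\ref{thm:equivalence of G and A} and Corollary~\ref{coro:1.8}), and the ``generalised V\"ais\"al\"a'' Lemma~\ref{lemma:1.4} for the $1$-BLD map $\pi$, whose proof is exactly your $\rho'(y)=\frac{1}{m}\sup_{S}\sum_{z\in S}\rho(z)$ construction together with the power-mean inequality and the change of variables for $\pi^*\nu$. The one genuine difference is how the a.e.\ distinctness of the lifts is secured. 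The paper argues by contradiction: it excises the family $\Gamma_0$ of curves on which $f$ fails to be absolutely precontinuous, invokes Remark~\ref{rmk:on KI imply abs precont} and Lemma~\ref{lemma:Condition N inverse on curve} to push a positive-measure coincidence set of parameters into a positive-length intersection of $\gamma'$ with $f(\mathcal{B}_f)$, and then cites $\nu(f(\mathcal{B}_f))=0$. You instead directly discard (as an exceptional family) the curves $\beta$ with $\int_\beta\chi_{\pi(\mathcal{B}_\pi)}\,ds>0$, observing that for the surviving curves $\beta(t)\notin\pi(\mathcal{B}_\pi)$ for a.e.\ $t$, which forces $i(\alpha_j(t),\pi)=1$ and hence pairwise distinctness of the lifts at a.e.\ $t$ via the multiplicity constraint in the V\"ais\"al\"a hypothesis. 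This is cleaner: by working exclusively with the $1$-BLD map $\pi$ the lifted curves in $X^f$ have the same length as the corresponding subcurves of $\beta$, so the absolute precontinuity machinery the paper deploys is not needed. The descriptions of the ``compactness on subintervals'' and ``disjoint sheet'' decomposition are vague, but in fact they are unnecessary once the pointwise distinctness is in hand; the $\rho'$-admissibility computation is purely an integral estimate along $\beta$'s arc-length parametrisation and needs no sheet structure.
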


A fundamental observation that we have made use of, when trying to move from the metric definition to the analytic one, is that the use of \textit{balls} in the typical covering argument (see e.g.~\cite{hk98,bkr07,w14}) to contruct an appropriate upper gradient is \emph{not essential}. Alternatively, one can use certain \emph{normal neighborhoods}, which are obtained as preimages of balls from the target, to run a similar covering type argument. This observation naturally motivates the study of pullback geometry/topology as we have developed in Section~\ref{sec:basic pullback studies}, which plays a crucial role in the discover of the pullback factorization.

Another point we would like to point out here is that based on the pullback factorization, we have introduced the class of \emph{branched quasisymmetric mappings}, which is branched version of the well-known quasisymmetric mappings. As in the case of quasiconformal mappings, we have shown in Section~\ref{subsec:Branched quasisymmetric mappings} that quasiregular mappings are locally branched quasisymmetric, quantitatively, in metric spaces of bounded geometry.  

As the first main application of the pullback factorization, we get our second main result of this paper (see Section~\ref{sec:the pullback factorization} below for the definition of BDD branched coverings). 

\begin{thmA}\label{thm:thmB}
	Let $f\colon X\to Y$ be a BDD branched covering, with $N=N(f,X)<\infty$, and suppose that $Y$ is doubling and that $X$ and $Y$ have bounded turning. Then there is a bi-Lipschitz embedding of $X$ into $Y\times \R^{c_d(N-1)}$, where $c_d$ depends only on the data of $Y$.
\end{thmA}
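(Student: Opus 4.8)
The plan is to build the embedding from the pullback factorization $f=\pi\circ g$, where $g\colon X\to X^f$ is quasiconformal and $\pi\colon X^f\to Y$ is $1$-BLD. Since $g$ is a homeomorphism onto $X^f$, it suffices to produce a bi-Lipschitz embedding of $X^f$ into $Y\times\R^{c_d(N-1)}$; the composition with $g$ then gives the desired embedding of $X$. Here the BDD hypothesis on $f$ should translate, via the fine properties of $\pi$ recorded in Section~\ref{sec:the pullback factorization}, into the statement that $\pi$ itself is a BDD branched covering with the same multiplicity bound $N$, and bounded turning is inherited by $X^f$ from $X$ and $Y$ because $g$ is quasiconformal between spaces of the relevant type. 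So I would first reduce, cleanly, to the case $f=\pi$ a $1$-BLD BDD branched covering.

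The second step is the local model. For a $1$-BLD branched covering the normal neighborhoods constructed in the pullback study (Section~\ref{sec:basic pullback studies}) have a very rigid structure: over a small ball $B\subset Y$, the preimage $\pi^{-1}(B)$ splits into at most $N$ sheets, and on each sheet $\pi$ is a $1$-BLD homeomorphism, hence bi-Lipschitz with the local length metric. The combinatorics of how these sheets are glued along the branch set is governed by the local index data, and the total ``defect'' from being a genuine covering is controlled by $N-1$. The key quantitative input is that, because $Y$ is doubling and has bounded turning, these local pictures have uniformly bounded complexity: the number of sheets meeting at any point, and the ``branching type'', are bounded in terms of $N$ and the data of $Y$ only.

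The third and main step is to assemble a global map $X^f\to Y\times\R^{c_d(N-1)}$ whose $Y$-component is $\pi$ itself and whose $\R^{c_d(N-1)}$-component is an auxiliary ``sheet-separating'' coordinate. The idea is that $\pi$ already preserves distances up to the factor coming from bounded turning, except that it fails to be injective — two points in different sheets over the same ball $B$ can be $\pi$-close while being far in $X^f$. One must therefore add finitely many bounded Lipschitz functions on $X^f$ that jointly separate the sheets everywhere, using a Assouad-type / nerve-of-a-cover construction: cover $Y$ at each scale by balls of bounded overlap (doubling), pull back, and on the resulting bounded-multiplicity cover of $X^f$ build a partition-of-unity–style map into a Euclidean space of dimension controlled by the overlap multiplicity — which is where the factor $c_d(N-1)$, with $c_d$ depending only on the doubling/bounded-turning data of $Y$, enters. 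Checking that the product map is bi-Lipschitz is then two estimates: the lower bound, which is where sheet-separation is used and is the delicate part, and the upper bound, which follows from the $1$-BLD property of $\pi$ together with the Lipschitz bounds on the auxiliary coordinates.

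I expect the main obstacle to be the lower bi-Lipschitz bound across the branch set: near $\mathcal{B}_\pi$ the sheets are not disjoint, so the separating functions cannot simply be indicator-type bump functions, and one has to argue that points which are close in the $Y$-coordinate and close in every auxiliary coordinate are genuinely forced to be close in $X^f$ — this requires the bounded-turning hypothesis on $X^f$ to rule out ``long thin detours'' and the bounded-multiplicity hypothesis to keep the number of relevant sheets finite and the construction uniform. Getting $c_d$ to depend only on the data of $Y$, and not on $X$, is exactly the point where one must route all metric information through $\pi$ and the doubling structure downstairs rather than through $X$ directly.
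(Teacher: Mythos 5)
Your overall strategy is the right one: reduce via the pullback construction to the case where $X$ has $1$-bounded turning and the covering is $1$-BDD, then seek a Lipschitz map $\phi\colon X\to\R^{c_d(N-1)}$ that \emph{separates fibers} (i.e., gives $\varepsilon\,d(x,x')\le |\phi(x)-\phi(x')|$ whenever $f(x)=f(x')$), and finally observe that $f\times\phi$ is then automatically bi-Lipschitz. That last observation is precisely the paper's Lemma~\ref{lemma:for biLip embedding}, and the final assembly $\psi=f\times\phi$ matches your plan. You also correctly locate the doubling hypothesis on $Y$ as the source of the constant $c_d$ via a bounded-multiplicity coloring argument. (One small inaccuracy en route: when $f$ is BDD the lift $g\colon X\to X^f$ is \emph{bi-Lipschitz} by Proposition~\ref{prop:nice properties of pullback factorization I}, not merely quasiconformal — invoking quasiconformality here would not give you the metric equivalence you want. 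Bounded turning of $X^f$ is also automatic from Lemma~\ref{lemma:pullback property 1}, not something you need to inherit from $X$.)

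The genuine gap is in your third step, where you propose an ``Assouad-type / nerve-of-a-cover'' construction: cover $Y$ ``at each scale'' by bounded-overlap balls and build partition-of-unity coordinates. Taken literally, this produces one family of coordinates per scale, hence infinitely many coordinates; the standard remedy (Assouad) requires snowflaking the metric to control the sum over scales, which destroys the bi-Lipschitz conclusion. What is missing — and what the paper's proof supplies as its key technical device — is that for each level $k=1,\dots,N-1$ there is a \emph{single} scale function
\[
R^k(y)=\inf\{\,r>0: f^{-1}(B(y,5r))\text{ has at most }k\text{ components}\,\},
\]
and the crucial point is that $R^k$ is $\tfrac{1}{5}$-Lipschitz in $y$. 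This replaces the infinite multiscale cover by a \emph{single} Whitney-like net $T^k\subset\{y:R^k(y)>0\}$ of balls $B(y,R^k(y))$ adapted to $R^k$, which doubling then colors with $c_d$ colors, giving exactly $c_d(N-1)$ coordinates. One also uses that whenever $f(x_1)=f(x_2)=y$ with $x_1\ne x_2$, there is some $k$ with $\tfrac12 d(x_1,x_2)\le 5R^k(y)\le d(x_1,x_2)$, so the coordinate $\phi^k_j$ built from this net (integer-weighted truncated distance functions to the complements of the pulled-back normal neighborhoods) really does separate $x_1$ from $x_2$ at a scale comparable to $d(x_1,x_2)$. Without the Lipschitz scale function $R^k$ your construction neither terminates in finitely many dimensions nor controls the scale at which separation must occur, so the lower bi-Lipschitz bound — which you correctly identify as the delicate part — would not go through.
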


An immediate corollary of Theorem~\ref{thm:thmB} (cf.~Corollary~\ref{coro:biLip embedding II}) is that every BLD covering of a locally compact quasiconvex metric space over $\R^n$ with finite maximal multiplicity embeds bi-Lipschitzly into some Euclidean space. This answers in the affirmative an open question of Heinonen and Rickman~\cite{hr02}. With the same technique, we also construct in Section~\ref{subsec:some natural examples} some natural examples of BLD mappings (even $1$-BLD), between sufficiently nice metric spaces, with positive branch sets (as well as their images). This shows the sharpness of the main results of~\cite{gw15} and also disproves a conjecture of Heinonen and Rickman~\cite{hr02}.

 As another application of the pullback techniques, we fully characterize BLD mappings, via quasiregular mappings, in metric spaces with (locally) bounded geometry.
 
\begin{thmA}\label{thm:thmC}
	Let $f\colon X\to Y$ be a branched covering between two Ahlfors $Q$-regular $Q$-Loewner spaces wtih $Q>1$. Then the following statements are equivalent:
	
	1). $f$ is $L$-BLD;
	
	2). For each $x\in X$, there exists $r_x>0$ such that 
	\begin{align*}
	\frac{d(x,y)}{c}\leq d(f(x),f(y))\leq cd(x,y)
	\end{align*}
	for all $y\in B(x,r_x)$;
	
	3). $L_f(x)\leq c$ and $l_f(x)\geq \frac{1}{c}$ for each $x\in X$;
	
	4). $f$ is metrically $H$-quasiregular, locally $M$-Lipschitz, and $J_f(x)\geq c$ for a.e. $x\in X$.
	
	Moreover, all the constants involved depend quantitatively only on each other and on the data associated to $X$ and $Y$. 
\end{thmA}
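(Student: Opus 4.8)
The plan is to close the cycle of elementary implications $(1)\Rightarrow(3)\Rightarrow(2)\Rightarrow(1)$ together with $(2)\Rightarrow(4)$, and then to establish the single substantial implication $(4)\Rightarrow(1)$ through the pullback factorization. For $(1)\Rightarrow(3)$ I would invoke the standard local behaviour of BLD mappings: an $L$-BLD mapping is locally Lipschitz, since along an $X$-quasiconvex curve $\gamma$ joining $x$ to a nearby $y$ one has $d(f(x),f(y))\le\ell(f\circ\gamma)\le L\,\ell(\gamma)\le LC\,d(x,y)$, where $C$ is the quasiconvexity constant furnished by the $Q$-Loewner property, so $L_f(x)\le LC$; dually, using the path-lifting property of BLD mappings, for $y$ close to $x$ a quasigeodesic from $f(x)$ to $f(y)$ lifts to a curve $\tilde\beta$ starting at $x$ that stays in a normal neighbourhood of $x$ and ends at $y$, so $d(x,y)\le\ell(\tilde\beta)\le L\,\ell(\beta)\le LC\,d(f(x),f(y))$ and $l_f(x)\ge 1/(LC)$. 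The equivalence $(2)\Leftrightarrow(3)$ is read off from the definitions of $L_f(x)$ and $l_f(x)$: for $r<r_x$, condition $(2)$ gives $L_f(x,r)\le cr$ and $l_f(x,r)\ge r/c$; conversely $L_f(x)\le c$ and $l_f(x)\ge 1/c$ give $(c+\varepsilon)^{-1}d(x,y)\le d(f(x),f(y))\le(c+\varepsilon)d(x,y)$ once $y$ is close enough to $x$. And $(2)\Rightarrow(1)$ follows by covering a rectifiable curve $\gamma([0,1])$ by finitely many balls on which $f$ is $c$-bi-Lipschitz (Lebesgue's number lemma) and refining partitions, giving $c^{-1}\ell(\gamma)\le\ell(f\circ\gamma)\le c\,\ell(\gamma)$; as $f$ is a branched covering by hypothesis, $f$ is $c$-BLD.

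Still elementary is $(2)\Rightarrow(4)$. Here the dilatation bound $H_f(x)\le c^{2}$, everywhere finiteness of $H_f$, and local $c$-Lipschitz continuity are immediate. For the Jacobian bound I would argue that $(2)$ gives $d(f(x),f(y))\ge d(x,y)/c$ on $B(x,r_x)$, so the component $V$ of $f^{-1}(B(f(x),r/c))$ containing $x$ is contained in $B(x,r)$ (using that $X$ is locally connected), hence relatively compact ($X$ being proper), so $f$ maps $V$ onto $B(f(x),r/c)$; thus $f(B(x,r))\supseteq B(f(x),r/c)$ for small $r$, and Ahlfors $Q$-regularity of $X$ and $Y$ yields $\nu(f(B(x,r)))\gtrsim(r/c)^{Q}\gtrsim c^{-Q}\mu(B(x,r))$, whence $J_f(x)\gtrsim c^{-Q}$ at every point, in particular a.e.

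The heart of the argument is $(4)\Rightarrow(1)$. Since $X$ and $Y$ have locally $Q$-bounded geometry, Theorem~\ref{thm:thmA}(iii) upgrades the metric quasiregularity of $f$ to full quasiregularity, so the pullback factorization of Section~\ref{sec:the pullback factorization} applies: $f=\pi\circ g$ with $g\colon X\to X^{f}$ a $K$-quasiconformal mapping, $\pi\colon X^{f}\to Y$ a $1$-BLD mapping, and $X^{f}$ again of locally $Q$-bounded geometry. Because $\pi$ is $1$-BLD we have $\ell(\pi\circ\sigma)=\ell(\sigma)$ for every curve $\sigma$, so $\ell(g\circ\gamma)=\ell(\pi\circ g\circ\gamma)=\ell(f\circ\gamma)\le M\,\ell(\gamma)$ for every curve $\gamma$ in $X$, hence, by quasiconvexity of $X$, the map $g$ is (globally) $L_1$-Lipschitz. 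Also $\pi$, being $1$-BLD, is locally $1$-Lipschitz up to the quasiconvexity constant, so $J_\pi\le C_0$ a.e.; by the chain rule for volume derivatives --- legitimate because the quasiconformal map $g$ satisfies Lusin's condition $N$ --- from $J_f=(J_\pi\circ g)\,J_g\ge c$ a.e. and $J_\pi\le C_0$ we get $J_g\ge c/C_0$ a.e. Since quasiconformal mappings satisfy conditions $N$ and $N^{-1}$, $J_{g^{-1}}=1/J_g\le C_0/c$ a.e.; writing $\rho$ for the minimal $Q$-weak upper gradient of $g^{-1}$, analytic quasiconformality of $g^{-1}$ (Theorem~\ref{thm:thmA}(i), applied on $X^{f}$) gives $\rho^{Q}\le K\,J_{g^{-1}}\le KC_0/c$ a.e., so $\rho\in L^{\infty}(X^{f})$. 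The $Q$-Poincaré inequality on $X^{f}$ then promotes this bounded upper gradient to a local Lipschitz bound, so $g^{-1}$ is locally $L_2$-Lipschitz. Consequently $g$ is locally bi-Lipschitz with uniform constant $L_0=\max\{L_1,L_2\}$, whence $L_0^{-1}\ell(\gamma)\le\ell(g\circ\gamma)\le L_0\,\ell(\gamma)$ for every curve $\gamma$ in $X$, and therefore $\ell(f\circ\gamma)=\ell(\pi\circ g\circ\gamma)=\ell(g\circ\gamma)$ is comparable to $\ell(\gamma)$ with constant $L_0$; since $f$ is a branched covering, $f$ is $L_0$-BLD.

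The hard part will be this last implication, and within it two points in particular. The first is ensuring that the pullback space $X^{f}$ genuinely inherits enough geometry --- Ahlfors $Q$-regularity together with a $Q$-Poincaré inequality --- for the ``bounded upper gradient $\Rightarrow$ locally Lipschitz'' step applied to $g^{-1}$ on $X^{f}$ to be available; this is exactly what the analysis of the pullback in Section~\ref{sec:the pullback factorization} must supply. The second is the measure-theoretic bookkeeping: the chain rule $J_f=(J_\pi\circ g)\,J_g$, the bound $J_\pi\le C_0$ for the $1$-BLD factor, and the reciprocal identity $J_{g^{-1}}=1/J_g$, all of which rest on Lusin's conditions $N$ and $N^{-1}$ for quasiconformal mappings and on the equivalence of the analytic and geometric definitions (Theorem~\ref{thm:thmA}(i)) together with the transfer of the $K_O$- and $K_I$-inequalities between $f$ and $g$ carried out in Section~\ref{sec:foundations of QR mappings in mms}. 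The tracking of the dependencies of all constants on $H$, $M$, $c$, $Q$, $K$ and the data of $X$ and $Y$ is then routine.
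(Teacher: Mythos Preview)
Your proposal is correct and follows essentially the same route as the paper: the elementary implications are handled directly (the paper defers these to \cite{gw15}), and for $(4)\Rightarrow(1)$ both you and the paper use the pullback factorization $f=\pi\circ g$, show that $g$ is Lipschitz because $f$ is, bound $J_{g^{-1}}$ via the Jacobian chain rule, deduce that $g^{-1}$ has a bounded $Q$-weak upper gradient from analytic quasiconformality, and then invoke the Poincar\'e inequality on $X^f$ (the paper packages this as Lemma~\ref{lemma:constant weak ug implies Lipschitz}) to conclude that $g^{-1}$ is Lipschitz. One small simplification: since the measure on $X^f$ is $\lambda=\pi^*\nu$ by construction, one has $J_g=d(g^*\lambda)/d\mu=d(f^*\nu)/d\mu=J_f$ directly, so your detour through $J_\pi\le C_0$ is unnecessary (in fact $J_\pi\equiv 1$ with respect to $\lambda$); the paper simply writes $J_g=J_f$.
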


As a consequence of Theorem~\ref{thm:thmC}, we obtain the important compactness properties of BLD mappings between certain generalized manifolds (see~Theorem~\ref{thm:convergence result}).

\subsection{Structure of the paper}
After the introduction, we collect some basic definitions and notation in Section~\ref{sec:preliminaries}.

In Section~\ref{sec:basic pullback studies}, we study some basic properties of pullback geometry. The main task is to introduce the essential local index and prove some basic estimates on the size of the essential branch set.

Section~\ref{sec:the pullback factorization} is the heart of this paper, from the point of view of techniques. We first present the general construction of the pullback factorization for branched coverings between locally compact complete metric measure spaces. Then we introduce the so-called \textit{mappings of bounded diameter distortion} (BDD for short) and  investigate the nice properties of the projection mapping $\pi$ and also the \textit{pullback metric} in details. Some basic properties between our branched covering $f$ and the lifting mapping $g$ are also studied.

Section~\ref{sec:Sobolev spaces on metric measure spaces} contains a brief introduction of the theory of Sobolev spaces on metric measure spaces based on the \textit{upper gradients}. 

The main body of this paper lies in Section~\ref{sec:foundations of QR mappings in mms}, where we establish the foundations of the theory of quasiregular mappings in general metric measure spaces and prove our first main result Theorem~\ref{thm:thmA}. More precisely, we introduce nine different definitions of quasiregularity in Section~\ref{subsec:Definitions of quasireguarity in general metric measure spaces} and then show in Section~\ref{subsec:Analytic and geometric definitions} the equivalences of $(A)$ and $(G)$ (and also $(A^*)$ and $(G^*)$) in arbitrary metric measure spaces. Section~\ref{subsec:Ahlfors regularity and the metric definitions} is the most technical part of the whole Section~\ref{sec:foundations of QR mappings in mms}, where we show that when the metric measure spaces are Ahlfors regular with the target space having bounded turning, then the (everywhere finite) weak metric/metric* definitions of quasiregularity implies both the analytic and the inverse analytic definitions, quantitatively. The key for our argument lies in Theorem~\ref{thm:for analytic regularity}, which essentially dates back to Balogh--Koskela--Rogovin~\cite{bkr07}. The principle idea is that the (weak) metric dilatation gives good control of balls with small radius that is sufficient, via a standard covering argument similar as~\cite{bkr07}, to construct appropriate weak upper gradients for the mapping itself in Ahlfors regular spaces. Moreover, one can quantifies the precise bound, which will result in the analytic or inverse analytic definitions of quasiregularity. One technical point here is that we need to run Theorem~\ref{thm:for analytic regularity} twice to get rid of the dependences   on the local essential index for the quasiregularity constant. Finally, we show the equivalences of the nine definitions of quasiregularity in metric measure spaces with locally bounded geometry in Section~\ref{subsec:Spaces of locally $Q$-bounded geometry}. Section~\ref{subsec:Size of the branch set and V\"ais\"al\"a's inequality} contains a brief discussion on the size of the branch set, mainly the results from~\cite{gw15}, and the V\"ais\"al\"a's inequality. In particular, we show that when $f$ satisfies the Poletsky's inequality, and the image of the branch set has zero measure, then $f$ satisfies the V\"ais\"al\"a's inequality. In Section~\ref{subsec:Geometric modulus inequalities for mappings of finite linear dilatation}, we study homeomorphisms with finite (weak) linear dilatation and derive some weighted $K_O$ or $K_I$-inequalities for such mappings. In the final section, Section~\ref{subsec:Branched quasisymmetric mappings}, we introduce the class of \textit{branched quasisymmetric mappings} and prove that metrically quasiregular mappings are locally branched quasisymmetric, quantitatively, in spaces of locally bounded geometry.

Section~\ref{sec:Quasiregular mappings between subRiemannian manifolds} contains a brief overview of the foundations of quasiregular mappings in equiregular subRiemannian manifolds. We separate the V\"ais\"al\"a's inequality as a typical application of our main results from Section~\ref{sec:foundations of QR mappings in mms}.

We prove our most important applications of the pullback techniques, Theorem~\ref{thm:thmB}, in Section~\ref{sec:biLipschitz embeddability}. We also use our preceding techniques to construct natural examples of BLD mappings between very nice metric spaces with positive branch set.

The proof of our second application, Theorem~\ref{thm:thmC}, is given in Section~\ref{sec:characterization of BLD mappings}, where we also prove a compactness results for BLD mappings in great generality.

\subsection{An important clarification from the first-named author}
The new tool, namely, the pullback factorization, as well as the main results (in Sections~\ref{sec:foundations of QR mappings in mms} and Section~\ref{sec:biLipschitz embeddability}) in this paper were essentially obtained in December 2014 by the second-named author in two short notes~\cite{w15,w15b}. The first-named author essentially only combined these notes together in a more detailed and readable manner, together with our joint paper~\cite{gw16} and a few natural applications of these techniques. Thus he wants to point out the credits of this paper should be given to the second-named author.

\subsection*{Acknowledgements}  
Some of this research was conducted by the second-named author at the Institute for Pure and Applied Mathematics during the program ``Interactions Between Analysis and Geometry" 2013.

\newpage
\section{Preliminaries}\label{sec:preliminaries}

\subsection{Generalized manifolds}\label{subsec:generalized manifolds}
Let $H_c^*(X)$ denote the Alexander-Spanier cohomology groups of a space $X$ with compact supports and coefficients in $\mathbb{Z}$. 

\begin{definition}\label{def:cohomology manifold}
	A space $X$ is called an $n$-dimensional, $n\geq 2$, \textit{cohomology manifold} (over $\mathbb{Z}$), or a \textit{cohomology $n$-manifold} if
	\begin{description}
		\item[(a)] the cohomological dimension $\dim_{\mathbb{Z}}X$ is at most $n$, and
		\item[(b)] the local cohomology groups of $X$ are equivalent to $\mathbb{Z}$ in degree $n$ and to zero in degree $n-1$.
	\end{description}
\end{definition}
Condition (a) means that $H_c^p(U)=0$ for all open $U\subset X$ and $p\geq n+1$. Condition (b) means that for each point $x\in X$, and for each open neighborhood $U$ of $x$, there is another open neighborhood $V$ of $x$ contained in $U$ such that
\begin{equation*}
H_c^p(V)=
\begin{cases}
\mathbb{Z} & \text{if } p=n \\
0 & \text{if } p=n-1,
\end{cases}
\end{equation*}
and the standard homomorphism
\begin{equation}\label{eq:standard homomorphism}
H_c^n(W)\to H_c^n(V)
\end{equation}
is a surjection whenever $W$ is an open neighborhood of $x$ contained in $V$. As for examples of cohomology $n$-manifolds, we point out all topological $n$-manifolds are cohomology $n$-manifolds. More examples can be found in~\cite{hr02}.

\begin{definition}\label{def:generalized manifold}
	A space $X$ is called a \textit{generalized $n$-manifold}, $n\geq 2$, if it is a finite-dimensional cohomology $n$-manifold.
\end{definition}

If a generalized $n$-manifold $X$ satisfies $H_c^n(X)\simeq\mathbb{Z}$, then $X$ is said to be orientable and a choice of a generator $g_X$ in $H_c^n(X)$ is called an orientation; $X$ together with $g_X$ is an oriented generalized $n$-manifold. If $X$ is oriented, we can simultaneously choose an orientation $g_U$ for all connected open subsets $U$ of $X$ via the isomorphisms
$$H_c^n(U)\rightarrow H_c^n(U).$$

Let $X$ and $Y$ be oriented generalized $n$-manifolds, $\Omega\subset X$ be an oriented domain and let $f\colon \Omega\to Y$ be continuous. For each domain $D\subset\subset\Omega$ and for each component $V$ of $Y\backslash f(\bdary D)$, the map
$$f|_{f^{-1}(V)\cap D}:f^{-1}(V)\cap D\to V$$
is proper. Hence we have a sequence of maps
\begin{equation}\label{eq:hr 2.1}
H_c^n(V)\to H_c^n(f^{-1}(V)\cap D)\to H_c^n(D),
\end{equation}
where the first map is induced by $f$ and the second map is the standard homomorphism. The composition of these two maps sends the generator $g_V$ to an integer multiple of the generator $g_D$; this integer, denoted by $\mu(y,f,D)$, is called the \textit{local degree of $f$ at a point $y\in V$ with respect to $D$}. The local degree is an integer-valued locally constant function
$$y\mapsto \mu(y,f,D)$$
defined in $Y\backslash f(\bdary D)$. If $V\cap f(D)=\emptyset$, then $\mu(y,f,D)=0$ for all $y\in V$.

\begin{definition}\label{def:sense-preserving}
	A continuous map $f\colon X\to Y$ between two oriented generalized $n$-manifolds is said to be \textit{sense-preserving} if
	$$\mu(y,f,D)>0$$
	whenever $D\subset\subset X$ is a domain and $y\in f(D)\backslash f(\bdary D)$.
\end{definition}
%
%
%
%
%

\subsection{Hausdorff measure}
Let $X=(X,d)$ be a metric space. Fix a positive real number $s$. For each $\delta>0$ and $E\subset X$, we set 
\begin{align*}
\mathcal{H}_{s,\delta}=\inf \sum_{i}\big(\diam(E_i)\big)^s,
\end{align*}
where the infimum is taken over all countable covers of $E$ by sets $E_i\subset X$ with diameter no more than $\delta$. When $\delta$ decreases, the value of $\mathcal{H}_{s,\delta}(E)$ for a fixed set $E$ increases, and the \emph{$s$-dimensional Hausdorff measure} of $E$ is defined to be
\begin{align}\label{eq:def of Hausdorff measure}
\mathcal{H}^{s}(E):=\lim_{\delta\to 0}\mathcal{H}_{s,\delta}(E).
\end{align}
The set function $E\mapsto \mathcal{H}^{s}(E)$ determins a Borel regular measure on $X$. In the later part of this paper, we also use the notation $\mathscr{H}^s(E)$ for the $s$-dimensional Hausdorff measure of a set $E$ in a metric space $X$.

The \emph{Hausdorff dimension} of a set $E$ in $X$ is the infimum of the numbers $s>0$ such that $\mathcal{H}^s(E)=0$.

\subsection{Metric measure spaces}\label{subsec:metric measure spaces}

\begin{definition}\label{def:metric measure space}
	A \textit{metric measure space} is defined to be a triple $(X,d,\mu)$, where $(X,d)$ is a \emph{separable} metric space and $\mu$ is a \emph{nontrivial locally finite Borel regular measure} on $X$.
\end{definition}

\begin{definition}\label{def:doubling metric measure space}
	A Borel regular measure $\mu$ on a metric space $(X,d)$ is called a \textit{doubling measure} if every ball in $X$ has positive and finite measure and there exists a constant $C_\mu\geq 1$ such that
	\begin{equation}\label{eq:doubling measure}
	\mu(B(x,2r))\leq C_\mu \mu(B(x,r))
	\end{equation}
	for each $x\in X$ and $r>0$. We call the triple $(X,d,\mu)$ a doubling metric measure space if $\mu$ is a doubling measure on $X$. We call $(X,d,\mu)$ an Ahlfors $Q$-regular space, $1\leq Q<\infty$, if there exists a constant $C\geq 1$ such that
	\begin{equation}\label{eq:Ahlfors regular measure}
	C^{-1}r^Q\leq \mu(B(x,r))\leq Cr^Q
	\end{equation}
	for all balls $B(x,r)\subset X$ of radius $r<\diam X$.
\end{definition}

It is well-known that if $(X,d,\mu)$ is an Ahlfors $Q$-regular space, then
\begin{equation}\label{eq: comparable with Hausdorff measure}
\mu(E)\approx \mathscr{H}^Q(E)
\end{equation}
for all Borel sets $E$ in $X$, see~\cite[Chapter 8]{h01}.

\subsection{Local contractibility and local metric orientation}\label{subsec:local contractibility and metric orientation}
Recall that  a metric space $X$ is said to be \emph{linearly locally contractible} if there exits a constant $c\geq 1$ such that every ball $B(x,r)$ is contractible in $B(x,cr)$.  $X$ is \textit{locally linearly locally contractible} if for each $x\in X$, there exists a neighborhood $U_x$ of $x$ such that $U_x$ is $c_x$-linearly locally contractible.

%
%
%
Assume that $X$ is an oriented generalized $n$-manifold, and assume that $X$ satisfy the following conditions:
\begin{itemize}
	\item X is $n$-rectifiable and has locally finite $\mathscr{H}^n$-measure;
	\item X is Ahlfors $n$-regular;
	\item X is locally bi-Lipschitz embeddable in Euclidean space.
\end{itemize}
Let $U$ be an open connected neighborhood of a point in $X$ that can be embedded bi-Lipschitz in some $\bR^N$ and that $U$ has finite Hausdorff $n$-measure. Because of properties $(A1)$ and $(A2)$, the set $U$ has a tangent $n$-plane $T_xU$ at a.e. point $x\in U$. The collection of these planes is called a measurable tangent bundle of $U$, and it is denoted by $TU$.

Each $n$-plane $T_xU$, whenever it exists, is an $n$-dimensional subspace of $\bR^N$, and a measurable choice of orientations $\xi=(\xi_x)$ on each $T_xU$ is called an orientation of the tangent bundle $TU$. Such orientations always exist. Because $X$ is an oriented generalized $n$-manifold, there is another orientation on $U$, provided $U$ is connected; this is a generator $g_U$ in the group $H_c^n(U)=\mathbb{Z}$ determined by the fixed orientation on $X$.

Fix a point $x\in U$ such that $T_xU$ exists. Then the projection
$$\pi_x:\bR^N\to T_xU+x$$
satisfies $x\notin \pi_x(\bdary D)$ whenever $D$ is a sufficiently small open connected neighborhood of $x$ in $U$. Moreover, for any $\varepsilon>0$,
\begin{equation}\label{eq: good property of projection map}
	\frac{|x-x_0|}{2}\leq \pi_{x_0}(x-x_0)\leq \varepsilon |x-x_0|
\end{equation}
whenever $|x-x_0|$ small enough, see e.g.~\cite{hr02,g14}. Thus, if $V$ is the $x$-component of $(T_xU+x)\backslash \pi_x(\bdary D)$, we have
\begin{equation}\label{eq:canonical isomorphism}
	H_c^n(T_xU)\leftarrow H_c^n(V)\stackrel{\pi_x^*}{\rightarrow}H_c^n(\pi_x^{-1}(V)\cap D)\rightarrow H_c^n(D)\rightarrow H_c^n(U),
\end{equation}
where the unnamed arrows represent a canonical isomorphism. If the orientations $\xi_U$ and $g_U$ correspond to each other under the map in~\eqref{eq:canonical isomorphism}, we say that $T_xU$ and $U$ are \textit{coherently oriented at $x$} by $\xi_x$ and $g_U$; if a measurable coherent orientation $\xi=(\xi_x)$ is chosen at a.e. point, we say that $TU$ is \textit{metrically oriented} by $\xi$ and $g_U$.

%

\subsection{Inverse dilatation}\label{subsec:Inverse dilatation}
Let $f\colon X\rightarrow Y$ be continuous.  For each $x\in X$, denote by $U(x,r)$ the component of $x$ in $f^{-1}(B(f(x),r))$.  

Set 
\begin{align*}
H_f^*(x,s)=\frac{L_f^*(x,s)}{l_f^*(x,s)},
\end{align*}
where
\begin{align*}
L_f^*(x,s)=\sup_{z\in \partial U(x,s)}d(x,z)\quad \text{and}\quad l_f^*(x,s)=\inf_{z\in \partial U(x,s)}d(x,z).
\end{align*}
The \textit{inverse linear dilatation function} of $f$ at $x$ is defined pointwise by
\begin{align*}
H_f^*(x)=\limsup_{s\to 0}H_f^*(x,s).
\end{align*}
Similarly, the \emph{weak inverse linear dilatation function} of $f$ at $x$ is defined pointwise by
\begin{align*}
h_f^*(x)=\liminf_{s\to 0}H_f^*(x,s).
\end{align*}

\subsection{Condition $N$ and $N^{-1}$}\label{subsec:Condition N and N inverse}
A mapping $f\colon (X,\mu)\to (Y,\nu)$ between two measure spaces is said to satisfy \textit{Condition $N$} if $\nu(f(E))=0$ whenever $E$ is a subset of $X$ with $\mu(E)=0$. Similarly, $f$ satisfies \textit{Condition $N^{-1}$} if $\nu(f(E))>0$ whenever $E$ is a subset of $X$ with $\mu(E)>0$.

\newpage
\section{Sobolev spaces on metric measure spaces}\label{sec:Sobolev spaces on metric measure spaces}

A main theme in analysis on metric spaces is that the infinitesimal structure
of a metric space can be understood via the curves that it contains. The
reason behind this is that we can integrate Borel functions along rectifiable
curves and do certain non-smooth calculus akin to the Euclidean spaces. The notion of upper gradients becomes an important tool in understanding these non-smooth calculus, particularly, when the underlying metric spaces have ``many" rectifiable curves.

In this section, we will briefly introduce the theory of Sobolev spaces on metric measure spaces based on the upper gradient approach. For detailed description of this approach, see the monographs~\cite{hkst15,s00}.

\subsection{Modulus of a curve family}\label{subsec:Modulus of a curve family}

Let $(X,d)$ be a metric space. A curve (or path) in $X$ is a continuous map $\gamma\colon I\to X$, where $I\subset \bR$ is an interval. We call $\gamma$ compact, open, or half-open, depending on the type of the interval $I$.

Given a compact curve $\gamma\colon [a,b]\to X$, we define the \textit{variation function} $v_\gamma\colon [a,b]\to [a,b]\to \infty$ by
\begin{align*}
	v_\gamma(s)=\sup_{a\leq a_1\leq b_1\leq \cdots\leq a_n\leq b_n\leq s}\sum_{i=1}^n d(\gamma(b_i),\gamma(a_i)).
\end{align*}
The length of $\gamma$ is defined to be the variation $v_\gamma(b)$ at the end point $b$ of the parametrizing interval $[a,b]$. If $\gamma$ is not compact, its length is defined to be the supremum of the lengths of the compact subcurves of $\gamma$. Thus, every curve has a well defined length in the extended nonnegative reals, and we denote it by length$(\gamma)$ or simply $l(\gamma)$.

A curve is said to be \textit{rectifiable} if its length $l(\gamma)$ is finite, and \textit{locally rectifiable} if each of its compact subcurves is rectifiable. For any rectifiable curve $\gamma$ there are its associated length function $s_\gamma\colon I\to [0,l(\gamma)]$ and a unique 1-Lipschitz map $\gamma_s\colon [0,l(\gamma)]\to X$ such that $\gamma=\gamma_s\circ s_\gamma$. The curve $\gamma_s$ is the \textit{arc length parametrization} of $\gamma$.

When $\gamma$ is rectifiable, and parametrized by arclength on the interval $[a,b]$, the integral of a Borel function $\rho\colon X\to [0,\infty]$ along  $\gamma$ is
\[ \int_\gamma \rho \,ds = \int_0^{l(\gamma)} \rho(\gamma_s(t))\,dt\text{.}\]
Similarly, the line integral of a  Borel function $\rho\colon X\to [0,\infty]$ over a locally rectifiable curve $\gamma$ is defined to be the supremum of the integral of $\rho$ over all compact subcurves of $\gamma$.

A curve $\gamma$ is \textit{absolutely continuous} if $v_\gamma$ is absolutely continuous. Via the chain rule, we then have
\begin{align}\label{eq:formula for ABS curve line integral}
	\int_{\gamma}\rho ds=\int_a^b\rho(\gamma(t))v_\gamma'(t)dt.
\end{align}

Let $X=(X,d,\mu)$ be a metric measure space as defined in Definition~\ref{def:metric measure space}. Let $\Gamma$ a family of curves in $X$.  A Borel function $\rho\colon X\rightarrow [0,\infty]$ is \textit{admissible} for $\Gamma$ if for every locally rectifiable curves $\gamma\in \Gamma$,
\begin{equation}\label{admissibility}
	\int_\gamma \rho\,ds\geq 1\text{.}
\end{equation}
The \textit{$p$-modulus} of $\Gamma$, $p\geq 1$, is defined as
\begin{equation*}
	\modulus_p(\Gamma) = \inf_{\rho} \left\{ \int_X \rho^p\,d\mu:\text{$\rho$ is admissible for $\Gamma$} \right\}.
\end{equation*}
A family of curves is called \emph{$p$-exceptional} if it has $p$-modulus zero. We say that a property of curves holds for \emph{$p$-almost every curve} if the collection of curves for which the property fails to hold is $p$-exceptional.

\subsection{Sobolev spaces based on upper gradients}
Let $X=(X,d,\mu)$ be a metric measure space and $Z=(Z,d_Z)$ be a metric space.

\begin{definition}\label{def:upper gradient}
	A Borel function $g\colon X\rightarrow [0,\infty]$ is called an \textit{upper gradient} for a map $f\colon X\to Z$ if for every rectifiable curve $\gamma\colon [a,b]\to X$, we have the inequality
	\begin{equation}\label{ugdefeq}
		\int_\gamma g\,ds\geq d_Z(f(\gamma(b)),f(\gamma(a)))\text{.}
	\end{equation}
	If inequality \eqref{ugdefeq} merely holds for $p$-almost every compact curve, then $g$ is called a \textit{$p$-weak upper gradient} for $f$.  When the exponent $p$ is clear, we omit it.
\end{definition}

The concept of upper gradient were introduced in~\cite{hk98}. It was initially called \textit{``very weak gradient"}, but the befitting term ``upper gradient" was soon suggested. Functions with $p$-integrable $p$-weak upper gradients were subsequently studied in~\cite{km98}, while the theory of Sobolev spaces based on upper gradient was systematically developed in~\cite{s00} and~\cite{hkst15}.

By \cite[Lemma 5.2.3]{hkst15}, $f$ has a $p$-weak upper gradient in $L^p_{loc}(X)$ if and only if it has an actual upper gradient in $L^p_{loc}(X)$.

A $p$-weak upper gradient $g$ of $f$ is \textit{minimal} if for every $p$-weak upper gradient $\tilde{g}$ of $f$, $\tilde{g}\geq g$ $\mu$-almost everywhere.  If $f$ has an upper gradient in $L^p_\loc(X)$, then $f$ has a unique (up to sets of $\mu$-measure zero) minimal $p$-weak upper gradient by the following result from~\cite[Theorem 5.3.23]{hkst15}.  In this situation, we denote the minimal upper gradient by $g_{f}$.

\begin{proposition}\label{prop:hkst Thm 5.3.23}
	The collection of all $p$-integrable $p$-weak upper gradients of a map $u\colon X\to Z$ is a closed convex lattice inside $L^p(X)$ and, if nonempty, contains a unique element of smallest $L^p$-norm. In particular, if a map has a $p$-integrable $p$-weak upper gradient, then it has a minimal $p$-weak upper gradient.
\end{proposition}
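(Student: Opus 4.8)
The plan is to verify three structural facts about the collection $G$ of all $p$-integrable $p$-weak upper gradients of $u$ — that it is convex, that it is closed in $L^p(X)$, and that it is closed under $\min$ and $\max$ (a lattice) — and then to read off a pointwise-least element of $G$, which is automatically the unique element of smallest $L^p$-norm. Two soft facts are used repeatedly: countable subadditivity of $\modulus_p$, and the observation that if a curve family $\Gamma_0$ is $p$-exceptional then so is the family of all curves having a subcurve in $\Gamma_0$ (any $\rho$ admissible for $\Gamma_0$ is admissible for the larger family, since $\int_{\gamma'}\rho\,ds\le\int_\gamma\rho\,ds$ whenever $\gamma'$ is a subcurve of $\gamma$). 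Convexity is then immediate: given $g_0,g_1\in G$ and $\lambda\in[0,1]$, both upper gradient inequalities hold off a common $p$-exceptional family, and there linearity of the line integral gives $\int_\gamma(\lambda g_0+(1-\lambda)g_1)\,ds\ge d_Z(u(\gamma(b)),u(\gamma(a)))$. Closure under $\max$ is also immediate, since any Borel function dominating a $p$-weak upper gradient is again one and $\max(g_0,g_1)\ge g_0$; so the only real point is that $\min(g_0,g_1)$ is a $p$-weak upper gradient.

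For closedness I would invoke Fuglede's lemma (cf.\ \cite{hkst15}): if $g_j\to g$ in $L^p(X)$ with each $g_j\in G$, pass to a subsequence with $\sum_j\|g_j-g\|_{L^p}^p<\infty$, so that $\big(\sum_j|g_j-g|^p\big)^{1/p}\in L^p(X)$ and hence $\int_\gamma|g_j-g|\,ds\to0$ for $p$-a.e.\ curve $\gamma$; discarding also the countably many $p$-exceptional families on which some $g_j$ fails to be an upper gradient, one gets $\int_\gamma g\,ds=\lim_j\int_\gamma g_j\,ds\ge d_Z(u(\gamma(b)),u(\gamma(a)))$ for $p$-a.e.\ $\gamma$. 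The delicate part, and the step I expect to be the main obstacle, is the $\min$; here I would use the absolute-continuity-on-curves characterization of $p$-weak upper gradients (cf.\ \cite{hkst15}). Namely, if $h\in L^p(X)$ is a $p$-weak upper gradient of $u$, then for $p$-a.e.\ rectifiable curve $\gamma$, parametrized by arclength on $[0,\ell_\gamma]$, one has $\int_\gamma h\,ds<\infty$, the map $u\circ\gamma$ is absolutely continuous, the metric speed $|(u\circ\gamma)'|(t)$ exists for a.e.\ $t$, and $|(u\circ\gamma)'|(t)\le h(\gamma(t))$ for a.e.\ $t$ — the last inequality relying on the overcurve stability above, so that the upper gradient inequality may be assumed on every subcurve of $\gamma$. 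Applying this with $h=g_0$ and with $h=g_1$ and intersecting the two full-modulus curve families, one obtains for $p$-a.e.\ $\gamma$ that $|(u\circ\gamma)'|(t)\le\min(g_0,g_1)(\gamma(t))$ for a.e.\ $t$, whence
\[
d_Z\big(u(\gamma(\ell_\gamma)),u(\gamma(0))\big)\le\int_0^{\ell_\gamma}|(u\circ\gamma)'|(t)\,dt\le\int_\gamma\min(g_0,g_1)\,ds .
\]
Thus $\min(g_0,g_1)\in G$, and $G$ is a lattice.

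To extract the minimal element, assume $G\ne\emptyset$, set $\alpha:=\inf\{\|h\|_{L^p}:h\in G\}$, choose $g_j\in G$ with $\|g_j\|_{L^p}\to\alpha$, and replace $g_j$ by $h_j:=\min(g_1,\dots,g_j)$, which lies in $G$ by the lattice property and satisfies $\alpha\le\|h_j\|_{L^p}\le\|g_j\|_{L^p}$, so $\|h_j\|_{L^p}\to\alpha$. The sequence $(h_j)$ is nonincreasing and dominated by $h_1\in L^p$, so it converges pointwise $\mu$-a.e.\ and, by dominated convergence, in $L^p(X)$ to some $g_u$; by closedness $g_u\in G$, and $\|g_u\|_{L^p}=\alpha$. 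Finally, for any $\tilde g\in G$ the function $\min(g_u,\tilde g)$ lies in $G$ and is $\le g_u$ pointwise, so $\|\min(g_u,\tilde g)\|_{L^p}=\alpha=\|g_u\|_{L^p}$ forces $\min(g_u,\tilde g)=g_u$ $\mu$-a.e., i.e.\ $g_u\le\tilde g$ $\mu$-a.e. Hence $g_u$ is the unique element of $G$ of smallest $L^p$-norm; that $g_u\le\tilde g$ $\mu$-a.e.\ in fact for \emph{every} $p$-weak upper gradient $\tilde g$ of $u$ — so that $g_u$ is the minimal $p$-weak upper gradient — follows from the same argument once one checks, as in \cite{hkst15}, that $\min(g_u,\tilde g)$ is again a $p$-weak upper gradient. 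The whole scheme is formal once the metric-speed bound $|(u\circ\gamma)'|\le h\circ\gamma$ along $p$-a.e.\ curve is in hand; establishing that bound (the absolute-continuity-on-curves theory, via Fuglede's lemma and the stability of $p$-exceptionality under passing to overcurves) is the crux.
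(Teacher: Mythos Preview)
The paper does not give its own proof of this proposition; it is stated as a quotation of \cite[Theorem 5.3.23]{hkst15} and used without argument. Your proposal is a correct reconstruction of the standard proof from that reference: convexity is immediate, closedness follows from Fuglede's lemma, the lattice property for $\min$ is obtained via the absolute-continuity-on-curves characterization (the metric-speed bound $|(u\circ\gamma)'|\le h\circ\gamma$ along $p$-a.e.\ curve), and the minimal element is extracted by taking a monotone limit of finite minima along a norm-minimizing sequence. There is nothing to compare against in the paper itself.
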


In view of the above result, the minimal $p$-weak upper gradient $g_u$ should be thought of as a substitute for $|\nabla u|$, or the length of a gradient, for functions defined in metric measure spaces.

Fix a Banach space $\mathbb{V}$, and we first define the Sobolev space $N^{1,p}(X,\mathbb{V})$ of $\mathbb{V}$-valued mappings. Let $\tilde{N}^{1,p}(X,\mathbb{V})$ denote the collection of all maps $f\in L^p(X,\mathbb{V})$ that have an upper gradient in $L^p(X)$. We equip it with seminorm
\begin{equation*}
	\|f\|_{\tilde{N}^{1,p}(X,\mathbb{V})}=\|f\|_{L^p(X,\mathbb{V})}+\|g_f\|_{L^p(X)},
\end{equation*}
where $g_f$ is the minimal $p$-weak upper gradient of $f$. We obtain a normed space $N^{1,p}(X,\mathbb{V})$ by passing to equivalence classes of functions in $\tilde{N}^{1,p}(X,\mathbb{V})$ with respect to equivalence relation: $f_1\sim f_2$ if $\|f_1-f_2\|_{\tilde{N}^{1,p}(X,\mathbb{V})}=0$. Thus
\begin{equation}\label{eq:definition of Nowton-Sobolev space}
	N^{1,p}(X,\mathbb{V}):=\tilde{N}^{1,p}(X,\mathbb{V})/\{f\in \tilde{N}^{1,p}(X,\mathbb{V}): \|f\|_{\tilde{N}^{1,p}(X,\mathbb{V})}=0\}.
\end{equation}

Let $\tilde{N}_{loc}^{1,p}(X,\mathbb{V})$ be the vector space of functions $f\colon X\to \mathbb{V}$ with the property that every point $x\in X$ has a neighborhood $U_x$ in $X$ such that $f\in \tilde{N}^{1,p}(U_x,\mathbb{V})$. Two functions $f_1$ and $f_2$ in $\tilde{N}_{loc}^{1,p}(X,\mathbb{V})$ are said to be equivalent if every point $x\in X$ has a neighborhood $U_x$ in $X$ such that the restrictions $f_1|_{U_x}$ and $f_2|_{U_x}$ determine the same element in $\tilde{N}^{1,p}(U_x,\mathbb{V})$. The local Sobolev space $N_{loc}^{1,p}(X,\mathbb{V})$ is the vector space of equivalent classes of functions in $\tilde{N}_{loc}^{1,p}(X,\mathbb{V})$ under the preceding equivalence relation.

To define the Sobolev space $N^{1,p}(X,Y)$ of mappings $f\colon X\to Y$, we first fix an isometric embedding $\varphi$ of $Y$ into some Banach space $\mathbb{V}$. Then the Sobolev space $N^{1,p}(X,Y)$ consists of all mappings $f\colon X\to Y$ with $\varphi\circ f\in N^{1,p}(X,\mathbb{V})$.

%
%

\subsection{Poincar\'e inequalities and spaces of locally bounded geometry}
The following concept of an abstract Poincar\'e inequality was first introduced by Heinonen and koskela~\cite{hk98} and it plays an important role in the study of analysis on metric spaces; see for instance~\cite{hk00,hkst15}.
\begin{definition}\label{def:Poincar\'e inequality}
	We say that a metric measure space $(X,d,\mu)$ admits a \textit{(1,p)-Poincar\'e inequality} if there exist constants $C\geq 1$ and $\tau\geq 1$ such that
	\begin{equation}\label{eq:Poin ineq}
		\dashint_B|u-u_B|d\mu\leq C\diam(B)\Big(\dashint_{\tau B}g^pd\mu\Big)^{1/p}
	\end{equation}
	for all open balls $B$ in $X$, for every function $u:X\to\bR$ that is integrable on balls and for every upper gradient $g$ of $u$ in $X$.
\end{definition}

The $(1,p)$-Poincar\'e inequality can be thought of as a requirement that a
space contains ``many" curves, in terms of the $p$-modulus of curves in the
space. For a complete doubling metric measure space $X=(X,d,\mu)$ supporting a $(1,p)$-Poincar\'e inequality, there are many important consequences~\cite[Section 9]{hkst15}. For example, one has the important Sobolev embedding results as in the Euclidean spaces. We point out the following geometric implications of Poincar\'e inequalities from~\cite[Theorem 8.3.2]{hkst15}.

\begin{lemma}\label{lemma:poincare implies quasiconvex}
	A complete and doubling metric measure space that supports a Poincar\'e inequality is quasiconvex, quantitatively.
\end{lemma}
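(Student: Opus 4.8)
The plan is to derive quasiconvexity from the Poincaré inequality by a chaining argument, using the Poincaré inequality to produce, for any two points, a curve (or a sequence of curves) whose length is controlled by the distance between the points. The starting observation is that if $X$ were \emph{not} quasiconvex, then the intrinsic (length) metric $d_\ell$ induced by $d$ would fail to be comparable to $d$; one can encode this failure through a suitable ``distance-like'' function and then contradict the Poincaré inequality applied to that function.

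First I would introduce, for a fixed basepoint $x_0$ and scale, the function $u(x) = d_\ell(x_0, x)$ (the length distance, possibly truncated to keep it bounded on balls so that it lies in $L^1_{\mathrm{loc}}$), and observe that $u$ has $g \equiv 1$ as an upper gradient: along any rectifiable curve $\gamma$ from $a$ to $b$, one has $|u(\gamma(b)) - u(\gamma(a))| \le \ell(\gamma) = \int_\gamma 1\,ds$ by the triangle inequality for $d_\ell$. Next I would feed this $u$ and $g\equiv 1$ into the $(1,p)$-Poincaré inequality~\eqref{eq:Poin ineq} on balls $B = B(x,r)$: the right-hand side is simply $C\,\diam(B) \cdot 1 \le 2Cr$, so the oscillation of $u$ on $B$ is controlled linearly by $r$, in an averaged sense. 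Iterating this over a telescoping chain of balls $B(z, 2^{-k} d(x,y))$ connecting two given points $x$ and $y$ — a standard chaining argument that uses the doubling property to control overlaps and to pass from averaged oscillation bounds to pointwise bounds along the chain — one obtains $d_\ell(x,y) \le C' d(x,y)$ with $C'$ depending only on the Poincaré data and the doubling constant. Completeness enters to guarantee that the length metric $d_\ell$ is actually realized by a rectifiable curve (via Arzelà–Ascoli applied to a minimizing sequence of curves, whose lengths are now uniformly bounded), so that the comparison $d_\ell \le C' d$ upgrades to the existence of an honest curve of length $\le C' d(x,y)$ joining $x$ to $y$.

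The main obstacle is the chaining step itself: one must be careful that the function $u = d_\ell(x_0,\cdot)$ is a priori only known to be finite (quasiconvexity is precisely what we are trying to prove), so one should either work with the ``modulus-$p$ length'' or a carefully truncated/regularized version, or else argue by contradiction assuming $d_\ell(x,y) = \infty$ or very large for some pair and deriving a contradiction with the telescoping estimate. Making the telescoping rigorous — choosing the chain of balls, bounding the number of balls at each scale using doubling, and summing the geometric series of radii — is routine but is where all the quantitative dependence is tracked; I would simply cite~\cite[Theorem 8.3.2]{hkst15} for this, since the lemma is explicitly quoted from there, and present the above only as the conceptual skeleton. The word ``quantitatively'' in the statement means the quasiconvexity constant $C'$ depends only on $C$, $\tau$, $p$, and the doubling constant $C_\mu$, which is exactly what the chaining estimate delivers.
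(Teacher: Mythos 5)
The paper cites~\cite[Theorem 8.3.2]{hkst15} with no proof, so there is no in-paper argument to compare against; your proposal is the standard proof from that reference. You correctly identify the two key ideas: that a (length- or chain-) distance function to a basepoint has the constant function $1$ as an upper gradient, and that the $(1,p)$-Poincar\'e inequality, combined with the doubling property, upgrades the averaged oscillation bound to a pointwise Lipschitz-type bound via a telescoping chain of balls, with completeness and Arzel\`a--Ascoli used to extract an actual rectifiable curve at the end. You also correctly flag the genuine technical gap: $u(x) = d_\ell(x_0,x)$ is not a priori locally finite (it could be $+\infty$ away from $x_0$ if $X$ had no rectifiable curves through $x_0$), so it cannot be fed directly into the Poincar\'e inequality as an integrable function; since the finiteness of $d_\ell$ is essentially what one is trying to prove, this is not a small point.

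The standard remedy, which you gesture at but do not carry out, is to replace $d_\ell$ by the $\epsilon$-chain distance $u_\epsilon(z) = \inf \sum_{i} d(z_{i-1},z_i)$ over finite chains $x_0 = z_0, z_1, \dots, z_n = z$ with $d(z_{i-1},z_i) < \epsilon$. This quantity is automatically finite (the Poincar\'e inequality forces connectedness of $X$, e.g.\ by applying it to indicator functions of clopen sets), is $1$-Lipschitz, and after truncation at level $N$ it is bounded, in $L^1_{\mathrm{loc}}$, and retains the constant $1$ as an upper gradient. Running your chaining estimate then gives $u_\epsilon(y) \le C' d(x_0,y)$ with $C'$ independent of $\epsilon$ and $N$; passing to the limit $\epsilon \to 0$ and invoking Arzel\`a--Ascoli (here completeness and local compactness from doubling are used) yields an honest curve from $x_0$ to $y$ of length at most $C'd(x_0,y)$. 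You defer this repair to the citation, which is acceptable given that the paper itself only cites, but as written your proposal is a conceptual skeleton with an acknowledged hole rather than a complete argument.
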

Recall that a metric space $Z=(Z,d_Z)$ is said to be \textit{$C$-quasiconvex} or simply \textit{quasiconvex}, $C\geq 1$, if each pair of points can be joined by a rectifiable curve in $Z$ such that
\begin{equation}\label{eq:def for quasiconvex}
	l(\gamma)\leq Cd_Z(x,y).
\end{equation}

Recall that a metric space $X$ is said to be  \textit{$\theta$-linearly locally connected} ($\theta$-LLC) if there exists $\theta\geq 1$ such that for $x\in X$ and $0<r\leq \diam(X)$,

($\theta$-LLC-1) every two points $a,b\in B(x,r)$ can be joined in $B(x,\theta r)$, and

($\theta$-LLC-2) every two points $a,b\in X\backslash \closure{B}(x,r)$ can be joined in $X\backslash \closure{B}(x,\theta^{-1}r)$.

Here, by joining $a$ and $b$ in $B$ we mean that there exists a continuum $\gamma\colon [0,1]\to B$ with $\gamma(0)=a,\gamma(1)=b$.

The following result was proved in~\cite[Corollary 5.8]{hk98}, where the quasiconvexity condition is provided by Lemma~\ref{lemma:poincare implies quasiconvex}.
\begin{proposition}\label{prop:Linear locally contra implies LLC}
	Let $(X,d,\mu)$ be a complete Ahlfors $Q$-regular metric measure space that supports a $(1,Q)$-Poincar\'e inequality. Then $X$ is $\theta$-linearly locally connected with $\theta$ depending only on the data associated with $X$.
\end{proposition}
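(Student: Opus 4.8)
The plan is to obtain the two halves of the LLC condition from material already in hand: ($\theta$-LLC-1) directly from quasiconvexity, and ($\theta$-LLC-2) from the $Q$-Loewner property, which itself follows from Ahlfors $Q$-regularity together with the $(1,Q)$-Poincar\'e inequality.

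First, by Lemma~\ref{lemma:poincare implies quasiconvex}, $X$ is $C$-quasiconvex with $C$ depending only on the data; since $X$ is complete and doubling (Ahlfors $Q$-regularity implies doubling), closed balls are compact, so $X$ is proper, and quasiconvexity makes $X$ locally path connected, so that components of open subsets of $X$ are open and path connected. Condition ($\theta$-LLC-1) is then immediate: given $x\in X$, $0<r\le\diam X$, and $a,b\in B(x,r)$, a curve $\gamma$ from $a$ to $b$ with $l(\gamma)\le C\,d(a,b)<2Cr$ has all of its points within distance $l(\gamma)<2Cr$ of $a$, hence inside $B(x,(2C+1)r)$; thus ($\theta$-LLC-1) holds with any $\theta\ge 2C+1$.

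For ($\theta$-LLC-2) the key input is the theorem of Heinonen and Koskela~\cite{hk98}: a complete, proper, Ahlfors $Q$-regular space admitting a $(1,Q)$-Poincar\'e inequality is $Q$-Loewner, i.e.\ there is a decreasing function $\psi\colon(0,\infty)\to(0,\infty)$, depending only on the data, with $\psi(t)\to\infty$ as $t\to 0^{+}$, such that $\modulus_Q(\Gamma(E,F))\ge\psi(\Delta(E,F))$ for all disjoint nondegenerate continua $E,F\subset X$, where $\Delta(E,F)=\dist(E,F)/\min\{\diam E,\diam F\}$. From this one shows that a metric ball cannot disconnect $X$ at a controlled relative scale. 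Concretely, suppose $a,b\in X\setminus\overline B(x,r)$ were to lie in distinct components $U\ni a$ and $V\ni b$ of $X\setminus\overline B(x,r/\theta)$. Since $U,V$ are open, path connected, and satisfy $\partial U,\partial V\subset S(x,r/\theta)$ while $a,b$ lie at distance at least $r$ from $x$, each of $\overline U,\overline V$ is a nondegenerate continuum reaching from the sphere $S(x,r/\theta)$ out to distance $\ge r$ from $x$. Running the Loewner lower bound on suitable sub-continua of $U$ and $V$ at the dyadic scales $2^{k}r/\theta$ between $r/\theta$ and $r$, against the $Q$-regular upper bounds for the $Q$-modulus of the curve families that the separation forces through the balls $\overline B(x,2^{k}r/\theta)$, yields a contradiction as soon as $\theta$ exceeds a constant determined solely by the data (here the hypothesis $Q>1$ enters, in particular guaranteeing that points do not disconnect $X$); this gives ($\theta$-LLC-2) with that value of $\theta$. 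All constants produced above depend only on the quasiconvexity constant, the regularity constant, and the Poincar\'e data. (This is, of course, precisely \cite[Corollary 5.8]{hk98}, whose quasiconvexity hypothesis is supplied by Lemma~\ref{lemma:poincare implies quasiconvex}.)

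The soft part is ($\theta$-LLC-1), which uses only quasiconvexity. The main obstacle is ($\theta$-LLC-2): one must produce a threshold $\theta$ depending only on the data of $X$ and not on the particular $x$ or $r$, and this rests both on the nontrivial implication that a $(1,Q)$-Poincar\'e inequality on a complete $Q$-regular space yields the $Q$-Loewner property (a telescoping argument over dyadic balls) and on a careful multiscale balancing of Loewner lower bounds against modulus upper bounds to rule out that small balls disconnect the space. Making that balance quantitative and scale-uniform is the heart of the matter.
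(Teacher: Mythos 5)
Your proposal matches the paper's own proof, which simply cites \cite[Corollary 5.8]{hk98} and notes that the quasiconvexity hypothesis of that corollary is furnished by Lemma~\ref{lemma:poincare implies quasiconvex}; you arrive at exactly the same reduction and acknowledge it explicitly at the end. The additional sketch you give (LLC-1 from quasiconvexity, LLC-2 from the Loewner estimate balanced against the annular modulus upper bound over dyadic scales) is a correct unpacking of the Heinonen--Koskela argument, but it is not a different route.
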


We next introduce an important class of metric spaces, where a large part of the theory of quasiconformal or/and quasiregular mappings can be extended as in the Euclidean spaces. The importance of such spaces was first realized in~\cite{hk98} in their characterization of Poincar\'e inequalities.

\begin{definition}\label{def:Loewner space}
	Let $(X,d,\mu)$ be a metric measure space. We say that $X$ has \textit{$Q$-Loewner property} if there is a function $\phi\colon (0,\infty)\to (0,\infty)$ such that
	\begin{equation*}
		\modulus_Q(\Gamma(E,F,X))\geq \phi(\zeta(E,F))
	\end{equation*}
	for every non-degenerate compact connected sets $E,F\subset X$, where
	\begin{equation*}
		\zeta(E,F)=\frac{\dist(E,F)}{\min\{\diam E,\diam F\}}.
	\end{equation*}
\end{definition}
By~\cite[Theorem 3.6]{hk98}, if $X$ is Ahlfors $Q$-regular, and $Q$-Loewner, then
\begin{equation}\label{eq: Loewner property for n regular space}
	\modulus_Q(\Gamma(E,F,X))\geq C\Big(\log \zeta(E,F)\Big)^{1-Q}
\end{equation}
when $\zeta(E,F)$ is large enough with $C$ depends only on the data of $X$. By~\cite[Corollary 5.13]{hk98}, a complete (or equivalently proper) Ahlfors $Q$-regular metric measure space that supports a $(1,Q)$-Poincar\'e inequality is $Q$-Loewner.

Following~\cite{hkst01}, we introduce the notion of metric spaces of locally bounded geometry.

\begin{definition}\label{def:space of bounded geometry}
	A metric measure space $(X,d,\mu)$ is said to be of \textit{locally $Q$-bounded geometry}, $Q\geq 1$, if $X$ is separable, pathwise connected, locally compact, and if there exist constants $C_0\geq 1$, $0<\lambda\leq 1$, and a decreasing function $\psi:(0,\infty)\to (0,\infty)$ such that each point $x\in X$ has a neighborhood $U$ (with compact closure in $X $) so that
	\begin{itemize}
		\item $\mu(B_R)\leq C_0R^Q$ whenever $B_R\subset U$ is a ball of radius $R>0$;
		\item $\Modd_Q(\Gamma(E,F,B_R))\geq \psi(t)$ whenever $B_R\subset U$ is a ball of radius $R>0$ and $E$ and $F$ are two disjoint, non-degenerate compact connected sets in $B_{\lambda R}$ with
		\begin{align*}
			\dist(E,F)\leq t\cdot\min\{\diam E,\diam F\}.
		\end{align*}
	\end{itemize}
\end{definition}

In other words, a pathwise connected, locally compact space is of locally $Q$-bounded geometry if and only if it is \textit{locally uniformly Ahlfors $Q$-regular} and \textit{locally uniformly $Q$-Loewner}. In terms of Poincar\'e inequality, a pathwise connected, locally compact space is of locally $Q$-bounded geometry if and only if it is locally uniformly Ahlfors $Q$-regular and supports a local uniform $(1,Q)$-Poincar\'e inequality. Here by saying locally uniformly Ahlfors $Q$-regular we mean that there exists a constant $C_0>0$ such that for each $x\in M$, there is a radius $r_x>0$ so that~\eqref{eq:Ahlfors regular measure} holds for all $0<r<r_x$ with the constant $C_0$, and by saying supporting a local uniform $(1,Q)$-Poincar\'e inequality, we mean that there exists a constant $C>0$ such that for each $x$, there exists a ball $B$ centered at $x$ (with radius depending on $x$) such that the Poincar\'e inequality~\eqref{eq:Poin ineq} with exponent $p=Q$ holds with the constant $C$.  

As a particular case, let us point out that every Riemannian $n$-manifold is of locally $n$-bounded geometry. More exotic examples can be found in~\cite[Section 6]{hk98}.

\newpage
\section{The basic pullback studies}\label{sec:basic pullback studies}
\subsection{Some topological facts about discrete open mappings}\label{subsec:some topological fact}
In this section, we establish preliminary facts about a generic discrete open
mapping $\psi\colon X\to Z$. For every subset $A\subset X$, and every point $z\in Z$, the \emph{multiplicity} $N(z,\psi,A)$ of $\psi$ at $z$ with respect to $A$ is defined to be the cardinality of the set $\psi^{-1}(z)\cap A$. The multiplicity of $A$ under $\psi$ is $N(\psi,A)=\sup_{z\in Z}N(z,\psi,A)$, and
the map $\psi$ has \textit{locally bounded multiplicity} if every $x\in X$ has a neighborhood $D\subset X$ such that $N(\psi,D)<\infty$, and \textit{bounded multiplicity} if $N(\psi,X)<\infty$.

A subset $D\subset X$ is \textit{normal} for $\psi$ if $D$ is open, relatively compact and $\psi(\partial D)=\partial \psi(D)$. We record, without
proof, the following elementary way to guarantee that the restriction of $\psi$ to a subset remains open with respect to the appropriate subspace topologies.

\begin{lemma}\label{lemma:discrete open wrt subspace topology}
	If $S\subset X$ and $\psi^{-1}(\psi(S))=S$, then the restriction $\psi|_S\colon S\to \psi(S)$ is discrete and open as well with respect to the subspace topologies of $S$ and $\psi(S)$.
\end{lemma}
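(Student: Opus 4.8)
The plan is to check the two conclusions --- discreteness and openness of $\psi|_S\colon S\to\psi(S)$ --- directly from the hypothesis $\psi^{-1}(\psi(S))=S$, using only the corresponding properties of $\psi$ and elementary point-set topology. The guiding observation is that the saturation condition $\psi^{-1}(\psi(S))=S$ forces $S$ to be a union of fibers of $\psi$, and it makes the relation between subsets of $S$ and their $\psi$-images behave well: for $A\subset S$ one has $\psi^{-1}(\psi(A))\cap S=\psi^{-1}(\psi(A))$ when $A$ is itself saturated, and more usefully, $\psi(S\setminus A)$ and $\psi(A)$ interact through the identity $\psi^{-1}(\psi(A))=A$ whenever $A$ is saturated. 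Crucially, $\psi^{-1}(\psi(A))\cap S = \psi^{-1}(\psi(A))$ always, because $\psi^{-1}(\psi(A))\subset\psi^{-1}(\psi(S))=S$.

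\emph{Discreteness.} For $w\in\psi(S)$, the fiber of $\psi|_S$ over $w$ is $(\psi|_S)^{-1}(w)=\psi^{-1}(w)\cap S$. Since $\psi$ is discrete, $\psi^{-1}(w)$ is a discrete subset of $X$, hence its subset $\psi^{-1}(w)\cap S$ is discrete in $X$, and therefore discrete in the subspace topology of $S$. So $\psi|_S$ is discrete.

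\emph{Openness.} Let $V\subset S$ be open in the subspace topology of $S$; I must show $\psi(V)$ is open in the subspace topology of $\psi(S)$, i.e. that $\psi(V)=W\cap\psi(S)$ for some open $W\subset Z$. Write $V=U\cap S$ with $U\subset X$ open. The natural candidate is $W=\psi(U)\setminus\psi(X\setminus S)$. First, $W$ is open in $Z$: $\psi(U)$ is open because $\psi$ is open, and $\psi(X\setminus S)$ is closed --- indeed $X\setminus S = \psi^{-1}(Z\setminus\psi(S))$ because $S$ is saturated, so $X\setminus S$ is saturated, and for a saturated set $X\setminus S = \psi^{-1}(\psi(X\setminus S))$, giving $\psi(X\setminus S) = Z\setminus\psi(S)$, which is closed since $\psi(S)$ is open (again by openness of $\psi$, as $S=U'\cap S$ for any open $U'\supset S$... more directly, $\psi(S)$ need not be open, so instead argue: $\psi(X\setminus S)=Z\setminus\psi(S)$ shows $\psi(X\setminus S)$ and $\psi(S)$ partition $Z$, so $W=\psi(U)\cap\psi(S)$). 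Then I claim $\psi(V)=W\cap\psi(S)=\psi(U)\cap\psi(S)$. The inclusion $\subset$ is clear. For $\supset$, take $z\in\psi(U)\cap\psi(S)$; then $z=\psi(u)$ for some $u\in U$ and $z\in\psi(S)$ means the fiber $\psi^{-1}(z)$ meets $S$, but $\psi^{-1}(z)\subset\psi^{-1}(\psi(S))=S$, so in fact $\psi^{-1}(z)\subset S$, hence $u\in S$, so $u\in U\cap S=V$ and $z\in\psi(V)$. Since $\psi(U)$ is open in $Z$ and $W\cap\psi(S)=\psi(U)\cap\psi(S)$ is the trace of an open set, $\psi(V)$ is open in $\psi(S)$. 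Thus $\psi|_S$ is open.

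\emph{Main obstacle.} There is no serious obstacle here; the lemma is genuinely elementary, which is why the authors state it without proof. The only point requiring a moment's care is the bookkeeping around saturation: one must consistently use that $\psi^{-1}(\psi(S))=S$ forces every fiber meeting $S$ to lie entirely inside $S$, so that the image of a relatively open set in $S$ is cut out from the ambient image $\psi(U)$ exactly by intersecting with $\psi(S)$. Once that is noted, both conclusions fall out of the definitions of discreteness and openness together with the fact that the subspace topology on an image is the trace topology.
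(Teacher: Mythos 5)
Your proof is correct in substance, and since the paper explicitly records this lemma ``without proof,'' there is no authorial argument to compare against. The discreteness half is immediate and right. For openness, the decisive claim is $\psi(V)=\psi(U)\cap\psi(S)$ for $V=U\cap S$, and your verification of the nontrivial inclusion---that $z\in\psi(S)$ forces the entire fiber $\psi^{-1}(z)$ into $S$ by saturation, so any $u\in U$ with $\psi(u)=z$ lies in $V$---is exactly the content of the lemma and is proved cleanly.

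One caveat about exposition rather than correctness: the detour through the candidate $W=\psi(U)\setminus\psi(X\setminus S)$ and the attempt to show $\psi(X\setminus S)=Z\setminus\psi(S)$ both implicitly assume $\psi$ is surjective, which is not part of the hypotheses. Without surjectivity one only gets $\psi(X\setminus S)=\psi(X)\setminus\psi(S)$, and $\psi(X\setminus S)$ need not be closed in $Z$. You appear to half-notice something is off (the parenthetical ``$\psi(S)$ need not be open, so instead argue\dots''), but the issue you flag is a different one, and the partition statement $\psi(X\setminus S)=Z\setminus\psi(S)$ is left standing. Fortunately none of this is load-bearing: the clean identity $\psi(V)=\psi(U)\cap\psi(S)$, which you prove directly, already realizes $\psi(V)$ as the trace on $\psi(S)$ of the open set $\psi(U)$, which is all that openness in the subspace topology requires. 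If you tighten the write-up by dropping the $W$-detour and stating the trace identity at the outset, the argument becomes both shorter and unambiguously correct without any surjectivity hypothesis.
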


Fix a normal subset $D\subset X$. For $1\leq n\leq N(\psi,D)$, let $D_n=\{x\in D:N(\psi(x),\psi,D)=n\}$. For each $z\in \psi(D)$, we define $M_z$ by
\begin{align*}
	M_z=\min\Big\{\inf_{x,x'\in \psi^{-1}(z)\cap D}\frac{d(x,x')}{6},\ d(\psi^{-1}(z),\ X\backslash D)\Big\}.
\end{align*}
For each $r>0$, we define $U(x,\psi,r)\footnote{We will also use $U(x,\psi,r)$ to denote the $x$-component of $\psi^{-1}(B(\psi(x),r))$ in later sections.}=\psi^{-1}(B(\psi(x),r))\cap B(x,M_{\psi(x)})$.
The following proposition allows us to choose radii $R(z)=R(z,\psi,D)$ such that the sets $U(x,\psi,r)$, $x\in \psi^{-1}(\{z\})$, behave reasonably well, provided $r<R(\psi(x))$. It is well-known to experts, but for completeness, we include the proofs here.

\begin{proposition}\label{prop:existence of normal nbhd}
	Let $D\subset X$ be normal. Then there is a function $R\colon \psi(D)\to (0,\infty)$ with the following properties:
	\begin{enumerate}
		\item For each $z\in \psi(D)$ and $r\leq R(z)$, $B(z,R(z))\subset \psi(D)$;
		
		\item For ever $z\in \psi(D)$ and $r\leq R(z)$, 
		\begin{align*}
			\psi^{-1}\big(B(z,r)\big)\cap D=\bigcup_{x\in \psi^{-1}(z)\cap D}U(x,\psi,r);
		\end{align*}
		
		\item For every $z\in \psi(D)$, $r\leq R(z)$, and $z'\in B(z,r)$
		\begin{align*}
			N(z',\psi,D)=\sum_{x\in \psi^{-1}(z)\cap D}N(z',\psi, U(x,\psi,r)).
		\end{align*}
		In particular, $N(z,\psi,U(x,\psi,r))=1$ for each $x\in \psi^{-1}(z)\cap D$;
		
		\item For every $z\in \psi(D)$, $r\leq R(z)$, and each $x\in \psi^{-1}(z)$, $\psi(U(x,\psi,r))=B(z,r)$;
		
		\item For every $z\in \psi(D)$, $r\leq R(z)$, and each $z'\in B(z,r)$, $N(z',\psi,D)\geq N(z,\psi,D)$;
		
		\item For each $z\in \psi(D)$, $r\leq R(z)$, and each $x\in \psi^{-1}(z)$, $\psi$ is injective on $D_n\cap U(x,\psi,r)$;
		
		\item For each $x,x'\in D_n$, each $r\leq R(\psi(x))$, and each $r'\leq R(\psi(x'))$, if $B(\psi(x),r)\subset B(\psi(x'),r')$, then either $\psi$ is injective on
		\begin{align*}
			D_n\cap \big(U(x,\psi,r)\cup U(x',\psi,r')\big)\quad \text{or}\quad U(x,\psi,r)\cap U(x',\psi,r')=\emptyset;
		\end{align*}
		
		\item For each $x,x'\in D$, $r\leq R(\psi(x))$, and each $r'\leq R(\psi(x'))$, if $B(\psi(x),r)\subset B(\psi(x'),r')$, then either
		\begin{align*}
			U(x,\psi,r)\subset U(x',\psi,r')\quad \text{or}\quad U(x,\psi,r)\cap U(x',\psi,r')=\emptyset.
		\end{align*}	
	\end{enumerate}
	Moreover, every other function $\tilde{R}\colon \psi(D)\to (0,\infty)$ such that $\tilde{R}(z)\leq R(z)$ for each $z\in \psi(D)$ satisfies all these properties as well.
\end{proposition}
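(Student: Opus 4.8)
The plan is to fix $z\in\psi(D)$, reduce all eight assertions to a single compactness statement about the fiber $\psi^{-1}(z)\cap D$, and then read them off in order. First I would record two facts that keep the fiber tame. Since $D$ is relatively compact and $\psi$ has locally bounded multiplicity, a finite subcover argument bounds $N(\psi,D)<\infty$, so every fiber $\psi^{-1}(z)\cap D=\{x_1,\dots,x_k\}$ is finite. Since $\psi$ is open, $\psi(D)$ is open, hence $\psi(D)\cap\partial\psi(D)=\emptyset$; combined with normality $\psi(\partial D)=\partial\psi(D)$ this forces $\psi^{-1}(z)\cap\partial D=\emptyset$, i.e. $\psi^{-1}(z)\cap\overline{D}=\{x_1,\dots,x_k\}\subset D$. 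By the definition of $M_z$, the balls $B(x_i,M_z)$ (and their closures, which lie in $\overline{D}$) are pairwise disjoint and contained in $D$, and $x_j\in B(x_i,M_z)$ precisely when $j=i$.

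\textbf{The compactness engine and properties (1)--(5).} The key estimate is: there is $\rho_1(z)>0$ such that $\psi^{-1}(\overline{B(z,\rho)})\cap\overline{D}\subset\bigcup_i B(x_i,M_z)$ for all $\rho\le\rho_1(z)$. This is proved by contradiction: a violating sequence $x_n\in\overline{D}$ with $d(\psi(x_n),z)\to 0$ and $x_n\notin\bigcup_i B(x_i,M_z)$ subconverges, by relative compactness of $D$, to some $x_0\in\overline{D}$ with $\psi(x_0)=z$; then $x_0=x_i$ for some $i$ by the previous paragraph, so $x_n\in B(x_i,M_z)$ eventually, a contradiction. Setting $R(z)\le\min\{\rho_1(z),\dist(z,\partial\psi(D))\}$ gives (1). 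Property (2) is immediate from the estimate, and since the $U(x_i,\psi,r)$ sit inside the disjoint balls $B(x_i,M_z)$, partitioning fibers gives the exact multiplicity count (3), including its ``in particular'' clause. For (4) I would note that the estimate (with open balls $B(x_i,M_z)$) forces $\partial U(x_i,\psi,r)\subset\psi^{-1}(\partial B(z,r))$, so $\psi(U(x_i,\psi,r))$ is open and relatively closed in the connected set $B(z,r)$, hence equals $B(z,r)$; the same boundary inclusion then makes each $U(x_i,\psi,r)$ normal for $\psi$. Property (5) is then just $N(z',\psi,D)=\sum_j N(z',\psi,U(x_j,\psi,r))\ge k$, every summand being $\ge 1$ by (4).

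\textbf{The connectedness auxiliary and properties (6)--(8); monotonicity.} It is convenient to extract one more fact: for $r\le R(z)$, each $U(x_i,\psi,r)$ is connected and is the $x_i$-component of $\psi^{-1}(B(z,r))$ --- a nonempty clopen piece missing $x_i$ would, by the argument for (4), surject onto $B(z,r)$ and so contain a preimage of $z$, contradicting the ``in particular'' clause of (3). With this, the rest is cheap. For (6), the exact count in (3) shows that whenever $z'\in B(z,r)$ has $N(z',\psi,D)=N(z,\psi,D)$, each $U(x_j,\psi,r)$ contains exactly one preimage of $z'$, so $\psi$ is injective on $D_{N(z,\psi,D)}\cap U(x_i,\psi,r)$. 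For (8), if $B(\psi(x),r)\subset B(\psi(x'),r')$ and $U(x,\psi,r)$ meets $U(x',\psi,r')$, then $U(x,\psi,r)$ is a connected subset of $\psi^{-1}(B(\psi(x'),r'))\cap D=\bigcup_y U(y,\psi,r')$ (a disjoint union by (2)), hence lies in a single piece, which must be $U(x',\psi,r')$; otherwise they are disjoint. Property (7) then follows: by (8) the non-disjoint case collapses $U(x,\psi,r)\cup U(x',\psi,r')$ to $U(x',\psi,r')$, and then (6) applies. Finally the monotonicity clause is automatic, since every conclusion concerns points in balls of radius $\le r$ and $U(\cdot,\psi,r')\subset U(\cdot,\psi,r)$ for $r'\le r$, so any $\tilde R\le R$ inherits all the properties.

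\textbf{Main obstacle.} I expect the hard part to be engineering one radius function $R$ serving all eight claims at once --- in particular, getting the compactness estimate in its strengthened form (closed target balls, open source balls $B(x_i,M_z)$, and $\overline{D}$ rather than $D$) so that each $U(x_i,\psi,r)$ is genuinely normal. That normality, together with the connectedness of $U(x_i,\psi,r)$, is the linchpin for (4), (7) and (8). The bookkeeping with $M_z$ --- that the closed balls $\overline{B(x_i,M_z)}$ are pairwise disjoint, sit inside $\overline{D}$, yet stay clear of $\partial D$ --- also needs care, but is routine once the estimate is in place.
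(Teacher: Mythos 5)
Your compactness engine for properties (1)--(3) is essentially the paper's argument: the proof of (2) there is exactly your contradiction/subconvergence argument, and (3) follows from (2) and the disjointness of the balls $B(x_i,M_z)$ just as you say. Property (5) via (3) and (4) is also identical. Where you diverge --- and where a genuine gap opens --- is in (4), and in the ``connectedness auxiliary'' you then lean on for (6)--(8).

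For (4) you argue that $\psi(U(x_i,\psi,r))$ is a nonempty, open, relatively closed subset of the connected set $B(z,r)$, hence equals $B(z,r)$. But $B(z,r)$ is not assumed connected here. The proposition sits in Section~\ref{sec:basic pullback studies}, where $\psi\colon X\to Z$ is a generic discrete open map between metric spaces with no connectivity hypothesis; the standing assumptions of ``locally compact, connected, locally connected'' only kick in later, in Section~\ref{sec:the pullback factorization}. In a general metric space the ball $B(z,r)$ can be disconnected, and then ``open $+$ relatively closed $+$ nonempty $\Rightarrow$ everything'' fails. The same objection kills the auxiliary claim that each $U(x_i,\psi,r)$ is connected (you derive that from the argument for (4), so it inherits the dependence on ball-connectedness), and with it your proofs of (7) and (8), which route through ``$U(x,\psi,r)$ is a connected subset of a disjoint union, hence lies in one piece.''

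The paper avoids this entirely. For (4) it does not try to show the image of each $U(x_i,\psi,R_1(z))$ fills the whole ball; instead it observes that, $\psi$ being open, the finite intersection $\bigcap_i \psi(U(x_i,\psi,R_1(z)))$ is an open set containing $z$, and simply shrinks the radius to some $R_2(z)\le R_1(z)$ with $B(z,R_2(z))$ inside that intersection. This extra shrinking is precisely what your plan tries to avoid, but without it you need connectivity you do not have. For (8) the paper does not touch connectedness at all: it argues by contradiction using the quantitative separation built into $M_z$ --- if $U(x,\psi,r)$ meets two different $U(x',\psi,r')$ and $U(x'',\psi,r')$, then $\diam U(x,\psi,r)\le 2M_z$ forces $M_z\ge 2M_{z'}$, while (4) plants preimages of $z$ inside both pieces, giving $6M_z\le 4M_{z'}+2M_z$, so $M_z\le M_{z'}$, a contradiction. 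That is a pure metric argument and is the right replacement for your connectivity step. So: the engine is sound and shared with the paper, but you need to swap out the connectivity-based proofs of (4) and (8) (and hence (6)--(7), which you build on top of them) for the paper's openness-shrinking and $M_z$-diameter arguments.
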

\begin{proof}
	The very final statement is trivial, so we need only construct the function
	R. Moreover, it is immediately clear that it suffices to prove the properties (1), (2),
	(3), (4), (5), and (6) only for $r = R(z)$, and property (7) only for $r = R(\psi(x))$ and
	$r' = R(\psi(x'))$. Property (8), on the other hand, we must prove for all $r\leq  R(\psi(x))$ and $r'\leq R(\psi(x'))$. 
	
	Since $\psi(D)$ is open, there is for each $z$ a radius $R_0(z) > 0$ such
	that (1) is satisfied for $R(z) = R_0(z)$. 
	
	We next claim that for each $z\in \psi(D)$, there is some radius $R_1(z)\leq R_0(z)$ such (2) is satisfied for $R(z) = R_1(z)$. Indeed, if not,
	then there is a sequence of points $\{x_i\}$ such that $\{\psi(x_i)\}$ converges to $z$ and such that
	\[d(x_i,\psi^{-1}(z)\cap D)\geq M_z.\] 
	Since $D$ is relatively compact, there is a subsequence $x_{i_j}$ converging to some $x'\in D$. Then $\psi(x') = z$, but 
	\[d(x',\psi^{-1}(z)\cap D)\geq M_z,\]
	so $x'\in \psi^{-1}({z})\cap \partial D$, contradicting the normality of $D$. 
	
	Property (3) also holds for $R(z) = R_1(z)$, as an immediate consequence of property (2). 
	
	Now, since $\psi$ is open, the intersection $\bigcap_{x\in \psi^{-1}(z)\cap D}\psi\big(U(x,\psi,R_1(z))\big)$ is open as well, and therefore contains the ball $B(z, R_2(z))$ for some $R_2(z)\leq R_1(z)$. It follows that property (4) is met for $R(z)=R_2(z)$.
	
	Property (5) follows immediately from (4) and (3), and (6) follows from (4) and (3) as well, by the pigeonhole principle.
	
	We claim that property (8) is also satisfied for $R(z) = R_2(z)$. Indeed, suppose by contradiction that there are two points $x,x'\in D$, $r\leq R_2(x)$, $r'\leq R_2(x')$ such that $B(\psi(x), r) \subset B(\psi(x'), r')$, that $U(x,\psi,r)\cap U (x',\psi,r')\neq \emptyset$, and that $U(x,\psi,r)\nsubseteq U(x',\psi,r')$. Let $z = \psi(x)$, $z'=\psi(x')$. By assumption, $U(x,\psi,r)$ intersects both $B(x',M_{z'})$ and $B(x'',M_{z'})$ for some $x''\in \psi^{-1}(z')\backslash \{x'\}$, and so
	\[2M_z \geq \diam (U (x,\psi,r))\geq d(U(x',\psi,r'), U(x'',\psi,r'))\geq 4M_{z'},\]
	so that $M_z\geq 2M_{z'}$. On the other hand, property (4) implies that there are points $x_1\in U(x',\psi,r')$ and $x_2\in U(x'',\psi,r')$ such that $\psi(x_1)=\psi(x_2)=z$. Thus
	\begin{align*}
		6M_z&\leq d(x_1,x_2)\leq \diam(U(x',\psi,r'))+\diam(U(x'',\psi,r''))+d(U(x',\psi,r'),U(x'',\psi,r''))\\
		&\leq \diam(U(x',\psi,r'))+\diam(U(x'',\psi,r''))+\diam(U(x,\psi,r))\leq 4M_{z'}+2M_z,
	\end{align*} 
	so that $M_z\leq M_{z'}$, a contradiction. Thus property (8) is satisfied for $R(z)=R_2(z)$.
	
	Finally, for each $z$, set $R(z)=R_2(z)/3$. Suppose $U(x,\psi,R(\psi(x)))\cap U(x',\psi,R(\psi(x')))\neq \emptyset$, for $x,x'\in D_n$. Let $z=\psi(x)$ and $z'=\psi(x')$. By property (4), it follows that $B(z,R(z))\cap B(z',R(z'))\neq \emptyset$. We may assume without loss of generality that $R(z)\leq R(z')$. Then $B(z,R(z))\subset B(z',R(z'))$. By property (8), we conclude that $$U(x,\psi,R(\psi(x)))\subset U(x',\psi,R_2(\psi(x'))).$$
	 Property (6) applied to the set $U(x,\psi,R_2(\psi(x)))$ gives immediately property (7). 
\end{proof}

By Proposition~\ref{prop:existence of normal nbhd} (5), it immediately follows the multiplicity $N(z,\psi,D)$ is lower semicontinuous as a function of $z$. Since for each $n\in \mathbb{N}$, $\psi^{-1}\big(\psi(D_n)\big)=D_n$, we see that for every open subset $V\subset D$, $\psi(V\cap D_n)=\psi(V)\cap \psi(D_n)$ so that $\psi|_{D_n}$ is a discrete open $n$-to-1 mapping onto its image. By Proposition~\ref{prop:existence of normal nbhd} (6), $\psi|_{D_n}$ is locally bijective, and thus is a local homeomorphism. 

It is helpful on occasion to decompose $D_n$ into sets $D_{n,1},\dots,D_{n,n}$ on which $\psi$ is bijective. The next lemma allows us to do this with no measure-theoretic concerns.

\begin{lemma}\label{lemma:decomposition Dn}
	Let $1\leq n\leq N(\psi,D)$. Then there are pairwise disjoint Borel subsets $D_{n,j}$, $j=1,\dots,n$, such that $D_n=\bigcup_{j=1}^nD_{n,j}$ and such that for each $j$, $\psi|_{D_{n,j}}$ is a bijection onto $\psi(D_{n})$.
\end{lemma}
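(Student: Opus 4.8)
The plan is to decompose $D_n$ into measurable "sheets" by an inductive/greedy selection argument using the local homeomorphism structure established just before the lemma. Recall that $\psi|_{D_n}\colon D_n\to \psi(D_n)$ is a discrete, open, $n$-to-$1$ map which is moreover a local homeomorphism (by Proposition~\ref{prop:existence of normal nbhd}(6)). The key point is that since $X$ is separable and $\psi|_{D_n}$ is a local homeomorphism, $D_n$ can be covered by countably many open sets $\{V_i\}_{i\in\mathbb{N}}$ on each of which $\psi$ is injective (a countable subcover of the cover by the sets $U(x,\psi,R(\psi(x)))\cap D_n$, using Proposition~\ref{prop:existence of normal nbhd}(6) and Lindel\"of). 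On each $V_i$ the map $\psi|_{V_i}$ is a homeomorphism onto the open set $\psi(V_i)\subset \psi(D_n)$; in particular it carries Borel sets to Borel sets and reflects them.

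First I would fix such a countable injectivity cover $\{V_i\}$ of $D_n$. Then I would build the sheets by transfinite-free induction on $i$: informally, process the $V_i$ in order, and at each stage assign to the already-constructed partial sheets $D_{n,1},\dots,D_{n,n}$ the points of $V_i$ not yet covered, distributing them so that injectivity of $\psi$ on each $D_{n,j}$ is preserved. Concretely, for a fiber $\psi^{-1}(z)\cap D_n = \{y_1,\dots,y_n\}$, I want exactly one $y_k$ in each $D_{n,j}$. The cleanest way to make this measurable: enumerate $\{V_i\}$ and for a point $y\in D_n$ with $\psi(y)=z$, let $\sigma(y)$ be the number of indices $i$ such that $\psi^{-1}(z)\cap V_i \cap D_n$ contains some point $y'$ with $y'$ lying in $V_{i'}$ for some $i' < i$ witnessing... — this becomes cumbersome. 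A cleaner formulation: define a "rank" function $\mathrm{rk}\colon D_n\to\{1,\dots,n\}$ by $\mathrm{rk}(y) = \#\{\, y'\in\psi^{-1}(\psi(y))\cap D_n : \iota(y') \le \iota(y)\,\}$, where $\iota(y) := \min\{i : y\in V_i\}$ is a Borel function of $y$ (it is $\sum_i i\cdot\mathbf{1}_{V_i\setminus(V_1\cup\dots\cup V_{i-1})}$). Points in the same fiber have distinct $\iota$-values if we first refine the $V_i$ so that each $V_i$ meets each fiber at most once — which is exactly injectivity of $\psi$ on $V_i$ — hence no; $\iota$ values can still collide across fibers but that's irrelevant, and within a single fiber $\{y_1,\dots,y_n\}$ the values $\iota(y_1),\dots,\iota(y_n)$ are distinct precisely because each $V_i$ contains at most one of them. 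Therefore $y\mapsto\mathrm{rk}(y)$ is a bijection of each fiber onto $\{1,\dots,n\}$, and I set $D_{n,j} = \mathrm{rk}^{-1}(j)$.

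It remains to check that $\mathrm{rk}$, hence each $D_{n,j}$, is Borel. Here I would write $\mathrm{rk}(y) = \sum_{i} \mathbf{1}_{A_i}(y)$, where $A_i = \{\, y\in D_n : \exists\, y'\in\psi^{-1}(\psi(y))\cap D_n \text{ with } \iota(y')\le \iota(y) \text{ and } \iota(y') = i\,\}$; more simply $\mathrm{rk}(y) = \#\{i \le \iota(y) : \psi^{-1}(\psi(y))\cap V_i\cap D_n \ne\emptyset\}$. For fixed $i$, the set $\{\, y\in D_n : \psi^{-1}(\psi(y))\cap V_i\cap D_n\ne\emptyset\,\} = \psi^{-1}\big(\psi(V_i\cap D_n)\big)\cap D_n$, and $\psi(V_i\cap D_n)$ is open (since $\psi|_{D_n}$ is open and $V_i\cap D_n$ is open in $D_n$), so its preimage is Borel; intersecting with $\{\iota \ge i\}$ (Borel) and summing over the countably many $i$ gives that $\mathrm{rk}$ is a Borel function. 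That each $D_{n,j}$ is a $\psi$-bijection onto $\psi(D_n)$ is then immediate: surjectivity because every fiber meets $D_{n,j}$, injectivity because every fiber meets it exactly once. Pairwise disjointness and $D_n = \bigcup_j D_{n,j}$ are clear from the definition via $\mathrm{rk}$.

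The main obstacle is purely bookkeeping: arranging the countable injectivity cover and the rank function so that measurability is transparent and so that within each fiber the $\iota$-values genuinely separate the preimages. I expect the subtlety to be in verifying that refining to an injectivity cover is compatible with separability (this is where one invokes that $\psi$ is a local homeomorphism on $D_n$ together with the Lindel\"of property of the separable metric space $X$), and in confirming the openness of $\psi(V_i\cap D_n)$ that drives the Borel computation — both of which are consequences of facts already recorded after Proposition~\ref{prop:existence of normal nbhd}. No deep input is needed beyond that and standard descriptive-set-theoretic manipulations.
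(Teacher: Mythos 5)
Your approach is genuinely different from the paper's. The paper builds the sheets $D_{n,j}$ by an explicit greedy/inductive construction: process the cover $\{V_i\}$ one set at a time and, for each $j$, add to $D_{n,j}$ those points of $V_i$ whose image under $\psi$ has not yet been ``claimed'' by $D_{n,j}$ at an earlier stage and that are not already in $D_{n,1},\dots,D_{n,j-1}$; Borel measurability is then built in because each stage is a Boolean combination of open sets with continuous preimages. You instead define a rank function
\[
	\mathrm{rk}(y)=\#\{y'\in\psi^{-1}(\psi(y))\cap D_n:\iota(y')\le\iota(y)\},\qquad \iota(y)=\min\{i:y\in V_i\},
\]
and set $D_{n,j}=\mathrm{rk}^{-1}(j)$. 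This is a more declarative repackaging of the same idea, and in fact it produces exactly the same partition as the paper's greedy construction (a point of $D_n$ lands in the paper's $D_{n,j}$ precisely when it is the $j$-th fiber point in $\iota$-order). Your version has the advantage that injectivity and surjectivity of $\psi|_{D_{n,j}}$ onto $\psi(D_n)$ become transparent one-liners, at the cost of having to argue measurability of $\mathrm{rk}$ explicitly.

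That said, there is a genuine bug in the measurability step. Your simplified formula
\[
	\mathrm{rk}(y)\stackrel{?}{=}\#\bigl\{i\le\iota(y):\psi^{-1}(\psi(y))\cap V_i\cap D_n\neq\emptyset\bigr\}
\]
is not equal to $\mathrm{rk}(y)$ in general: a fiber point $y'$ with $\iota(y')<i$ may nevertheless also lie in $V_i$ (the $V_i$ need not be disjoint), and then the index $i$ is counted by the right-hand side even though no fiber point has $\iota$-value $i$. Consequently the right-hand side can strictly exceed $\mathrm{rk}(y)$, and the Borel argument you give--- which shows precisely that the sets $\psi^{-1}(\psi(V_i\cap D_n))\cap D_n$ are Borel--- proves measurability of the wrong function. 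Your \emph{earlier} formula $\mathrm{rk}=\sum_i\mathbf{1}_{A_i}$ with $A_i=\{y:\exists\,y'\text{ in the fiber with }\iota(y')=i\le\iota(y)\}$ is correct; the error is in the ``more simply'' reformulation. The fix is to disjointify first: put $W_i:=\iota^{-1}(i)\cap D_n=(V_i\setminus\bigcup_{j<i}V_j)\cap D_n$. Then within a fiber, $y'\in W_i$ iff $\iota(y')=i$, so
\[
	\mathrm{rk}(y)=\#\bigl\{i\le\iota(y):\psi^{-1}(\psi(y))\cap W_i\neq\emptyset\bigr\},
\]
and $\{y\in D_n:\psi^{-1}(\psi(y))\cap W_i\neq\emptyset\}=\psi^{-1}(\psi(W_i))\cap D_n$ is Borel because $W_i$ is a Borel subset of $V_i\cap D_n$ and $\psi|_{V_i\cap D_n}$ is a homeomorphism onto the open set $\psi(V_i\cap D_n)$, hence maps Borel sets to Borel sets. (One loses the openness of $\psi(W_i)$ used in your argument, but the homeomorphism still gives Borel-ness.) With that correction the proof is complete.
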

\begin{proof}
	Let $\{V_i\}$ be a countable cover of $D_n$ by open subsets $V_i\subset D$, such that $\psi|_{D_n\cap V_i}$ is injective. Such a cover exists by the relative compactness, and hence separability of $D$, along with Proposition~\ref{prop:existence of normal nbhd} (6). For each $j=0,\dots,n$, construct $D_{n,j}$ as follows: Let $D_{n,0}=\emptyset$. Having defined $D_{n,j}$ for all $j<k\leq n$, let $D_{n,k}^0=\emptyset$, and for each integer $i\geq 1$, define $D_{n,j}^i$ by
	\begin{align*}
		D_{n,j}^i=D_{n,j}^{i-1}\cup\Big(V_i\backslash \Big(\psi^{-1}\big(\psi(D_{n,j}^{i-1})\big)\cup \bigcup_{j=0}^{k-1}D_{n,j}\Big)\Big).
	\end{align*}
	Finally, let $D_{n,k}=\bigcup_{i=1}^\infty D_{n,j}^i$. By construction, the sets $D_{n,j}$ are disjoint Borel sets on which $\psi$ is injective. It remains to show that 
	$\psi(D_{n,j})=\psi(D_n)$. To see this, fix $1\leq k\leq n$ and $z\in \psi(D_n)$. The union $\bigcup_{j=1}^{k-1}D_{n,j}$ contains at most $(k-1)$ elements of the fiber $\psi^{-1}(z)$, so there is some $x\in D_n\backslash \bigcup_{j=1}^{k-1}D_{n,j}$ such that $\psi(x)=z$. Since the sets $V_i$ are an open cover of $D_n$, there is some $V_i\ni x$. Then either $z\in \psi(D_{n,k}^{i-1})$ or $z\notin \psi(D_{n,k}^{i-1})$. In the latter case, $x\notin \psi^{-1}(\psi(D_{n,k}^{i-1}))$. By definition, we have $x\in D_{n,k}^i$, and so $z=\psi(x)\in \psi(D_{n,k})$.
\end{proof}

\subsection{The pullback measure}
Let $\rho\colon X\to [0,\infty)$ be a function. We define two functions $\sup(\rho,y,\psi,A)$ and $\sum(\rho,y,\psi,A)$ with respect to a branched covering $\psi\colon X\to Y$ by
\begin{align*}
	\sup(\rho,y,\psi,A)=\sup_{x\in \psi^{-1}(y)\cap A}\rho(x)
\end{align*}
and 
\begin{align*}
	\sum(\rho,y,\psi,A)=\sum_{x\in \psi^{-1}(y)\cap A}\rho(x).
\end{align*}

Suppose, as we will do from now on, that $Y$ is equipped with a (locally finite, Borel-regular) measure $\nu$. We define the ``pullback measure" $\psi^*\nu$ on $X$ via the formula
\begin{align*}
	\psi^*\nu(A)=\int_YN(y,f,A)d\nu(y).
\end{align*}
By the subadditivity of the integral, $\psi^*\nu$ is an outer measure. Note that by definition, for every subset $A\subset X$, $\psi^*\nu(A)=0$ if and only if $\nu(\psi(A))=0$.

Since the restrictions $\psi|_{D_n}$ are local homeomorphisms, it follows that $\psi$ maps Borel sets to Borel sets, and that $\psi^*\nu$ is a locally finite Borel regular outer measure on $X$. Let $A\subset X$. Since $\psi$ preserves Borel sets, and $\nu(\psi(A))=0$ if and only if $\psi^*\nu(A)=0$, it follows from the Borel regularity of each measure that $A$ is $\psi^*\nu$-measurable if and only if $\psi(A)$ is $\nu$-measurable. Thus, if $\rho\colon X\to \R$ is a Borel (resp. $\psi^*\nu$-measurable) function on $X$ and $A\subset X$ a Borel set, then $\sup(\rho,\cdot,\psi,A)$ and $\sum(\rho,\cdot,\psi,A)$ are Borel (resp. $\nu$-measurable) functions on $Y$. Moreover, by standard approximation arguments, we have
\begin{align}\label{eq:change of variable general}
	\int_X \rho(x)d\psi^*\nu(x)=\int_{Y}\sum(\rho,y,\psi,X)d\nu(y),
\end{align}
for every Borel function $\rho\colon X\to \R$.

We define the (volume) Jacobian of $\psi\colon X\to Y$ by
\begin{align*}
	J_\psi:=\frac{d\psi^*\nu}{d\mu},
\end{align*}
and the inverse (volume) Jacobian of $\psi$ by
\begin{align*}
	J_\psi^{-1}:=\frac{d\mu}{d\psi^*\nu}.
\end{align*}
We say that $\psi$ satisfies Condition $N$ if $\nu(\psi(A))=0$ whenever $\mu(A)=0$, and Condition $N^{-1}$ if $\mu(A)=0$ whenever $\nu(\psi(A))=0$. Since, by the preceding discussion, $\psi^*\nu(A)=0$ if and only if $\nu(\psi(A))=0$, Condition $N$ is equivalentto the condition that $\psi^*\nu\ll \mu$, and Condition $N^{-1}$ is equivalent to the condition that $\nu\ll \psi^*\nu$. From this, it immediately follows that
\begin{align}\label{eq:area inequality general 1}
	\int_X\rho(x) J_\psi(x) d\mu(x)\leq \int_X \rho(x)d\psi^*\nu(x)=\int_Y\sum(\rho,y,\psi,X)d\nu(y),
\end{align}
for every Borel function $\rho\colon X\to \R$, and the two sides are equal for every Borel function $\rho$ if and only if $\psi$ satisfies Condition $N$. Similarly,
\begin{align}\label{eq:area inequality general 2}
	\int_Y\sum(\rho J_\psi^{-1},y,\psi,X)d\nu(y)= \int_X\rho(x)J_\psi^{-1}(x)d\psi^*\nu(x)\leq \int_X\rho(x)d\nu(x),
\end{align}
with equality for all $\rho$ if and only if $\psi$ satisfies Condition $N^{-1}$.

\subsection{Integration theory in pullback geometry}
As already mentioned in the introduction, we will later use certain pullback normal neighborhoods, instead of balls, to run the covering arguments from~\cite{bkr07} to relate metrically quasiregularity and analytic quasiregularity. From the technical point of view, it is important that our special family of normal neighborhoods will have many nice behaviors as the family of balls. For this reason, we develop a variant of the Lebesgue-Radon-Nikodym theory in this section. 

\begin{lemma}\label{lemma:covering lemma}
	Let $\mathcal{U}$ be a family of sets of the form $U=U(x,\psi,r)$, with $x\in D$ and $5r_x<R(\psi(x))$, and suppose $\nu$ is locally doubling. Then there is a countable, pairwise disjoint subfamily $\mathcal{U'}\subset \mathcal{U}$ such that
	\begin{align*}
		\bigcup_{U\in \mathcal{U}}U\subset \bigcup_{U'\in \mathcal{U}'}5U'.
	\end{align*}
\end{lemma}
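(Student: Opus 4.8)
The plan is to mimic the classical $5r$-covering lemma (Vitali-type covering), but working with the pullback normal neighborhoods $U(x,\psi,r)$ in place of metric balls. The key point is that the family of these neighborhoods, together with the dilated versions $5U'$, behaves enough like a family of balls under the locally doubling hypothesis on $\nu$ that the standard greedy selection argument goes through. First I would introduce the notation $5U(x,\psi,r):=U(x,\psi,5r)$ for the ``dilate'' of a normal neighborhood; note that this is well-defined and still a normal neighborhood precisely because we assumed $5r_x<R(\psi(x))$, so property (2) and (4) of Proposition~\ref{prop:existence of normal nbhd} apply to $U(x,\psi,5r)$ as well.

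Next, the crucial geometric lemma I would establish is a \emph{containment-to-dilate} statement: if $U=U(x,\psi,r)$ and $U''=U(x'',\psi,r'')$ both belong to $\mathcal{U}$, if $U\cap U''\neq\emptyset$, and if $r''\leq r$ (roughly, $U''$ is ``no bigger'' than $U$ in an appropriate sense), then $U''\subset 5U$. To prove this I would argue on the target side: $\psi(U)=B(\psi(x),r)$ and $\psi(U'')=B(\psi(x''),r'')$ by property (4), and since $U\cap U''\neq\emptyset$ we get $d(\psi(x),\psi(x''))<r+r''\leq 2r$, hence $B(\psi(x''),r'')\subset B(\psi(x),r+r''+r'')\subset B(\psi(x),5r)$. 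Then I would lift this back using property (8): since $B(\psi(x''),r'')\subset B(\psi(x),5r)$ and $U\cap U''\neq\emptyset$ forces (via the structure of the $U$'s around a common point, using property (2)) that $U''$ and the relevant component $U(x,\psi,5r)$ intersect, property (8) gives $U''\subset U(x,\psi,5r)=5U$. One has to be slightly careful here that the intersecting point of $U$ and $U''$ lies in the same fiber-component picture; this is handled by property (2), which says $\psi^{-1}(B(\psi(x),5r))\cap D$ is exactly the disjoint union of the $U(x',\psi,5r)$ over $x'\in\psi^{-1}(\psi(x))\cap D$, so whichever of these contains a point of $U''$ must contain all of $U''$ by property (8).

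With this geometric lemma in hand, the selection is the usual greedy/maximal argument. I would set $R_0=\sup\{r:U(x,\psi,r)\in\mathcal{U}\}$; since $5r<R(\psi(x))$ and one typically works locally, $R_0<\infty$ (or one first reduces to a subfamily with bounded radii, splitting into countably many generations $\mathcal{U}_j=\{U(x,\psi,r)\in\mathcal{U}: 2^{-j-1}R_0<r\leq 2^{-j}R_0\}$). Within each generation choose a maximal pairwise disjoint subcollection using Zorn's lemma (or transfinite recursion / the separability of $D$ to make it countable), then pass through the generations in order $j=0,1,2,\dots$, at each stage adding a maximal disjoint subfamily of those $U\in\mathcal{U}_j$ disjoint from everything already chosen. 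The resulting $\mathcal{U}'$ is countable (here the separability of $D$, guaranteed by relative compactness as in Lemma~\ref{lemma:decomposition Dn}, ensures any pairwise disjoint family of open sets is countable) and pairwise disjoint. For the covering property: given any $U=U(x,\psi,r)\in\mathcal{U}$, it lies in some generation $\mathcal{U}_j$; by maximality of the chosen subfamily at stage $j$, either $U$ was chosen, or $U$ meets some already-chosen $U'=U(x',\psi,r')$ with $r'>2^{-j-1}R_0\geq r/2$; in the latter case $r\leq 2r'$, so applying the geometric lemma (with the roles set so that the chosen one is ``at least comparably large,'' which is why I'd actually want $5U'$ to absorb $U$ — so the lemma should be stated as: $U\cap U'\neq\emptyset$ and $r\leq 2r'$ imply $U\subset 5U'$, with the dilation factor $5$ covering the factor-of-2 slack) we get $U\subset 5U'$.

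The main obstacle I anticipate is purely the geometric containment lemma — specifically verifying that intersecting normal neighborhoods of comparable-or-smaller radius get absorbed into the $5$-dilate of the larger one. Unlike balls, the $U(x,\psi,r)$ are components of preimages and their intrinsic diameters in $X$ are not directly controlled by $r$; all control passes through the target via property (4) and then back via the nesting dichotomy of property (8). The delicate step is arranging that the hypothesis $B(\psi(x),r)\subset B(\psi(x'),5r')$ of property (8) is actually met (the factor of $5$ in the statement is exactly what gives room for the $r\leq 2r'$ slack plus the $r+r''$ displacement of centers), and that the ``intersects'' hypothesis transfers correctly to the specific component $U(x',\psi,5r')$. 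The locally doubling hypothesis on $\nu$, somewhat surprisingly, is not needed for \emph{this} statement of the lemma (no measure estimate appears in the conclusion) and seems to be carried along because it is needed in the applications that follow; I would either use it to bound the number of generations / ensure local finiteness, or simply note it is available and not essential for the covering conclusion itself.
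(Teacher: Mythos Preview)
Your proposal is correct and follows essentially the same Vitali-type greedy selection as the paper, with the key geometric input being exactly property~(8) of Proposition~\ref{prop:existence of normal nbhd} to pass back and forth between target balls $B(\psi(x),r)$ and pullback neighborhoods $U(x,\psi,r)$. The only execution difference is that you run the generational (dyadic-layer) variant of the argument, whereas the paper runs the sequential variant, at each step choosing a remaining $U$ with near-maximal radius and then using the local doubling of $\nu$ to force the chosen radii to tend to zero and obtain the covering conclusion; your generational version sidesteps that step entirely, and your suspicion that doubling is inessential to the statement is confirmed by the paper itself in Remark~\ref{rmk:on covering I}.
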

\begin{proof}
	If $\mathcal{U}$ is empty the theorem is trivial. Otherwise, we construct $\mathcal{U}'=\{U_i\}$, where each $U_i=U(x_i,\psi,r_i)\in \mathcal{U}$, in the following manner. For ease of notation, we index the family $\mathcal{U}$ by some index set $\mathcal{A}$, so that $\mathcal{U}=\bigcup_{\alpha\in \mathcal{A}}U_\alpha$, where $U_\alpha=U(x_\alpha,\psi,r_\alpha)$. We construct $\mathcal{U}'$ inductively. We first choose $U_0\in \mathcal{U}$ such that
	\begin{align*}
		r_0\geq \frac{1}{2}\sup_{\alpha\in \mathcal{A}}r_\alpha.
	\end{align*}
	Having chosen $U_k$ for all $k<n$, let 
	\begin{align*}
		\mathcal{A}_n=\big\{\alpha\in \mathcal{A}:U_\alpha\nsubseteq \bigcup_{k=1}^{n-1}5U_k\big\}.
	\end{align*}
	If $\mathcal{A}_n$ is nonempty, choose $U_n$ such that 
	\begin{align*}
		r_n\geq \frac{1}{2}\sup_{\alpha\in \mathcal{A}_n}r_\alpha. 
	\end{align*}
	We proceed inductively, choosing $U_n$ for each $n\in \mathbb{N}$, unless $\mathcal{A}_n$ is empty for some $n$, in which case the sequence terminates at $n-1$.
	
	For each $i$, let $B_i=B(\psi(x_i),r_i)=\psi(U_i)$. We first note that by construction, for $m<n$, $U_n\nsubseteq 5U_m$, and $r_n\leq 2r_m$, so that $\diam(B_n)\leq 4r_m$. Now, $B_n\nsubseteq 5B_m$, and so $B_n\cap B_m=\emptyset$, from which it follows that $U_n\cap U_m=\emptyset$. It remains to show the inclusion.
	
	If the sequence terminates, this is immediate. Otherwise, we need only show that $\bigcap_{n=1}^\infty \mathcal{A}_n=\emptyset$. Suppose by contradiction that $\alpha\in \bigcap_{n=1}^\infty \mathcal{A}_n$. By our construction, $0<r_\alpha\leq 2r_n$ for each $n\in \mathbb{N}$. By the doubling property of $\nu$ and the compactness of $D$, there is a constant $C>0$ such that $\psi^*\nu(U_n)\geq \nu(B_n)\geq C$ for each $n$. Since the sets $U_n$ are disjoint, this implies that 
	\begin{align*}
		\psi^*\nu(D)\geq \sum_{n=1}^\infty \psi^*\nu(U_n)=\infty, 
	\end{align*}
	contradicting the local finiteness of $\psi^*\nu$.
\end{proof}
\begin{remark}\label{rmk:on covering I}
	The assumption that $\nu$ is locally doubling is not necessary; we have assumed  it since it is harmless and makes the proof somewhat more straightforward. In fact, if $D$ has finite topological dimension $N$, then by the Whitney embedding theorem, $D$ is homeomorphic to a subset $D'\subset \R^{2N+1}$. Thus we may assume that $D$ is a subset in $\R^{2N+1}$. By~\cite{h01}, $D$ may be equipped with a doubling measure. Moreover, even in the case when the topological dimension of $D$ is infinite, the theorem can be proved with an argument along the lines of \cite[Proof of Theorem 1.2]{h01}, with the countability of $\mathcal{U}'$ following from the compactness of $D$. 
\end{remark}

Inspired by the covering lemmas from~\cite{bkr07}, we introduce the following concept. 
\begin{definition}[Admissible pointed neighborhoods]\label{def:admissible pointed nbhd}
	A sequence $\mathcal{A}=\{A_i:x_i\in A_i\}_{i\in I}$ of pointed neighborhoods is \textit{admissible} if for each $i\neq j$ form $I$, the following two conditions are satisfied:
	\begin{enumerate}
		\item Either $x_i\notin A_j$ or $x_j\notin A_i$.
		\item $A_i\nsubseteq A_j\nsubseteq A_i$.
	\end{enumerate}
\end{definition}

We have the following simple fact for admissible pointed sets, which slightly generalizes~\cite[Lemma 2.3]{bkr07}.
\begin{lemma}\label{lemma:bkr covering 1}
	Let $\mathcal{A}=\{A_i:x_i\in A_i\}_{i\in I}$ be an admissible sequence of pointed sets such that for each $k\in I$, 
	\begin{align*}
		B(x_k,r_k)\subset A_k\subset B(x_k,Hr_k).
	\end{align*}
	Then for each $i\neq j$ in $I$, $B(x_i,r_i/5H)\cap B(x_j,r_j/5H)=\emptyset$.
\end{lemma}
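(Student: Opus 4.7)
The argument is purely geometric: combine the two admissibility conditions with the sandwich $B(x_k,r_k)\subset A_k\subset B(x_k,Hr_k)$ to extract a quantitative lower bound on $d(x_i,x_j)$ for every pair $i\neq j$, and then apply the triangle inequality. Without loss of generality I would assume $r_i\leq r_j$; the goal reduces to showing $d(x_i,x_j)\geq (r_i+r_j)/(5H)$, which is clearly sufficient for the disjointness of the two open balls of radii $r_i/(5H)$ and $r_j/(5H)$.

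By condition~(1) of admissibility, either $x_i\notin A_j$ or $x_j\notin A_i$. In the first case, the inclusion $B(x_j,r_j)\subset A_j$ yields the very strong bound $d(x_i,x_j)\geq r_j$, which already dominates $(r_i+r_j)/(5H)\leq 2r_j/(5H)\leq 2r_j/5$ since $H\geq 1$, so the disjointness is immediate.

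The delicate case is $x_j\notin A_i$; this gives only $d(x_i,x_j)\geq r_i$, which by itself is useless when $r_j$ is much larger than $r_i$. Here I would bring in condition~(2). Since $A_i\nsubseteq A_j$, there is a witness $p\in A_i\setminus A_j$, which satisfies $d(p,x_i)<Hr_i$ (from $A_i\subset B(x_i,Hr_i)$) and $d(p,x_j)\geq r_j$ (from $B(x_j,r_j)\subset A_j$). A single triangle inequality then upgrades the lower bound to $d(x_i,x_j)>r_j-Hr_i$, so in this second case $d(x_i,x_j)\geq \max(r_i,\,r_j-Hr_i)$.

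Finally I would split on whether $r_j\leq (H+1)r_i$ or $r_j>(H+1)r_i$. In the former regime the bound $r_i$ dominates, and $(r_i+r_j)/(5H)\leq r_i$ reduces to $r_j\leq (5H-1)r_i$, which follows from $(H+1)\leq (5H-1)$ for $H\geq 1$. In the latter regime $r_j-Hr_i>r_i$, and the required $(r_i+r_j)/(5H)\leq r_j-Hr_i$ is equivalent to $(5H-1)r_j\geq (5H^2+1)r_i$; this holds because $r_j>(H+1)r_i$ and $(H+1)(5H-1)=5H^2+4H-1\geq 5H^2+1$ whenever $H\geq 1$. The only real obstacle is organizing this two-scale case split cleanly; condition~(2) is precisely what rescues the regime where the two radii $r_i$ and $r_j$ are very different.
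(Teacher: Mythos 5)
Your argument is correct and rests on the same ingredients as the paper's proof --- the two admissibility conditions, the sandwich $B(x_k,r_k)\subset A_k\subset B(x_k,Hr_k)$, and triangle inequalities --- but the bookkeeping is organized differently. The paper puts the ``without loss of generality'' on condition (1) (say $x_i\notin A_j$, so $d(x_i,x_j)\geq r_j$), argues by contradiction, deduces $r_j\leq r_i/(5H-1)$, and then proves the set containment $A_j\subset B(x_j,Hr_j)\subset B(x_i,r_i)\subset A_i$, violating condition (2). You instead normalize by $r_i\leq r_j$, case-split on which alternative of condition (1) holds, and in the delicate case extract a witness $p\in A_i\setminus A_j$ from condition (2) to upgrade the bound to $d(x_i,x_j)>r_j-Hr_i$, finishing with a two-regime split at $r_j=(H+1)r_i$. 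The two uses of condition (2) are contrapositives of each other (the paper contradicts $A_j\not\subset A_i$, you invoke $A_i\not\subset A_j$ directly), and the arithmetic in $H$ is essentially identical. The paper's version avoids the extra case split and is a bit tighter; yours is direct rather than by contradiction and makes visible exactly when condition (2) is needed, namely only when $r_j$ dwarfs $r_i$. Either way the proof is sound.
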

\begin{proof}
	Since $\mathcal{A}$ is admissible, we may assume with no loss of generality that $x_i\not\in A_j$, which in particular implies that $x_i\notin B(x_j,r_j)$, and hence $d(x_i,x_j)\geq r_j$. Seeking for a contradiction, let us assume that $B(x_i,r_i/5H)\cap B(x_j,r_j/5H)\neq\emptyset$. Then $d(x_i,x_j)< \frac{r_i+r_j}{5H}$, from which it follows that
    \begin{equation*}
    	r_j\leq d(x_i,x_j)< \frac{r_i+r_j}{5H}
    \end{equation*}
    and so $r_j\leq \frac{1}{5H-1}r_i$. 
    
    On the other hand, for each $y\in B(x_j,Hr_j)$, we have
    \begin{align*}
    	d(y,x_i)&\leq d(y,x_j)+d(x_j,x_i)< Hr_j+\frac{r_i+r_j}{5H}\\
    	&\leq \big(
    	\frac{1}{5H}+\frac{5H^2+1}{5H(5H-1)}\big)r_i<r_i,
    \end{align*}
    since $H\geq 1$. This implies that
    \begin{align*}
    	A_j\subset B(x_j,Hr_j)\subset B(x_i,r_i)\subset A_i,
    \end{align*}
    contradicting with our assumption that $\mathcal{A}$ is admissible.
\end{proof}

Unlike the situation in~\cite{bkr07}, we will not always begin our arguments with a family of balls. Thus we require a slight variation on~\cite{bkr07}.
\begin{lemma}\label{lemma:bkr variant 2}
	Let $X$ be a metric space, $S\subset X$ relatively compact, and for each $x\in S$, let $A_x\subset X$ and $r_x>0$ satisfy
	\begin{align}\label{eq:bkr variant 2}
		B(x,r_x)\subset A_x\subset B(x,Hr_x).
	\end{align}
	Then there is a sequence of points $x_i\in S$ such that the corresponding sequence $A_i=A_{x_i}$ is admissible, and 
	\begin{align}\label{eq:bkr variant 2 2}
		S\subset \bigcup_{i=1}^\infty A_i.
	\end{align}
\end{lemma}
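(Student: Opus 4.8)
The statement is a greedy-selection lemma in the spirit of the basic 5-covering theorem, but adapted so that the chosen sets $A_i$ cover $S$ (not merely after dilation) at the cost of the conclusion being only \emph{admissibility} of the selected family rather than pairwise disjointness. The plan is to run a transfinite (Zorn-type) greedy argument on the collection $\{A_x : x\in S\}$, ordered so as to prefer sets with large radius. First I would, for notational convenience, fix for each $x\in S$ the data $A_x$ and $r_x$ with $B(x,r_x)\subset A_x\subset B(x,Hr_x)$ as in~\eqref{eq:bkr variant 2}, and let $R=\sup_{x\in S} r_x$; since $S$ is relatively compact, $R<\infty$. I would then select points in ``generations'': having chosen $x_j$ for $j$ in some index set already, look at the points $x\in S$ whose associated set $A_x$ is not yet redundant — here redundancy should mean $A_x\subseteq A_{x_j}$ for some previously chosen $j$, OR $x\in A_{x_j}$ and $x_j\in A_x$ simultaneously (this is exactly the failure of admissibility against $x_j$). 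Among the non-redundant points, pick $x_i$ with $r_{x_i}$ at least half the supremal remaining radius, exactly as in the proof of Lemma~\ref{lemma:covering lemma}.

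The two things to verify are: (i) the resulting family $\{A_i\}$ is admissible, and (ii) it covers $S$. Admissibility is built into the selection rule: when $x_i$ is chosen after $x_j$, it was non-redundant against $x_j$, so neither $A_i\subseteq A_j$ nor ($x_i\in A_j$ and $x_j\in A_i$) holds; the first gives condition (2) of Definition~\ref{def:admissible pointed nbhd} in one direction, and to get the other direction ($A_j\nsubseteq A_i$) I use that $x_j\in B(x_j,r_j)\subset A_j$ together with the radius ordering $r_j\geq \tfrac12\cdot(\text{sup of remaining radii})\geq \tfrac12 r_i$, which via the containments $A_j\supset B(x_j,r_j)$ and $A_i\subset B(x_i,Hr_i)$ forces $A_j\not\subset A_i$ once one checks the diameter bound — essentially the same computation as in Lemma~\ref{lemma:bkr covering 1}. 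Condition (1) of admissibility is precisely the negation of the second redundancy clause. So admissibility is essentially bookkeeping.

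The covering statement~\eqref{eq:bkr variant 2 2} is the substantive point. Suppose some $x\in S$ lies in no $A_i$. Then $x$ was never chosen; but $x$ could only have been passed over because at every stage it was redundant, i.e. for every stage there is an already-chosen $x_j$ with $x\in A_{x_j}$ and $x_j\in A_x$ — in particular $x\in A_{x_j}$, contradicting $x\notin\bigcup_i A_i$ — OR because the process had already terminated (the set of non-redundant points became empty, which again means every point, including $x$, is redundant in the above sense, giving the same contradiction). The one gap this leaves is the \emph{transfinite} case: if infinitely many $x_i$ are chosen, one must argue the process exhausts $S$, and the radius-halving rule is what prevents an infinite descent from ``skipping'' a point with comparatively large radius. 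Concretely, if $x$ is never chosen and never redundant, then at every finite stage $n$ one has $r_{x_n}\geq \tfrac12 r_x$, so infinitely many chosen balls $B(x_n, r_x/ (2\cdot 5H))$ are pairwise disjoint (by Lemma~\ref{lemma:bkr covering 1} applied to the admissible chosen family) and all contained in a bounded neighborhood of $\overline S$, which is impossible. I expect this termination/exhaustion argument — reconciling the greedy radius rule with relative compactness to force the covering — to be the only real obstacle; the rest is a routine transcription of the standard Vitali-type bookkeeping, and I would keep it short by citing Lemma~\ref{lemma:bkr covering 1} for the disjointness of the shrunken balls.
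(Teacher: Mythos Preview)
Your selection rule builds in only \emph{one} direction of condition~(2): by declaring $x$ redundant when $A_x\subseteq A_{x_j}$, you guarantee $A_i\nsubseteq A_j$ for $j<i$, but nothing prevents $A_j\subseteq A_i$. Your proposed fix --- that $r_j\geq \tfrac12 r_i$ together with $A_j\supset B(x_j,r_j)$ and $A_i\subset B(x_i,Hr_i)$ forces $A_j\not\subset A_i$ by a ``diameter bound'' --- does not work. A concrete counterexample on $\R$ with $H=5$: take $x_j=3$, $r_j=1.1$, $A_j=(1.9,4.1)$ and $x_i=0$, $r_i=1$, $A_i=[-1,5]$. Your greedy rule picks $x_j$ first (larger radius), then $x_i$ is not redundant (since $A_i\nsubseteq A_j$ and $0\notin A_j$), so you select it --- yet $A_j\subset A_i$, violating condition~(2). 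The radius inequality $r_j\geq\tfrac12 r_i$ holds (even $r_j>r_i$), so no diameter comparison can rescue this. Since your covering argument then invokes Lemma~\ref{lemma:bkr covering 1}, which requires full admissibility, the gap propagates.

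The paper avoids this by separating the two conditions. It first runs a simpler greedy selection --- at each step pick $x_k\in S_k:=S\setminus\bigcup_{l<k}A_l$ with near-maximal radius --- which immediately yields condition~(1) (for $i<j$ one has $x_j\notin A_i$ by construction) and the covering $S\subset\bigcup A_i$ (relative compactness forces the supremal remaining radius $M_k\to 0$). Only \emph{afterwards} does it enforce condition~(2), by discarding every $A_i$ contained in some other $A_j$. The point is that since $\diam A_k\leq 2Hr_k\leq HM_k\to 0$, every nested chain $A_{k_1}\subset A_{k_2}\subset\cdots$ is finite, so its maximal element survives the pruning and the cover persists. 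Your one-pass scheme could be repaired by the same post-processing step, but then the argument is essentially the paper's.
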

\begin{proof}
	We first construct a sequence $x_i$ satisfying condition (1) of Definition~\ref{def:admissible pointed nbhd} and the inclusion~\eqref{eq:bkr variant 2 2}. We will then pass to a subsequence to complete the proof. Upon choosing $x_i$, we let $r_i=r_{x_i}$ and $A_i=A_{x_i}$.
	
	If $M_0=\sup_{x\in S}r_x=\infty$, then we are finished by the relative compactness of $S$. Otherwise, we proceed inductively. Let $S_0=S$, and choose $x_0\in S_0$ such that $r_0>M_0/2$. Once $x_l$ (and hence also $A_l$) has been chosen for all $l<k$, we define $S_k=S\backslash \bigcup_{l<k}A_l$, and $M_k=\sup_{x\in S_k}r_x$, and then choose $x_k\in S_k$ such that
	$r_k>M_k/2$. We continue this process for all natural numbers $k$, unless for some $n$, $S_{n+1}=\emptyset$, in which case the sequence terminates with $A_n$. We observe that for $i<j$, $A_i$ does not contain $x_j$, by construction. Thus $\{A_i\}$ satisfies condition (1) of Definition~\ref{def:admissible pointed nbhd}.
	
	If $\{A_i\}$ is finite, then the inclusion~\eqref{eq:bkr variant 2 2}  follows immediately from the definition of the sets $S_i$. Assume then that $\{A_i\}$ is infinite. We first claim that $M=\lim_{k\to \infty}M_k=0$. Indeed, if $M>0$, then by the left-hand inequality in~\eqref{eq:bkr variant 2}, $d(x_i,x_j)\geq M$ for each $i=j$, and so $S$ contains an infinite $M$-separated set, contradicting its relative compactness. From the claim it follows that $\bigcap_{k=1}^\infty S_k=\emptyset$, and so the inclusion~\eqref{eq:bkr variant 2 2} holds.
	
	The only remaining obstruction to $\{A_i\}$ being admissible is that it may fail to satisfy condition (2) of Definition~\ref{def:admissible pointed nbhd}. To rectify this, we simply remove any member $A_i$ that is
	contained in another set $A_j$. It is immediate that the resulting subsequence $\{A_{i_j}\}$ satisfies conditions (1) and (2) of Definition~\ref{def:admissible pointed nbhd}. To show that the inclusion~\eqref{eq:bkr variant 2 2} holds, we observe
	that by the right-hand inequality in~\eqref{eq:bkr variant 2},  
	\begin{align*}
		\diam(A_k)\leq Hr_k\leq HM_k/2.
	\end{align*}
	Since $M_k$ converges to 0, so does $\diam(A_k)$, and thus every nested chain $A_{k_1}\subset \cdots\subset A_{k_j}\cdots$ must be finite. The final member $A_{k_n}$ of such a chain is therefore in the subsequence
	$\{A_{i_j}\}$. Thus for each $k$, $A_k\subset A_{i_{j_k}}$ for some $j_k$, so that 
	\begin{align*}
		\bigcup_{j=0}^\infty A_{i_j}=\bigcup_{i=0}^\infty A_i\supset S.
	\end{align*}
\end{proof}

Let $\psi\colon (X,d_X,\mu)\to (Y,d_Y,\nu)$ be a branched covering. If the measure $\nu$ is doubling, we can generalize a number of important results from integration theory on doubling measures to $\psi^*\nu$.
\begin{lemma}\label{lemma:Lebesgue differentiation theorem pb}
	Let $D\subset X$ be a normal domain, $A\subset D$ a Borel set, and $\mathcal{U}$ a family of sets of the form $U=U(x,\psi,r)$, such that for each $x\in A$, there is a sequence $r_i$ converging to 0 such that $U(x,\psi,r_i)\in \mathcal{U}$. Then for each $\varepsilon>0$ there is a
	pairwise disjoint, countable family of sets $U_i=U(x_i,\psi,r_i)$ such that
	\begin{align*}
		\psi^*\nu\Big(A\backslash \bigcup_{i=1}^\infty U_i\Big)<\varepsilon.
	\end{align*}
\end{lemma}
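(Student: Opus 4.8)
The plan is to deduce this Lebesgue-type covering theorem for the pullback measure from the classical Vitali covering theorem, transported through the projection $\psi$ and the pullback geometry developed above. The key observation is that, by Proposition~\ref{prop:existence of normal nbhd}(4), each set $U(x,\psi,r)$ projects exactly onto the ball $B(\psi(x),r)$, and on each sheet $D_{n,j}$ from Lemma~\ref{lemma:decomposition Dn} the restriction $\psi$ is a homeomorphism; so the family $\mathcal U$ pushes forward to a Vitali cover of $\psi(A)$ by balls in the doubling space $(Y,\nu)$.

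First I would reduce to a single multiplicity sheet. Write $A=\bigcup_{n=1}^{N(\psi,D)} (A\cap D_n)$ and, using Lemma~\ref{lemma:decomposition Dn}, further split $A\cap D_n=\bigcup_{j=1}^n (A\cap D_{n,j})$ into finitely many Borel pieces on which $\psi$ is injective. Since there are only finitely many such pieces, it suffices to prove the statement with $A$ replaced by $A\cap D_{n,j}$ and with $\varepsilon$ replaced by $\varepsilon/(\text{number of pieces})$; then take the union of the resulting disjoint subfamilies. One must check the subfamilies coming from different pieces can be taken pairwise disjoint as well — this follows because $\psi^*\nu$ is a genuine (Borel regular) measure, so after choosing the subfamily for the first piece we simply regard its union as already covered and apply the same argument inside the complement for the next piece; alternatively, shrink each radius further using property (8) of Proposition~\ref{prop:existence of normal nbhd} to keep the $U$'s disjoint across pieces, but the measure-subtraction argument is cleaner.

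Next, fix a piece $E=A\cap D_{n,j}$ on which $\psi$ is injective, and consider $F=\psi(E)\subset\psi(D)$, a Borel set of finite $\nu$-measure. For each $x\in E$ there is a sequence $r_i\to 0$ with $U(x,\psi,r_i)\in\mathcal U$, hence $B(\psi(x),r_i)=\psi(U(x,\psi,r_i))$ is a Vitali cover of $F$ by balls. By the Vitali covering theorem in the doubling metric measure space $(Y,\nu)$ (valid since $\nu$ is locally doubling and $F$ has finite measure), there is a pairwise disjoint countable subfamily $\{B(y_i,r_i)\}$ with $\nu(F\setminus\bigcup_i B(y_i,r_i))<\varepsilon$. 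Pull these back: set $U_i=U(x_i,\psi,r_i)$ where $x_i\in E$ is the unique preimage of $y_i$ (unique on this sheet; to land inside $E$ one may need to intersect, but since $\psi|_E$ is injective and $y_i\in\psi(E)$ there is a well-defined $x_i\in E$). The $U_i$ are pairwise disjoint because their images $B(y_i,r_i)$ are disjoint and, by Proposition~\ref{prop:existence of normal nbhd}(4), $\psi(U_i)=B(y_i,r_i)$, so $U_i\cap U_k\subset\psi^{-1}(B(y_i,r_i)\cap B(y_k,r_k))=\emptyset$. Finally, $E\setminus\bigcup_i U_i\subset\psi^{-1}(F\setminus\bigcup_i B(y_i,r_i))\cap D$, and since on this sheet $N(y,\psi,E)\le 1$,
\begin{align*}
\psi^*\nu\Big(E\setminus\bigcup_i U_i\Big)\le\nu\Big(F\setminus\bigcup_i B(y_i,r_i)\Big)<\varepsilon,
\end{align*}
as desired.

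The main obstacle I anticipate is purely bookkeeping rather than conceptual: one must be careful that the chosen $U_i$ genuinely lie in $\mathcal U$ (so the radii $r_i$ must be drawn from the prescribed sequences associated to the points $x_i$, which forces the Vitali argument to be run with the restricted class of admissible radii — this is fine since the Vitali theorem only needs arbitrarily small radii at each point), and that disjointness is preserved both within a sheet and across the finitely many sheets. The across-sheets issue is handled, as noted, by iterating the single-sheet result and subtracting already-covered mass; here one uses crucially that $\psi^*\nu$ is a Borel regular \emph{measure} (not merely an outer measure on arbitrary sets), which was established in the subsection on the pullback measure. An alternative, if one prefers to avoid the Whitney-embedding remark, is to invoke the abstract Vitali covering theorem for finite-dimensional metric spaces directly on $D$ for the family $\mathcal U$ itself, exactly as in Remark~\ref{rmk:on covering I}; but routing through $(Y,\nu)$ keeps all the doubling hypotheses on the target, where they were assumed.
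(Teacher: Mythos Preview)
Your pushforward-to-$Y$ strategy has a genuine gap at the pullback step. You claim
\[
E\setminus\bigcup_i U_i \;\subset\; \psi^{-1}\Big(F\setminus\bigcup_i B(y_i,r_i)\Big)\cap D,
\]
but this inclusion is false in general. If $x\in E$ has $\psi(x)\in B(y_i,r_i)$, then by Proposition~\ref{prop:existence of normal nbhd}(2) we only know $x\in U(x',\psi,r_i)$ for \emph{some} $x'\in\psi^{-1}(y_i)\cap D$, not necessarily $x'=x_i$. The sheets $D_{n,j}$ from Lemma~\ref{lemma:decomposition Dn} are merely Borel sets, not open, and need not respect the component structure of the preimage of a ball: a point of $D_{n,j}$ can perfectly well sit inside the normal neighborhood $U(x',\psi,r_i)$ of a preimage $x'$ lying on a \emph{different} sheet $D_{n,j'}$. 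In that case $x\notin U_i$, yet $\psi(x)\in B(y_i,r_i)$, so the inclusion breaks down and your measure estimate $\psi^*\nu(E\setminus\bigcup U_i)\le \nu(F\setminus\bigcup B_i)$ does not follow. Including \emph{all} components $U(x',\psi,r_i)$ of each $\psi^{-1}(B_i)$ would repair the covering, but then those extra sets are no longer guaranteed to lie in $\mathcal U$ (the radius $r_i$ was admissible only for $x_i$), which is exactly what the lemma is meant to deliver for the application in Corollary~\ref{coro:Lebesgue point pb}.

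The paper avoids this entirely by never pushing forward: it runs the standard Vitali argument directly in $D$ with the family $\mathcal U$ of $U$-sets, using the $5r$-covering Lemma~\ref{lemma:covering lemma} in place of the usual ball-covering lemma, together with the doubling of $\nu$ and the bound $\psi^*\nu(5U_i)\le N(\psi,D)\,\nu(5B_i)$ to control the tail $\sum_{i>N}\psi^*\nu(5U_i)$. Working intrinsically with the $U$-sets sidesteps the sheet/component mismatch altogether.
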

\begin{proof}
	We will follow closely the standard argument as in the ball case. By Lemma~\ref{lemma:covering lemma} and our assumption on $U$, we can select a pairwise disjoint, countable family $\mathcal{U}_0$ of sets $U_i=U(x_i,\psi,r_i)$ such that $A$ is contained in $\bigcup_{i=1}^\infty 5U_i$ and that $\sum_{i=1}^\infty \nu(B_i)<\infty$, where $B_i=\psi(U_i)=B(\psi(x_i),r_i)$. 
	Note that by the compactness of $D$ and doubling property of $\nu$, we have
	\begin{align*}
		\sum_{i\geq 1}\psi^*\nu(5U_i)&\leq N(\psi,D)\sum_{i=1}^\infty\nu(\psi(5U_i))=N(\psi,D)\sum_{i=1}^\infty\nu(5B_i)\\
		&\leq cN(\psi,D)\sum_{i=1}^\infty\nu(B_i)<\infty,
	\end{align*}
	which implies that
	\begin{align*}
		\sum_{i>N}\psi^*\nu(5U_i)\to 0 \quad \text{ as } N\to \infty.
	\end{align*}
	It suffices to show that 
	\begin{align*}
		A\backslash \bigcup_{i=1}^N U_i\subset \bigcup_{i>N}5U_i\quad \text{ or }\quad \psi(A)\backslash \bigcup_{i=1}^N B_i\subset \bigcup_{i>N} 5B_i.
	\end{align*}
	To this end, take $a\in A\backslash \bigcup_{i=1}^N U_i$ and select a small $U_\alpha\in \mathcal{U}$ that does not meet any of the sets $U_i$ for $i\leq N$ (probably needs to assume apply everything to $\closure{U}$). On the other hand, by Lemma~\ref{lemma:covering lemma}, the family $\mathcal{U}_0$ can be chosen so that some set $U_j$ from $\mathcal{U}_0$ with radius at least $r_j\geq r_\alpha/2$. Thus $j>N$ and $U_\alpha\subset 5U_j$, as required.
\end{proof}

If $\rho\in L^p_{loc}(\psi^*\nu)$ and
\begin{align}\label{eq:def for Lebesgue point}
	\limsup_{r\to 0}\Big(\dashint_{U(x,\psi,r)}|\rho-\rho(x)|^pd\psi^*\nu\Big)^{1/p}=0,
\end{align}
then $x$ is called a \textit{$(p,\psi)$-Lebesgue point} of $\rho$. For $p=1$, we simply say $x$ is a $\psi$-\textit{Lebesgue point} of $\rho$. 

As in the classical case, a standard application of the Vitali covering theorem, Lemma~\ref{lemma:Lebesgue differentiation theorem pb}, one obtains the following version of the Lebesgue differentiation theorem.
\begin{corollary}\label{coro:Lebesgue point pb}
	If $\rho\in L^p_{loc}(\psi^*\nu)$ with a doubling measure $\nu$. Then $\psi^*\nu$-a.e. $x\in X$ is a $(p,\psi)$-Lebesgue point of $\rho$.
\end{corollary}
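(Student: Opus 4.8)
The plan is to mimic the classical proof of the Lebesgue differentiation theorem, replacing balls by the pullback neighborhoods $U(x,\psi,r)$ and the Vitali covering theorem by Lemma~\ref{lemma:Lebesgue differentiation theorem pb}. Since the conclusion is purely local, and every point of $X$ has arbitrarily small normal neighborhoods (of which, by separability, countably many cover $X$), it suffices to fix a normal domain $D\subset X$ and prove that $\psi^*\nu$-a.e.\ $x\in D$ is a $(p,\psi)$-Lebesgue point; note $\psi^*\nu(D)<\infty$ by the relative compactness of $D$ and the local finiteness of $\psi^*\nu$. First I would reduce to $p=1$: for each $c\in\mathbb Q$ the function $|\rho-c|^p$ lies in $L^1_{loc}(\psi^*\nu)$ (as $\psi^*\nu$ is locally finite), so once the case $p=1$ is known, $\psi^*\nu$-a.e.\ $x\in D$ satisfies $\lim_{r\to0}\dashint_{U(x,\psi,r)}|\rho-c|^p\,d\psi^*\nu=|\rho(x)-c|^p$ for every rational $c$ simultaneously; approximating $\rho(x)$ by such $c$ and using $|a-b|^p\le2^{p-1}(|a|^p+|b|^p)$ then yields \eqref{eq:def for Lebesgue point} at each such $x$. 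It therefore suffices to show: for every $g\in L^1_{loc}(\psi^*\nu)$, $\psi^*\nu$-a.e.\ $x\in D$ has $\lim_{r\to0}\dashint_{U(x,\psi,r)}|g-g(x)|\,d\psi^*\nu=0$.

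For the $L^1$ statement I would work with the subadditive functional $\Lambda g(x):=\limsup_{r\to0}\dashint_{U(x,\psi,r)}|g-g(x)|\,d\psi^*\nu$ and show $\psi^*\nu(\{x\in D:\Lambda g(x)>t\})=0$ for every $t>0$. Two standard inputs are needed. First, $\Lambda\phi\equiv0$ on $D$ whenever $\phi$ is continuous, because $\diam U(x,\psi,r)\to0$ as $r\to0$; this follows from the discreteness of $\psi$, the normality of $D$, and the construction of $M_{\psi(x)}$, together with the identity $\psi(U(x,\psi,r))=B(\psi(x),r)$ from Proposition~\ref{prop:existence of normal nbhd}(4). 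Second, continuous functions are dense in $L^1(D,\psi^*\nu)$, since $\psi^*\nu|_D$ is a finite Borel measure on a metric space and hence regular (Lusin's theorem plus a truncation). Given $\varepsilon>0$, write $g=\phi+h$ with $\phi$ continuous and $\int_D|h|\,d\psi^*\nu<\varepsilon$; by subadditivity and $\Lambda\phi\equiv0$ we get $\Lambda g(x)\le\Lambda h(x)\le\limsup_{r\to0}\dashint_{U(x,\psi,r)}|h|\,d\psi^*\nu+|h(x)|$, so $\{x\in D:\Lambda g(x)>t\}\subset\{|h|>t/2\}\cup A'$, where $A':=\{x\in D:\limsup_{r\to0}\dashint_{U(x,\psi,r)}|h|\,d\psi^*\nu>t/2\}$. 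By Chebyshev, $\psi^*\nu(\{|h|>t/2\})\le2\varepsilon/t$. For $A'$, each $x\in A'$ admits a sequence $r_i\to0$ (which we may take with $5r_i<R(\psi(x))$) such that $\dashint_{U(x,\psi,r_i)}|h|\,d\psi^*\nu>t/2$; applying Lemma~\ref{lemma:Lebesgue differentiation theorem pb} to the family of all such $U(x,\psi,r_i)$ produces, for any $\delta>0$, a countable pairwise disjoint subfamily $\{U_j\}$ with $\psi^*\nu(A'\setminus\bigcup_jU_j)<\delta$, whence $\psi^*\nu(A')\le\delta+\sum_j\psi^*\nu(U_j)\le\delta+\tfrac2t\sum_j\int_{U_j}|h|\,d\psi^*\nu\le\delta+2\varepsilon/t$. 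Letting $\delta\to0$ and then $\varepsilon\to0$ gives $\psi^*\nu(\{\Lambda g>t\})=0$, and letting $t\to0$ along $t=1/n$ finishes the $L^1$ case, hence the corollary.

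I expect no serious obstacle here: once Lemmas~\ref{lemma:covering lemma} and~\ref{lemma:Lebesgue differentiation theorem pb} are in hand — and these already carry all the genuinely pullback-geometric content — the argument is the classical one run verbatim. The only points requiring care are (i) the fact $\diam U(x,\psi,r)\to0$, which rests on Proposition~\ref{prop:existence of normal nbhd} and the discreteness/normality already set up; (ii) density of continuous functions in $L^1(D,\psi^*\nu)$, which uses finiteness and Borel-regularity of $\psi^*\nu|_D$; and (iii) a minor measurability technicality, namely that the sets $\{\Lambda g>t\}$ and $A'$ need not be manifestly Borel — but since we only bound (and ultimately annihilate) their $\psi^*\nu$-\emph{outer} measure, and a set of outer measure zero is measurable, this is immaterial (alternatively, restrict the $\limsup$ to rational radii). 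This is presumably why the statement is recorded as a corollary rather than as a proposition.
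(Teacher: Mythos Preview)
Your proof is correct, and it takes a genuinely different (though equally standard) route from the paper's. The reduction to $p=1$ via rational constants is identical in both, but the $L^1$ arguments diverge.

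The paper does \emph{not} use density of continuous functions or a maximal operator. Instead it reduces to showing $\lim_{r\to0}\dashint_{U(x,\psi,r)}\rho\,d\psi^*\nu=\rho(x)$ for nonnegative $\rho$, and proceeds by a ``derivation'' argument: using Lemma~\ref{lemma:Lebesgue differentiation theorem pb} it proves the two density inequalities (if $\liminf\le t$ on a set $A$ then $\int_A\rho\le t\,\psi^*\nu(A)$, and the companion $\limsup$ inequality), deduces from these that the sets $A_{s,t}=\{\liminf\le s<t\le\limsup\}$ are null so the limit exists a.e., and then shows the limit function agrees with $\rho$ by matching integrals over every Borel set. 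Your approach is the weak-type route: $\Lambda\phi\equiv0$ for continuous $\phi$, $C(D)$ dense in $L^1(D,\psi^*\nu)$, and a weak $(1,1)$-type bound on the remainder extracted from Lemma~\ref{lemma:Lebesgue differentiation theorem pb}.

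What each buys: the paper's argument is self-contained within the pullback machinery (no appeal to Lusin or regularity of $\psi^*\nu$) and dovetails directly with the proof of Corollary~\ref{coro:Radon-Nikodym derivative}. Your argument is shorter and closer to modern textbook expositions; its one extra ingredient, density of continuous functions, is cheap since $\psi^*\nu|_D$ is a finite Borel-regular measure on a separable metric space. Two small remarks: the fact $\diam U(x,\psi,r)\to0$ really uses relative compactness of $D$ together with the fiber separation encoded in $M_{\psi(x)}$ (Proposition~\ref{prop:existence of normal nbhd}(4) is not the relevant item here); and your observation about measurability is well taken---the proof of Lemma~\ref{lemma:Lebesgue differentiation theorem pb} uses only the covering structure, so the Borel hypothesis on $A$ there is cosmetic.
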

\begin{proof}
	The argument for this is very similar to the standard case. For completeness, we still outline it here. As in the classical case, we do some simple reduction. First, it suffices to show the case $p=1$. Indeed, having the case $p=1$, for general $p\in [1,\infty)$, we know
	\begin{align*}
		\lim_{r\to 0}\dashint_{U(x,\psi,r)}|\rho(y)-r_i|^p d\psi^*\nu(y)=|\rho(x)-r_i|^p
	\end{align*}
	for $\psi^*\nu$-a.e. $x$ and each countable dense subset $\{r_i\}_{i\in \mathbb{N}}$ of $\R$. In particular, for $\psi^*\nu$-a.e. $x$, the above equation holds for all $i$. Fix such a point and $\varepsilon>0$. We may choose $r_i$ such that $|\rho(x)-r_i|^p<\frac{\varepsilon}{2^p}$. Then
	\begin{align*}
		\limsup_{r\to 0}&\dashint_{U(x,\psi,r)}|\rho(y)-\rho(x)|^p d\psi^*\nu(y)\\
		&\leq 2^{p-1}\Big(\dashint_{U(x,\psi,r)}|\rho(y)-r_i|^p d\psi^*\nu(y)+\dashint_{U(x,\psi,r)}|r_i-\rho(x)|^p d\psi^*\nu(y)\Big)\\
		&\leq 2^{p-1}\big(|\rho(x)-r_i|^p+|\rho(x)-r_i|^p\big)<\varepsilon. 
	\end{align*}
	
	Secondly, to prove~\eqref{eq:def for Lebesgue point} for the case $p=1$, it suffices to show that
	\begin{align}\label{eq:Lebesgue point variant}
		\lim_{r\to 0}\dashint_{U(x,\psi,r)}\rho(y) d\psi^*\nu(y)=\rho(x) 
	\end{align}
	holds for $\psi^*\nu$-a.e. for all locally integrable non-negative function $\rho$, since the general case follows routinely by splitting $|\rho-\rho(x)|$ into the positive part and negative part and then applying the result separately.  
	
	Finally, we turn to the proof of~\eqref{eq:Lebesgue point variant}. Let $E$ be the set of points in $X$ where the equation~\eqref{eq:Lebesgue point variant} fails and cover $E$ by sets of the form $U(x,\psi,r)$ with $r=r_x$ sufficiently small. By Lemma~\ref{lemma:Lebesgue differentiation theorem pb}, there is a countable union of sets of this kind containing $\psi^*\nu$-a.e. points in $E$. Thus, it suffices to show that $E$ has $\psi^*\nu$-measure zero in a fixed set $U$ where $\rho$ is integrable.
	
	We first claim that if $t>0$ and if
	\begin{align*}
		\liminf_{r\to 0}\dashint_{U(x,\psi,r)}\rho(y)d\psi^*\nu(y)\leq t
	\end{align*}
	for each $x$ in a subset $A$ of $U_0$, then 
	\begin{align*}
		\int_A \rho(y)d\psi^*\nu(y)\leq t\psi^*\nu(A).
	\end{align*}
	Indeed, fix $\varepsilon>0$ and choose an open set $U\supset A$ such that $\psi^*\nu(U)\leq \psi^*\nu(A)+\varepsilon$. Then each point in $A$ has arbitrarily small neighborhoods in $U$ of the form $U(y,\psi,r)$ where the mean value of $\rho$ is less than $t+\varepsilon$. Lemma~\ref{lemma:Lebesgue differentiation theorem pb} implies that we can pick a countable disjoint collection of such a collection covering $\psi^*\nu$-a.e. of $A$, from which
	\begin{align*}
		\int_A \rho(y)d\psi^*\nu(y)\leq (t+\varepsilon)\psi^*\nu(U)+\varepsilon\leq (t+\varepsilon)(\psi^*\nu(A)+\varepsilon),
	\end{align*}
	and the desired claim follows upon letting $\varepsilon\to 0$. A similar argument shows that if $t>0$ and if 
		\begin{align*}
		\limsup_{r\to 0}\dashint_{U(x,\psi,r)}\rho(y)d\psi^*\nu(y)\geq t
		\end{align*}
		for all $x\in A\subset U_0$, then
		\begin{align*}
			\int_A \rho(y)d\psi^*\nu(y)\geq t\psi^*\nu(A).
		\end{align*}
	
	On the other hand, if $A_{s,t}$ is the set of points $y$ in $U_0$ for which
	\begin{align*}
		\liminf_{r\to 0}\dashint_{U(x,\psi,r)}\rho(y)d\psi^*\nu(y)\leq s<t\leq \limsup_{r\to 0}\dashint_{U(x,\psi,r)}\rho(y)d\psi^*\nu(y),
	\end{align*}	
	then $\psi^*\nu(A_{s,t})=0$, since our preceding claims imply 
	\begin{align*}
		t\psi^*\nu(A_{s,t})\leq \int_{A_{s,t}}\rho(y)d\psi^*\nu(y)\leq s\psi^*\nu(A_{s,t}).
	\end{align*}
	Thus the limit on the left hand side of~\eqref{eq:Lebesgue point variant} exists and is finite $\psi^*\nu$-a.e. in $U_0$. Denote this limit by $g(x)$ whenever it exists. It remains to show that $g(x)=\rho(x)$ $\psi^*\nu$-a.e. in $U_0$.
	
	Fix both a Borel set $F\subset U_0$ and $\varepsilon>0$; for each $n\in \mathbb{N}$, denote
	\begin{align*}
		A_n=\big\{x\in F:(1+\varepsilon)^n\leq g(x)<(1+\varepsilon)^n\big\}.
	\end{align*}
	Then by the second claim
	\begin{align*}
		\int_F g(y)d\psi^*\nu(y)&=\sum_n\int_{A_n}g(y)d\psi^*\nu(y)\leq \sum_n(1+\varepsilon)^{n+1}\psi^*\nu(A_n)\\
		&\leq (1+\varepsilon)\sum_n\int_{A_n} \rho(y)d\psi^*\nu(y)=(1+\varepsilon)\int_F\rho(y)d\psi^*\nu(y),
	\end{align*}
	and similarly, by our first claim,
	\begin{align*}
			\int_F g(y)d\psi^*\nu(y)&=\sum_n\int_{A_n}g(y)d\psi^*\nu(y)\geq \sum_n(1+\varepsilon)^n\psi^*\nu(A_n)\\
			&\geq (1+\varepsilon)^{-1}\sum_n\int_{A_n} \rho(y)d\psi^*\nu(y)=(1+\varepsilon)^{-1}\int_F\rho(y)d\psi^*\nu(y).
	\end{align*}
	By letting $\varepsilon\to 0$, we infer that
	\begin{align*}
		\int_F g(y)d\psi^*\nu(y)=\int_F\rho(y)d\psi^*\nu(y)
	\end{align*}
	and hence $g=\rho$ $\psi^*\nu$-a.e. in $U_0$. The proof is complete.
	
\end{proof}

As a corollary of Lemma~\ref{lemma:Lebesgue differentiation theorem pb} and Corollary~\ref{coro:Lebesgue point pb}, we have the following version of the Lebesgue-Radon-Nikodym theorem.
\begin{corollary}\label{coro:Radon-Nikodym derivative}
	Let $\mu$ be a Radon measure on $X$, and let $\nu$ be doubling. Then the Radon-Nikodym derivative of $\mu$ with respect to $\psi^*\nu$ is given, at $\psi^*\nu$-a.e. $x\in X$, by
	\begin{align*}
		\frac{d\mu}{d\psi^*\nu}(x)=\lim_{r\to 0}\frac{\mu(U(x,\psi,r))}{\psi^*\nu(U(x,\psi,r))}=\lim_{r\to 0}\frac{\mu(U(x,\psi,r))}{\nu(B(\psi(x),r))}.
	\end{align*}
\end{corollary}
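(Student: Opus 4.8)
The plan is to adapt the classical proof of the Lebesgue--Radon--Nikodym theorem via the differentiation of measures, replacing balls with the normal neighborhoods $U(x,\psi,r)$ and using the pullback measure $\psi^*\nu$ in place of the underlying doubling measure. The key technical inputs are already in place: Lemma~\ref{lemma:Lebesgue differentiation theorem pb} (a Vitali-type covering lemma for the family $\{U(x,\psi,r)\}$ with respect to $\psi^*\nu$), Corollary~\ref{coro:Lebesgue point pb} ($\psi^*\nu$-a.e.\ point is a $\psi$-Lebesgue point of an $L^1_{\loc}(\psi^*\nu)$ function), and the identity $\psi^*\nu(U(x,\psi,r))=\nu(B(\psi(x),r))$, which follows from Proposition~\ref{prop:existence of normal nbhd}~(3) and~(4) since $N(z',\psi,U(x,\psi,r))=1$ for $z'\in B(\psi(x),r)$. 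The last of these already gives the second equality in the statement for free, so the entire content is the first equality.

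First I would reduce to the absolutely continuous case. Write the Lebesgue decomposition $\mu=\mu_a+\mu_s$ of the Radon measure $\mu$ with respect to $\psi^*\nu$, so $\mu_a\ll \psi^*\nu$ with $h:=d\mu_a/d\psi^*\nu\in L^1_{\loc}(\psi^*\nu)$, and $\mu_s\perp\psi^*\nu$. For the absolutely continuous part, apply Corollary~\ref{coro:Lebesgue point pb} to $h$: at every $\psi$-Lebesgue point $x$ of $h$ one has
\[
\lim_{r\to 0}\frac{\mu_a(U(x,\psi,r))}{\psi^*\nu(U(x,\psi,r))}
=\lim_{r\to 0}\dashint_{U(x,\psi,r)}h\,d\psi^*\nu = h(x),
\]
which holds for $\psi^*\nu$-a.e.\ $x$. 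For the singular part, the task is to show $\lim_{r\to 0}\mu_s(U(x,\psi,r))/\psi^*\nu(U(x,\psi,r))=0$ for $\psi^*\nu$-a.e.\ $x$. This is the usual maximal-function argument: let $A$ be a Borel set carrying $\mu_s$ with $\psi^*\nu(A)=0$, fix $t>0$ and let $E_t$ be the set of $x\notin A$ with $\limsup_r \mu_s(U(x,\psi,r))/\psi^*\nu(U(x,\psi,r))>t$; for each such $x$ choose arbitrarily small $U(x,\psi,r)$ with $\mu_s(U(x,\psi,r))>t\,\psi^*\nu(U(x,\psi,r))$, extract a disjoint subfamily covering $\psi^*\nu$-a.e.\ of $E_t$ by Lemma~\ref{lemma:Lebesgue differentiation theorem pb}, and sum to get $t\,\psi^*\nu(E_t)\le \sum_i \mu_s(U_i)\le \mu_s(X\setminus A')+\varepsilon$ for an open $A'\supset A$ of small $\mu_s$-measure, using outer regularity of $\mu$; letting $\varepsilon\to 0$ and $t\to 0$ through a sequence gives $\psi^*\nu(E_t)=0$. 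Combining the two parts yields the first equality.

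The main obstacle is bookkeeping around the covering lemma rather than any deep difficulty: Lemma~\ref{lemma:Lebesgue differentiation theorem pb} is stated for a normal \emph{domain} $D$ and for families in which each $x\in A$ admits arbitrarily small $U(x,\psi,r)\in\mathcal{U}$ with $5r<R(\psi(x))$, so one must work locally — cover $X$ by countably many such normal domains (available since $X$ is separable and $\psi$ is a branched covering) and run the argument on each, noting that the claimed limits are genuinely local so the conclusion patches together. One should also check that $\mu_s$ restricted to such a $D$ is still a finite Radon measure so the outer-regularity step applies, and that the $\limsup$ (not merely $\liminf$) version of the covering argument is what is needed for the singular part; both are routine. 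With these caveats the proof is the standard one, transported verbatim through the dictionary ``ball $\leftrightarrow$ $U(x,\psi,r)$, $\nu \leftrightarrow \psi^*\nu$''.
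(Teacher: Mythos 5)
Your handling of the \emph{first} equality is correct and is the standard Lebesgue--Radon--Nikodym argument (decompose $\mu=\mu_a+\mu_s$, differentiate $d\mu_a/d\psi^*\nu$ at $\psi$-Lebesgue points via Corollary~\ref{coro:Lebesgue point pb}, kill $\mu_s$ via the maximal-function/Vitali argument using Lemma~\ref{lemma:Lebesgue differentiation theorem pb}); this is exactly what the paper is implicitly invoking when it presents the corollary as ``a corollary of Lemma~\ref{lemma:Lebesgue differentiation theorem pb} and Corollary~\ref{coro:Lebesgue point pb}'' and does not spell the argument out. Your localization remark is also a fair point since Lemma~\ref{lemma:Lebesgue differentiation theorem pb} is stated over a normal domain.

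However, your treatment of the \emph{second} equality contains a genuine error. You assert the pointwise \emph{identity} $\psi^*\nu(U(x,\psi,r))=\nu(B(\psi(x),r))$, citing Proposition~\ref{prop:existence of normal nbhd}~(3) and~(4) and the claim that $N(z',\psi,U(x,\psi,r))=1$ for all $z'\in B(\psi(x),r)$. That claim is false: part~(3) of Proposition~\ref{prop:existence of normal nbhd} only yields $N(z,\psi,U(x,\psi,r))=1$ at the \emph{center} $z=\psi(x)$. For other $z'\in B(\psi(x),r)$ the fiber in $U(x,\psi,r)$ can have cardinality up to $i(x,\psi)$; e.g.\ for the winding map $z\mapsto z^k$ at the origin one has $\psi^*\nu(U)=k\,\nu(B)$. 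In general one only has $\psi^*\nu(U(x,\psi,r))\ge\nu(B(\psi(x),r))$, and equality fails on the branch set. What is true, and what the paper's short proof actually establishes, is the weaker statement that
\begin{align*}
\lim_{r\to 0}\frac{\psi^*\nu(U(x,\psi,r))}{\nu(B(\psi(x),r))}=1\qquad\text{for }\psi^*\nu\text{-a.e.\ }x,
\end{align*}
which is \emph{not} an algebraic identity but again a Lebesgue-point statement: one applies Corollary~\ref{coro:Lebesgue point pb} to the characteristic functions of the sets $D_n$ (equivalently, this is the content $i_{\ess}(x,\psi)=1$ a.e.\ from Lemma~\ref{lemma:on essential index}). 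So the second equality is precisely the place where Corollary~\ref{coro:Lebesgue point pb} does real work, rather than being ``free'' as you claim, and your proof of this half needs to be replaced by the a.e.\ limit argument.
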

\begin{proof}
	We only need to show the first equality, since the second one follows from the fact that 
	\begin{align*}
		\lim_{r\to 0}\frac{\psi^*\nu(U(x,\psi,r))}{\nu(B(\psi(x),r))}=1
	\end{align*}
	at $\psi^*\nu$-a.e. $x\in X$, which is a simple consequence of Corollary~\ref{coro:Lebesgue point pb}.
\end{proof}

Suppose $\mu$ is doubling, $D\subset X$ is normal, and $\nu_n$ is a measure on $Y$ concentrated on $\psi(D_n)$. Then the pullback $\psi^*\nu_n$ is concentrated on $D_n$, so that
\begin{align}\label{eq:19}
	\frac{d\psi^*\nu_n}{d\mu}(x)=0
\end{align}
for $\mu$-a.e. $x\in D\backslash D_n$. Moreover, since $\psi|_{D_n}$ is a local homeomorphism, the Lebesgue-Radon-Nikodym Theorem implies that
\begin{align}\label{eq:20}
	\frac{d\psi^*\nu_n}{d\mu}(x)=\lim_{r\to 0}\frac{\psi^*\nu_n(B(x,r))}{\mu(B(x,r))}=\lim_{r\to 0}\frac{\nu_n(\psi(B(x,r)))}{\mu(B(x,r))}
\end{align}
at $\mu$-a.e. $x\in D_n$. In particular, the preceding argument, applied to the measures $\nu_n=\nu|_{\psi(D_n)}$ for $n=1,\dots,N(\psi,D)$ gives us
\begin{align}\label{eq:Jacobian presentation}
	J_\psi(x)=\lim_{r\to 0}\frac{\nu(\psi(B(x,r)))}{\mu(B(x,r))}
\end{align}
for $\mu$-a.e. $x\in X$. Arguing similarly as above, replacing the Lebesgue-Radon-Nikodym Theorem with Corollary~\ref{coro:Radon-Nikodym derivative}, we infer that
\begin{align}\label{eq:inverse Jacobian presentation}
	J_\psi^{-1}(x)=\lim_{r\to 0}\frac{\mu(U(x,\psi,r))}{\nu(B(\psi(x),r))}
\end{align}
for $\psi^*\nu$-a.e. $x\in X$.


\subsection{The local index and essential index}
The \textit{local index} $i(x,\psi)$ of a continuous mapping $\psi$ at $x$ is traditionally determined by
the homomorphism of local homology or cohomology groups induced by $\psi$ at $x$. In
order to retain the ability to study spaces without a good topological degree theory, we instead define the index to be the local multiplicity of $\psi$ at $x$, i.e.,
\begin{align*}
	i(x,\psi)=\inf\big\{N(f,D):D\subset X \text{ is an open neighborhood of } x\big\}.
\end{align*}
This definition coincides with the usual one when $\psi$ is a discrete, open, and sense-preserving mapping between homology or cohomology manifolds (see for instance~\cite{hr02}). It is immediate from the definition that $\psi$ is a
local homeomorphism at $x$ if and only if $i(x,\psi)=1$. The \textit{branch set} $\mathcal{B}_\psi$ of $\psi$ is the set of points where $\psi$ is not locally homeomorphic, i.e.,
\begin{align*}
	\mathcal{B}_\psi=\big\{x\in X:i(x,\psi)>1 \big\}.
\end{align*}

In the classical setting, as well as in a number of generalizations, the branch set of a quasiregular mapping has measure zero, as does its image. We will see that in general, this need not be the case, and in some situations, it is still unknown, and so we need a substitute. If $\nu$ is doubling, then we define the \textit{essential local index} $i_{\ess}(x,\psi)$ of $\psi$ at $x$ to be the quantity
\begin{align*}
i_{\ess}(x,\psi)=\limsup_{r\to 0}\frac{\psi^*\nu(U(x,\psi,r))}{\nu(B(\psi(x),r))}.
\end{align*}

\begin{lemma}\label{lemma:on essential index}
	It always holds that $1\leq i_{\ess}(x,\psi)\leq i(x,\psi)$. Moreover, if 
	$x$ is a $\psi$-Lebesgue point for $D_n$, where $D_n=\{z\in D:N(\psi(z),f,D)=n\}$, then $i_{\ess}(x,\psi)=1$. In particular, if $\nu$ is doubling, then $i_{\ess}(x,\psi)=1$ $\psi^*\nu$-a.e. in $X$.
\end{lemma}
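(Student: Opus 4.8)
The plan is to work throughout inside a fixed normal domain $D\ni x$, to restrict attention to radii $r\le R(\psi(x))$ so that all of Proposition~\ref{prop:existence of normal nbhd} is available, and to abbreviate $B_r=B(\psi(x),r)$ and $U_r=U(x,\psi,r)$. The two-sided bound $1\le i_{\ess}(x,\psi)\le i(x,\psi)$ comes from comparing $\psi^*\nu(U_r)$ with $\nu(B_r)$ through the change-of-variables identity~\eqref{eq:change of variable general}. For the lower bound: since $\psi(U_r)=B_r$ by Proposition~\ref{prop:existence of normal nbhd}(4) and $N(y,\psi,U_r)\ge1$ on $B_r$, one gets $\psi^*\nu(U_r)\ge\nu(B_r)$ at once, hence $i_{\ess}(x,\psi)\ge1$. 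For the upper bound I would first record the (classical, cf.~\cite{mrv69}) fact that $\{U_r\}_{0<r\le R(\psi(x))}$ is a neighborhood basis at $x$: the $U_r$ increase with $r$ by Proposition~\ref{prop:existence of normal nbhd}(8), $x$ is the only point of $\psi^{-1}(\psi(x))$ in $\overline{B(x,M_{\psi(x)})}$ by the definition of $M_{\psi(x)}$ together with the normality relation $\psi(\partial D)=\partial\psi(D)$, and then a compactness argument inside the relatively compact set $\overline D$ forces every neighborhood of $x$ to contain some $U_r$. Given an open $D'\ni x$ realizing the index, $N(\psi,D')=i(x,\psi)$, and $r$ small enough that $U_r\subset D'$, we then have $N(y,\psi,U_r)\le N(\psi,D')=i(x,\psi)$ for every $y$, so $\psi^*\nu(U_r)\le i(x,\psi)\,\nu(B_r)$ and $i_{\ess}(x,\psi)\le i(x,\psi)$.

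For the ``moreover'' clause I would interpret the hypothesis as: $x\in D_n$ for the unique index $n$ with $x\in D_n$, and $x$ is a $\psi$-Lebesgue point of $\chi_{D_n}\in L^1_{\loc}(\psi^*\nu)$ (membership in $L^1_{\loc}$ follows from local finiteness of $\psi^*\nu$), i.e.\ $\psi^*\nu(U_r\setminus D_n)=o(\psi^*\nu(U_r))$ as $r\to0$. The key point is that $\psi$ is injective on $D_n\cap U_r$ by Proposition~\ref{prop:existence of normal nbhd}(6), so $N(\cdot,\psi,D_n\cap U_r)\le\chi_{B_r}$ and hence $\psi^*\nu(D_n\cap U_r)=\nu(\psi(D_n\cap U_r))\le\nu(B_r)$. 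Splitting $\psi^*\nu(U_r)=\psi^*\nu(D_n\cap U_r)+\psi^*\nu(U_r\setminus D_n)\le\nu(B_r)+o(\psi^*\nu(U_r))$ and using $0<\nu(B_r)\le\psi^*\nu(U_r)<\infty$ (positivity of $\nu$ on balls, local finiteness of $\psi^*\nu$), one rearranges to $\psi^*\nu(U_r)/\nu(B_r)\le(1-o(1))^{-1}\to1$, so $i_{\ess}(x,\psi)\le1$; combined with the lower bound this gives $i_{\ess}(x,\psi)=1$.

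The ``in particular'' is then a soft consequence. Applying Corollary~\ref{coro:Lebesgue point pb} to each of the finitely many characteristic functions $\chi_{D_1},\dots,\chi_{D_{N(\psi,D)}}$ produces a single $\psi^*\nu$-null set off which every $x\in D$ is simultaneously a $\psi$-Lebesgue point of all of them, in particular of $\chi_{D_{n(x)}}$ where $x\in D_{n(x)}$, and the previous paragraph then yields $i_{\ess}(x,\psi)=1$ there. Since the value $i_{\ess}(x,\psi)$ does not depend on the ambient normal domain (the two definitions of $U(x,\psi,r)$ agree for $r$ small), one covers $X$ by countably many normal domains — these exist at every point of a discrete open mapping and $X$ is separable — and concludes that $i_{\ess}(x,\psi)=1$ for $\psi^*\nu$-a.e.\ $x\in X$.

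The step I expect to require the most care is the neighborhood-basis property of $\{U_r\}$ used in the bound $i_{\ess}\le i$: this is precisely the bridge between the purely measure-theoretic quantity $i_{\ess}$ and the topological index $i$, and it rests delicately on the defining inequalities for $M_{\psi(x)}$, on the normality relation $\psi(\partial D)=\partial\psi(D)$, and on the relative compactness of $D$. Everything else — the pullback change of variables, the sheetwise injectivity of $\psi$ on $D_n\cap U_r$, and the Lebesgue-point rearrangement — is routine once Proposition~\ref{prop:existence of normal nbhd} and Corollary~\ref{coro:Lebesgue point pb} are granted.
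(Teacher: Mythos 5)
Your proposal is correct and follows the same route as the paper's own (much terser) proof: the two-sided bound comes from the fact that $1\le N(y,\psi,U(x,\psi,r))\le i(x,\psi)$ for small $r$ and all $y\in B(\psi(x),r)$, combined with the identity $\psi^*\nu(U_r)=\int_{B_r} N(y,\psi,U_r)\,d\nu$; the ``moreover'' clause uses the Lebesgue-point condition to reduce $\psi^*\nu(U_r)$ to $\psi^*\nu(U_r\cap D_n)$, which is $\le\nu(B_r)$ by the sheetwise injectivity from Proposition~\ref{prop:existence of normal nbhd}(6); and the ``in particular'' clause is Corollary~\ref{coro:Lebesgue point pb} applied to $\chi_{D_1},\dots,\chi_{D_{N(\psi,D)}}$. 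Where you spell out the neighborhood-basis property of $\{U(x,\psi,r)\}_r$ (via $M_{\psi(x)}$, normality of $D$, and relative compactness), the paper simply takes it as self-evident, but your careful treatment is exactly the hidden content of that claim; the only cosmetic overstatement is invoking Proposition~\ref{prop:existence of normal nbhd}(8) for monotonicity of $r\mapsto U_r$, which is already immediate from the defining formula $U(x,\psi,r)=\psi^{-1}(B(\psi(x),r))\cap B(x,M_{\psi(x)})$.
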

\begin{proof}
 The inequalities $1\leq i_{\ess}(x,\psi)\leq i(x,\psi)$ follows trivially from the definition and the fact that $1\leq N(y,\psi,U(x,\psi,r))\leq i(x,\psi)$ for all $r$ sufficiently small and all $y\in B(\psi(x),r)$. If $x$ is a $\psi$-Lebesgue point for $D_n$, then 
 \begin{align*}
 	\lim_{r\to 0}\frac{\psi^*\nu(U(x,\psi,r))}{\psi^*\nu(U(x,\psi,r)\cap D_n)}=1,
 \end{align*}
 which implies, by definition and Proposition~\ref{prop:existence of normal nbhd} (6), that $i_{\ess}(x,\psi)=1$. The last assertion follows immediately from Corollary~\ref{coro:Lebesgue point pb}. 
\end{proof}

\newpage
\section{The pullback factorization}\label{sec:the pullback factorization}
In this section, we explore some elementary properties of branched coverings between topological and metric spaces. Our main tool is the \textit{``pullback metric"}, which endows the domain and mapping with a number of nice properties.

In particular, the pullback metric causes a mapping to have \textit{bounded diameter/length distortion (BDD/BLD)}, and therefore provides a rich source of examples and counterexamples in the study of BLD mappings. 

Though all the objects under consideration will be metric spaces for most of the
paper, for the moment we are uninterested in the metric in the source space, and
we prefer to ignore it in order to emphasize the general nature of our construction.

\subsection{The pullback metric}\label{subsec:The pullback metric}
Let $X$ be a connected topological space and $Y=(Y,d_Y)$ a metric space. Suppose $f\colon X\to Y$ is a continuous mapping. We define the ``\textit{pullback metric}" $f^*d_Y\colon X\times X\to [0,\infty)$ as follows:
\begin{align}\label{eq:def for pullback metric}
	f^*d_Y(x_1,x_2)=\inf_{x_1,x_2\in \alpha}\diam\big(f(\alpha)\big),
\end{align}
where the infimum is taken over all continua $\alpha$ joining $x_1$ and $x_2$ in $X$.

It is immediate from the definition that $f^*d_Y$ satisfies the triangle inequality. Moreover, the connectivity assumption on $X$ guarantees that it is finite. Thus even with no further assumptions on $X$, $Y$ or $f$, $f^*d_Y$ is a pseudo-metric on $X$. If we assume further that $f$ is discrete (or, for that matter, even \textit{light}, which means that the preimage of each point in $Y$ is totally disconnected in $X$), then $f^*d_Y$ becomes a genuine metric. We denote by $X^f$ or $f^*Y$ the metric space $(X,f^*d_Y)$.

\begin{remark}\label{rmk:on pullback via length}
Another natural candidate to serve as a pullback metric will be the following
\begin{align}\label{eq:def for pullback metric via length}
	f^*d_Y(x_1,x_2)=\inf_{x_1,x_2\in \alpha}l(f(\alpha)),
\end{align}	
where the infimum is taken over all curves $\alpha$ joining $x_1$ and $x_2$ in $X$. A lot of properties that we are going to prove below can be alternatively proved using this metric. However, the typical assumption for the metric space $X$ will be \textit{quasiconvexity}, which is a priori too strong.
\end{remark}

\subsection{Canonical factorization}\label{subsec:Canonical factorization}
From here on out we will always assume that $X$ and $Y$ are locally compact, connected, locally connected metric spaces, and that $f\colon X\to Y$ is \textit{discrete}, \textit{open}, and \textit{of locally bounded multiplicity}. We refer to these as the standing assumptions for this section.

In what follows, let $g\colon X\to X^f$ be the identity map, and let $\pi\colon X^f\to Y$ satisfy $\pi\circ g=f$, so that on the level of sets, $f=\pi$. We refer this canonical factorization as the \textit{pullback factorization for} $f$. 

\begin{figure}[h]
\[
		\xymatrix{
			X \ar[r]^{g} \ar[dr]_{f} & X^f \ar[d]^{\pi} \\
			& Y
		}
\]
\caption{The canonical pullback factorization between metric spaces}\label{Fig:pullback factorization}
\end{figure}

Under appropriate conditions, the metric space $X^f$ and the branched covering $\pi$ are well behaved. In order to make precise these nice behaviors, we need to recall some of the nice mapping classes between metric spaces.

\begin{definition}\label{def:BLD mapping}
	A branched covering $f\colon X\to Y$ between two metric spaces is said to be an \textit{$L$-BLD}, or a \textit{mapping of $L$-bounded length distortion}, $L\geq 1$, if 
	\begin{align*}
	L^{-1}l(\alpha)\leq l(f\circ \alpha)\leq Ll(\alpha)
	\end{align*}
	for all non-constant curves $\alpha$ in $X$, where $l(\gamma)$ denotes the length of a curve $\gamma$ in a metric space.
\end{definition}
The definition of BLD mappings is clearly only interesting if the metric spaces $X$ and $Y$ have a reasonable supply of rectifiable curves and so the most natural setting in which we study such mappings is that of quasiconvex metric spaces; we recall that a metric space $X$ is \textit{$c$-quasiconvex} if every
pair of points $x_1,x_2\in X$ may be joined with a curve $\gamma$ of length $l(\gamma)\leq cd(x_1,x_2)$. When the constant $c$ is unimportant, we omit it.

Since our construction of pullback metric generally only requires the control of diameters of sets, a more appropriate setting for our results will be the more general class of metric spaces of bounded turning. Recall that $X$ has \textit{$c$-bounded turning} if every pair of points $x_1,x_2\in X$ can be joined by a continuum $E\subset X$ such that $\diam E\leq cd(x_1,x_2)$. Note that by the local connectivity, we also have local path connectivity, and we may use curves instead of general continuua in the definition of bounded turning. It is elementary to verify, as in the case of quasiconvex spaces, that the infimum of the diameters of continuua or curves joining two points is realized, provided $X$ is assumed to be complete as well as locally compact.

When $X$ and $Y$ have bounded turning, the natural branched analog of a bi-Lipschitz homeomorphism is what we call a \textit{mapping of bounded diameter distortion}, which is defined in analogy with BLD mappings:
\begin{definition}\label{def:BDD mapping}
	A branched covering $f\colon X\to Y$ between two metric spaces is said to be an \textit{$L$-BDD}, or a \textit{mapping of $L$-bounded diameter distortion}, $L\geq 1$, if 
	\begin{align*}
	L^{-1}\diam(\alpha)\leq \diam(f\circ \alpha)\leq L\diam(\alpha)
	\end{align*}
	for all non-constant curves $\alpha$ in $X$.
\end{definition}

It follows directly from the definition of arc-length that an $L$-BDD mapping is $L$-BLD as well, regardless of any connectivity assumptions on either $X$ or $Y$. We will see later that if $Y$ is $c$-quasiconvex and $f$ is $L$-Lipschitz and $L$-BLD with $N=N(f,X)<\infty$, then $f$ is $LcN$-BDD. In particular, every $L$-BLD mapping $f\colon X\to Y$ between length spaces is $LN$-BDD.

As we will see in a moment, the metric space $X^f$ retains the original topology of $X$ and is often rather well-behaved: having 1-bounded turning and inheriting many metric and geometric properties from $Y$. Moreover, the branched covering $\pi$ from the pullback factorization is easily seen to be 1-Lipschitz and 1-BDD (and a fortiori 1-BLD).

\subsection{Fine properties of the pullback metric}
In this section, we fix a branched covering $\pi\colon X\to Y$ between two metric spaces. However, we will typically consider the space $X^\pi$ (or $\pi^*Y$) by endowing the set $X$ with the pullback metric $\pi^*d_Y$.

Recall also that for each metric $d$ on a topological space $X$, the length metric $l_d(z_1,z_2)$ is given by infimizing the lengths of all curves joining $z_1$ and $z_2$. For a metric space $X=(X,d_X)$, we denote by $X^l$ the length space $(X,l_{d_X})$.

We caution the reader that there are subtleties involved depending on the order in which one pulls back the metric, restricts the mapping to a neighborhood, or passes to the length metric. That is, in general, $l_{\pi^*d_Y}$ need not coincide with $\pi^*l_{d_Y} $, nor must $(\pi|_U)^*d_Y$ necessarily coincide
with $(\pi^*d_Y)|_U$. Nevertheless, we will see in a moment that these distinctions are rather minor.

We need the following well-known path-lifting result of Floyd~\cite[Theorem 2]{f50}.
\begin{lemma}\label{lemma:lift Floyd}
Let $\pi\colon X\to Y$ be a proper branched covering and let $\gamma\colon [a,b]\to Y$ be a path. Then for each $x\in \pi^{-1}\big(\gamma(a)\big)$, there is a curve $\tilde{\gamma}\colon [a,b]\to X$ such that $\tilde{\gamma}(a)=x$ and $\gamma=\pi\circ \tilde{\gamma}$.
\end{lemma}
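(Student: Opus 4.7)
The plan is to run the classical maximal-lift argument adapted to branched coverings via the normal-neighborhood structure supplied by Proposition~\ref{prop:existence of normal nbhd}. Let $\mathcal{F}$ denote the family of pairs $(c, h)$ where $a \leq c \leq b$ and $h\colon [a,c] \to X$ is a continuous lift of $\gamma|_{[a,c]}$ satisfying $h(a) = x$ and $\pi \circ h = \gamma|_{[a,c]}$. Partially order $\mathcal{F}$ by extension and aim to produce, via Zorn's lemma, a maximal element $(c^*, \tilde{\gamma}^*)$; it then suffices to show $c^* = b$.

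To apply Zorn, first verify that every chain admits an upper bound. Given a chain $\{(c_\alpha, h_\alpha)\}$, glue the partial lifts over the union of their domains to obtain a continuous lift $\tilde{\gamma}$ on $[a, c^\dagger)$, where $c^\dagger := \sup_\alpha c_\alpha$. The image $\tilde{\gamma}([a, c^\dagger))$ lies in $\pi^{-1}(\gamma([a, c^\dagger]))$, which is compact by properness of $\pi$, so any sequence $s_n \to c^\dagger$ has accumulation points inside the finite fiber $\pi^{-1}(\gamma(c^\dagger))$. Uniqueness of the limit follows from a ``no jumping between sheets'' argument: for small $r$, Proposition~\ref{prop:existence of normal nbhd} gives a disjoint-union decomposition $\pi^{-1}(B(\gamma(c^\dagger),r)) \cap D = \bigsqcup_i U(x_i,\pi,r)$ over the fiber inside a normal domain $D$, and continuity together with the connectedness of a left half-neighborhood of $c^\dagger$ in $[a,c^\dagger)$ forces the image of $\tilde\gamma$ there to lie in a single piece $U(x_i,\pi,r)$, producing the required continuous extension to $[a, c^\dagger]$.

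Now suppose for contradiction that $c^* < b$, and set $x^* := \tilde{\gamma}^*(c^*)$. Choose a normal domain $D$ about $x^*$ and apply Proposition~\ref{prop:existence of normal nbhd} to produce a radius $r > 0$ such that $U := U(x^*,\pi,r)$ satisfies $\pi(U) = B(\gamma(c^*),r)$ and $N(\gamma(c^*),\pi,U) = 1$. Pick $\delta > 0$ with $\gamma([c^*, c^*+\delta]) \subset B(\gamma(c^*),r)$. It suffices to construct a continuous lift $\sigma\colon [c^*, c^*+\delta] \to \overline U$ of $\gamma|_{[c^*, c^*+\delta]}$ starting at $x^*$; concatenating $\tilde{\gamma}^*$ with $\sigma$ would then contradict the maximality of $c^*$ and complete the proof.

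The main obstacle is precisely this local lifting step inside $\overline U$. In an unbranched cover one would simply invert $\pi|_U$, but here $\pi|_U$ can have positive multiplicity away from $x^*$, so continuous local sections are not unique. Existence — which is all that is required — is obtained by a compactness argument in the style of Floyd: subdivide $[c^*, c^*+\delta]$ into finer and finer subintervals, on each short subinterval use the disjoint decomposition from Proposition~\ref{prop:existence of normal nbhd} to unambiguously identify the sheet hosting the current endpoint of the partial lift, and pass to a limiting continuous section using the finiteness of fibers and the compactness of $\pi^{-1}(\gamma([c^*,c^*+\delta])) \cap \overline U$. Since the statement is classical and explicitly attributed to Floyd in the paper, at this point I would invoke his theorem directly rather than reproduce the full construction.
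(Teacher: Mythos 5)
The paper gives no proof of this lemma at all: it simply cites Floyd's Theorem~2 from \cite{f50}. Your proposal adds a Zorn's-lemma (or, equivalently, open-and-closed) scaffold on top of the citation, but the scaffold does not actually reduce the difficulty of the lemma, and in the end you invoke Floyd anyway — so your route is a more verbose version of the paper's.

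More precisely, your chain argument is fine: if a lift exists on $[a,c^\dagger)$, then properness of $\pi$ compactifies $\pi^{-1}(\gamma([a,c^\dagger]))$, the fiber $\pi^{-1}(\gamma(c^\dagger))$ is compact and discrete hence finite, and the disjoint decomposition of $\pi^{-1}(B(\gamma(c^\dagger),r))$ into sets $U(x_i,\pi,r)$ together with connectedness of $\tilde\gamma([c^\dagger-\epsilon,c^\dagger))$ forces the partial lift to converge to a single fiber point. That closes the ``closed'' half of the argument. The genuine content, however, sits entirely in the ``open'' half: lifting $\gamma|_{[c^*,c^*+\delta]}$ inside $\overline{U(x^*,\pi,r)}$ when $\pi$ may branch at or near $x^*$. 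This is not a simplification of the original problem — the restriction $\pi|_{U(x^*,\pi,r)}$ is again a proper branched covering (surjective onto $B(\gamma(c^*),r)$ by Proposition~\ref{prop:existence of normal nbhd}), so the local lifting is a special case that is essentially as general as the lemma you set out to prove. Your subdivision sketch for this step (refine the partition and ``pass to a limiting continuous section'') names the strategy without proving it; the difficulty is precisely to show that the piecewise choices of sheet can be made coherently so that the limit exists and is continuous, which is the substance of Floyd's compactness argument. Deferring to Floyd at that point is legitimate but means the Zorn layer bought nothing: the paper's one-line citation of Floyd already proves the statement in the same generality. If you wanted a genuinely self-contained proof you would need to actually carry out the local-lift construction (in the spirit of Rickman's treatment of path lifting for discrete open maps, \cite[Section~II.3]{r93} or \cite{r73}), which your proposal stops short of.
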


Another important fact (see e.g.~Proposition~\ref{prop:existence of normal nbhd}) which we will use repeatedly is that for each $x\in X$, the sets $U(x,\pi,r)$, $r>0$, form a neighborhood basis of $z$ in the topology of $X$, where from now on, $U(x,\pi,r)$ denotes the $x$-component of $\pi^{-1}(B(\pi(x),r))$.

\begin{remark}\label{rmk:on light open map}
	i). For simplicity, we are not working in quite as much generality as we could. In fact, it would often suffice for our purposes to work in the generality of \emph{light open} mappings.	However, locally finite (and indeed, locally bounded) multiplicity is necessary for a number of our strongest results. Moreover, the theory of BLD mappings (and more generally, quasiregular mappings) between manifolds, or even generalized manifolds, typically requires $\pi$ to be light, open and sense-preserving (or sense-reversing), which in that setting is equivalent to the condition that $\pi$ is discrete, open, and of locally bounded multiplicity~\cite{hs02}. 
	
	ii). It should be noted, however, that in general, local compactness of $Z$ and discreteness of $\pi$ imply locally finite multiplicity (i.e., each $x\in X$ has a neighborhood $U$ such that for all $y\in Y$, $N(y,\pi,U)<\infty$), but \emph{not} locally bounded multiplicity. For example, let
	\begin{align*}
		X=Y=\bigcup_{i=1}^\infty\Big\{(t,it):0\leq t\leq \frac{1}{i^2}\Big\}\subset \mathbb{R}^2,
	\end{align*}
	with $X$ equipped with the subspace topology and $Y$ inheriting the Euclidean metric from $\R^2$. Let $\pi\colon X\to Y$ be given by $\pi(t,it)=(\frac{i^2t}{k^2},\frac{i^2t}{k})$ whenever $2^k\leq i<2^{k+1}$. Then $X$ is compact, $\pi$ is discrete and open, and for every
	neighborhood $U\subset X$ of the origin, we have $N(\pi,U)=\infty$, even though for each $y\in Y$, $N(y,\pi,U)<\infty$.
\end{remark}

\begin{remark}\label{rmk:on continuum and path}
	It is an elementary topological fact that a locally compact, connected, locally connected metric space is path connected, and thus every connected open subset is path connected as well. It also follows from a simple diagonalization argument that locally, the infimum in inequality~\eqref{eq:def for pullback metric} is in fact attained, i.e., at every $x\in X$, there is a
	neighborhood $U$ of $x$ such that for every $x'\in U$, there is a continuum $\alpha$ joining $x$ and $x'$ with
	\begin{align*}
		f^*d_Y(x,x')=\diam(f(\alpha)).
	\end{align*}
	It is also perhaps of interest that by the continuity and openness
	of $f$, every continuum $\alpha$ has a connected open (and a fortiori path connected) neighborhood $\alpha_\varepsilon$ with 
	\begin{align*}
		\diam(f(\alpha_\varepsilon))\leq \diam(f(\alpha))+\varepsilon
	\end{align*}
	for each $\varepsilon>0$. Thus, in the definition of pullback metric,
	we could just as well have required $\alpha$ to be (the image of) a curve, or a connected open set, though in the latter case the infimum in~\eqref{eq:def for pullback metric} need not necessarily be realized.
\end{remark}


For simplicity, we will formulate many of our basic results for the case that $\pi$ is \textit{proper} (i.e.~the preimage of each compact set is compact) and surjective, and $N(\pi,X)<\infty$. We lose very little generality with this reduction, in light of the following considerations.

First, if $\pi$ is an arbitrary branched covering, then at every $x\in X$, there is a radius $r>0$ for which $U(x,\pi,r)$ is relatively compact. From that and the local boundedness of the multiplicity
of $\pi$, we have
\begin{align*}
	N(\pi|_{U(x,\pi,r)},U(x,\pi,r))=N(\pi, U(x,\pi,r))<\infty.
\end{align*}
Since $\pi$ is open, $\pi(\partial U(x,\pi,r))=\partial \psi(U(x,\pi,r))$, so that 
$$\pi|_{U(x,\pi,r)}\colon U(x,\pi,r)\to \pi(U(x,\pi,r))$$
is proper and surjective as well, and so local restrictions of $\pi$ satisfy the more restrictive conditions.

Secondly, it follows easily from the local connectivity of $X$ that for any two open subsets $U,V\subset X$, the pullback metrics $(\pi|_U)^*d_Y$ and $(\pi|_V)^*d_Y$ are locally isometric on $U\cap V$ (see e.g. Lemma~\ref{lemma:pullback property 4} below).

As a result of the above considerations, when analyzing the metric space $X^\pi$, it is often enough to apply our followingi results to the metric spaces $(\pi|_{U(x,\pi,r)})^*Y$, and then invoke the local
isometries between these spaces and $X^\pi$.

Up to the end of this section, we will assume the metric space $Y$ is \textit{proper}, i.e. closed bounded balls are compact. 

\begin{lemma}\label{lemma:pullback property 1}
	The metric space $X^\pi$ is a proper metric space, homeomorphic to $X$ via the indentity mapping $g$. Open and closed balls in $X^\pi$ are connected and $X^\pi$ have 1-bounded turning. The projection mapping $\pi\colon X^\pi\to Y$ is 1-Lipschitz, 1-BDD and for each $z\in X^\pi$,
	\begin{align}\label{eq:pullback property 1}
		B(z,r)\subset U(z,\pi,r)\subset B(z,2r).
	\end{align}
\end{lemma}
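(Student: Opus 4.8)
The plan is to analyze Lemma~\ref{lemma:pullback property 1} one assertion at a time, reducing everything to the defining formula~\eqref{eq:def for pullback metric} for $\pi^*d_Y$ together with the standing assumptions and the path-lifting Lemma~\ref{lemma:lift Floyd}. First I would record the elementary two-sided estimate~\eqref{eq:pullback property 1}, as everything else leans on it. The left inclusion $B(z,r)\subset U(z,\pi,r)$: if $\pi^*d_Y(z,w)<r$, pick a continuum $\alpha\ni z,w$ with $\diam(\pi(\alpha))<r$; then $\pi(\alpha)\subset B(\pi(z),r)$, so $\alpha$ lies in $\pi^{-1}(B(\pi(z),r))$, and since $\alpha$ is connected and contains $z$ it lies in the $z$-component $U(z,\pi,r)$, whence $w\in U(z,\pi,r)$. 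For the right inclusion $U(z,\pi,r)\subset B(z,2r)$: given $w\in U(z,\pi,r)$, since $U(z,\pi,r)$ is open and connected in the locally compact, connected, locally connected space $X$, it is path connected (Remark~\ref{rmk:on continuum and path}), so choose a curve $\alpha\subset U(z,\pi,r)$ from $z$ to $w$; then $\pi(\alpha)\subset B(\pi(z),r)$ has diameter $\le 2r$, giving $\pi^*d_Y(z,w)\le 2r$.

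Next I would establish that $g\colon X\to X^\pi$ is a homeomorphism, i.e.\ that $\pi^*d_Y$ induces the original topology. Since $\pi$ is continuous, $\pi^*d_Y$ is continuous on $X\times X$, so $g$ is continuous; for the inverse, the left inclusion in~\eqref{eq:pullback property 1} shows every $\pi^*d_Y$-ball contains a genuine neighborhood $U(z,\pi,r)$, and these form a neighborhood basis at $z$ (the fact recalled after Lemma~\ref{lemma:lift Floyd}, via Proposition~\ref{prop:existence of normal nbhd}), so $g^{-1}$ is continuous. That $\pi^*d_Y$ is a genuine metric (not just a pseudometric) is already noted in Section~\ref{subsec:The pullback metric} from discreteness of $\pi$. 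Then $\pi\colon X^\pi\to Y$ is $1$-Lipschitz: for any $x_1,x_2$, the trivial continuum considerations give $d_Y(\pi(x_1),\pi(x_2))\le \diam(\pi(\alpha))$ for every admissible $\alpha$, hence $d_Y(\pi(x_1),\pi(x_2))\le \pi^*d_Y(x_1,x_2)$. For the $1$-BDD property I would take any non-constant curve $\alpha$ in $X^\pi$; the inequality $\diam(\pi\circ\alpha)\le\diam_{\pi^*d_Y}(\alpha)$ is again immediate from $1$-Lipschitzness, while $\diam_{\pi^*d_Y}(\alpha)\le\diam(\pi\circ\alpha)$ follows because for any two points $x_1,x_2\in\alpha$ the subarc of $\alpha$ between them is itself an admissible continuum whose $\pi$-image has diameter $\le\diam(\pi\circ\alpha)$; since $\alpha$ is a curve the infimum defining $\pi^*d_Y(x_1,x_2)$ is thus $\le\diam(\pi\circ\alpha)$.

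For connectedness of balls and $1$-bounded turning, I would argue that the closed ball $\closure B(z,r)$ in $X^\pi$ is connected: given $w$ with $\pi^*d_Y(z,w)\le r$, for each $\varepsilon>0$ pick a continuum $\alpha\ni z,w$ with $\diam(\pi(\alpha))<r+\varepsilon$; then $\alpha\subset\closure B(z,r+\varepsilon)$ by the same reasoning as above (every point of $\alpha$ is joined to $z$ inside $\alpha$, and $\diam(\pi(\alpha))<r+\varepsilon$). Taking $\varepsilon\to 0$ and using that $X^\pi$ is proper (so nested intersections of closed balls behave well), or more directly passing to a convergent sub-continuum via the Blaschke selection / compactness of the hyperspace of a compact set, one extracts a continuum joining $z$ to $w$ inside $\closure B(z,r)$; since $z$ and $w$ were arbitrary, $\closure B(z,r)$ is connected. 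The analogous statement for open balls follows by writing $B(z,r)=\bigcup_{s<r}\closure B(z,s)$, an increasing union of connected sets through $z$. The $1$-bounded turning is then exactly the statement that any $x_1,x_2$ are joined by a continuum of $\pi^*d_Y$-diameter $\le\pi^*d_Y(x_1,x_2)$, which is immediate from the definition: for $\varepsilon>0$ take $\alpha\ni x_1,x_2$ with $\diam(\pi(\alpha))\le\pi^*d_Y(x_1,x_2)+\varepsilon$; applying $1$-BDD to (a curve tracing) $\alpha$ gives $\diam_{\pi^*d_Y}(\alpha)=\diam(\pi(\alpha))\le\pi^*d_Y(x_1,x_2)+\varepsilon$, and one then lets $\varepsilon\to 0$ using compactness/properness to realize the infimum exactly.

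Finally, properness of $X^\pi$: by~\eqref{eq:pullback property 1} closed $\pi^*d_Y$-balls are contained in $\closure{U(z,\pi,2r)}\subset\pi^{-1}(\closure B(\pi(z),2r))$; since $Y$ is proper, $\closure B(\pi(z),2r)$ is compact, and $\pi$ is proper (under the reduction made in the text that $\pi$ is proper with $N(\pi,X)<\infty$, with the general case handled by restricting to $U(x,\pi,r)$ and invoking the local isometries noted before), so $\pi^{-1}(\closure B(\pi(z),2r))$ is compact, hence the closed $\pi^*d_Y$-ball is a closed subset of a compact set, therefore compact. I expect the only genuinely delicate point to be the realization of the infimum in~\eqref{eq:def for pullback metric} by an actual continuum (needed for the exactness in $1$-bounded turning and for closedness/connectedness arguments); this is asserted in the text for complete locally compact spaces with bounded turning, and I would invoke it via a compactness argument in the hyperspace of closed subsets of the compact set $\pi^{-1}(\closure B(\pi(z),2r))$ — everything else is routine bookkeeping with~\eqref{eq:pullback property 1}.
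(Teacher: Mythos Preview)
Your proposal is correct and follows essentially the same route as the paper's proof: establish the two inclusions in~\eqref{eq:pullback property 1} first, read off the topological equivalence and properness from them, deduce $1$-Lipschitz and then $1$-BDD from the definition of the pullback metric, and obtain connectedness of balls and $1$-bounded turning from the fact that the infimum in~\eqref{eq:def for pullback metric} is realized. The paper is simply terser: it asserts~\eqref{eq:pullback property 1} ``immediately from the definition'' and invokes Remark~\ref{rmk:on continuum and path} for the realization of the infimum, whereas you spell out the inclusions and sketch the hyperspace compactness argument yourself; one small sharpening worth noting is that since any curve $\alpha\subset U(z,\pi,r)$ has \emph{compact} image, $\pi(\alpha)$ is a compact subset of the open ball $B(\pi(z),r)$ and hence has diameter strictly less than $2r$, which gives the open-ball inclusion $U(z,\pi,r)\subset B(z,2r)$ rather than merely the closed-ball version.
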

\begin{proof}
	It follows immediately from the definition that for each $z\in X^\pi$ and $r>0$,
	\begin{align*}
		B(z,r)\subset U(z,\pi,r)\subset B(z,2r).
	\end{align*} 
	Since the sets $U(z,\pi,r)$ form a base for the topology on $X^\pi$, it follows that so do the balls $B(z,r)$, whereby the metric $\pi^*d_Y$ induces the original topology. Moreover,
	by properness of $Y$ and of $\pi$, each set $U(z,\pi,r)$ is relatively compact, and so $X^\pi$ is proper.
	
	Since for every $z_1,z_2\in X^\pi$ and each continuum $\alpha\subset X^\pi$ joining $z_1$ and $z_2$, we have
	\begin{align*}
		d_Y(\pi(z_1),\pi(z_2))\leq \diam(\pi\circ \alpha).
	\end{align*}
	Taking the infimum over all such continuua gives that $\pi$ is 1-Lipschitz, and so we have $\diam(\pi(\alpha))\leq \diam(\alpha)$ for each continuum $\alpha\subset X^\pi$. The reverse inequality follows immediately from the definition, and so we obtain $\diam(\pi(\alpha))=\diam(\alpha)$. Thus $\pi$ is 1-BDD, and a fortiori 1-BLD. That $X^\pi$ has 1-bounded turning, and that open and closed balls are connected, follows immediately from the definition and the fact that the infimum in~\eqref{eq:def for pullback metric} is realized per Remark~\ref{rmk:on continuum and path}.
\end{proof}

For the next result, recall that a continuous mapping $f\colon X\to Y$ between two metric spaces is said to be \textit{$c$-co-Lipschitz} if for all $x\in X$ and $r>0$, 
\begin{align*}
	B(f(x),r)\subset f(B(x,cr)).
\end{align*}
Recall also that we have assumed that $N:=N(\pi,X)<\infty$.

\begin{lemma}\label{lemma:pullback property 2}
	If $Y$ has $c$-bounded turning, then $\pi$ is $c$-co-Lipschitz and is locally $c$-bi-Lipschitz on each set $X_k:=\{z\in X^\pi:N(z,\pi,X)=k\}$. If, additionally, $Y$ is Ahlfors $Q$-regular with constant $c_2$, then $X^\pi$ is Ahlfors $Q$-regular with constant $c^Qc_2N$. Moreover, for each $k=1,\dots,N$, and at each Lebesgue point $z$ of $X_k$, we may take the pointwise constant of $Q$-regularity to be $c^Qc_2$.
\end{lemma}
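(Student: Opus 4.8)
\emph{Proof proposal.} I would equip $X^\pi$ with the pullback measure $\mu:=\pi^*\nu$ and prove the four assertions in turn, using throughout Lemma~\ref{lemma:pullback property 1}: $\pi\colon X^\pi\to Y$ is a proper branched covering that is $1$-Lipschitz and $1$-BDD, and $B(z,r)\subset U(z,\pi,r)\subset B(z,2r)$ for each $z$. First, the $c$-co-Lipschitz property. Fix $z\in X^\pi$, $s>0$ and $w\in B_Y(\pi(z),s)$. Since $Y$ is locally connected and has $c$-bounded turning, $\pi(z)$ and $w$ can be joined by a curve $\gamma\colon[a,b]\to Y$ with $\diam\gamma([a,b])\le c\,d_Y(\pi(z),w)<cs$. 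Floyd's path-lifting lemma (Lemma~\ref{lemma:lift Floyd}) gives a lift $\tilde\gamma$ with $\tilde\gamma(a)=z$ and $\pi\circ\tilde\gamma=\gamma$; setting $z':=\tilde\gamma(b)$ we get $\pi(z')=w$, and since $\tilde\gamma([a,b])$ is a continuum joining $z$ and $z'$ with $\pi$-image $\gamma([a,b])$, the very definition of $\pi^*d_Y$ yields
\[\pi^*d_Y(z,z')\le\diam\gamma([a,b])<cs.\]
Hence $w\in\pi(B_{X^\pi}(z,cs))$, i.e.\ $\pi$ is $c$-co-Lipschitz.

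Next, local $c$-bi-Lipschitzness on $X_k$. Since $\pi$ is $1$-Lipschitz, only the inequality $\pi^*d_Y(z,z')\le c\,d_Y(\pi(z),\pi(z'))$, for $z,z'$ near a given $z_0\in X_k$ within $X_k$, requires proof. I would pass to a relatively compact \emph{normal} neighbourhood $D$ of the (finite) fibre $\pi^{-1}(\pi(z_0))$ — these exist for proper discrete open maps — so that, by Proposition~\ref{prop:existence of normal nbhd} and the discussion after it, there is $r_0>0$ with $\pi$ injective on $X_k\cap U(z_0,\pi,r_0)$. Put $W:=X_k\cap U(z_0,\pi,r_0/(10c))$, take $z,z'\in W$ and write $w=\pi(z)$, $w'=\pi(z')$. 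Given $\varepsilon>0$, co-Lipschitzness produces $\hat z$ with $\pi(\hat z)=w'$ and $\pi^*d_Y(z,\hat z)<c(d_Y(w,w')+\varepsilon)$. A short radius estimate using $d_Y(w,w')\le\pi^*d_Y(z,z')$ and $B(z_0,r_0)\subset U(z_0,\pi,r_0)$ (from \eqref{eq:pullback property 1}) shows that $\hat z\in U(z_0,\pi,r_0)$ for $\varepsilon$ small; and $\hat z\in\pi^{-1}(w')\subset X_k$, since $w'=\pi(z')$ with $z'\in X_k$ forces the whole fibre over $w'$ into $X_k$. Thus $\hat z$ and $z'$ both lie in the set $X_k\cap U(z_0,\pi,r_0)$ on which $\pi$ is injective, and have equal image, so $\hat z=z'$; letting $\varepsilon\to0$ gives the bound.

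For global Ahlfors regularity, note $\diam X^\pi=\diam Y$ (the inequality $\le$ is immediate from the definition of $\pi^*d_Y$, and $\ge$ because $\pi$ is $1$-Lipschitz and onto), so the admissible radii coincide. For $z\in X^\pi$ and $0<r<\diam X^\pi$, using $N(y,\pi,\cdot)\le N$ and $\pi(B_{X^\pi}(z,r))\subset B_Y(\pi(z),r)$,
\[\pi^*\nu\bigl(B_{X^\pi}(z,r)\bigr)=\int_Y N\bigl(y,\pi,B_{X^\pi}(z,r)\bigr)\,d\nu(y)\le N\,\nu\bigl(B_Y(\pi(z),r)\bigr)\le Nc_2\,r^Q,\]
while co-Lipschitzness gives $B_Y(\pi(z),r/c)\subset\pi(B_{X^\pi}(z,r))$ and hence
\[\pi^*\nu\bigl(B_{X^\pi}(z,r)\bigr)\ge\nu\bigl(B_Y(\pi(z),r/c)\bigr)\ge c_2^{-1}(r/c)^Q=(c^Qc_2)^{-1}r^Q.\]
Since $c\ge1$ and $N\ge1$, both bounds are subsumed by the single constant $c^Qc_2N$; \eqref{eq: comparable with Hausdorff measure} then gives Ahlfors $Q$-regularity in the usual sense. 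Finally, at a $\pi$-Lebesgue point $z$ of $X_k$ — which is $\pi^*\nu$-a.e.\ point, by Corollary~\ref{coro:Lebesgue point pb}, as $\nu$ is doubling — Lemma~\ref{lemma:on essential index} gives $i_{\ess}(z,\pi)=1$; together with $\pi^*\nu(U(z,\pi,r))\ge\nu(B_Y(\pi(z),r))$ (valid for small $r$ by Proposition~\ref{prop:existence of normal nbhd}(4)) this forces $\pi^*\nu(U(z,\pi,r))=\nu(B_Y(\pi(z),r))+o(r^Q)\le c_2r^Q+o(r^Q)$, and since $B_{X^\pi}(z,r)\subset U(z,\pi,r)$ we get $\limsup_{r\to0}r^{-Q}\pi^*\nu(B_{X^\pi}(z,r))\le c_2\le c^Qc_2$, while the lower bound $(c^Qc_2)^{-1}r^Q$ holds at every point — so the pointwise constant may be taken to be $c^Qc_2$.

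I expect the only genuinely delicate point to be the second step: one must localise on a normal neighbourhood of the \emph{whole} fibre $\pi^{-1}(\pi(z_0))$, not just of $z_0$, so that Proposition~\ref{prop:existence of normal nbhd} applies and $N(\pi(\cdot),\pi,D)$ agrees with the global multiplicity near $z_0$, and then one has to track the radii carefully — using $B(z_0,s)\subset U(z_0,\pi,s)$ — to guarantee that the point $\hat z$ furnished by co-Lipschitzness really coincides with the intended preimage $z'$. Everything else is routine bookkeeping with Lemma~\ref{lemma:pullback property 1}, the pullback-measure calculus of Section~\ref{sec:basic pullback studies}, and the essential-index estimate of Lemma~\ref{lemma:on essential index}.
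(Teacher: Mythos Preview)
Your proof is correct, and for the co-Lipschitz and local bi-Lipschitz parts it is essentially the paper's argument (indeed, you spell out the bi-Lipschitz step more carefully than the paper's one-line remark, and your care about localising on a normal neighbourhood of the whole fibre so that $D_k$ matches $X_k\cap D$ is exactly what is needed). The genuine divergence is in the Ahlfors regularity step. The paper works directly with the Hausdorff measure $\mathcal{H}^Q$: it decomposes $B(z,r)=\bigcup_k\bigl(B(z,r)\cap X_k\bigr)$, uses the just-established local $c$-bi-Lipschitzness of $\pi$ on each $X_k$ together with the Borel decomposition Lemma~\ref{lemma:decomposition Dn} (to pass from ``locally bi-Lipschitz'' to ``bi-Lipschitz on pieces''), and then pushes forward to $Y$ to get $\mathcal{H}^Q(B(z,r))\le Nc^Q\mathcal{H}^Q(B_Y(\pi(z),r))$. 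You instead work with the pullback measure $\pi^*\nu$ and obtain the upper bound from the trivial estimate $N(y,\pi,\cdot)\le N$, bypassing both the layer decomposition and Lemma~\ref{lemma:decomposition Dn}; your lower bound comes straight from co-Lipschitzness. This is cleaner and shorter, and it yields the stated constant $c^Qc_2N$ for $\pi^*\nu$ (which is the measure $\lambda$ actually used downstream in Section~\ref{sec:foundations of QR mappings in mms}); note, though, that if one insists on the constant for $\mathcal{H}^Q$ itself, your appeal to~\eqref{eq: comparable with Hausdorff measure} loses the exact value. Similarly, for the pointwise statement the paper argues via injectivity of $\pi$ on $X_k$ near a Lebesgue point (``repeat with $N=1$''), while you reach the same conclusion through the essential index and Lemma~\ref{lemma:on essential index}; these are equivalent in substance.
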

\begin{proof}
	If $Y$ has $c$-bounded turning, then for each $z_0\in X^\pi$ and $r>0$, every point $y\in B(\pi(z_0),r)$ may be joined to $\pi(z_0)$ with a continuum $\alpha$ of diameter at most $cr$. We have by Lemma~\ref{lemma:lift Floyd} that there is a continuum $\tilde{\alpha}$ containing $z_0$ such that $\pi(\tilde{\alpha})=\alpha\ni y$. Thus there
	is some point $z\in \title{\alpha}$ with $\pi(z)=y$, so that 
	\begin{align*}
		\pi^*d_Y(z_0,z)\leq \diam(\tilde{\alpha})=\diam(\alpha)\leq cr.
	\end{align*}
	Thus $y\in \pi(B(z_0,cr))$ and so $\pi$ is $c$-co-Lipschitz. It follows easily from local compactness and Proposition~\ref{prop:existence of normal nbhd} (5) that the multiplicity function $N(y,\pi,X^\pi)$ is lower-semicontinuuous in $Y$, which in turn implies that $\pi$ is locally bijective on each of the sets $X_k$. From this and the fact that $\pi$ is 1-Lipschitz and $c$-co-Lipschitz we obtain that $\pi$ is locally $c$-bi-Lipschitz on each $X_k$. 
	
	We next turn to the second claim. Note that $\pi$ is 1-Lipschitz, $c$-co-Lipschitz and that $Y$ is Ahlfors $Q$-regular with constant $c_2$, so we have
	\begin{align*}
		\mathcal{H}^Q\big(B(x,r)\big)\geq \mathcal{H}^Q\big(\pi(B(x,r))\big)\geq \mathcal{H}^Q\big(B(\pi(x),r/c)\big)\geq \frac{1}{c_2c^Q}r^Q. 
	\end{align*}
	For the reverse direction, we first assume that $\pi$ is $c$-bi-Lipschitz on $X_k$ and write $B(x,r)=\bigcup_{k=1}^NB_k(x,r)$, where $B_k(x,r)=B(x,r)\cap X_k$. Then it follows from the subadditivity of the Hausdorff measure that
	\begin{align*}
		\mathcal{H}^Q(B(x,r))\leq \sum_{k=1}^{N}\mathcal{H}^Q(B_k(x,r))\leq N\max_{1\leq k\leq N}\mathcal{H}^Q(B_k(x,r)).
	\end{align*}
	For each $k$, note that $\pi^{-1}\colon \pi(B_k(x,r))\to B_k(x,r)$ is $c$-Lipschitz and that $\pi$ is 1-Lipschitz, and we conclude 
	\begin{align*}
	\mathcal{H}^Q(B_k(x,r))=\mathcal{H}^Q(\pi^{-1}\circ\pi(B_k(x,r)))\leq c^Q\mathcal{H}^Q(\pi(B(x,r)))\leq c^Q\mathcal{H}^Q(B(\pi(x),r)).
	\end{align*}
	In the general case, we may use Lemma~\ref{lemma:decomposition Dn} to decompose each $B_k(x,r)$ as a disjoint union of Borel sets on which $\pi$ is $c$-bi-Lipschitz. Then a similar computation leads to the estimate $\mathcal{H}^Q(B(x,r))\leq Nc_2c^Qr^Q$.  
	In conclusion, we have shown
	\begin{align*}
		\frac{1}{c_2c^Q}r^Q\leq \mathcal{H}^Q(B(x,r))\leq Nc_2c^Qr^Q.
	\end{align*}
	At each Lebesgue point $z$ of $X_k$, by Proposition~\ref{prop:existence of normal nbhd}, $\pi$ is injective in a sufficiently small neighborhood of $z$ and so we may repeat the above argument with $N=1$, which gives the desired  pointwise constant $c^Qc_2$. 
\end{proof}

For the next result, recall that a point $x$ in a metric space $X$ is called a \textit{local cut point} if $U\backslash \{x\}$ is disconnected for some neighborhood $U$ of $x$. 

\begin{lemma}\label{lemma:pullback property 3}
   If $Y$ is $c$-LLC-2 and $X$ has no local cut points, then $X^\pi$ is locally $2c$-LLC.  
\end{lemma}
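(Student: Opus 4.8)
The plan is to check the two clauses in the definition of $\theta$-linear local connectedness, with $\theta=2c$, working inside a single normal neighbourhood of a given point and transferring the second clause from $Y$ through the projection $\pi$ together with Floyd's path lifting (Lemma~\ref{lemma:lift Floyd}). The first clause requires almost nothing: since $X^\pi$ has $1$-bounded turning and the infimum in~\eqref{eq:def for pullback metric} is locally attained (Lemma~\ref{lemma:pullback property 1}), any two points $a,b$ in a ball $B(z,\rho)$ may be joined to $z$ by continua of diameter $<\rho$, and the union of these continua joins $a$ and $b$ inside $B(z,\rho)\subset B(z,2c\rho)$. So (LLC-1) holds even with constant $1$, and the content of the lemma is the second clause.

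For (LLC-2), fix $x\in X^\pi$, put $y_x=\pi(x)$, and choose $\rho_0>0$ so small that $W:=U(x,\pi,\rho_0)$ is relatively compact and normal, contains no other point of the fibre $\pi^{-1}(y_x)$, and satisfies the conclusions of Proposition~\ref{prop:existence of normal nbhd} with $D=W$ and $z=y_x$; it will suffice to treat balls centred at $x$, the case of a general centre in $W$ being handled by the same argument applied in a normal neighbourhood of that centre. By Proposition~\ref{prop:existence of normal nbhd}(2) one has $\pi^{-1}(B_Y(y_x,r))\cap W=U(x,\pi,r)$ for every $r\le\rho_0$, and combining this with~\eqref{eq:pullback property 1} yields the two-sided comparison
\begin{equation}\label{eq:llc3-comparison}
	d_Y\big(y_x,\pi(z)\big)\le d_{X^\pi}(x,z)\le 2\,d_Y\big(y_x,\pi(z)\big)\qquad\text{for all }z\in W\text{ sufficiently close to }x,
\end{equation}
the left inequality being $1$-Lipschitzness of $\pi$ and the right one coming from the displayed identity for preimages of balls. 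The point of~\eqref{eq:llc3-comparison} is that it is, at the centre $x$, a co-Lipschitz estimate, available even though $Y$ is merely assumed $c$-LLC-2 rather than, say, of bounded turning (where Lemma~\ref{lemma:pullback property 2} would apply).

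Now let $r>0$ be small, set $s=r/(2c)$, and take $a,b\in W$ with $d_{X^\pi}(x,a),d_{X^\pi}(x,b)>r$. By the right-hand inequality of~\eqref{eq:llc3-comparison}, $d_Y(y_x,\pi(a))>r/2$ and $d_Y(y_x,\pi(b))>r/2$, so $\pi(a),\pi(b)\in Y\setminus\overline B_Y(y_x,r/2)$. Since $Y$ is $c$-LLC-2, there is a continuum $\gamma\subset Y\setminus\overline B_Y(y_x,s)$ joining $\pi(a)$ to $\pi(b)$; parametrising it as a path and lifting through $\pi$ starting at $a$ (Lemma~\ref{lemma:lift Floyd}) produces a path $\tilde\gamma$ with $\tilde\gamma(0)=a$ and $\pi\circ\tilde\gamma=\gamma$. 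Every point $w$ of $\tilde\gamma$ satisfies $d_{X^\pi}(x,w)\ge d_Y(y_x,\pi(w))>s$, so $\tilde\gamma$ avoids $\overline B_{X^\pi}(x,s)$; its endpoint $b''$ lies in $\pi^{-1}(\pi(b))$, and by~\eqref{eq:llc3-comparison} again $d_{X^\pi}(x,b'')\ge d_Y(y_x,\pi(b))>r/2\ge s$, so $b,b''\in W\setminus\overline B_{X^\pi}(x,s)$.

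It remains to join $b''$ to $b$ inside $W\setminus\overline B_{X^\pi}(x,s)$; concatenating with $\tilde\gamma$ then joins $a$ to $b$ there and gives (LLC-2) with constant $2c$. This endpoint repair is where the hypothesis that $X$ --- equivalently $X^\pi$, since $g$ is a homeomorphism --- has no local cut points is indispensable, and it is the step I expect to be the hard part. One cannot argue as in the quasiconformal case by transferring connectedness of a spherical shell from $Y$ to its preimage, since $\pi^{-1}$ of an annulus in $Y$ need not be connected and, moreover, only $c$-LLC-2 of $Y$ is available, so annuli in $Y$ themselves need not be connected. Instead the plan is to use that $\overline B_{X^\pi}(x,s)$ is connected (Lemma~\ref{lemma:pullback property 1}) and contained in the normal ball $U(x,\pi,2s)$, which is compactly contained in $W$, so that $W\setminus\overline B_{X^\pi}(x,s)$ differs from the punctured open connected set $W\setminus\{x\}$ only by the connected set $U(x,\pi,2s)\setminus\{x\}$; invoking the absence of local cut points at points of $W$ near $x$, together with a limiting argument on the radius of the deleted ball of the kind classically used to study the domain of a branched covering, one concludes that $W\setminus\overline B_{X^\pi}(x,s)$ is connected, and hence that $b$ and $b''$ lie in a common component of it. Being open and locally path connected, that component contains a path from $b''$ to $b$, which completes the plan.
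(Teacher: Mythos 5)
Your plan diverges from the paper's in a way that opens several genuine gaps; I'll focus on the most concrete ones.

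First, your lift can escape the normal neighbourhood. You apply $c$-LLC-2 in $Y$ to get a continuum $\gamma$ from $\pi(a)$ to $\pi(b)$ lying in $Y\setminus\overline B_Y(y_x,s)$, but nothing prevents $\gamma$ from exiting $B_Y(y_x,\rho_0)$. Consequently the Floyd lift $\tilde\gamma$ starting at $a\in W$ can exit $W$, and your claimed conclusion ``$b''\in W$'' does not follow. (Your comparison~\eqref{eq:llc3-comparison} only gives the one-sided inequality $d_{X^\pi}(x,b'')\ge d_Y(y_x,\pi(b''))$ for arbitrary $b''$, which does not place $b''$ in $W$.) Without $b''\in W$, the entire ``endpoint repair'' strategy of joining $b''$ to $b$ inside $W\setminus\overline B(x,s)$ collapses. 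The paper sidesteps this by instead joining $y_1$ and $y_2$ to the \emph{fixed outer sphere} $\partial B(y_0,r_0)$, truncating at the first hit, and lifting; the truncated continua stay in $\overline{B(y_0,r_0)}$, so by the normality of $U(z_0,\pi,r_0)$ the lifts stay in $\overline{U(z_0,\pi,r_0)}$ and land on the \emph{compact} set $\partial U(z_0,\pi,r_0)$. That single compact target set is what makes the ``no local cut point'' step both well-posed and uniform.

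Second, you only establish the LLC-2 estimate for balls centred at the distinguished point $x$, waving off the general case with ``the same argument applied in a normal neighbourhood of that centre.'' That does not suffice: locally $\theta$-LLC requires a single neighbourhood in which the estimate holds for \emph{all} centres at once, and the comparison~\eqref{eq:llc3-comparison} is special to the point $x$ whose fibre meets $\overline W$ only in $\{x\}$. For a centre $z$ on or near $\mathcal B_\pi$, both the choice of normal neighbourhood and the constants can degenerate as one shrinks around $z$. The paper handles exactly this by fixing $z_0$ once and for all, deriving a radius $R$ from the absence of local cut points that works \emph{simultaneously} for all $z\in U(z_0,\pi,r_0/2)$, and then splitting into three cases according to whether $\pi(z_1),\pi(z_2)$ lie inside or outside $B(\pi(z),r)$. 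Your restriction to $x$ sidesteps those cases, which is another sign the proof is incomplete.

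Third, even at the single centre $x$, the repair step rests on the assertion that $W\setminus\overline B_{X^\pi}(x,s)$ is connected, which you acknowledge but do not actually prove; the limiting argument you gesture at would give, for each \emph{fixed} pair, a radius below which the pair lies in the same component, but you need a radius uniform over all admissible pairs, and $W\setminus\overline B(x,s)$ is not compact. The paper's version of this step is phrased on the compact sphere $\partial U(z_0,\pi,r_0)$ precisely to make such a uniformity argument available.

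In short, your proposal is not a minor variant of the paper's proof: joining $a$ directly to a lift-endpoint $b''$ and repairing afterwards is a genuinely different route, and it runs into the problems above. The paper's strategy of pushing both endpoints out to the same fixed compact boundary sphere eliminates all three issues at once, and is the key idea your plan is missing.
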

\begin{proof}
By Lemma~\ref{lemma:pullback property 1}, we only need to check the LLC-2 property of $X^\pi$. Since $X$ has no local cut points, so is $X^\pi$. It follows that for each $z_0\in X^\pi$ and $r_0>0$ such that $\pi^{-1}(\pi(z_0))\cap \overline{U(z_0,\pi,r_0)}=\{z_0\}$, there is some $R<\frac{r_0}{2}$ such that for each $z\in U(z_0,\pi,\frac{r_0}{2})$ and $r<R$, every pair of points $z_1',z_2'\in \partial U(z_0,\pi,r_0)$ can be joined by a continuum $\alpha$ such that $\alpha$ is disjoint from $U(z,\pi,r)$. Therefore, let $z\in U(z_0,\pi,\frac{r_0}{2})$ and $r<R$, and suppose $z_1,z_2\in X^\pi\backslash U(z,\pi,r)$. 
	
	Let $y_0=\pi(z_0)$, $y=\pi(z)$, and $y_i=\pi(z_i)$ for $i=1,2$.
	
	 First suppose additionally that $y_1,y_2\in Y\backslash B(y,r)$. Then by the LLC-2 property, we may join $y_i$ to $\partial B(y_0,r_0)$ with a continuum
	$\alpha_i\subset Y\backslash B(y,r/c)$, for $i=1, 2$. By Lemma~\ref{lemma:lift Floyd}, we may lift each continuum $\alpha_i$ to a continuum $\tilde{\alpha}_i$ containing $z_i$, which clearly intersects $\partial U(z_0,\pi,r_0)$, say at $z_i'$. Since $r<R$,
	there is a continuum $\alpha'\subset X^\pi\backslash U(z,\pi,r)$ joining 
	$z_1'$ and $z_2'$, and so 
	\begin{align*}
		\tilde{\alpha}_1\cup \alpha\cup \tilde{\alpha}_2\subset X^\pi\backslash U(z,r/c)
	\end{align*}
	is a continuum joining $z_1$ and $z_2$. Since $B(z,r/c)\subset U(z,\pi,r/c)$ and $U(z,\pi,r)\subset B(z,2r)$, we conclude that $X^\pi$ is locally $2c$-LLC.
	
	Next, suppose $y_1\in B(y,r)$ and $y_2\in Y\backslash B(y,r)$. Then $U(z_1,\pi,r)\cap U(z,\pi,r)=\emptyset$. Let $\alpha_2$, $\alpha_2'$ and $z_2'$ be as in the preceding proof. It is clear that we only need to construct a continuum $\alpha_1$ containing $y_1$ so that its lift $\alpha_1'$ is disjoint from $U(z,\pi,r)$ and intersects $\partial U(z_0,\pi,r_0)$ at $\tilde{z}_1'$. We first select a continuum $\beta_1$ that joins $y_1$ to some point $y_1'\in\partial B(y,r)$ so that its lift $\beta_1'$ is disjoint from $U(z,\pi,r)$ and joins $z_1$ to some point $z_1'\in\partial U(z_1,\pi,r)$. Then since $y_1'\in Y\backslash B(y,r)$, we may join $y_1'$ to some point $\tilde{z}_1\in \partial B(y_0,r_0)$ with a continuum $\gamma_1\subset Y\backslash B(y,r/c)$. As before, we may lift $\gamma_1$ to a continuum $\gamma_1'$ containing $z_1'$, which clearly intersects $\partial U(z_0,\pi,r)$, say at $\tilde{z}_1'$. Then $\alpha_1':=\beta_1'\cup \gamma_1'$ will be a continuum joining $z_1$ to $\partial U(z_0,\pi,r_0)$ that is disjoint from $U(z,\pi,r/c)$.
	
	Finally, suppose $y_1,y_2\in B(y,r)$. We may repeat the preceding arguments for both $y_1$ and $y_2$ to obtain two continua $\alpha_1'$ and $\alpha_2'$ that are disjoint from $U(z,\pi,r/c)$. Moreover, for each $i=1,2$, $\alpha_i'$ joins $z_i$ to $\partial U(z_0,\pi,r_0)$. We may repeat the arguments in the first case to complete the proof.
	
\end{proof}

\begin{lemma}\label{lemma:pullback property 4}
   If $l_{d_Y}$ and $d_Y$ induces the same topology on $Y$, then the metrics $\pi^*d_Y$, $l_{\pi^*d_Y}$, $\pi^*l_{d_Y}$, and $l_{\pi^*l_{d_Y}}$ induces the same topology on $X$. The length of a curve in $X^\pi$ is the same with respect to any of these four metrics, and moreover,
   \begin{align*}
   	\pi^*d_Y\leq \pi^*l_{d_Y}\leq l_{\pi^*d_Y}=l_{\pi^*l_{d_Y}}\leq (2N-1)\pi^*l_{d_Y}.
   \end{align*}
   In particular, the metric space $\pi^*(Y^l)$ is $(2N-1)$-quasiconvex and if $Y$ is $c$-quasiconvex, then $X^\pi$ is $(2N-1)c$-quasiconvex. 
\end{lemma}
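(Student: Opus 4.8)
Write $\ell:=l_{d_Y}$. The plan is to establish the four displayed relations first and then deduce the topological claim, the invariance of curve length, and the two quasiconvexity assertions as consequences. Three of the four comparisons are essentially formal. Since $d_Y\leq\ell$ pointwise, $\diam_{d_Y}(\pi(\alpha))\leq\diam_\ell(\pi(\alpha))$ for every continuum $\alpha$, so passing to the infimum in the definition of the pullback metric~\eqref{eq:def for pullback metric} gives $\pi^*d_Y\leq\pi^*\ell$. Next, $\pi\colon(X,\pi^*\rho)\to(Y,\rho)$ is $1$-Lipschitz for $\rho\in\{d_Y,\ell\}$ straight from the definition (cf. Lemma~\ref{lemma:pullback property 1}); combining this with the elementary bound $\pi^*\rho(a,b)\leq\diam_\rho(\pi(\beta([s,t])))$ for a subarc $\beta|_{[s,t]}$ of a curve joining $a$ to $b$, one gets, for every curve $\gamma$ in $X$,
\[ l_{\pi^*d_Y}(\gamma)=l_{d_Y}(\pi\circ\gamma)=l_\ell(\pi\circ\gamma)=l_{\pi^*\ell}(\gamma), \]
the middle equality because a curve has the same length in a metric and in its associated length metric. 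This chain of equalities is exactly the assertion that all four metrics assign the same length to every curve, and it forces $l_{\pi^*d_Y}=l_{\pi^*\ell}$. Finally $\pi^*\ell\leq l_{\pi^*d_Y}$: for any curve $\gamma$ joining $x_1,x_2$ one has $\pi^*\ell(x_1,x_2)\leq\diam_\ell(\pi\circ\gamma)\leq l_\ell(\pi\circ\gamma)=l_{\pi^*d_Y}(\gamma)$, and one infimizes over $\gamma$.

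The substantive point is the last inequality $l_{\pi^*\ell}\leq(2N-1)\pi^*\ell$. Fix $x_1,x_2$, let $\varepsilon>0$, and choose a continuum $\alpha$ joining them with $\diam_\ell(\pi(\alpha))<D:=\pi^*\ell(x_1,x_2)+\varepsilon$. Using local path-connectedness of $X$ together with the openness and continuity of $\pi$ (as in Remark~\ref{rmk:on continuum and path}), replace $\alpha$ by a curve $\beta\colon[0,1]\to X$ from $x_1$ to $x_2$ whose image $K:=\pi(\beta([0,1]))$ still has $\diam_\ell(K)<D$, and fix a small connected open neighborhood $K_\varepsilon\supset K$ with $\diam_\ell(K_\varepsilon)<D$. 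The curve $\pi\circ\beta$ may be arbitrarily long, so the idea is to manufacture a short replacement. Since $\ell$ is a length metric, any two points of $K_\varepsilon$ are joined in $Y$ by a curve of $\ell$-length $<D$, and by Floyd's path-lifting theorem (Lemma~\ref{lemma:lift Floyd}) every such curve lifts through $\pi$ from any prescribed preimage. I would then build $\gamma$ from $x_1$ to $x_2$, confined to $\pi^{-1}(K_\varepsilon)$, as a concatenation of at most $2N-1$ lifts of curves of $\ell$-length $<D$: lift such a curve from $x_1$ toward $\pi(x_2)$; if its endpoint is the wrong preimage of $\pi(x_2)$, correct it by a further short lifted curve (a \emph{sheet-changing} loop) and iterate. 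Each correction can be arranged to reach a preimage of $\pi(x_1)$ or of $\pi(x_2)$ not reached before — the key being that any branch value one must encircle to change sheets lies within $\ell$-distance $<D$ of $K$, a consequence of $\beta$ joining the relevant sheets inside $\alpha$ — so by a pigeonhole over these two fibers, each of cardinality $\leq N$, the process stops after at most $2N-1$ pieces. Then $l_{\pi^*\ell}(x_1,x_2)\leq l_\ell(\pi\circ\gamma)<(2N-1)D$, and $\varepsilon\to0$ finishes. I expect this construction — making the sheet-changing loops precise, keeping everything inside $\pi^{-1}(K_\varepsilon)$, controlling the accumulated length, and bounding the number of pieces by $2N-1$ (most naturally by induction on $N$) — to be the main obstacle.

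Granting the chain of relations, the rest is soft. The metrics $\pi^*\ell$, $l_{\pi^*d_Y}$ and $l_{\pi^*\ell}$ are mutually bi-Lipschitz and so metrize one topology; $\pi^*d_Y$ metrizes the topology of $X$ by Lemma~\ref{lemma:pullback property 1}; and $\pi^*d_Y\leq\pi^*\ell$, together with the hypothesis that $d_Y$ and $\ell$ induce the same topology on $Y$ (so that small $d_Y$-balls of $Y$ have small $\ell$-diameter, whence $\pi^*d_Y(z,z')$ small forces $\pi^*\ell(z,z')$ small), shows $\pi^*d_Y$ and $\pi^*\ell$ metrize the same topology as well; hence all four metrics induce the topology of $X$. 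For the quasiconvexity assertions, both $X^\pi=(X,\pi^*d_Y)$ and $\pi^*(Y^l)=(X,\pi^*\ell)$ are proper (the former by Lemma~\ref{lemma:pullback property 1}, the latter since $\pi^*\ell\geq\pi^*d_Y$ and the two topologies agree), so their length metrics realize their infima; since $\pi^*\ell\leq l_{\pi^*\ell}\leq(2N-1)\pi^*\ell$, the space $\pi^*(Y^l)$ is $(2N-1)$-quasiconvex, and if $Y$ is $c$-quasiconvex then $\ell\leq c\,d_Y$ pointwise, hence $\pi^*\ell\leq c\,\pi^*d_Y$ and $l_{\pi^*d_Y}=l_{\pi^*\ell}\leq(2N-1)\pi^*\ell\leq(2N-1)c\,\pi^*d_Y$, i.e. $X^\pi$ is $(2N-1)c$-quasiconvex.
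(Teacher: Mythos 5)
Your treatment of the ``easy'' comparisons is correct and essentially matches the paper: $\pi^*d_Y\leq\pi^*l_{d_Y}$ from $d_Y\leq l_{d_Y}$, the chain of length equalities via the $1$-Lipschitz/1-BLD property of $\pi$ and the fact that $d$ and $l_d$ assign the same length to every curve, the resulting identity $l_{\pi^*d_Y}=l_{\pi^*l_{d_Y}}$, and the soft deductions (topology, quasiconvexity) from the displayed chain of inequalities. The deduction that $Y$ $c$-quasiconvex $\Rightarrow\pi^*l_{d_Y}\leq c\,\pi^*d_Y\Rightarrow X^\pi$ is $(2N-1)c$-quasiconvex is also fine.

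The genuine gap is exactly where you suspect it: the inequality $l_{\pi^*l_{d_Y}}\leq(2N-1)\pi^*l_{d_Y}$ is only sketched, via a ``sheet-changing loops'' construction that you explicitly flag as the main obstacle, and that construction is not carried out. It has real difficulties: a single lifted curve from $x_1$ toward $\pi(x_2)$ may land on the wrong sheet, and there is no guarantee that a corrective loop of controlled $\ell$-length and confined to $\pi^{-1}(K_\varepsilon)$ exists, nor that the iteration terminates within $2N-1$ pieces --- the monodromy of $\pi$ over $K_\varepsilon$ need not act transitively, and the pigeonhole you invoke over the two fibers doesn't by itself bound the number of corrective loops or keep the total length under control.

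The paper avoids this by a \emph{covering} argument rather than a curve-concatenation argument, and it is worth noting how cleanly it goes. Fix a continuum $\alpha$ with $\diam_{\pi^*l_{d_Y}}(\alpha)\leq r$; by the 1-BDD property, $\pi(\alpha)$ sits in a closed $l_{d_Y}$-ball $\bar B_{l_{d_Y}}(y,r)$. Since $Y^l$ is a length space, each $y'\in\bar B_{l_{d_Y}}(y,r)$ is joined to $y$ by a curve of $\ell$-length $\leq r$; lifting such a curve from each $x\in\alpha$ (Lemma~\ref{lemma:lift Floyd}) lands in $\pi^{-1}(y)$ after a path of $\pi^*l_{d_Y}$-length $\leq r$. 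Hence $\alpha$ is covered by at most $N$ closed balls of radius $r$ in $(X,l_{\pi^*l_{d_Y}})$, one around each preimage of $y$. Connectedness of $\alpha$ in that topology (which the paper establishes separately, via coincidence of small balls $B_{\pi^*l_{d_Y}}(z,r)=B_{l_{\pi^*l_{d_Y}}}(z,r)$ for small $r$, using discreteness of $\pi$ and Floyd lifting) then gives the diameter bound. This sidesteps entirely the need to string together sheet-changing loops. I recommend replacing your final paragraph's sketch with this covering argument.

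One small ordering difference: the paper proves that $l_{\pi^*l_{d_Y}}$ and $\pi^*l_{d_Y}$ induce the same topology \emph{before} the $(2N-1)$ estimate, and then \emph{uses} that topological equivalence (connectedness of $\alpha$ in the length topology) inside the proof of the estimate. Your plan derives the topological equivalence \emph{from} the bi-Lipschitz bound, so you cannot appeal to it while proving the bound; if you adopt the covering argument, you will need the paper's independent proof of the local ball coincidence first.
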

\begin{proof}
	Since $Y$ and $Y^l$ are homeomorphic by the identity, the pullback metric $\pi^*l_{d_Y}$ induces the same topology on $X$ as $\pi^*d_Y$ (namely, the original one). The inequalities
	\begin{align*}
		\pi^*d_Y\leq \pi^*l_{d_Y}\leq l_{\pi^*l_{d_Y}}
	\end{align*}
	are trivial since $d\leq l_d$. Note that for an arbitrary metric $d$, $\gamma$ is a rectifiable curve with respect to $d$ if and only if it is with respect to $l_d$, and in that case, the lengths coincide. It follows from this and the fact that $\pi$ is 1-BLD, for any curve $\gamma\subset X^\psi$,
	\begin{align*}
		l_{\pi^*d_Y}(\gamma)=l_{d_Y}(\pi(\gamma))=l_{l_{d_Y}}(\pi(\gamma))=l_{\pi^*l_{d_Y}}(\gamma).
	\end{align*}
	This means that $l_{\pi^*d_Y}=l_{\pi^*l_{d_Y}}$. 
	
	Now, note that since $\pi$ is discrete, by Lemma~\ref{lemma:lift Floyd} and the facts that $\pi$ is 1-BLD and $Y^l$ is a length
	space, we have that every $z\in X^\pi$ has a radius $r_z>0$ for which 
	\begin{align*}
		B_{\pi^*l_{d_Y}}(z,r)=B_{l_{\pi^*l_{d_Y}}}(z,r)
	\end{align*}
	for $r<r_z$, and so $l_{\pi^*l_{d_Y}}$ induces the same topology as $\pi^*l_{d_Y}$. Since the former is equal to $l_{\pi^*d_Y}$, and the latter was already remarked to induce the same topology as $\pi^*d_Y$, we obtain
	the desired topological equivalence. 
	
	Finally, let $\alpha\subset \pi^*(Y^l)$ be a continuum in $X$, with
	$\diam_{\pi^*l_{d_Y}}(\alpha)\leq r$. Then by the 1-BDD property for pullback metrics, $\diam_{l_{d_Y}}(\pi(\alpha))\leq r$. Thus $\pi(\alpha)$ is contained in a closed $l_{d_Y}$-ball of radius $r$, and so by Lemma~\ref{lemma:lift Floyd}, $\alpha$ is contained in a union of $N$ closed $l_{\pi^*l_{d_Y}}$-balls of radius $r$; since $\alpha$ is also connected in the topology induced by $l_{\pi^*l_{d_Y}}$, it follows that 
	\begin{align*}
		\diam_{l_{\pi^*l_{d_Y}}}(\alpha)\leq (2N-1)r,
	\end{align*}
	and so we obtain the inequality $l_{\pi^*l_{d_Y}}\leq (2N-1)\pi^*l_{d_Y}$.
\end{proof}
\begin{remark}\label{rmk:on pullback=orignial}
	In the special case where $X=Y$ and $\pi$ is the identity, the pullback metric $\pi^*d_Y$ coincides with $d_Y$ if and only if $Y$ has 1-bounded turning, and more generally the two are
	$c$-bi-Lipschitz equivalent if and only if $Y$ has $c$-bounded turning. In particular, they coincide when $Y$ is a length space, and are $c$-bi-Lipschitz equivalent when $Y$ is $c$-quasiconvex.
\end{remark}

\subsection{Fine properties of the pullback factorization}\label{subsec:Fine properties of the pullback factorization}
Recall that for any branched covering $f\colon X\to Y$ between two metric spaces, we have the pullback factorization $f=\pi\circ g$, where
$g\colon X\to X^f$ is the identity mapping and $\pi\colon X^f\to Y$ is the branched covering given by $\pi=f$. 

By Lemma~\ref{lemma:pullback property 1}, $\pi$ is a 1-BDD mapping and thus we have factorized $f$ into a composition of a homeomorphism and a 1-BDD ``projection". Note that while on the level of sets, we are factoring out the identity, the mapping $g$ will typically not be an isometry. But we have seen already, the projection mapping $\pi$ can be thought of as being as close to an isometry as possible. Thus, philosophically, we have factored $f$ into a geometric equivalence composed with a topological one.

On the other hand, as a result of the fact that $\pi$ is 1-BDD, $f$ and $g$ share many geometric properties.
\begin{proposition}\label{prop:nice properties of pullback factorization I}
$f$ is $L$-BDD if and only if $g$ is $L$-BDD. 
\end{proposition}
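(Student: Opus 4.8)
The plan is to reduce the statement to the already-established fact (Lemma~\ref{lemma:pullback property 1}) that the projection $\pi\colon X^f\to Y$ is $1$-BDD, and to exploit that $g\colon X\to X^f$ is the identity on underlying sets, so that it changes neither the point set of a curve nor its image under $f=\pi\circ g$.

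First I would record the key identity: for every non-constant curve $\alpha$ in $X$,
\[
\diam_{X^f}(g\circ\alpha)=\diam\bigl(f\circ\alpha\bigr),
\]
where the diameter on the left is computed in the pullback metric $f^*d_Y$ and the one on the right in $d_Y$. This is nothing more than the $1$-BDD property of $\pi$ applied to the continuum $g(\alpha)\subset X^f$ (a homeomorphic copy of $\alpha$, since $g$ is a homeomorphism by Lemma~\ref{lemma:pullback property 1}), combined with the set-level equality $f(\alpha)=\pi(g(\alpha))$; only the $1$-Lipschitz estimate $d_Y(f(x_1),f(x_2))\le f^*d_Y(x_1,x_2)$ and the obvious reverse bound (using $\alpha$ itself as a connecting continuum) are needed, both of which are built into the definition of $f^*d_Y$. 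Note also that $g$, being a homeomorphism, is itself a branched covering, so the condition that $g$ be $L$-BDD is meaningful, and that $f$ and $g$ share the same source space $X$ with its original metric, so that the source diameter $\diam(\alpha)$ entering the two BDD inequalities is literally the same quantity.

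Given the identity, both implications are immediate. If $f$ is $L$-BDD, then $L^{-1}\diam(\alpha)\le\diam(f\circ\alpha)\le L\diam(\alpha)$ for every non-constant curve $\alpha$ in $X$; substituting the identity rewrites this as $L^{-1}\diam(\alpha)\le\diam_{X^f}(g\circ\alpha)\le L\diam(\alpha)$, which says exactly that $g$ is $L$-BDD. Reading the substitution in reverse yields the converse. I do not expect a genuine obstacle here: the only point demanding attention is keeping straight which metric each diameter is measured in, and the entire content of the proposition is carried by the $1$-BDD-ness of $\pi$ from Lemma~\ref{lemma:pullback property 1}.
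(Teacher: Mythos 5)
Your proof is correct and uses essentially the same idea as the paper: both rest entirely on the $1$-BDD property of $\pi$ from Lemma~\ref{lemma:pullback property 1}, which gives $\diam_{X^f}(g\circ\alpha)=\diam_Y(f\circ\alpha)$. The only stylistic difference is that you state this identity up front and read both inequalities off it directly, whereas the paper proves the forward direction directly and the reverse by contradiction; your version is the cleaner presentation of the same argument.
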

\begin{proof}
	If $g$ is $L$-BDD, then it follows immediately from the fact $\pi$ is 1-BDD that $f$ is $L$-BDD as well. For the reverse direction, simply notice that if $g$ is not $L$-BDD, then there exists a non-constant curve $\gamma\subset X$ such that 
	$$\text{either } \diam(g(\gamma))>L\diam(\gamma) \quad\text{ or }\quad \diam(g(\gamma))<\frac{1}{L}\diam(\gamma).$$
	Taking into account the fact that $\pi$ is 1-BDD, it means 
	$$\text{either } \diam(\pi\circ g(\gamma))>L\diam(\gamma) \quad\text{ or }\quad \diam(\pi\circ g(\gamma))<\frac{1}{L}\diam(\gamma),$$
	which contradicts with our assumption $f$ being $L$-BDD.
	
\end{proof}

Since $X^f$ has 1-bounded turning, the latter implies that $g^{-1}\colon X^f\to X$ is $L$-Lipschitz. Indeed, for any $z_1,z_2\in X^f$, we may select a curve $\alpha\subset X^f$, joining $z_1$ and $z_2$, so that $f^*d_Y(z_1,z_2)=\diam (\alpha)$. Then $\tilde{\alpha}=g^{-1}(\alpha)$ will be a curve in $X$ that connects $g^{-1}(z_1)$ and $g^{-1}(z_2)$. Thus the $L$-BDD property of $f$ and 1-BDD property of $\pi$ give us
\begin{align*}
	d_X(g^{-1}(z_1),g^{-1}(z_2))&\leq \diam(\tilde{\alpha})\leq L\diam(f(\tilde{\alpha}))=L\diam(\pi(\alpha))\\
	&=L\diam(\alpha)=Lf^*d_Y(z_1,z_2).
\end{align*}
Thus when $g$ is $L$-Lipschitz and $L$-BDD, $g$ is an $L$-bi-Lipschitz equivalence between $X$ and $X^f$, whereby $X$ has $L$-bounded turning. Conversely, if $f$ is $L$-BDD, and $X$ has $c$-bounded turning, then $g$ is $Lc$-Lipschitz.

Similar to the BDD case (with indeed the same proof, but using only the 1-BLD property of $\pi$), we have the following useful conclusion.
\begin{proposition}\label{prop:nice properties of pullback factorization II}
	$f$ is $L$-BLD if and only if $g$ is $L$-BLD. 
\end{proposition}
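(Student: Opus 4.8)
The plan is to mirror the proof of Proposition~\ref{prop:nice properties of pullback factorization I}, with diameters of curve images replaced by lengths and with the role of the $1$-BDD property of $\pi$ played instead by its $1$-BLD property. The key structural input is that $\pi\colon X^f\to Y$ is $1$-BDD (Lemma~\ref{lemma:pullback property 1}), hence $1$-BLD, so that $l(\pi\circ\alpha)=l(\alpha)$ for every curve $\alpha$ in $X^f$; and that $g\colon X\to X^f$ is the identity on underlying sets, so a curve $\gamma$ in $X$ is non-constant precisely when $g\circ\gamma$ is, and the two $L$-BLD conditions are tested against the same family of curves.

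For the direction ``$g$ is $L$-BLD $\Longrightarrow$ $f$ is $L$-BLD'', I would take an arbitrary non-constant curve $\gamma$ in $X$ and observe that $l(f\circ\gamma)=l(\pi\circ g\circ\gamma)=l(g\circ\gamma)$, so the bounds $L^{-1}l(\gamma)\le l(g\circ\gamma)\le L\,l(\gamma)$ pass verbatim to $f$. For the converse I would argue by contrapositive exactly as in Proposition~\ref{prop:nice properties of pullback factorization I}: if $g$ were not $L$-BLD there would be a non-constant curve $\gamma$ in $X$ with $l(g\circ\gamma)>L\,l(\gamma)$ or $l(g\circ\gamma)<L^{-1}l(\gamma)$; composing with $\pi$ and using that $\pi$ preserves lengths converts this into the corresponding violation for $f=\pi\circ g$, contradicting that $f$ is $L$-BLD.

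The whole argument is bookkeeping, and I do not anticipate a genuine obstacle. The only points that deserve a sentence of care are (i) that $\pi$ preserves lengths \emph{exactly} --- one needs both inequalities in the $1$-BLD condition, which is why $1$-BLD (rather than a one-sided Lipschitz bound) is the right hypothesis on $\pi$ --- and (ii) that $g$ and $g^{-1}$ send rectifiable non-constant curves to rectifiable non-constant curves, so that no test curves are lost when translating the condition between $X$ and $X^f$; both are immediate from the construction of the pullback metric.
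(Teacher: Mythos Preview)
Your proposal is correct and matches the paper's approach exactly: the paper itself does not write out a separate proof but simply remarks that the argument is identical to that of Proposition~\ref{prop:nice properties of pullback factorization I}, using the $1$-BLD property of $\pi$ in place of the $1$-BDD property. Your write-up is precisely that argument spelled out.
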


As in the BDD case, if $g$ is $L$-BLD, then $(g^l)^{-1}$ is $L$-Lipschitz, where $g^l\colon X\to (X^f)^l$ is the identity, and so when $f$ is $L$-Lipschitz and $L$-BLD, $g^l$ gives a bi-Lipschitz equivalence between $X$ and $(X^f)^l$.

Thus much of the theory of BLD and BDD mappings reduces to the study of the pullback metric. Similarly, it turns out that under various definitions and for many different levels of generality, $g$ is quasiconformal if and only if $f$ is quasiregular. Thus one obtains a canonical factorization of a quasiregular mapping into a composition of a quasiconformal mapping
with a 1-BDD mapping. This is particularly useful in extending the theory of quasiregular mappings to the metric setting, as the quasiconformal theory has at present advanced much further than its branched counterpart in this generality. In fact, one of the motivations for our exploration of the pullback metric is to establish the equivalence of various geometric, metric, and analytic characterizations of quasiregularity, which we explore in Section~\ref{sec:foundations of QR mappings in mms}; this equivalence
has been established already in various contexts for definitions involving ``outer" dilatation, (e.g., the geometric $K_O$-inequality) which tend to mimic the proofs in the quasiconformal case, but the problem is substantially more delicate, even in the classical case, when inner dilatation is concerned.

\begin{proposition}\label{prop:BLD to BDD}
	Suppose that $Y$ is $c$-quasiconvex, and that $f$ is $L$-Lipschitz and $L$-BLD with $N=N(f,X)<\infty$. Then $f$ is $cNL$-BDD. 
\end{proposition}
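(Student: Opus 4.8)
The plan is to prove the two inequalities in Definition~\ref{def:BDD mapping} separately. The upper bound is for free: since $f$ is $L$-Lipschitz, $\diam(f\circ\alpha)\le L\diam(\alpha)\le cNL\diam(\alpha)$ for every non-constant curve $\alpha$ in $X$, because $c,N\ge 1$. All the work goes into the lower bound $\diam(\alpha)\le cNL\diam(f\circ\alpha)$. I would fix such an $\alpha$, set $r=\diam(f\circ\alpha)$ (assuming $r<\infty$, otherwise there is nothing to prove), and fix a point $x_0$ on $\alpha$, so that $f(\alpha)\subset \closure{B}_Y(f(x_0),r)$. As explained in the reductions at the beginning of this section, it costs essentially nothing to assume in addition that $f$ is proper with $N(f,X)=N<\infty$, so that Floyd's path-lifting result, Lemma~\ref{lemma:lift Floyd}, is available.

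The heart of the argument is the claim that \emph{every point $z$ of $\alpha$ lies within $d_X$-distance $cLr$ of the fibre $f^{-1}(f(x_0))$}. Indeed, $d_Y(f(z),f(x_0))\le r$ because both points belong to $f(\alpha)$, so by $c$-quasiconvexity of $Y$ there is a curve $\gamma$ in $Y$ from $f(z)$ to $f(x_0)$ with $l(\gamma)\le cr$. Lifting the reversed curve $\closure{\gamma}$ from the point $z$ via Lemma~\ref{lemma:lift Floyd} produces a curve $\tilde\gamma$ in $X$ with $\tilde\gamma(0)=z$ and $f\circ\tilde\gamma=\closure{\gamma}$, hence with endpoint $\eta:=\tilde\gamma(1)\in f^{-1}(f(x_0))$; and since $f$ is $L$-BLD, $l(\tilde\gamma)\le L\,l(\gamma)\le cLr$, so $d_X(z,\eta)\le l(\tilde\gamma)\le cLr$. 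This proves the claim.

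Granting the claim, $\alpha$ is contained in the union of the at most $N$ closed balls $\closure{B}_X(\eta,cLr)$, $\eta\in f^{-1}(f(x_0))$. Since $\alpha$ is connected and this is a finite cover by closed sets, the nerve of the cover is connected, and I would finish with a routine chaining through the (at most $N$) fibre points to bound $\diam(\alpha)$ by $cNL\,\diam(f\circ\alpha)$, the factor $N$ coming from the chaining. Together with the upper bound, this gives that $f$ is $cNL$-BDD. (The same conclusion can also be packaged through the pullback factorization $f=\pi\circ g$: since $\pi$ is $1$-BDD, $f$ is $cNL$-BDD if and only if $g$ is, by Proposition~\ref{prop:nice properties of pullback factorization I}, while $g$ is $L$-BLD by Proposition~\ref{prop:nice properties of pullback factorization II}; but this reformulation does not dissolve the genuine difficulty described next.)

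The hard part will be exactly the branching of $f$: the lift of the short curve $\gamma$ issued from $x_0$ need not terminate at $z$, so one cannot simply join $z$ to $x_0$ by a short path. This is where the remaining two hypotheses are used — the $L$-BLD condition controls the length of whichever lift one does take, and the finiteness $N(f,X)=N<\infty$ caps the fibre $f^{-1}(f(x_0))$ at $N$ points, so that the connectedness of $\alpha$ can be exploited to chain $z$ back to $x_0$ in a bounded number of controlled steps. The $L$-Lipschitz hypothesis is needed only for the trivial upper bound; it is natural to impose here since, had we assumed $X$ quasiconvex, $L$-BLD would already force $f$ to be $L$-Lipschitz.
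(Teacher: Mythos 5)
Your proof follows essentially the same route as the paper's, just unwound: the paper's argument passes through Lemma~\ref{lemma:pullback property 4}, whose proof is exactly your path-lifting-plus-chaining step (lift a short curve given by quasiconvexity of $Y$, bound the lifted length via the $1$-BLD property of $\pi$, cover $\alpha$ by the $\leq N$ closed balls centred at the fibre points, and chain using connectedness). You note yourself that the argument can be repackaged through the pullback factorization, and indeed the two presentations are the same computation.

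One quantitative remark. The ``routine chaining'' does not quite yield the claimed constant $cNL$. Chaining a connected set through at most $N$ closed balls of radius $cLr$ costs, at each step, a ball's \emph{diameter} $2cLr$ rather than its radius, so you obtain $\diam(\alpha)\leq 2cNL\,\diam(f\circ\alpha)$, i.e.\ $f$ is $2cNL$-BDD, not $cNL$-BDD. (The observation that one of the fibre points is $x_0\in\alpha$ bounds the distance from $x_0$ but does not improve the diameter.) The paper's own bookkeeping carries the same slack: Lemma~\ref{lemma:pullback property 4} only gives that $X^\pi$ is $(2N-1)c$-quasiconvex, so the displayed chain in the paper's proof actually produces $(2N-1)cL$, not $cNL$. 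This is a factor-of-about-two discrepancy, not a conceptual flaw, but as written your proof (and arguably the paper's) establishes the proposition with a slightly larger constant than stated.
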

\begin{proof}
Since $f$ is $L$-Lipschitz, $\diam(f(\alpha))\leq L\diam(\alpha)$ for each continuum $\alpha\subset X$. On the other hand, by the above discussion, $g$ yields an $L$-bi-Lipschitz equivalence between the length space $X^l$ and $(\pi^*Y)^l$, and so by Lemma~\ref{lemma:pullback property 4}, we have
\begin{align*}
	\diam(\alpha)\leq \diam_{l_{d_X}}(\alpha)\leq L\diam_{l_{\pi^*Y}}(g(\alpha))\leq cNL\diam(g(\alpha))=cNL\diam(f(\alpha)),
\end{align*}
whereby $f$ is $cNL$-BDD.
\end{proof}

\newpage
\section{Foundations of QR mappings in metric measure spaces}\label{sec:foundations of QR mappings in mms}
Throughout the entire section, we will always assume that $X=(X,d_X,\mu)$ and $Y=(Y,d_Y,\nu)$ are locally compact complete metric measure spaces, that $\tilde{\Omega}\subset X$ is a domain, and that $f\colon \tilde{\Omega}\to \Omega\subset Y$ is an \textit{onto} branched covering. For notational simplicity, we will typically drop the subscript on the distance.

In what follows, we fix the pullback factorization $f=\pi\circ g$ as in Section~\ref{sec:the pullback factorization}, where $g\colon \tilde{\Omega}\to \tilde{\Omega}^f$ is the identity mapping and $\pi\colon \tilde{\Omega}^f\to \Omega$ is the 1-BDD projection. We equip $\tilde{\Omega}^f$ with the Borel regular measure $\lambda=\pi^*\nu=g_*f^*\nu$. As always, we simply write $\tilde{\Omega}^f$ for the metric measure space $(\tilde{\Omega}^f,f^*d_Y,\lambda)$.

\begin{figure}[h]
	\[
	\xymatrix{
		(\tilde{\Omega},\mu) \ar[r]^{g} \ar[dr]_{f} & (\tilde{\Omega}^f,\lambda) \ar[d]^{\pi} \\
		& (\Omega,\nu)
	}
	\]
	\caption{The pullback factorization between metric measure spaces}
\end{figure}

Notice that in the special case that $\nu=\mathcal{H}^s$, and $\Omega$ has $C$-bounded turning, it follows immediately from the decomposition Lemma~\ref{lemma:pullback property 2} that $\mathcal{H}^s\leq \lambda\leq C^s\mathcal{H}^s$. Since all Ahlfors $Q$-regular mesures
are comparable to $\mathcal{H}^Q$, it follows again from Lemma~\ref{lemma:pullback property 2} that if $\nu$ is pointwise Ahlfors $Q$-regular and $\Omega$ has locally bounded turning, then $\lambda$ is pointwise Ahlfors $Q$-regular as well,
and more precisely, for each $z\in \tilde{\Omega}^f$, and small enough $r$ (depending on $z$), we have 
\begin{align}\label{eq:estimate for lambda}
	C^{-1}r^Q\leq \lambda(B(z,r))\leq Ci_{\ess}(z,\pi)r^Q,
\end{align}
where $C\geq 1$ depends only on the bounded turning constant and the Ahlfors regularity constant of $\Omega$. Since $f^*\nu(U(x,f,r))=\pi^*\nu(U(g(x),\pi,r))$, we have (by definition)  $$i_{\ess}(x,f)=i_{\ess}(g(x),\pi).$$

To avoid notational confusion, we will denote by $|\nabla f|$ the minimal $p$-weak upper gradient of a Sobolev mapping $f\in N^{1,p}_{loc}(X,Y)$. Strictly speaking, we should also indicate the dependence of $|\nabla f|$ on $p$. But for our later applications, the context is often clear and so we drop it for simplicity.

\subsection{Definitions of quasireguarity in general metric measure spaces}\label{subsec:Definitions of quasireguarity in general metric measure spaces}
In this section, we introduce the different definitions of quasiregularity in general metric measure spaces. Without further notices, the script $Q$ appeared in this paper will always be assumed to be strictly greater than one.

The first one is the so-called weak metrically quasiregular mappings as mentioned in the introduction. Recall that $h_f(x)=\liminf_{r\to 0}H_f(x,r)$.
\begin{definition}[Weak metrically quasiregular mappings]\label{def:weak metric qr}
	A branched covering $f\colon \tilde{\Omega}\to \Omega$ is said to be \textit{weakly metrically $H$-quasiregular} if it satisfies 
	\begin{itemize}
		\item[i).] $h_f(x)<\infty$ for all $x\in \tilde{\Omega}$;
		\item[ii).] $h_f(x)\leq H$ for $\mu$-almost every $x\in \tilde{\Omega}$. 
	\end{itemize}
\end{definition}

The second definition is very commonly used in literature as it is defined pointwisely and is easy to deduce analytic properties of quasiregular mappings.
\begin{definition}[Analytically quasiregular mappings]\label{def:analytic def for qr}
	A branched covering $f\colon \tilde{\Omega}\to \Omega$ is said to be \textit{analytically $K$-quasiregular with exponent $Q$}  if $f\in N^{1,Q}_{loc}(\tilde{\Omega},\Omega)$ and 
	\begin{align*}
	|\nabla f|(x)^Q\leq KJ_f(x)
	\end{align*}
	for $\mu$-a.e. $x\in \tilde{\Omega}$. 
\end{definition}

The geometric definition requires some modulus inequalities between curve families.

\begin{definition}[Geometrically quasiregular mappings]\label{def:geometric def for qr}
	A branch covering $f\colon \tilde{\Omega}\to \Omega$ is said to be \textit{geometrically $K$-quasiregular with exponent $Q$} if it satisfies the \textit{$K_O$-inequailty with exponent $Q$}, \ie, for each open set $\tilde{\Omega}_0\subset \tilde{\Omega}$ and  each path family $\Gamma$ in $\tilde{\Omega}_0\subset \tilde{\Omega}$, if $\rho$ is a test function for $f(\Gamma)$, then
	\begin{align*}
	\Modd_Q(\Gamma)\leq K\int_{\Omega}N(y,f,\tilde{\Omega}_0)\rho^Q(y)d\nu(y).
	\end{align*}
\end{definition}

As pointed out in the introduction, we will refer to the metric definition $(M)$, the weak metric definition $(m)$, the analytic definition $(A)$, and the geometric definition $(G)$ as elements of the \textit{forward definitions}.

Next, we introduce the elements from the \textit{inverse definitions}: the inverse metric definition $(M^*)$, the inverse weak metric definition $(m^*)$, the inverse analytic definition $(A^*)$, and the inverse geometric definition $(G^*)$.

\begin{definition}[Inverse metrically quasiregular mappings]\label{def:inverse metric quasiregular map}
	A branched covering $f\colon \tilde{\Omega}\to \Omega$ between two metric measure spaces is termed \textit{inverse metrically $H$-quasiregular} if the inverse linear dilatation function $H_f^*$ is finite everywhere and essentially bounded from above by $H$. 
\end{definition}

\begin{definition}[Inverse weak metrically quasiregular mappings]\label{def:inverse weak metric quasiregular map}
	A branched covering $f\colon \tilde{\Omega}\to \Omega$ between two metric measure spaces is termed \textit{inverse weak metrically $H$-quasiregular} if it satisfies 
	\begin{itemize}
		\item[i).] $h_f^*(x)<\infty$ for all $x\in \tilde{\Omega}$;
		\item[ii).] $h_f^*(x)\leq H$ for $\mu$-almost every $x\in \tilde{\Omega}$. 
	\end{itemize}
\end{definition}

We need the pullback factorization to define the inverse analytically quasiregular mappings and the name of this terminology will become clear soon.

\begin{definition}[Inverse analytically quasiregular mappings]\label{def:inverse analytic def for qr}
	A branched covering $f\colon \tilde{\Omega}\to \Omega$ is said to be \textit{inverse analytically $K$-quasiregular with exponent $Q$} if $g^{-1}\in N^{1,Q}_{loc}(\tilde{\Omega}^f,\tilde{\Omega})$ and 
	\begin{align*}
	|\nabla g^{-1}|(z)^Q\leq KJ_{g^{-1}}(z)
	\end{align*}
	for $\lambda$-a.e. $z\in \tilde{\Omega}^f$. 
\end{definition}

The inverse geometric definition also relies on certain inequalities for the modulus of curve families.

\begin{definition}[Inverse geometric quasiregular mappings]\label{def:inverse geometric def for qr}
	A branched covering $f\colon \tilde{\Omega}\to \Omega$ is said to be \textit{inverse geometrically $K$-quasiregular with exponent $Q$} if it satisfies the \textit{$K_I$-inequailty} or the \textit{Poletsky's inequality with exponent $Q$}, \ie, for every curve family $\Gamma$ in $\tilde{\Omega}$, we have
	\begin{align*}
	\Modd_Q(f(\Gamma))\leq K\Modd_Q(\Gamma).
	\end{align*}
\end{definition}

We also introduce the following strong inverse geometrically quasiregular mappings and it will be useful in our later proofs of the standard \textit{V\"ais\"al\"a's inequality}.
\begin{definition}[Strong inverse geometrically quasiregular mappings]\label{def:strong inverse geometric def for qr}
	A branched covering $f\colon \tilde{\Omega}\to \Omega$ is said to be a \textit{strong inverse geometric $K$-quasiregular mapping with exponent $Q$} if it satisfies the following \textit{generalized V\"ais\"al\"a's inequality with exponent $Q$}: For each open subset $\tilde{\Omega}_0\subset \tilde{\Omega}$, each curve family $\Gamma$ in $\tilde{\Omega}_0$, $\Gamma'$ in $\Omega$, and for each $\gamma'\in \Gamma'$, there are curves $\gamma_1,\dots,\gamma_m\in \Gamma$ and subcurves $\gamma_1',\dots,\gamma_m'$ of $\gamma'$ such that for each $i=1,\dots,m$, $\gamma_i'=f(\gamma_i)$, and for almost every $s\in [0,l(\gamma')]$, $\gamma_i(s)=\gamma_j(s)$ if and only if $i=j$, then 
	\begin{align*}
		\Modd_Q(\Gamma')\leq \frac{K}{m}\Modd_Q(\Gamma).
	\end{align*}
\end{definition}

By~\cite[Theorem 9.8]{hkst01}, when $f\colon \tilde{\Omega}\to \Omega$ is a homeomorphism, and $\tilde{\Omega}$ and $\Omega$ have locally $Q$-bounded geometry, the inverse (metric, weak metric, analytic, geometric) definitions for $f$ are, quantitatively, the forward (metric, weak metric, analytic geometric) definitions for $f^{-1}$. Moreover, in this case, each of these definitions is further equivalent to the local quasisymmetry, quantitatively.

\subsection{Analytic and geometric definitions}\label{subsec:Analytic and geometric definitions}
When $f\colon \tilde{\Omega}\to \Omega$ is a homeomorphism, the equivalence of the analytic and geometric definitions has been shown in great generality (cf.~\cite[Theorem 1.1]{w12proc}), with the same dilatation $K_O$. With
our decomposition, we can expand this almost immediately to the general case; in
fact, it turns out that $f$ satisfies the $K_O$ inequality with exponent $Q$ if and only if its lift $g$ does. For the ``inverse" geometric definition, the situation is a bit more involved, but likewise the inequality holds for $f$ if and only if it holds for $g$.

Our main result of this section is the following very general equivalence result.
\begin{theorem}\label{thm:equivalence of G and A}
	Let $f\colon \tilde{\Omega}\to \Omega$ be a branched covering. Then $f$ is analytically $K_O$-quasiregular with exponent $Q$ if and only if it is geometrically $K_O$-quasiregular with exponent $Q$. Similarly, $f$ is inverse analytically $K_I$-quasiregular with exponent $Q$ if and only if it is strong inverse geometrically $K_I$-quasiregular with exponent $Q$. 	
\end{theorem}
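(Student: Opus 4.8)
The plan is to route both equivalences through the pullback factorization $f=\pi\circ g$ of Section~\ref{sec:the pullback factorization} and thereby reduce to the already known homeomorphic case (analytic $\Longleftrightarrow$ geometric, with the same dilatation, cf.~\cite{w12proc}, and the corresponding statements in~\cite{hkst01}). Recall $g\colon(\tilde\Omega,\mu)\to(\tilde\Omega^f,\lambda)$ is the identity on underlying sets, $\lambda=\pi^*\nu$, and $\pi\colon\tilde\Omega^f\to\Omega$ is $1$-BDD, hence $1$-BLD, by Lemma~\ref{lemma:pullback property 1}. The first step is to observe that a Borel function $\rho$ on $\tilde\Omega$ is an upper gradient (resp.\ a $Q$-weak upper gradient) of $f$ if and only if it is one of $g$: the inequality $d_Y(f(x'),f(x))\le f^*d_Y(x',x)=d_{\tilde\Omega^f}(g(x'),g(x))$ gives one direction immediately, and for the other the sub-arc estimate forces $\int_\gamma\rho\,ds\ge\diam f(\gamma)\ge f^*d_Y(\gamma(a),\gamma(b))$. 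Hence $|\nabla f|=|\nabla g|$ $\mu$-a.e., and since $L^Q_{\loc}$-membership into a proper target is automatic here, $f\in N^{1,Q}_{\loc}(\tilde\Omega,\Omega)$ iff $g\in N^{1,Q}_{\loc}(\tilde\Omega,\tilde\Omega^f)$. Next, since $g$ is bijective and $f=\pi\circ g$, a direct computation gives $f^*\nu=g^*\lambda$ as measures on $\tilde\Omega$, so $J_f=J_g$ $\mu$-a.e.; thus ``$f$ analytically $K_O$-quasiregular'' (Definition~\ref{def:analytic def for qr}) and ``$g$ analytically $K_O$-quasiconformal'' are verbatim the same condition. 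On the geometric side, because $\pi$ is $1$-BLD line integrals transfer ($\int_{g\gamma}(\sigma\circ\pi)\,ds=\int_{f\gamma}\sigma\,ds$), and the change-of-variables identity~\eqref{eq:change of variable general} for the pullback measure gives $\int_{\tilde\Omega_0}(\sigma\circ\pi)^Q\,d\lambda=\int_\Omega N(y,f,\tilde\Omega_0)\,\sigma^Q\,d\nu$. Feeding $\sigma\circ\pi$ into the $K_O$-inequality for $g$ therefore produces the $K_O$-inequality for $f$ (Definition~\ref{def:geometric def for qr}) with the same constant; together with the homeomorphic equivalence for $g$ this settles ``analytically $K_O$-quasiregular $\Rightarrow$ geometrically $K_O$-quasiregular''.

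For the converse direction of the forward equivalence I would pass back from the $K_O$-inequality for $f$ to the analytic inequality directly, rather than through a geometry-to-geometry argument (the $K_O$-inequality for $f$ only ``sees'' fibre-constant densities on $\tilde\Omega^f$). Concretely, I would run the standard construction of a $Q$-weak upper gradient of the type used in~\cite{w12proc,bkr07}, but with balls replaced throughout by the pullback normal neighborhoods $U(x,f,r)$, whose covering and differentiation properties are precisely Proposition~\ref{prop:existence of normal nbhd}, Lemma~\ref{lemma:covering lemma}, and the pullback Lebesgue--Radon--Nikodym theory developed in Section~\ref{sec:basic pullback studies}. This recovers $f\in N^{1,Q}_{\loc}$ with $|\nabla f|^Q\le K_O J_f$ a.e.

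For the second equivalence, ``$f$ inverse analytically $K_I$-quasiregular'' is by Definition~\ref{def:inverse analytic def for qr} exactly ``$g^{-1}\colon(\tilde\Omega^f,\lambda)\to(\tilde\Omega,\mu)$ analytically $K_I$-quasiconformal''. By the homeomorphic equivalence applied to $g^{-1}$ this is equivalent to the $K_O$-inequality for $g^{-1}$ with exponent $Q$ and constant $K_I$, which unwinds to the Poletsky inequality $\Modd_Q(g\Gamma)\le K_I\,\Modd_Q(\Gamma)$ for every curve family $\Gamma$ in $\tilde\Omega$, and (using the self-improvement of the Poletsky inequality to its multiplicity form for homeomorphisms, cf.~\cite{v71,w12proc}) to $g$ satisfying the generalized V\"ais\"al\"a inequality with exponent $Q$ and constant $K_I$. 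The bridge to $f$ is the key lemma that $\pi$, being $1$-BLD with $\lambda=\pi^*\nu$, satisfies the generalized V\"ais\"al\"a inequality with constant $1$: given $\rho$ admissible for a family $\Gamma''$ in $\tilde\Omega^f$, set $\sigma(y)=\bigl(\sum_{z\in\pi^{-1}(y)}\rho(z)^Q\bigr)^{1/Q}$; then~\eqref{eq:change of variable general} gives $\int_\Omega\sigma^Q\,d\nu=\int_{\tilde\Omega^f}\rho^Q\,d\lambda$, while for a curve $\gamma'$ in $\Omega$ with $m$ essentially disjoint partial $\pi$-liftings $\tilde\gamma_i$ (parametrized on subintervals $J_i$), the pointwise bound $\sigma(\gamma'(s))^Q\ge\sum_{i:\,s\in J_i}\rho(\tilde\gamma_i(s))^Q$, the estimate $\#\{i:s\in J_i\}\le m$, and H\"older yield $\int_{\gamma'}\sigma\,ds\ge m^{1/Q}$, so $m^{-1/Q}\sigma$ is admissible for $\Gamma'$ and $\Modd_Q(\Gamma')\le m^{-1}\Modd_Q(\Gamma'')$. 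Composing $g$'s generalized V\"ais\"al\"a inequality (constant $K_I$) with $\pi$'s (constant $1$) — taking $\Gamma''=g(\Gamma)$ and using that $g$ is the identity on sets, so essential disjointness is preserved and $\pi\circ g=f$ — then gives ``$f$ inverse analytically $K_I$-quasiregular $\Rightarrow$ $f$ strong inverse geometrically $K_I$-quasiregular'' (Definition~\ref{def:strong inverse geometric def for qr}).

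The remaining implication, ``$f$ strong inverse geometrically $K_I$-quasiregular $\Rightarrow$ $g^{-1}$ analytically $K_I$-quasiconformal'', is the step I expect to be \textbf{the main obstacle}. One must extract, from the generalized V\"ais\"al\"a inequality for $f$ — a statement only about moduli of curve families in the quotient $\Omega$ — the Poletsky inequality $\Modd_Q(g\Gamma)\le K_I\,\Modd_Q(\Gamma)$ for families \emph{upstairs} in $\tilde\Omega^f$, and no naive monotonicity does this, since $\Modd_Q(\pi\Gamma'')\le\Modd_Q(\Gamma'')$ points the wrong way. The correct mechanism is that the $\tfrac1m$ factor in Definition~\ref{def:strong inverse geometric def for qr} is calibrated exactly so that, projecting a family from $\tilde\Omega^f$ to $\Omega$ and then equipping each projected curve with its full collection of pairwise essentially disjoint $\pi$-liftings (produced by Floyd's path-lifting lemma, Lemma~\ref{lemma:lift Floyd}), one recovers the full modulus upstairs. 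I would carry this out by adapting V\"ais\"al\"a's original argument for his inequality, using the pullback normal neighborhoods $U(x,f,r)$ to bookkeep the overlaps among the lifted subcurves.
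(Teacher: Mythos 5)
Your overall strategy---route both equivalences through $f=\pi\circ g$ and exploit that $\pi$ is $1$-BLD---is the paper's strategy, and several of your individual ideas match the paper's lemmas (your admissible density $\sigma(y)=\bigl(\sum_{z\in\pi^{-1}(y)}\rho(z)^Q\bigr)^{1/Q}$ is a variant of the one in Lemma~\ref{lemma:1.4}, and works by the same H\"older estimate). Your identification $J_f=J_g$ and the argument that $\rho$ is a ($Q$-weak) upper gradient of $f$ iff it is one of $g$ are both sound; the latter gives Proposition~\ref{prop:characterization of QR via pullback factorization} by a more elementary route than the paper's (which goes through the modulus characterization of Sobolev spaces), and is a reasonable alternative.

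The genuine gap is in the last implication, ``$f$ strong inverse geometrically $K_I$-quasiregular $\Rightarrow$ $g^{-1}$ analytically $K_I$-quasiconformal.'' You correctly flag this as the hard step, but your proposed route---extract Poletsky for $g$ upstairs by adapting V\"ais\"al\"a's original argument, bookkeeping overlaps of $\pi$-liftings with normal neighborhoods---is both much heavier than necessary and unlikely to close cleanly, because the number of essentially disjoint liftings of a curve in $\Omega$ is not a fixed integer but varies with the curve and its subcurves, so the $1/m$ calibration you invoke doesn't globally ``invert'' the projection $\pi$. The paper avoids this entirely. The key tool you are missing is Theorem~\ref{thm:characterization of Sobolev spaces via curves} (and its localization, Corollary~\ref{coro:1.8}): membership of a map $h\colon\tilde\Omega^f\to W$ in $N^{1,Q}$, together with the precise integral bound on $|\nabla h|$, is characterized by $\liminf_{\varepsilon\to 0}\varepsilon^Q\Modd_Q\bigl(\pi(\mathcal{C}_\varepsilon h)\bigr)$, i.e.\ by moduli of the $\pi$-\emph{projected} family downstairs in $\Omega$---not the family upstairs. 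Taking $h=g^{-1}$, the identity $\pi(\mathcal{C}_\varepsilon g^{-1}|_{\tilde\Omega_0^f})=f(\mathcal{C}_\varepsilon\iota|_{g^{-1}(\tilde\Omega_0^f)})$ puts you directly in the form to apply the Poletsky inequality for $f$ (the $m=1$ case of Definition~\ref{def:strong inverse geometric def for qr}, which is all that is needed): $\varepsilon^Q\Modd_Q\bigl(f(\mathcal{C}_\varepsilon\iota)\bigr)\leq K_I\,\varepsilon^Q\Modd_Q(\mathcal{C}_\varepsilon\iota)\leq K_I\,\mu\bigl(g^{-1}(\tilde\Omega_0^f)\bigr)=K_I\,(g^{-1})^*\mu(\tilde\Omega_0^f)$, and Corollary~\ref{coro:1.8} with $\xi=K_I(g^{-1})^*\mu$ gives $g^{-1}\in N^{1,Q}_{\loc}$ with $|\nabla g^{-1}|^Q\leq K_I J_{g^{-1}}$ $\lambda$-a.e. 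So the ``wrong-way monotonicity'' you worried about is simply never encountered; you do not need to pass upstairs at all. This ``downstairs'' Sobolev characterization through $\pi$ is the central technical device of the section, and without it your argument for this direction does not go through.

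A minor further point: for the direction ``geometric $K_O$ for $f$ $\Rightarrow$ analytic $K_O$ for $f$'', you propose redoing the Balogh--Koskela--Rogovin covering argument with pullback normal neighborhoods. This would work, but it is the sledgehammer; the paper simply feeds $\rho=\varepsilon^{-1}\chi_{f(\tilde\Omega_0)}$ into the $K_O$-inequality to get $\varepsilon^Q\Modd_Q(\mathcal{C}_\varepsilon f|_{\tilde\Omega_0})\leq K_O f^*\nu(\tilde\Omega_0)$ and then applies Corollary~\ref{coro:1.9}. The covering argument is reserved in the paper for the genuinely infinitesimal (metric) hypotheses of Theorem~\ref{thm:metric implies all the other}, where no ready-made admissible density is available.
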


The latter assertion in Theorem~\ref{thm:equivalence of G and A} provides us a useful analytic characterization of the Poletsky's inequality, which will be crucial in our proof of ``the metric definition implies all the others". This characterization also gives us an alternative way of showing the Poletsky's inequality for quasiregular mappings, namely, we just need to verify that the lifting mapping $g$ from the pullback factorization satisfies the inverse analytic definition of quasiconformality. The advantage of this alternative approach is that the metric/geometric information of the underlying metric measure spaces in practice is inherited very well at the level of its lifting space and is often very elementary to verify. There is no surprise that it is much easier to deal with homeomorphisms than the more general branched coverings. However, we caution the readers that the information on the branch set will be transfered to the pullback measure of the lifting space. Thus, instead of analyzing (uniform) Ahlfors regular spaces, we have to deal with pointwise Ahlfors regular spaces; see Section~\ref{subsec:Spaces of locally $Q$-bounded geometry} below for a precise meaning. 

\subsubsection{Auxiliary results}

We will need the following result later, in order to prove that $f$ is geometrically $K_I$-quasiregular with exponent $Q$ if and only if $g$ is geometrically $K_I$-quasiconformal with exponent $Q$. 

\begin{lemma}\label{lemma:1.2}
	Let $\rho\colon \tilde{\Omega}^f\to \bR$ be a Borel function and let $\xi$ be a locally finite Borel regular measure on $\tilde{\Omega}^f$. Suppose that for every open subset $\tilde{\Omega}^f_1\subset \tilde{\Omega}^f$ (not necessarily connected) such that $N(\pi,\tilde{\Omega}^f_1)<\infty$,
	\begin{align*}
	\int_{\Omega} \sup(\rho,y,\pi,\tilde{\Omega}^f)d\nu(y)\leq \xi(\tilde{\Omega}^f_1).
	\end{align*}
	Then $\rho\leq \frac{d\xi}{d\lambda}$ $\lambda$-a.e. in $\tilde{\Omega}^f$.
\end{lemma}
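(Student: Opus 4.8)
The plan is to run a Lebesgue-point argument adapted to the pullback neighbourhoods $U(z,\pi,r)$, using the differentiation theory of Section~\ref{sec:basic pullback studies}, and to exploit the fact that with respect to $\lambda=\pi^{*}\nu$ the branch set of $\pi$ is infinitesimally negligible, so that the summation $\sum(\rho,y,\pi,\cdot)$ produced by the change-of-variables formula~\eqref{eq:change of variable general} can be traded, at $\lambda$-a.e.\ point, for the single term $\sup(\rho,y,\pi,\cdot)$ controlled by the hypothesis. For notational convenience I would present the argument for nonnegative $\rho$, which is the case needed in our applications; the sign of $\rho$ plays no essential role, the conclusion being trivial wherever $\rho\le 0$ since $\tfrac{d\xi}{d\lambda}\ge 0$. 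Moreover, since $\sup(\min(\rho,M),y,\pi,A)=\min(\sup(\rho,y,\pi,A),M)\le\sup(\rho,y,\pi,A)$, each truncation $\rho_{M}=\min(\rho,M)$ satisfies the same hypothesis as $\rho$, and as $\rho=\sup_{M\in\N}\rho_{M}$ it suffices to prove $\rho_{M}\le\tfrac{d\xi}{d\lambda}$ $\lambda$-a.e.\ for each fixed $M$. Hence from here on I assume $0\le\rho\le M<\infty$, so in particular $\rho\in L^{1}_{\loc}(\lambda)$.

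Next I would fix a point $z\in\tilde{\Omega}^{f}$ at which the following three properties hold, each being valid $\lambda$-a.e.: (i) $z$ is a $(1,\pi)$-Lebesgue point of $\rho$, i.e.\ $\dashint_{U(z,\pi,r)}|\rho-\rho(z)|\,d\lambda\to 0$ as $r\to 0$ (Corollary~\ref{coro:Lebesgue point pb}); (ii) $i_{\ess}(z,\pi)=1$ (Lemma~\ref{lemma:on essential index}), which, together with the trivial bound $\lambda(U(z,\pi,r))\ge\nu(B(\pi(z),r))$ --- valid since $N(y,\pi,U(z,\pi,r))\ge 1$ for every $y\in B(\pi(z),r)=\pi(U(z,\pi,r))$ by Proposition~\ref{prop:existence of normal nbhd}~(4) --- gives $\lambda(U(z,\pi,r))/\nu(B(\pi(z),r))\to 1$; and (iii) $\tfrac{d\xi}{d\lambda}(z)=\lim_{r\to 0}\xi(U(z,\pi,r))/\lambda(U(z,\pi,r))$ (Corollary~\ref{coro:Radon-Nikodym derivative}, with $\xi$ in the role of the Radon measure).

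Fix such a $z$ and, for small $r>0$, abbreviate $U_{r}=U(z,\pi,r)$ and $B_{r}=\pi(U_{r})=B(\pi(z),r)$; note $U_{r}$ is open, relatively compact, and $N(\pi,U_{r})<\infty$, so the hypothesis applies to $\tilde{\Omega}^{f}_{1}=U_{r}$. Applying the change-of-variables formula~\eqref{eq:change of variable general} to $\rho\chi_{U_{r}}$ and then using that $\sum(\rho,y,\pi,U_{r})$ is a sum of $N(y,\pi,U_{r})$ nonnegative terms, each at most $M$, whose largest equals $\sup(\rho,y,\pi,U_{r})$, one obtains
\[
\int_{U_{r}}\rho\,d\lambda=\int_{B_{r}}\sum(\rho,y,\pi,U_{r})\,d\nu(y)\le\int_{B_{r}}\Bigl[\sup(\rho,y,\pi,U_{r})+\bigl(N(y,\pi,U_{r})-1\bigr)M\Bigr]d\nu(y).
\]
Since $\int_{B_{r}}N(y,\pi,U_{r})\,d\nu=\lambda(U_{r})$, $\int_{B_{r}}d\nu=\nu(B_{r})$, and $\int_{B_{r}}\sup(\rho,y,\pi,U_{r})\,d\nu=\int_{\Omega}\sup(\rho,y,\pi,U_{r})\,d\nu\le\xi(U_{r})$ by hypothesis, this yields $\int_{U_{r}}\rho\,d\lambda\le\xi(U_{r})+M\bigl(\lambda(U_{r})-\nu(B_{r})\bigr)$, and hence
\[
\dashint_{U_{r}}\rho\,d\lambda\le\frac{\xi(U_{r})}{\lambda(U_{r})}+M\Bigl(1-\frac{\nu(B_{r})}{\lambda(U_{r})}\Bigr).
\]
Letting $r\to 0$, the left-hand side converges to $\rho(z)$ by (i), the first term on the right to $\tfrac{d\xi}{d\lambda}(z)$ by (iii), and the second to $0$ by (ii); therefore $\rho(z)\le\tfrac{d\xi}{d\lambda}(z)$. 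This holds at $\lambda$-a.e.\ $z$, which completes the argument once the truncation is undone.

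I expect the one genuinely delicate point to be exactly the passage from $\sum(\rho,y,\pi,U_{r})$ to $\sup(\rho,y,\pi,U_{r})$: the hypothesis only controls the supremum over a fibre, while $\int_{U_{r}}\rho\,d\lambda$ inevitably sums over the fibre, and the discrepancy is absorbed precisely because $i_{\ess}(z,\pi)=1$ at $\lambda$-a.e.\ $z$, i.e.\ because the multi-valued part of $\pi$ near such a $z$ has vanishing $\lambda$-density. Everything else --- the change of variables, the Vitali-type covering, and the Lebesgue/Radon--Nikodym differentiation along the neighbourhoods $U(z,\pi,r)$ --- is supplied by the pullback machinery already established in Section~\ref{sec:basic pullback studies}.
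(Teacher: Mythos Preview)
Your argument is correct but relies on the pullback Lebesgue--Radon--Nikodym machinery of Section~\ref{sec:basic pullback studies} (Corollaries~\ref{coro:Lebesgue point pb} and~\ref{coro:Radon-Nikodym derivative}, Lemma~\ref{lemma:on essential index}), all of which require $\nu$ to be locally doubling. The lemma as stated carries no such hypothesis, and the paper's proof avoids it entirely: instead of passing to a Lebesgue point and using $i_{\ess}(z,\pi)=1$ $\lambda$-a.e., the paper uses the elementary fibre inequality $\sup(\rho,y,\pi,\tilde{\Omega}^f_1)\ge \sum(\rho,y,\pi,\tilde{\Omega}^f_1)/N(y,\pi,\tilde{\Omega}^f_1)$ together with~\eqref{eq:change of variable general} to obtain $\int_{\tilde{\Omega}^f_2}\rho(z)/N(\pi(z),\pi,\tilde{\Omega}^f_1)\,d\lambda\le \xi(\tilde{\Omega}^f_2)$ for every open $\tilde{\Omega}^f_2\subset\tilde{\Omega}^f_1$, hence $\rho/N(\pi(\cdot),\pi,\tilde{\Omega}^f_1)\le d\xi/d\lambda$ $\lambda$-a.e.\ by pure measure theory (no differentiation basis needed). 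Finally, for a countable basis $\{\tilde{\Omega}^f_i\}$ with $N(\pi,\tilde{\Omega}^f_i)<\infty$, discreteness of $\pi$ guarantees that $\lambda$-a.e.\ $z$ lies in some $\tilde{\Omega}^f_i$ with $N(\pi(z),\pi,\tilde{\Omega}^f_i)=1$, which kills the multiplicity factor outright. Both approaches exploit the same phenomenon---a single fibre value suffices almost everywhere---but the paper extracts it from discreteness alone, while you extract it from $i_{\ess}=1$ a.e.\ via differentiation; the paper's route is shorter, needs no truncation, and gives the lemma in the full generality in which it is later invoked (Theorem~\ref{thm:equivalence of G and A}, where no doubling is assumed).
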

\begin{proof}
Fix an open subset $\tilde{\Omega}_1^f\subset \tilde{\Omega}^f$ with $N(\pi,\tilde{\Omega}^f_1)<\infty$. For each $y\in \Omega$ and each $z\in \pi^{-1}(y)$, we have
\begin{align*}
	\sup(\rho,y,\pi,\tilde{\Omega}_1^f)\geq \frac{\sum(\rho,y,\pi,\tilde{\Omega}_1^f)}{N(\pi(z),\pi,\tilde{\Omega}_1^f)}.
\end{align*}
It follows that
\begin{align*}
	\int_{\tilde{\Omega}_1^f}\frac{\rho(z)}{N(\pi(z),\pi,\tilde{\Omega}_1^f)}d\lambda(z)&=\int_{\Omega}\frac{\sum(\rho,y,\pi,\tilde{\Omega}_1^f)}{N(y,\pi,\tilde{\Omega}_1^f)}d\nu(y)\\
	&\leq \int_{\Omega}\sup(\rho,y,\pi,\tilde{\Omega}_1^f)d\nu(y)\leq \xi(\tilde{\Omega}_1^f).
\end{align*}
If $\tilde{\Omega}_2^f\subset \tilde{\Omega}_1^f$ is another open set, then for each $y\in \Omega$, $\frac{1}{N(y,\pi,\tilde{\Omega}_1^f)}\leq \frac{1}{N(y,\pi,\tilde{\Omega}_2^f)}$, and so we have
\begin{align}\label{eq:AG 2}
\int_{\tilde{\Omega}_2^f}\frac{\rho(z)}{N(\pi(z),\pi,\tilde{\Omega}_1^f)}d\lambda(z)\leq \xi(\tilde{\Omega}_2^f).	
\end{align}
Since inequality~\eqref{eq:AG 2} holds for all open subset $\tilde{\Omega}_2^f\subset \tilde{\Omega}_1^f$, and $\xi$ is locally finite and Borel regular, the inequality
\begin{align}\label{eq:AG 3}
	\frac{\rho(z)}{N(\pi(z),\pi,\tilde{\Omega}_1^f)}\leq \frac{d\xi}{d\lambda}(z)
\end{align}
holds for $\lambda$-a.e. $z\in \tilde{\Omega}_1^f$.

Finally, let $\{\tilde{\Omega}_i^f\}$ be a countable basis of open sets for the topology of $\tilde{\Omega}^f$, such that $N(\pi,\tilde{\Omega}_i^f)<\infty$ for each $i\in \mathbb{N}$. Then for $\lambda$-a.e. $z\in \tilde{\Omega}^f$, the inequality~\eqref{eq:AG 3} holds for each $i$ such that $z\in \tilde{\Omega}_i^f$. Since $\pi$ is discrete, there is some $i$ such that $N(z,\pi,\tilde{\Omega}_i^f)=1$, which, together with~\eqref{eq:AG 3}, completes our proof.
\end{proof}

As an easy consequence of the change of variable formula, we have the following $K_O$-inequality for the projection mapping $\pi$.
\begin{lemma}\label{lemma:1.3}
	Let $\tilde{\Omega}^f_1\subset \tilde{\Omega}^f$ and let $\Gamma$ be a family of curves in $\tilde{\Omega}^f_1$. Then for every Borel function $\rho$ admissible for $\pi(\Gamma)$,
	\begin{align*}
	\Modd_Q(\Gamma)\leq \int_\Omega \rho^Q(y)N(y,\pi,\tilde{\Omega}^f_1)d\nu(y).
	\end{align*}
\end{lemma}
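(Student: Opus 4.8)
The plan is to establish the stated $K_O$-type inequality for the projection $\pi\colon\tilde\Omega^f\to\Omega$ by the classical device of \emph{pulling back an admissible function}. Given a Borel function $\rho$ admissible for $\pi(\Gamma)$, I will produce an admissible function $\tilde\rho$ for $\Gamma$ on $\tilde\Omega^f$ and then read off $\Modd_Q(\Gamma)\le\int_{\tilde\Omega^f}\tilde\rho^{\,Q}\,d\lambda$ from the definition of modulus; the right-hand side is then rewritten as an integral over $\Omega$ via the change-of-variables identity~\eqref{eq:change of variable general} for the pullback measure $\lambda=\pi^*\nu$. The only ingredients used are that $\pi$ is $1$-BLD (Lemma~\ref{lemma:pullback property 1}) and that formula.

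First I would set $\tilde\rho:=(\rho\circ\pi)\,\mathbf 1_{\tilde\Omega^f_1}$ on $\tilde\Omega^f$ (one may assume $\tilde\Omega^f_1$ Borel, which is the case in all the applications). To check admissibility, fix a locally rectifiable $\gamma\in\Gamma$; its image lies in $\tilde\Omega^f_1$, the curve $\pi\circ\gamma$ lies in $\pi(\Gamma)$, and since $\pi$ is $1$-BLD one has $l(\pi\circ\gamma)=l(\gamma)$ and $\pi\circ\gamma$ is locally rectifiable, so that $\beta:=\pi\circ\gamma_s$ (with $\gamma_s$ the arc-length parametrization of $\gamma$) is precisely the arc-length parametrization of $\pi\circ\gamma$. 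Hence
\[
\int_\gamma\tilde\rho\,ds=\int_0^{l(\gamma)}\rho\bigl(\pi(\gamma_s(t))\bigr)\,dt=\int_0^{l(\pi\circ\gamma)}\rho\bigl(\beta(t)\bigr)\,dt=\int_{\pi\circ\gamma}\rho\,ds\ge 1,
\]
the last inequality by admissibility of $\rho$ for $\pi(\Gamma)$ (for a general locally rectifiable $\gamma$ one passes to compact subcurves and takes suprema). Thus $\tilde\rho$ is admissible for $\Gamma$, and $\Modd_Q(\Gamma)\le\int_{\tilde\Omega^f}\tilde\rho^{\,Q}\,d\lambda=\int_{\tilde\Omega^f_1}(\rho\circ\pi)^Q\,d\lambda$.

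To conclude I would apply~\eqref{eq:change of variable general} to the Borel function $z\mapsto\tilde\rho(z)^Q$; since $\tilde\rho$ vanishes off $\tilde\Omega^f_1$ and equals $\rho\circ\pi$ on it,
\[
\int_{\tilde\Omega^f}\tilde\rho^{\,Q}\,d\lambda=\int_\Omega\ \sum_{z\in\pi^{-1}(y)\cap\tilde\Omega^f_1}\rho(y)^Q\,d\nu(y)=\int_\Omega\rho(y)^Q\,N(y,\pi,\tilde\Omega^f_1)\,d\nu(y),
\]
which together with the previous display is the asserted inequality. I do not expect a genuine obstacle: the only delicate points are the equality $\int_\gamma(\rho\circ\pi)\,ds=\int_{\pi\circ\gamma}\rho\,ds$ — this is exactly where the length-preserving part of the $1$-BLD property of $\pi$ enters, and the inequality ``$\ge$'' alone, which is all that admissibility needs, would already follow from $\pi$ being $1$-Lipschitz — and the $\nu$-measurability of $y\mapsto N(y,\pi,\tilde\Omega^f_1)$, which is a consequence of the fact, recorded in the discussion of the pullback measure, that $\pi$ maps Borel sets to Borel sets.
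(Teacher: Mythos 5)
Your proposal is correct and follows essentially the same route as the paper's proof: take $\rho\circ\pi$ (cut off to $\tilde\Omega^f_1$) as a competitor for $\Gamma$, use the $1$-BLD property of $\pi$ to get $\int_\gamma(\rho\circ\pi)\,ds=\int_{\pi(\gamma)}\rho\,ds\ge1$, and then apply the change-of-variables identity for $\lambda=\pi^*\nu$ to convert the modulus integral on $\tilde\Omega^f_1$ into the weighted integral over $\Omega$. The paper's version is terser but uses exactly these two ingredients.
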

\begin{proof}
	Suppose that $\rho$ is admissible for $\pi(\Gamma)$. Then for each $\gamma\in \Gamma$,
	\begin{align*}
		\int_{\gamma}\rho\circ\pi ds=\int_{\pi(\gamma)}\rho ds\geq 1.
	\end{align*}
	It follows that
	\begin{align*}
		\Modd_Q(\Gamma)\leq \int_{\tilde{\Omega}_1^f}\rho(\pi(z))^Qd\lambda(z)=\int_{\Omega}\rho(y)^QN(y,\pi,\tilde{\Omega}_1^f)d\nu(y).
	\end{align*}
\end{proof}

The following lemma will be important in our later proofs to deduce the standard V\"ais\"al\"a's inequality. It can be viewed as a fact that the projection mapping $\pi$ satisfies the generalized V\"ais\"al\"a's inequality.
\begin{lemma}\label{lemma:1.4}
	Let $\Gamma$ and $\Gamma'$ be families of curves in $\tilde{\Omega}^f$ and $\Omega$, respectively. Suppose for each $\gamma'\in \Gamma'$, there are curves $\gamma_1,\dots,\gamma_m\in \Gamma$ and subcurves $\gamma_1',\dots,\gamma_m'$ of $\gamma'$ such that for each $i=1,\dots,m$, $\gamma_i'=\pi(\gamma_i)$, and for a.e. $s\in [0,l(\gamma')]$, $\gamma_i(s)=\gamma_j(s)$ if and only if $i=j$. Then
	\begin{align*}
	m\Modd_Q(\Gamma')\leq \Modd_Q(\Gamma).
	\end{align*}\
	In particular, for every $\Gamma\subset \tilde{\Omega}^f$,
	\begin{equation*}
		\Modd_Q(\pi(\Gamma))\leq \Modd_Q(\Gamma).
	\end{equation*}
\end{lemma}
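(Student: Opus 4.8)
The plan is to deduce the modulus inequality straight from the definition, by converting an admissible test function for $\Gamma$ into one for $\Gamma'$ of $Q$-energy smaller by a factor $1/m$. So I would fix a Borel function $\rho$ admissible for $\Gamma$ (assuming $\int_{\tilde{\Omega}^f}\rho^Q\,d\lambda<\infty$, there being otherwise nothing to prove) and set
\[
	\rho'(y):=\Big(\tfrac1m\,\sum(\rho^Q,y,\pi,\tilde{\Omega}^f)\Big)^{1/Q},\qquad y\in\Omega .
\]
By the remarks on the pullback measure in Section~\ref{sec:basic pullback studies}, $\rho'$ is Borel, and the change of variables formula~\eqref{eq:change of variable general} gives at once
\[
	\int_\Omega(\rho')^Q\,d\nu=\frac1m\int_\Omega\sum(\rho^Q,y,\pi,\tilde{\Omega}^f)\,d\nu(y)=\frac1m\int_{\tilde{\Omega}^f}\rho^Q\,d\lambda .
\]
Thus the whole proof reduces to checking that $\rho'$ is admissible for $\Gamma'$; granting that, $\Modd_Q(\Gamma')\le\int_\Omega(\rho')^Q\,d\nu=\frac1m\int_{\tilde{\Omega}^f}\rho^Q\,d\lambda$, and infimizing over all admissible $\rho$ yields $m\,\Modd_Q(\Gamma')\le\Modd_Q(\Gamma)$.

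To verify admissibility I would fix a locally rectifiable $\gamma'\in\Gamma'$ parametrized by arc length, write the given subcurves as $\gamma_i'=\gamma'|_{[a_i,b_i]}$, and use that $\pi$ is $1$-BDD, hence $1$-BLD (Lemma~\ref{lemma:pullback property 1}): since $\pi$ preserves the length of every subcurve, each lift $\gamma_i$ of $\gamma_i'$ has length function $s\mapsto s-a_i$, so it is itself arc-length parametrized with $\pi(\gamma_i(s))=\gamma'(s)$, whence $\int_{\gamma_i}\rho\,ds=\int_{a_i}^{b_i}\rho(\gamma_i(s))\,ds\ge1$ because $\gamma_i\in\Gamma$. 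Summing over $i$ and interchanging sum and integral,
\[
	m\le\int_0^{l(\gamma')}\Big(\sum_{i:\,a_i\le s\le b_i}\rho(\gamma_i(s))\Big)\,ds .
\]
For a.e.\ $s$ the points $\gamma_i(s)$ appearing in the inner sum are pairwise distinct (this is exactly the hypothesis ``$\gamma_i(s)=\gamma_j(s)$ iff $i=j$'') and all lie in the fiber $\pi^{-1}(\gamma'(s))$; since there are at most $m$ of them, H\"older's inequality $\sum_{i=1}^k a_i\le k^{1-1/Q}\big(\sum_{i=1}^k a_i^Q\big)^{1/Q}$ with $k\le m$, together with the distinctness, yields
\[
	\sum_{i:\,a_i\le s\le b_i}\rho(\gamma_i(s))\le m^{1-1/Q}\Big(\sum(\rho^Q,\gamma'(s),\pi,\tilde{\Omega}^f)\Big)^{1/Q}=m\,\rho'(\gamma'(s)) .
\]
Feeding this back gives $m\le m\int_{\gamma'}\rho'\,ds$, i.e.\ $\int_{\gamma'}\rho'\,ds\ge1$, so $\rho'$ is admissible and the inequality $m\,\Modd_Q(\Gamma')\le\Modd_Q(\Gamma)$ follows. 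The ``in particular'' assertion then comes for free: given any family $\Gamma$ in $\tilde{\Omega}^f$, apply the inequality to $\Gamma'=\pi(\Gamma)$ with, for each $\gamma'=\pi\circ\gamma$, the single lift $m=1$, $\gamma_1=\gamma$, $\gamma_1'=\gamma'$ (the distinctness condition being vacuous), to get $\Modd_Q(\pi(\Gamma))\le\Modd_Q(\Gamma)$.

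The only step I expect to require genuine care is the bookkeeping in the second paragraph: identifying $\int_{\gamma_i}\rho\,ds$ with the ordinary integral $\int_{a_i}^{b_i}\rho(\gamma_i(s))\,ds$ and giving the a.e.-distinctness condition a precise meaning, both of which hinge on the $1$-BLD property of $\pi$ forcing $\gamma_i$ and its projection to carry a common arc-length parametrization (plus the routine reduction to rectifiable $\gamma'$). Everything after that is the standard V\"ais\"al\"a-type averaging, with the H\"older estimate tuned so that it produces precisely the constant $1/m$ rather than a weaker multiplicity-dependent factor.
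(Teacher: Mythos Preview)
Your proof is correct and follows essentially the same route as the paper's: convert an admissible $\rho$ for $\Gamma$ into an admissible $\rho'$ for $\Gamma'$ via a fiber sum, use the $1$-BLD property of $\pi$ to identify arc-length along $\gamma_i$ and $\gamma_i'$, invoke the a.e.\ distinctness hypothesis together with H\"older to get the factor $m$, and finish with the change-of-variables formula~\eqref{eq:change of variable general}. The only cosmetic difference is the choice of test function---the paper takes $\rho'(y)=\tfrac{1}{m}\sup_{|S|\le m}\sum_{z\in S}\rho(z)$ and applies H\"older afterward to bound $(\rho')^Q$, whereas you build the H\"older step into the definition $\rho'=(\tfrac{1}{m}\sum\rho^Q)^{1/Q}$ and apply it while checking admissibility; the two are interchangeable and lead to the identical energy bound.
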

\begin{proof}
	It suffices to prove the first inequality, since the second inequality follows when $\Gamma'=\pi(\Gamma)$ and $m=1$.
	
	Suppose, then, that $\rho\colon \tilde{\Omega}^f\to \R$ is admissible for $\Gamma$. Define $\rho'\colon \Omega\to \R$ by
	\begin{align*}
		\rho'(y)=\frac{1}{m}\sup_{S\in \mathcal{S}_m(y)}\sum_{z\in S}\rho(z),
	\end{align*}
	where $\mathcal{S}_m(y)$ is the family of subsets of $\pi^{-1}(y)\cap \tilde{\Omega}^f$ of cardinality at most $m$.
	
	Now for each rectifiable curve $\gamma'\in \Gamma'$, we assume for ease of notation that $\gamma'$ is parameterized by arc-length, $\gamma'\colon [0,L]\to \Omega$, and each $\gamma_i'$ is parametrized so that $\gamma_i'=\pi\circ \gamma_i|_{[a_i,b_i]}$. Then at a.e. $t\in [0,L]$,
	\begin{align*}
		\rho'(\gamma'(t))\geq \frac{1}{m}\sum_{i:t\in [a_i,b_i]}\rho(\gamma_i(t)).
	\end{align*}
	We therefore have
	\begin{align*}
		\int_{\gamma'}\rho'ds&=\int_0^L\rho'(\gamma'(t))dt\geq \int_0^L\frac{1}{m}
\sum_{i:t\in [a_i,b_i]}\rho(\gamma_i(t))\\
&=\frac{1}{m}\sum_{i=1}^m\int_{a_i}^{b_i}\rho(\gamma_i(t))dt=\frac{1}{m}\sum_{i=1}^{m}\int_{\gamma_i}\rho ds\geq 1. 
	\end{align*}
	Thus $\rho'$ is admissible for $\Gamma'$ and by H\"older's inequality,
	\begin{align*}
		\rho'(y)^Q\leq \frac{1}{m^Q}\sup_{S\in \mathcal{S}_m(y)}m^{Q-1}\sum_{z\in S}\rho(z)^Q\leq \frac{1}{m}\sum(\rho,y,\pi,\tilde{\Omega}^f)^Q. 
	\end{align*}
	Therefore, by~\eqref{eq:change of variable general}, we have
	\begin{align*}
		\Modd_Q(\Gamma')\leq \int_{\Omega}\rho'(y)^Qd\nu(y)\leq \int_{\Omega}\frac{1}{m}\sum(\rho,y,\pi,\tilde{\Omega}^f)^Q d\nu(y)=\frac{1}{m}\int_{\tilde{\Omega}^f}\rho^Q d\lambda. 
	\end{align*}
\end{proof}

\begin{corollary}\label{coro:1.5}
	A curve family $\Gamma$ of curves in $\tilde{\Omega}^f$ is exceptional if and only if $\pi(\Gamma)$ is.
\end{corollary}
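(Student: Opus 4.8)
The plan is to derive Corollary~\ref{coro:1.5} from the two modulus estimates for the projection $\pi$ established above, together with the fact that $\pi$ is $1$-BLD. One implication is immediate: Lemma~\ref{lemma:1.4} gives $\Modd_Q(\pi(\Gamma))\le\Modd_Q(\Gamma)$, so an exceptional $\Gamma$ has exceptional image.

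For the converse, suppose $\Modd_Q(\pi(\Gamma))=0$. The mechanism I would use is that, since $\pi$ is $1$-BLD (Lemma~\ref{lemma:pullback property 1}), arclength is preserved along a curve and its $\pi$-image, so $\int_\gamma(\rho\circ\pi)\,ds=\int_{\pi\circ\gamma}\rho\,ds$ for every Borel $\rho\colon\Omega\to[0,\infty]$ and every locally rectifiable $\gamma$ in $\tilde\Omega^f$; in particular $\rho$ admissible for $\pi(\Gamma)$ implies $\rho\circ\pi$ admissible for $\Gamma$, while by~\eqref{eq:change of variable general},
\[
\int_{\tilde\Omega^f}(\rho\circ\pi)^Q\,d\lambda=\int_\Omega\rho^Q(y)\,N\big(y,\pi,\tilde\Omega^f\big)\,d\nu(y).
\]
If the total multiplicity $N:=N(\pi,\tilde\Omega^f)$ is finite this already proves $\Modd_Q(\Gamma)=0$: the right-hand side is then $\le N\int_\Omega\rho^Q\,d\nu$, which can be made arbitrarily small by choice of $\rho$ (this is essentially Lemma~\ref{lemma:1.3} with $\tilde\Omega^f_1=\tilde\Omega^f$).

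In general $\pi$ is only of locally bounded multiplicity, and one localizes. Write $\tilde\Omega^f=\bigcup_{i\ge1}V_i$ with $V_i$ open, $\overline{V_i}$ compact, $V_i\uparrow\tilde\Omega^f$ and $N(\pi,V_i)<\infty$. For the subfamily $\Gamma_i\subset\Gamma$ of curves with image in $V_i$ we have $\pi(\Gamma_i)\subset\pi(\Gamma)$, hence $\Modd_Q(\pi(\Gamma_i))=0$, so Lemma~\ref{lemma:1.3} (with $\tilde\Omega^f_1=V_i$) gives $\Modd_Q(\Gamma_i)\le N(\pi,V_i)\int_\Omega\rho^Q\,d\nu$ for every $\rho$ admissible for $\pi(\Gamma_i)$, whence $\Modd_Q(\Gamma_i)=0$. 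By countable subadditivity, the subfamily of $\Gamma$ consisting of curves with relatively compact image --- in particular every rectifiable curve --- is exceptional. In all the intended applications the relevant curve families live in relatively compact subsets, so this already suffices.

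The remaining point --- which I expect to be the genuine obstacle --- is to treat the curves of $\Gamma$ whose image is not contained in any $V_i$. Here one cannot pass to a compact subcurve inside some $V_i$, since a family of subcurves of an exceptional family need not be exceptional, so that reduction is illegitimate; instead I would invoke Fuglede's lemma on the target (cf.~\cite{hkst15}) to obtain $g\in L^Q(\Omega,\nu)$ with $\int_{\pi\circ\gamma}g\,ds=\infty$ for all locally rectifiable $\gamma\in\Gamma$, so that $\int_\gamma(g\circ\pi)\,ds=\infty$ for all such $\gamma$ by the identity above and $c(g\circ\pi)$ is admissible for $\Gamma$ for every $c>0$. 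One then has to manufacture from $g\circ\pi$ an element of $L^Q(\tilde\Omega^f,\lambda)$ that remains non-integrable along every curve of $\Gamma$: splitting $\tilde\Omega^f$ into the disjoint Borel sets $D_i=V_i\setminus V_{i-1}$ (on each of which $\int_{D_i}(g\circ\pi)^Q\,d\lambda\le N(\pi,V_i)\int_\Omega g^Q\,d\nu<\infty$) and rescaling $g\circ\pi$ by carefully chosen positive constants on the $D_i$ is the natural device. Reconciling the summability of the resulting $L^Q$-norm with non-integrability along the ``escaping'' curves is the delicate part of the argument; once it is carried out, combining with the previous paragraph yields $\Modd_Q(\Gamma)=0$, completing the proof.
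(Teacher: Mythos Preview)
Your argument coincides with the paper's: reduce to curves lying in a relatively compact open subset (where Lemma~\ref{lemma:1.3} and finite multiplicity give $\Modd_Q(\Gamma_i)\le N(\pi,V_i)\,\Modd_Q(\pi(\Gamma_i))=0$), then invoke separability and countable subadditivity. The paper's own proof is precisely those two sentences and stops there.

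Where you go further is in flagging the curves whose image is not contained in any single $V_i$. You are right that separability plus subadditivity does not capture these, and right that passing to compact subcurves is illegitimate (a family of subcurves of an exceptional family need not be exceptional). The paper simply does not address this case---its proof has exactly the gap you identify. Your Fuglede-based sketch is the natural attack, but as you concede, the rescaling step (forcing the modified $\rho\circ\pi$ into $L^Q(\lambda)$ while retaining non-integrability along every escaping curve) is not carried out, and it is not clear it can be in full generality without further hypotheses on $\pi$. So neither you nor the paper fully closes the argument in the stated generality. In practice this is harmless: every subsequent use of the corollary concerns curve families supported in normal neighborhoods or other relatively compact sets, so the escaping case never arises. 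You have been more scrupulous than the source.
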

\begin{proof}
	If $\Gamma\subset \tilde{\Omega}_1^f\subset \tilde{\Omega}^f$, where $\tilde{\Omega}_1^f$ is relatively compact, then the corollary is trivial. The general case then follows from the separability of $\tilde{\Omega}^f$ and the countable subadditivity of modulus.
\end{proof}

We now discuss a few general facts about upper gradients in a locally compact
metric measure space $\Delta=(\Delta,d,\sigma)$.

First, we recall the following alternative characterization of upper gradients, which essentially follows from the change of variables formula for path integrals; see, e.g.,~\cite[Proposition 3.6]{w12proc}.

\begin{proposition}\label{prop:1.6}
	Let $\rho\in L^p_{loc}(\Delta)$ and $h\colon \Delta\to W$ be a continuous mapping. Then $\rho$ is a $p$-weak upper gradient of $h$ if and only if for $p$-almost every curve $\gamma\colon [a,b]\to \Delta$, the following two conditions are satisfied:
	\begin{itemize}
		\item $h$ is absolutely continuous along $\gamma$;
		\item If the parametrization of $\gamma$ itself is absolutely continuous as well, then the inequality
		\begin{align}\label{eq:upper graident pointwise characterization}
		\rho(\gamma(t))v_\gamma'(t)\geq v_{h(\gamma)}'(t)
		\end{align}
		holds a.e. on the parametrizing interval $[a,b]$.
	\end{itemize}
\end{proposition}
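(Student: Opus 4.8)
The plan is to reduce both implications to curves parametrized by arc length, where the statement becomes an elementary comparison with an absolutely continuous majorant, and to transfer between parametrizations using three standard facts about the $p$-modulus: (a) a curve family $\Gamma$ is $p$-exceptional if and only if the family $\Gamma_s$ of its arc-length reparametrizations is; (b) if $\Gamma_0$ is $p$-exceptional, then so is the family of all curves possessing a subcurve in $\Gamma_0$, since any Borel function admissible for $\Gamma_0$ is admissible for the larger family; and (c) since $\rho\in L^p_{\loc}(\Delta)$, one has $\int_\gamma\rho\,ds<\infty$ for $p$-a.e.\ curve $\gamma$. Throughout I would fix a Borel representative of $\rho$, restrict to compact rectifiable curves, and use the change-of-variables identity~\eqref{eq:formula for ABS curve line integral} as the main computational input.

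For the ``if'' direction, assume the two listed conditions hold for $p$-a.e.\ curve. Since $\int_\gamma\rho\,ds$ and the endpoint values of $h\circ\gamma$ are reparametrization-invariant, it suffices to show $\int_\gamma\rho\,ds\ge d_W(h(\gamma(b)),h(\gamma(a)))$ for $p$-a.e.\ arc-length parametrized $\gamma\colon[0,\ell]\to\Delta$; by (a) the hypotheses pass to $p$-a.e.\ such $\gamma$. For these, $v_\gamma(t)=t$, so $v_\gamma'\equiv1$ and the parametrization is $1$-Lipschitz, hence absolutely continuous, so the second condition applies and gives $v_{h(\gamma)}'(t)\le\rho(\gamma(t))$ a.e.; the first condition makes $v_{h(\gamma)}$ absolutely continuous, so $d_W(h(\gamma(\ell)),h(\gamma(0)))\le v_{h(\gamma)}(\ell)=\int_0^\ell v_{h(\gamma)}'(t)\,dt\le\int_0^\ell\rho(\gamma(t))\,dt=\int_\gamma\rho\,ds$, as desired.

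For the ``only if'' direction, assume $\rho$ is a $p$-weak upper gradient of $h$. Combining the defining inequality with (b) (applied to the $p$-exceptional family where it fails) and with (c), I get that for $p$-a.e.\ $\gamma\colon[a,b]\to\Delta$ one has $\int_\gamma\rho\,ds<\infty$ and $d_W(h(\gamma(t)),h(\gamma(s)))\le\int_{\gamma|_{[s,t]}}\rho\,ds$ for all $a\le s\le t\le b$. Passing to the arc-length parametrization $\gamma_s\colon[0,\ell]\to\Delta$ (allowed by (a)), the function $F(t):=\int_0^t\rho(\gamma_s(\tau))\,d\tau$ is absolutely continuous and dominates the increments of $h\circ\gamma_s$; hence $h\circ\gamma_s$, and therefore $h$ along any absolutely continuous parametrization, is absolutely continuous. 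Summing the increment inequality over partitions gives $v_{h(\gamma_s)}\le F$, so $v_{h(\gamma_s)}$ is absolutely continuous with $v_{h(\gamma_s)}'\le\rho\circ\gamma_s$ a.e.\ on $[0,\ell]$. Finally, when the original parametrization $\gamma$ is absolutely continuous, write $\gamma=\gamma_s\circ s_\gamma$ with $s_\gamma$ nondecreasing, absolutely continuous and $s_\gamma'=v_\gamma'$; then $v_{h(\gamma)}=v_{h(\gamma_s)}\circ s_\gamma$, and the chain rule for a monotone absolutely continuous change of variables yields $v_{h(\gamma)}'(t)=v_{h(\gamma_s)}'(s_\gamma(t))\,v_\gamma'(t)\le\rho(\gamma(t))\,v_\gamma'(t)$ a.e., which is exactly~\eqref{eq:upper graident pointwise characterization}.

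The main obstacle I anticipate is justifying this last chain-rule step, because $s_\gamma$ need not be injective: on the set $\{s_\gamma'>0\}$ one must know that $s_\gamma$ sends a.e.\ point outside the null set $E\subset[0,\ell]$ on which $v_{h(\gamma_s)}'\le\rho\circ\gamma_s$ fails, which follows from the substitution rule $\int_{s_\gamma^{-1}(E)}s_\gamma'\,dt=\int_0^\ell\mathbf{1}_E\,dy=0$; and on $\{s_\gamma'=0\}$ both sides of~\eqref{eq:upper graident pointwise characterization} vanish a.e., since $h\circ\gamma$ is locally constant there. Everything else is the bookkeeping of $p$-exceptional families through (a)--(c), which is routine; see~\cite{w12proc,hkst15} for the analogous arguments.
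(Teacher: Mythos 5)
The paper does not prove Proposition~\ref{prop:1.6}: it simply recalls the statement and cites~\cite[Proposition 3.6]{w12proc}, so there is no internal argument to compare your proof against. Assessed on its own terms, your strategy --- reduce to the arc-length parametrization, exploit the reparametrization invariance of line integrals and endpoints, pass through the modulus facts (a)--(c), and differentiate a dominated increment --- is sound and is the standard route. The ``if'' direction is correct as written.

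In the ``only if'' direction there is one genuinely wrong justification. On $\{v_\gamma'=0\}$ you claim both sides of~\eqref{eq:upper graident pointwise characterization} vanish ``since $h\circ\gamma$ is locally constant there.'' That is false: $\{v_\gamma'=0\}$ is a measurable set on which the derivative of $s_\gamma$ vanishes, not a union of intervals of constancy of $s_\gamma$; it can easily have positive measure and contain no interval (e.g.\ when $v_\gamma'$ vanishes on a fat Cantor set), and then $\gamma$ and $h\circ\gamma$ are not locally constant at a typical such point. The conclusion $v_{h(\gamma)}'=0$ a.e.\ on $\{v_\gamma'=0\}$ is nevertheless true, but it needs a different justification --- either the Serrin--Varberg chain rule (which asserts $(f\circ g)'=(f'\circ g)\cdot g'$ a.e.\ with the convention that the product is $0$ wherever $g'=0$, provided $f\circ g$ is differentiable a.e.), or, cleaner still, skip the chain rule altogether: having shown $v_{h(\gamma_s)}'\le\rho\circ\gamma_s$ a.e., the change of variables gives $v_{h(\gamma)}(t_2)-v_{h(\gamma)}(t_1)=v_{h(\gamma_s)}(s_\gamma(t_2))-v_{h(\gamma_s)}(s_\gamma(t_1))\le\int_{s_\gamma(t_1)}^{s_\gamma(t_2)}\rho(\gamma_s)\,dy=\int_{t_1}^{t_2}\rho(\gamma(u))\,v_\gamma'(u)\,du$ for all $t_1<t_2$, so $v_{h(\gamma)}$ is absolutely continuous and $v_{h(\gamma)}'\le\rho(\gamma)\,v_\gamma'$ a.e.\ by direct differentiation, which is exactly~\eqref{eq:upper graident pointwise characterization}. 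This also dispenses with the need to track the null set $E$ under $s_\gamma$. I would replace the chain-rule paragraph with this two-line comparison; the rest of your argument stands.
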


Note that in the previous proposition every rectifiable curve (and hence $p$-almost
every curve) has an absolutely continuous parametrization, namely, the arc-length
parametrization.

Let $\mathcal{C}_\varepsilon(W)$ be the collection of curves $\gamma\colon [a,b]\to W$ such that $d(\gamma(a),\gamma(b))\geq \varepsilon$. For every mapping $h\colon \Delta\to W$ from the metric space $W$, we use the notation $\mathcal{C}_\varepsilon h$ to denote the family of curves $\gamma$ in $\Delta$ such that $h(\gamma)\in \mathcal{C}_\varepsilon(W)$.

We need the following useful characterization of Sobolev mappings based on the limiting behavior of modulus of certain curve family, which generalizes~\cite[Theorem 3.10]{w12proc}.
\begin{theorem}\label{thm:characterization of Sobolev spaces via curves}
	A mapping $h\colon \tilde{\Omega}^f\to W$ belongs to the Sobolev space $N^{1,p}(\tilde{\Omega}^f,W)$, $1<p<\infty$, if and only if 
	\begin{align*}
	\liminf_{\varepsilon\to 0}\varepsilon^p\Modd_p(\pi(\mathcal{C}_\varepsilon h))<\infty.
	\end{align*}
	Moreover, if this is the case, then the liminf on the left-hand side is an actual limit, and 
	\begin{align*}
	\int_{\Omega} \sup(|\nabla h|,y,\pi,\tilde{\Omega}^f)d\nu(y)=\lim_{\varepsilon\to 0}\varepsilon^p\Modd_p(\pi(\mathcal{C}_\varepsilon h)).
	\end{align*}
\end{theorem}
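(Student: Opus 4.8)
The plan is to follow the scheme of the homeomorphic case \cite[Theorem~3.10]{w12proc}, the genuinely new ingredients being the pullback integration theory of Section~\ref{sec:basic pullback studies}, the transfer of exceptional curve families under $\pi$ (Corollary~\ref{coro:1.5}), and the fact (Lemma~\ref{lemma:pullback property 1}) that $\pi$ is $1$-Lipschitz and $1$-BLD. The latter makes $\pi$ \emph{path-isometric}: when $\sigma$ is parametrized by arclength in $\tilde\Omega^f$, the curve $\pi\circ\sigma$ is $1$-Lipschitz of the same length, hence is itself parametrized by arclength, so $\int_\sigma (\rho\circ\pi)\,ds=\int_{\pi\circ\sigma}\rho\,ds$ for every Borel $\rho\geq 0$. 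I will prove the two estimates
\begin{equation*}
\limsup_{\varepsilon\to 0}\varepsilon^p\Modd_p\big(\pi(\mathcal{C}_\varepsilon h)\big)\leq \int_\Omega \sup(|\nabla h|^p,y,\pi,\tilde\Omega^f)\,d\nu(y)\qquad\big(h\in N^{1,p}(\tilde\Omega^f,W)\big)
\end{equation*}
and
\begin{equation*}
\liminf_{\varepsilon\to 0}\varepsilon^p\Modd_p\big(\pi(\mathcal{C}_\varepsilon h)\big)\geq \int_\Omega \sup(|\nabla h|^p,y,\pi,\tilde\Omega^f)\,d\nu(y),
\end{equation*}
the second one under the hypothesis that its left-hand side is finite, in which case one also shows $h\in N^{1,p}(\tilde\Omega^f,W)$. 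Granting these, the equivalence in Theorem~\ref{thm:characterization of Sobolev spaces via curves} follows, and so does the assertion that the $\liminf$ is an actual limit equal to the stated integral. (Here $|\nabla h|$ is the minimal $p$-weak upper gradient of $h$ on $(\tilde\Omega^f,\lambda)$, and $\sup(|\nabla h|^p,y,\pi,\tilde\Omega^f)=\sup(|\nabla h|,y,\pi,\tilde\Omega^f)^p$ is the integrand appearing on the right of the stated identity.)

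For the upper estimate, assuming $h\in N^{1,p}(\tilde\Omega^f,W)$, I would take $\rho_\varepsilon:=\varepsilon^{-1}\sup(|\nabla h|,\,\cdot\,,\pi,\tilde\Omega^f)$, a Borel function on $\Omega$ by the measurability remarks of Section~\ref{sec:basic pullback studies}. Let $E$ be the exceptional family of curves in $\tilde\Omega^f$ along which the upper-gradient inequality for $|\nabla h|$ fails; by Corollary~\ref{coro:1.5} the family $\pi(E)$ is exceptional, and any $\beta=\pi\circ\gamma\in\pi(\mathcal{C}_\varepsilon h)\setminus\pi(E)$ has $\gamma\notin E$. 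For such $\beta$, the path-isometry of $\pi$ and the pointwise bound $\sup(|\nabla h|,\pi(\gamma(t)),\pi,\tilde\Omega^f)\geq|\nabla h|(\gamma(t))$ give $\int_\beta\sup(|\nabla h|,\,\cdot\,,\pi,\tilde\Omega^f)\,ds\geq\int_\gamma|\nabla h|\,ds\geq d_W(h(\gamma(b)),h(\gamma(a)))\geq\varepsilon$, so $\rho_\varepsilon$ is admissible for $\pi(\mathcal{C}_\varepsilon h)$ modulo an exceptional family; hence $\varepsilon^p\Modd_p(\pi(\mathcal{C}_\varepsilon h))\leq\varepsilon^p\int_\Omega\rho_\varepsilon^p\,d\nu=\int_\Omega\sup(|\nabla h|^p,\,\cdot\,,\pi,\tilde\Omega^f)\,d\nu$ for each $\varepsilon$, which is finite because by \eqref{eq:change of variable general} it is at most $\int_{\tilde\Omega^f}|\nabla h|^p\,d\lambda<\infty$.

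For the lower estimate and the converse, I would set $m:=\liminf_{\varepsilon\to 0}\varepsilon^p\Modd_p(\pi(\mathcal{C}_\varepsilon h))<\infty$, pick $\varepsilon_j\downarrow 0$ realizing it, and choose $\rho_j$ admissible for $\pi(\mathcal{C}_{\varepsilon_j}h)$ with $\varepsilon_j^p\int_\Omega\rho_j^p\,d\nu\to m$. With $\tilde\rho_j:=\varepsilon_j(\rho_j\circ\pi)$ on $\tilde\Omega^f$, path-isometry and admissibility give $\int_\sigma\tilde\rho_j\,ds\geq\varepsilon_j$ for every curve $\sigma$ in $\tilde\Omega^f$ whose $h$-image has endpoints at distance $\geq\varepsilon_j$. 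Fixing a relatively compact open $D\subset\tilde\Omega^f$ with $N(\pi,D)<\infty$ (these exist by Proposition~\ref{prop:existence of normal nbhd} and exhaust $\tilde\Omega^f$), the change of variables \eqref{eq:change of variable general} bounds $\|\tilde\rho_j\|_{L^p(D,\lambda)}^p$ by $N(\pi,D)\varepsilon_j^p\int_\Omega\rho_j^p\,d\nu$, which is bounded; since $1<p<\infty$, a subsequence has $\tilde\rho_j\rightharpoonup g$ in $L^p(D,\lambda)$, and by Mazur's lemma plus Fuglede's lemma (applied in $(\tilde\Omega^f,\lambda)$) suitable convex combinations converge to $g$ strongly and along the line integral of $p$-almost every curve in $D$. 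For such a curve $\gamma$, a greedy subdivision of its domain into consecutive subcurves each with $h$-displacement exactly $\varepsilon_j$ (possible since $h\circ\gamma$ is continuous) yields $\int_\gamma\tilde\rho_j\,ds\geq d_W(h(\gamma(b)),h(\gamma(a)))-\varepsilon_j$, and passing to the limit through the Mazur combinations gives $\int_\gamma g\,ds\geq d_W(h(\gamma(b)),h(\gamma(a)))$. Hence $g\in L^p(D,\lambda)$ is a $p$-weak upper gradient of $h|_D$, so $h\in N^{1,p}_{\loc}(\tilde\Omega^f,W)$ with $|\nabla h|\leq g$ $\lambda$-a.e.\ on $D$ (this membership can also be seen more quickly: by Lemma~\ref{lemma:1.3}, on $D$ the quantity $\Modd_p(\pi(\mathcal{C}_\varepsilon h))$ dominates a fixed multiple of the $p$-modulus of $\mathcal{C}_\varepsilon h$ computed with $\lambda$, whence \cite[Theorem~3.10]{w12proc} applies in $(\tilde\Omega^f,\lambda)$; the sharp bound, however, still requires the weak-limit argument). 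Finally I would let $\bar g$ be a weak $L^p(\pi(D),\nu)$-limit of $\varepsilon_j\rho_j$ along the same subsequence; since $\rho_j\circ\pi$ is constant along fibers of $\pi$, testing \eqref{eq:change of variable general} against $L^{p'}$-functions shows $\tilde\rho_j\rightharpoonup\bar g\circ\pi$ in $L^p(D,\lambda)$, so $g=\bar g\circ\pi$ and therefore $\bar g\circ\pi\geq|\nabla h|$ $\lambda$-a.e.; thus $\bar g(y)\geq\sup(|\nabla h|,y,\pi,D)$ for $\nu$-a.e.\ $y$, and weak lower semicontinuity of the $L^p$ norm gives $\int_{\pi(D)}\sup(|\nabla h|^p,y,\pi,D)\,d\nu\leq\int\bar g^p\,d\nu\leq m$. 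Letting $D\uparrow\tilde\Omega^f$ and applying monotone convergence yields $\int_\Omega\sup(|\nabla h|^p,y,\pi,\tilde\Omega^f)\,d\nu\leq m$, the desired lower estimate (and $h\in N^{1,p}$, not merely $N^{1,p}_{\loc}$, when the total multiplicity $N(\pi,\tilde\Omega^f)$ is finite).

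The main obstacle I anticipate is obtaining the sharp constant in the form of a fiberwise \emph{supremum} $\sup(|\nabla h|^p,\,\cdot\,,\pi,\tilde\Omega^f)$ rather than the fiberwise \emph{sum} $\sum(|\nabla h|^p,\,\cdot\,,\pi,\tilde\Omega^f)=\int_{\tilde\Omega^f}|\nabla h|^p\,d\lambda$: a direct pullback of the argument of \cite{w12proc} produces the $\sum$-version, because $\Modd_p(\pi(\mathcal{C}_\varepsilon h))$ registers each curve of $\Omega$ only once however many preimages it has — precisely the effect of the branch set. The device that forces the $\sup$ to appear is to keep the near-optimal test functions $\rho_j$ on the target $\Omega$, extract the weak limit $\bar g$ there, and only afterwards transfer to $\tilde\Omega^f$ through the fiberwise-constant change of variables; making this step clean, together with the bookkeeping of exceptional families via Corollary~\ref{coro:1.5} and the exhaustion of $\tilde\Omega^f$ by sets of finite multiplicity, is the delicate part. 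The remaining ingredients — measurability of the fiber functions, the greedy subdivision, and the Mazur–Fuglede passage — are routine adaptations of the classical proof.
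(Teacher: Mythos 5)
Your argument is correct and follows essentially the same route as the paper, which gives only a quick sketch and defers to \cite[Proof of Theorem~3.10]{w12proc} for details. The two points you flesh out are exactly where care is needed relative to the homeomorphic case: localizing to relatively compact $D\subset\tilde\Omega^f$ with $N(\pi,D)<\infty$ so that the uniform $L^p(D,\lambda)$-bound on the pullbacks $\tilde\rho_j$ is honest (the paper asserts uniform boundedness in $L^p(\tilde\Omega^f,\lambda)$ without comment, which can fail when the global multiplicity is unbounded; the paper's own Corollary~\ref{coro:1.8} then applies the theorem to relatively compact subsets anyway), and extracting the weak limit $\bar g$ on $\Omega$ first and transferring via the fiberwise-constant change of variables, so that the fiber \emph{supremum}, rather than the fiber sum, emerges as the sharp constant — you are right that this is the genuinely new point over the homeomorphic case. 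One final remark: the paper's statement and proof omit a $p$-th power; the integrand should be $\sup(|\nabla h|,y,\pi,\tilde\Omega^f)^p=\sup(|\nabla h|^p,y,\pi,\tilde\Omega^f)$, as your displays correctly have it, since otherwise the $\varepsilon^p$-scaling does not balance.
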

\begin{proof}
	The proof is very similar to the proof of the standard case in~\cite[Theorem 3.10]{w12proc}, but a few modifications must be made. We quickly sketch the argument, and refer the readers to~\cite[Proof of Theorem 3.10]{w12proc} for the full details.
	
	As in~\cite{w12proc}, we say that a function $\rho$ is \textit{almost $p$-admissible} for a curve family $\Gamma$ in a metric measure space $(X,d,\mu)$ if $\rho$ is admissible for some subfamily $\tilde{\Gamma}\subset \Gamma$, with $\Modd_p(\Gamma\backslash \tilde{\Gamma})=0$. In this situation, we have
	\begin{align*}
		\Modd_p(\Gamma)=\Modd_p(\tilde{\Gamma})\leq \int_X\rho^p d\mu.
	\end{align*}
	Thus from the point of view of estimating the $p$-modulus, almost $p$-admissible functions work as well as admissible ones.
	
	Let $h\in N^{1,p}(\tilde{\Omega}^f,W)$. Then $|\nabla h|\in L^p(\tilde{\Omega}^f)$ and for each $\varepsilon>0$, the Borel function  $$\rho_\varepsilon(y)=\varepsilon^{-1}\sup(|\nabla h|,y,\pi,\tilde{\Omega}^f)$$ 
	is almost $p$-admissible for $\pi(\mathcal{C}_\varepsilon h)$. We therefore have
	\begin{align*}
	  \varepsilon^p\Modd_p(\pi(\mathcal{C}_\varepsilon h))\leq \int_{\Omega}\sup(|\nabla h|,y,\pi,\tilde{\Omega}^f)d\nu(y).
	\end{align*}
	
	Conversely, suppose $\liminf_{\varepsilon\to0}\varepsilon^p\Modd_p(\pi(\mathcal{C}_\varepsilon h))<\infty$. Let $\{\varepsilon_i\}$ be a sequence converging to zero that realizes the liminf. Then for each $\varepsilon_i$, we may select a Borel function $\rho_{\varepsilon_i}\colon \Omega\to [0,\infty)$, that is almost admissible for $\pi(\mathcal{C}_{\varepsilon_i} h)$, such that 
	\begin{align*}
		\Modd_p(\pi(\mathcal{C}_{\varepsilon_i} h))=\int_{\Omega}\rho_{\varepsilon_i}^p d\nu.
	\end{align*} 
	For each $i\in \mathbb{N}$, let $\varrho_i=\varepsilon_i\rho_{\varepsilon_i}\circ \pi$. Then the Borel function $\varrho_i$ is an upper gradient of $h$ along $p$-almost every curve $\gamma\in \mathcal{C}_\varepsilon h$, and also has the property that
	\begin{align*}
		\int_\Omega \sup(\varrho_i,y,\pi,\tilde{\Omega}^f)d\nu(y)\leq \varepsilon^p\Modd_p(\pi(\mathcal{C}_{\varepsilon_i}h)).
	\end{align*}
	Note that our starting assumption implies that the functions $\varrho_i$ is uniformly bounded in $L^p(\tilde{\Omega}^f)$. A standard limiting argument via Reflexivity, Mazur's lemma and Fuglede's lemma completes the proof (see e.g.~\cite[Proof of Theorem 3.10]{w12proc}).	
\end{proof}

Applying Theorem~\ref{thm:characterization of Sobolev spaces via curves} to each relatively compact open subset $\tilde{\Omega}_0^f\subset \tilde{\Omega}^f$, and invoking Lemma~\ref{lemma:1.2}, we immediately obtain the following corollary.
\begin{corollary}\label{coro:1.8}
	Let $p>1$ and let $\xi$ be a locally finite Borel regular measure on $\tilde{\Omega}^f$. Suppose for every relatively compact open subset $\tilde{\Omega}^f_0\subset \tilde{\Omega}^f$,
	\begin{align*}
	\liminf_{\varepsilon\to 0}\varepsilon^p\Modd_p(\pi(\mathcal{C}_\varepsilon h|_{\tilde{\Omega}_0^f}))<\xi(\tilde{\Omega}^f_0).
	\end{align*}
	Then $h\in N^{1,p}_{loc}(\tilde{\Omega}^f,W)$ and for $\lambda$-a.e. $z\in \tilde{\Omega}^f$,
	\begin{align*}
	|\nabla h|^p(z)\leq \frac{d\xi}{d\lambda}(z).
	\end{align*}
\end{corollary}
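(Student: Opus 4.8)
The plan is to localize and reduce everything to two results already established in this section: the curve-family characterization of Sobolev maps (Theorem~\ref{thm:characterization of Sobolev spaces via curves}) and the Radon--Nikodym comparison Lemma~\ref{lemma:1.2}. First I would fix a countable cover $\{\tilde{\Omega}_j^f\}_{j\in\mathbb{N}}$ of $\tilde{\Omega}^f$ by relatively compact open sets, which is possible since $\tilde{\Omega}^f$ is locally compact. By relative compactness together with the local boundedness of the multiplicity of $\pi$, each $\tilde{\Omega}_j^f$ satisfies $N(\pi,\tilde{\Omega}_j^f)<\infty$, and, as recorded in Section~\ref{sec:the pullback factorization}, $\pi$ restricts to a proper branched covering of $\tilde{\Omega}_j^f$ onto the open set $\pi(\tilde{\Omega}_j^f)$, with $\lambda|_{\tilde{\Omega}_j^f}=(\pi|_{\tilde{\Omega}_j^f})^*\nu$; thus Theorem~\ref{thm:characterization of Sobolev spaces via curves} and Lemma~\ref{lemma:1.2} apply verbatim to these localizations.

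Applying the hypothesis with $\tilde{\Omega}_0^f=\tilde{\Omega}_j^f$ gives $\liminf_{\varepsilon\to 0}\varepsilon^p\Modd_p(\pi(\mathcal{C}_\varepsilon h|_{\tilde{\Omega}_j^f}))<\xi(\tilde{\Omega}_j^f)<\infty$, so Theorem~\ref{thm:characterization of Sobolev spaces via curves} yields $h|_{\tilde{\Omega}_j^f}\in N^{1,p}(\tilde{\Omega}_j^f,W)$; letting $j$ vary, $h\in N^{1,p}_{loc}(\tilde{\Omega}^f,W)$, and I would write $|\nabla h|$ for its minimal $p$-weak upper gradient, which by the locality of minimal upper gradients is $\lambda$-a.e.\ well defined on $\tilde{\Omega}^f$ and coincides $\lambda$-a.e.\ with the minimal $p$-weak upper gradient of every restriction $h|_{\tilde{\Omega}_1^f}$ to an open set $\tilde{\Omega}_1^f$. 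Now fix $j$ and let $\tilde{\Omega}_1^f\subset\tilde{\Omega}_j^f$ be an arbitrary open subset; it is again relatively compact with finite multiplicity, so the ``moreover'' clause of Theorem~\ref{thm:characterization of Sobolev spaces via curves} gives $\int_\Omega\sup(|\nabla h|,y,\pi,\tilde{\Omega}_1^f)^p\,d\nu(y)=\lim_{\varepsilon\to 0}\varepsilon^p\Modd_p(\pi(\mathcal{C}_\varepsilon h|_{\tilde{\Omega}_1^f}))<\xi(\tilde{\Omega}_1^f)$. Since $\sup(|\nabla h|^p,y,\pi,\tilde{\Omega}_1^f)=\sup(|\nabla h|,y,\pi,\tilde{\Omega}_1^f)^p$, this is precisely the hypothesis of Lemma~\ref{lemma:1.2} for the function $\rho=|\nabla h|^p$ and the measure $\xi$ on $\tilde{\Omega}_j^f$, so the lemma delivers $|\nabla h|^p\le \frac{d\xi}{d\lambda}$ $\lambda$-a.e.\ on $\tilde{\Omega}_j^f$. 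Taking the union over $j$ completes the argument.

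This is essentially bookkeeping rather than a substantive proof, so I do not expect a genuine obstacle; the content has already been isolated in Theorem~\ref{thm:characterization of Sobolev spaces via curves} (the passage from a modulus bound to Sobolev regularity together with an integral formula for the fiberwise gradient) and in Lemma~\ref{lemma:1.2} (the pointwise Radon--Nikodym bound extracted from fiberwise integral bounds over all small open sets). The only points I would be careful to state cleanly are: (i) that restricting $\pi$ to a relatively compact open set preserves the branched-covering hypotheses and the compatibility of the pullback measure, so that the two cited results really do apply on each $\tilde{\Omega}_j^f$; (ii) the $\lambda$-a.e.\ compatibility of the minimal $p$-weak upper gradients of the various restrictions, which is the standard locality of minimal upper gradients; and (iii) the bookkeeping of the exponent $p$ --- the integral identity furnished by Theorem~\ref{thm:characterization of Sobolev spaces via curves} is for $\sup(|\nabla h|,\cdot,\pi,\cdot)^p$, which equals $\sup(|\nabla h|^p,\cdot,\pi,\cdot)$ and hence feeds correctly into Lemma~\ref{lemma:1.2} to produce the claimed bound on $|\nabla h|^p$.
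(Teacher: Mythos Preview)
Your proposal is correct and follows exactly the approach the paper indicates: the corollary is stated immediately after the sentence ``Applying Theorem~\ref{thm:characterization of Sobolev spaces via curves} to each relatively compact open subset $\tilde{\Omega}_0^f\subset \tilde{\Omega}^f$, and invoking Lemma~\ref{lemma:1.2}, we immediately obtain the following corollary.'' You have simply unpacked this one-line justification carefully, including the locality of minimal upper gradients and the bookkeeping with the exponent $p$, which the paper leaves implicit.
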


The previous result, when applied to the special case $f=\iota$, the inclusion mapping, reduces to the following result from~\cite[Theorem 3.10]{w12proc}.
\begin{corollary}\label{coro:1.9}
	Let $p>1$, and $h\colon \tilde{\Omega}\to W$, and let $\xi$ be a locally finite Borel regular measure on $\tilde{\Omega}$. Suppose for every relatively compact open subset $\tilde{\Omega}_0\subset \tilde{\Omega}$,
	\begin{align*}
	\liminf_{\varepsilon\to 0}\varepsilon^p\Modd_p(\mathcal{C}_\varepsilon h)<\xi(\tilde{\Omega}).
	\end{align*}
	Then $h\in N^{1,p}_{loc}(\title{\Omega},W)$ and for $\mu$-a.e. $x\in \Omega$,
	\begin{align*}
	|\nabla h|^p(x)\leq \frac{d\xi}{d\mu}(x).
	\end{align*}
\end{corollary}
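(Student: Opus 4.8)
The plan is to obtain Corollary~\ref{coro:1.9} as the special case of Corollary~\ref{coro:1.8} in which the branched covering is trivial. Concretely, I would apply the constructions of Sections~\ref{sec:the pullback factorization} and~\ref{sec:foundations of QR mappings in mms} with $f=\iota$ taken to be the identity map of $\tilde{\Omega}$ onto itself, so that $Y=\tilde{\Omega}$, $\nu=\mu$, and $\Omega=\tilde{\Omega}$. Then $\iota$ is a (trivially discrete, open, multiplicity-one) branched covering, and in the pullback factorization $\iota=\pi\circ g$ both $g\colon\tilde{\Omega}\to\tilde{\Omega}^{\iota}$ and $\pi\colon\tilde{\Omega}^{\iota}\to\tilde{\Omega}$ are the identity as maps of sets; by Lemma~\ref{lemma:pullback property 1}, $\pi$ is injective, $1$-Lipschitz and $1$-BDD (hence $1$-BLD), and the pullback measure is $\lambda=\pi^{*}\mu=\mu$.

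The key point I would isolate is that, although the pullback metric $\iota^{*}d$ need not coincide with $d$ on $\tilde{\Omega}^{\iota}$ --- they agree exactly when $\tilde{\Omega}$ has $1$-bounded turning, see Remark~\ref{rmk:on pullback=orignial} --- the $1$-BLD property of $\pi$ forces every curve in $\tilde{\Omega}$ (and each of its subcurves) to have the same length measured in $(\tilde{\Omega},d)$ and in $\tilde{\Omega}^{\iota}$. Consequently the line integral $\int_{\gamma}\varrho\,ds$ of any Borel function $\varrho$ is the same in both spaces, and since in addition $\lambda=\mu$ it follows that: for every curve family $\Gamma$ in $\tilde{\Omega}$ the quantity $\Modd_{p}(\Gamma)$ is computed identically in $(\tilde{\Omega},d,\mu)$ and in $\tilde{\Omega}^{\iota}$; the Newtonian--Sobolev spaces $N^{1,p}_{\mathrm{loc}}(\tilde{\Omega},W)$ and $N^{1,p}_{\mathrm{loc}}(\tilde{\Omega}^{\iota},W)$ coincide, as does the minimal $p$-weak upper gradient $|\nabla h|$; and, because $\pi$ acts as the identity on curves, $\pi(\mathcal{C}_{\varepsilon}h|_{\tilde{\Omega}_{0}^{\iota}})=\mathcal{C}_{\varepsilon}h|_{\tilde{\Omega}_{0}}$ while $\tfrac{d\xi}{d\lambda}=\tfrac{d\xi}{d\mu}$.

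Granting these identifications, the hypothesis $\liminf_{\varepsilon\to0}\varepsilon^{p}\Modd_{p}(\mathcal{C}_{\varepsilon}h|_{\tilde{\Omega}_{0}})<\xi(\tilde{\Omega}_{0})$ for all relatively compact open $\tilde{\Omega}_{0}\subset\tilde{\Omega}$ is exactly the hypothesis of Corollary~\ref{coro:1.8} for this choice of $f$, and the conclusion of that corollary --- that $h\in N^{1,p}_{\mathrm{loc}}(\tilde{\Omega}^{\iota},W)$ with $|\nabla h|^{p}\le\tfrac{d\xi}{d\lambda}$ holding $\lambda$-a.e. --- translates verbatim into the assertion of Corollary~\ref{coro:1.9}.

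The only step I expect to require genuine care is the verification in the second paragraph that the $1$-BLD property of $\pi$ really renders lengths, line integrals, $p$-modulus, and minimal upper gradients independent of which of the two metrics one uses, so that the identification of the Sobolev spaces is legitimate rather than merely formal; everything else is bookkeeping. If one prefers to sidestep even this, the statement can instead be proved directly by repeating the proofs of Theorem~\ref{thm:characterization of Sobolev spaces via curves} and Lemma~\ref{lemma:1.2} with $\pi$ replaced throughout by the identity map --- which is precisely the content of~\cite[Theorem 3.10]{w12proc} --- and I would note this as an alternative route.
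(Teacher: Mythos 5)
Your proposal is correct and follows exactly the route the paper intends: the paper simply remarks, without further detail, that applying Corollary~\ref{coro:1.8} to the special case $f=\iota$ recovers the statement (which in turn is cited as \cite[Theorem 3.10]{w12proc}). The additional bookkeeping you supply --- that because $\pi$ is a $1$-BLD bijection, lengths of curves, line integrals, $p$-modulus, $\lambda$, the Newtonian--Sobolev space, and hence the minimal $p$-weak upper gradient are all the same whether computed in $(\tilde\Omega,d,\mu)$ or in $\tilde\Omega^{\iota}$ --- is precisely the verification the paper leaves implicit, so your argument is a rigorous instance of the same approach rather than a different one.
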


As an immediate application of the above characterization of Sobolev mappings, we obtain the following characterization of analytic quasiregularity via the pullback factorization.
\begin{proposition}\label{prop:characterization of QR via pullback factorization}
	Let $f\colon \tilde{\Omega}\to \Omega$ be a branched covering. Then $f$ is analytically $K$-quasiregular with exponent $Q$ if and only if $g\colon \tilde{\Omega}\to \Omega^f$ is analytically $K$-quasiconformal with exponent $Q$. 
\end{proposition}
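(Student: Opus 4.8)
The plan is to reduce the statement to two pointwise identities between the data of $f$ and the data of its lift $g$: namely that $f$ and $g$ have (up to $\mu$-null sets) the same minimal $Q$-weak upper gradient, $|\nabla f| = |\nabla g|$, and the same volume Jacobian, $J_f = J_g$. Once these are in hand, the analytic inequality $|\nabla f|^Q\leq KJ_f$ $\mu$-a.e.\ is \emph{literally} the analytic inequality $|\nabla g|^Q\leq KJ_g$ $\mu$-a.e., and since $g$ is a homeomorphism of $(\tilde\Omega,d_X)$ onto $\tilde\Omega^f=(\tilde\Omega,f^*d_Y)$ (Lemma~\ref{lemma:pullback property 1}), the latter is by definition the assertion that $g$ is analytically $K$-quasiconformal with exponent $Q$. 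Throughout one uses that on the level of underlying sets $g=\mathrm{id}$ and $\pi=f$, and that $\pi\colon\tilde\Omega^f\to\Omega$ is $1$-Lipschitz and $1$-BDD.

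For the upper-gradient identity I would argue both inclusions directly. If $\rho$ is a ($Q$-weak) upper gradient of $g$, then for ($Q$-a.e.)\ curve $\gamma\colon[a,b]\to\tilde\Omega$, using that $\pi$ is $1$-Lipschitz, $\int_\gamma\rho\,ds\geq d_{\tilde\Omega^f}(g\gamma(b),g\gamma(a))\geq d_Y(\pi g\gamma(b),\pi g\gamma(a))=d_Y(f\gamma(b),f\gamma(a))$, so $\rho$ is a ($Q$-weak) upper gradient of $f$. Conversely, if $\rho$ is a ($Q$-weak) upper gradient of $f$, then for $Q$-a.e.\ curve $\gamma\colon[a,b]\to\tilde\Omega$ the upper-gradient inequality for $f$ holds along $\gamma$ together with all of its subcurves, and hence $\int_\gamma\rho\,ds\geq\sup_{a\leq s\leq t\leq b}d_Y(f\gamma(s),f\gamma(t))=\diam f(\gamma([a,b]))\geq f^*d_Y(\gamma(a),\gamma(b))=d_{\tilde\Omega^f}(g\gamma(a),g\gamma(b))$, because the trace $\gamma([a,b])$ is a continuum joining $\gamma(a)$ and $\gamma(b)$, hence admissible in the infimum defining $f^*d_Y$. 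Thus $\rho$ is a ($Q$-weak) upper gradient of $g$. In particular $f$ admits a $Q$-integrable $Q$-weak upper gradient iff $g$ does; combined with the observation that $f$ and $g$ are continuous maps on a locally compact space carrying a locally finite measure (so both lie in $L^Q_{loc}$ automatically), this gives $f\in N^{1,Q}_{loc}(\tilde\Omega,\Omega)\Leftrightarrow g\in N^{1,Q}_{loc}(\tilde\Omega,\tilde\Omega^f)$, and in that case $|\nabla f|=|\nabla g|$ $\mu$-a.e.

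For the Jacobian identity, note that since $g$ is a bijection and $\pi=f$ on underlying sets, $N(y,\pi,g(A))=N(y,f,A)$ for every Borel $A\subset\tilde\Omega$ and every $y\in\Omega$. Hence the target measure $\lambda=\pi^*\nu$ on $\tilde\Omega^f$ satisfies $g^*\lambda(A)=\lambda(g(A))=\int_\Omega N(y,\pi,g(A))\,d\nu(y)=\int_\Omega N(y,f,A)\,d\nu(y)=f^*\nu(A)$, i.e.\ $g^*\lambda=f^*\nu$ as Borel measures, so $J_g=\tfrac{d(g^*\lambda)}{d\mu}=\tfrac{d(f^*\nu)}{d\mu}=J_f$ $\mu$-a.e. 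Putting the two identities together finishes the proof, as indicated in the first paragraph.

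I expect the only genuinely delicate point to be the subcurve refinement used in the second inclusion of the upper-gradient argument, i.e.\ the passage from ``$\rho$ is a $Q$-weak upper gradient of $f$'' to ``the upper-gradient inequality for $f$ holds along $Q$-a.e.\ curve together with every subcurve of it.'' This is a standard property of $Q$-weak upper gradients (the family of curves possessing a subcurve in a fixed $Q$-exceptional family is itself $Q$-exceptional), so it will need only a short justification or a citation; everything else is bookkeeping with the pullback construction and the definitions of $J_f$, $\lambda$, and $N^{1,Q}_{loc}$.
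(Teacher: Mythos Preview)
Your proof is correct and is actually more direct than the paper's. The paper proceeds via the modulus characterization of Sobolev mappings (Theorem~\ref{thm:characterization of Sobolev spaces via curves}): it observes that $f$ is analytically $K$-quasiregular iff $\liminf_{\varepsilon\to 0}\varepsilon^Q\Modd_Q(\mathcal{C}_\varepsilon f|_{\tilde\Omega_0})\leq K f^*\nu(\tilde\Omega_0)$ for all relatively compact $\tilde\Omega_0$, and similarly for $g$, then shows $f^*\nu=g^*\lambda$ and $\Modd_Q(\mathcal{C}_\varepsilon f|_{\tilde\Omega_0})=\Modd_Q(\mathcal{C}_\varepsilon g|_{\tilde\Omega_0})$. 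The inequality $\Modd_Q(\mathcal{C}_\varepsilon f)\leq\Modd_Q(\mathcal{C}_\varepsilon g)$ comes from $\pi$ being $1$-Lipschitz (so $\mathcal{C}_\varepsilon f\subset\mathcal{C}_\varepsilon g$), and the reverse from the observation that every $\gamma\in\mathcal{C}_\varepsilon g$ has a subcurve in $\mathcal{C}_\varepsilon f$, because $\diam f(\gamma)\geq f^*d_Y(\gamma(a),\gamma(b))\geq\varepsilon$.

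Your argument bypasses the modulus machinery entirely by establishing the pointwise identities $|\nabla f|=|\nabla g|$ and $J_f=J_g$ directly from the definitions; this is cleaner and more transparent. The two approaches rest on the very same pair of facts---the $1$-Lipschitz property of $\pi$ for one direction, and $\diam f(\gamma)\geq f^*d_Y(\gamma(a),\gamma(b))$ (the trace of $\gamma$ being an admissible continuum) for the other---so the underlying content is identical; you simply apply these facts at the level of upper gradients rather than at the level of the curve families $\mathcal{C}_\varepsilon$. The subcurve refinement you flag as delicate is exactly the step the paper also needs (though its wording there is somewhat opaque), and your justification via the standard fact that curves with a subcurve in a $Q$-exceptional family form a $Q$-exceptional family is the right one.
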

\begin{proof}
	First, note that by Theorem~\ref{thm:characterization of Sobolev spaces via curves}, $f$ is analytically $K$-quasiregular with exponent $Q$ if and only if for each relatively compact open subset $\tilde{\Omega}_0\subset \tilde{\Omega}$,
	\begin{align*}
	\liminf_{\varepsilon\to 0}\varepsilon^Q\Modd_Q(\mathcal{C}_\varepsilon f|_{\tilde{\Omega}_0})\leq Kf^*\nu(\tilde{\Omega}_0).	
	\end{align*}
    Similarly, $g$ is analytically $K$-quasiconformal with exponent $Q$ if and only if each relatively compact open subset $\tilde{\Omega}_0\subset \tilde{\Omega}$,
	\begin{align*}
	\liminf_{\varepsilon\to 0}\varepsilon^Q\Modd_Q(\mathcal{C}_\varepsilon g|_{\tilde{\Omega}_0})\leq Kg^*\lambda(\tilde{\Omega}_0).	
	\end{align*}
	
	Secondly, according to our definition of $\lambda$, $f^*\nu=g^*\lambda$ and so it suffices to show that $\Modd_Q(\mathcal{C}_\varepsilon f|_{\tilde{\Omega}_0})=\Modd_Q(\mathcal{C}_\varepsilon g|_{\tilde{\Omega}_0})$. Since $\pi$ is 1-Lipschitz, $\mathcal{C}_\varepsilon f|_{\tilde{\Omega}_0}\subset \mathcal{C}_\varepsilon g|_{\tilde{\Omega}_0}$, and so we have
	\begin{align*}
		\Modd_Q(\mathcal{C}_\varepsilon f|_{\tilde{\Omega}_0})\leq \Modd_Q(\mathcal{C}_\varepsilon g|_{\tilde{\Omega}_0}).
	\end{align*} 
	For the other direction, we need to observe that if $\rho$ is admissible for $\mathcal{C}_\varepsilon f|_{\tilde{\Omega}_0}$, then it is also admissible for $\mathcal{C}_\varepsilon g|_{\tilde{\Omega}_0}$, which follows directly from the definition of line integral and from the fact that each the arc-length parametrization $\gamma^s$ of each $\gamma\in \mathcal{C}_\varepsilon g|_{\tilde{\Omega}_0}$ is an element of $\mathcal{C}_\varepsilon f|_{\tilde{\Omega}_0}$.
	
\end{proof}


Parallel to Proposition~\ref{prop:characterization of QR via pullback factorization}, we have the following comparison result on metric quasiregularity via the pullback factorization.

\begin{proposition}\label{prop:characterization of QR via pullback factorization II}
	Let $f\colon \tilde{\Omega}\to \Omega$ be a branched covering. Suppose $\Omega$ has $c$-bounded turning. Then for each $x_0\in \tilde{\Omega}$, we have
	\begin{align*}
		\frac{1}{c}h_f(x_0)\leq h_g(x_0)\leq ch_f(x_0)
	\end{align*}
	and
	\begin{align*}
		\frac{1}{c}h_f^*(x_0)\leq h_g^*(x_0)\leq ch_f^*(x_0).
	\end{align*}
	The analogous results hold with $h_f$ being replaced by $H_f$, and $h_f^*$ being replaced by $H_f^*$, respectively.
\end{proposition}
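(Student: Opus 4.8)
The plan is to compare, for a fixed $x_0\in\tilde\Omega$ and small $r>0$, the quantities $L_f(x_0,r),l_f(x_0,r)$ with their counterparts $L_g(x_0,r),l_g(x_0,r)$ computed in the pullback metric, and then take the appropriate $\liminf$ (resp. $\limsup$) as $r\to 0$. Since $\pi\colon\tilde\Omega^f\to\Omega$ is $1$-Lipschitz, for any $x,y$ we have $d_Y(f(x),f(y))=d_Y(\pi g(x),\pi g(y))\le f^*d_Y(g(x),g(y))$, which already gives one of the two inequalities in each direction. For the reverse comparison I would use that $\Omega$ has $c$-bounded turning: given $x,y$ with $d_Y(f(x),f(y))$ small, the definition of the pullback metric says $f^*d_Y(g(x),g(y))=\inf_\alpha\diam(f(\alpha))$ over continua $\alpha\ni x,y$; choosing $\alpha$ so that $f(\alpha)$ is (close to) a $c$-bounded-turning continuum joining $f(x)$ and $f(y)$ — which exists by bounded turning and lifts by Lemma~\ref{lemma:lift Floyd} — we get $f^*d_Y(g(x),g(y))\le c\,d_Y(f(x),f(y))$, locally near $x_0$. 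Thus on a small neighbourhood of $x_0$,
\begin{align*}
d_Y(f(x),f(y))\le f^*d_Y(g(x),g(y))\le c\,d_Y(f(x),f(y)).
\end{align*}

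Next I would relate spheres in the source. Because $g$ is the identity on the level of sets, $\{y:d_X(x_0,y)=r\}$ and $\{y:d_X(x_0,y)\le r\}$ are the sets over which $L_f,l_f$ are taken, while $L_g,l_g$ use the balls/spheres of $X^f$. But $H_f(x_0)$ and $h_f(x_0)$ depend only on $f$ through the values $d_Y(f(x_0),f(y))$ for $d_X(x_0,y)=r$; comparing $H_f(x_0,r)$ with $H_g(x_0,r)$ directly is awkward because the two use different source spheres. The cleaner route is to observe that both $h_f,H_f$ and $h_g,H_g$ can be computed using the \emph{same} underlying neighbourhood basis, namely the sets $U(x_0,\cdot,\rho)$ (by Proposition~\ref{prop:existence of normal nbhd} for $f$ and by Lemma~\ref{lemma:pullback property 1} for $\pi$, noting $U(x_0,f,\rho)=U(g(x_0),\pi,\rho)$ as sets). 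For $\rho$ small, $f(U(x_0,f,\rho))=B(f(x_0),\rho)$ and $\pi(U(g(x_0),\pi,\rho))=B_{X^f}(g(x_0),\rho)$, so the oscillation of the target-distance over $\partial U$ is controlled from above and below by $\rho$ in the $\Omega$-metric and by itself in the $X^f$-metric. Using the sandwich $d_Y\le f^*d_Y\le c\,d_Y$ restricted to such $U$, one bounds $L_g(x_0,r)\le \sup\{f^*d_Y(g(x_0),z)\}\le c\sup\{d_Y(f(x_0),\pi z)\}$ over the relevant sphere, and symmetrically for $l_g$; translating back and forth between radii in $d_X$ and in $f^*d_Y$ costs only the universal factors already present. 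Taking $\limsup_{r\to 0}$ yields $\tfrac1c H_f(x_0)\le H_g(x_0)\le c H_f(x_0)$, and taking $\liminf$ gives the same for $h_f,h_g$.

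For the inverse dilatations the argument is the same in spirit but applied to the \emph{other} side: $H_f^*(x_0,s)$ and $H_g^*(x_0,s)$ are built from $U_f(x_0,s)$, the $x_0$-component of $f^{-1}(B(f(x_0),s))$, and its analogue $U_g(x_0,s)$, the $g(x_0)$-component of $\pi^{-1}(B_{X^f}(g(x_0),s))=\pi^{-1}(\pi(U(g(x_0),\pi,s)))$. Since $d_Y\le f^*d_Y\le c\,d_Y$ locally, for small $s$ we get $U_g(x_0,s)\subset U_f(x_0,s)\subset U_g(x_0,cs)$ (or with the roles and constants swapped, exactly as above), and then $L_g^*(x_0,s),l_g^*(x_0,s)$ — which are $\sup$/$\inf$ of $d_X$-distances, not target distances, so they are genuinely the same $d_X$ on both sides — get sandwiched between $L_f^*,l_f^*$ evaluated at $s$ and at $cs$. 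Taking the appropriate limit as $s\to0$ gives $\tfrac1c h_f^*(x_0)\le h_g^*(x_0)\le c h_f^*(x_0)$ and likewise for $H_f^*$. I expect the main obstacle to be bookkeeping: being careful that "for $r$ (resp. $s$) small enough depending on $x_0$" is exactly where bounded turning and the normal-neighbourhood structure from Proposition~\ref{prop:existence of normal nbhd}/Lemma~\ref{lemma:pullback property 1} are used, and that the comparison of source radii in $d_X$ versus $f^*d_Y$ does not introduce an uncontrolled constant — it does not, because $g$ is only needed to carry neighbourhoods to neighbourhoods, and the dilatation quotients $L/l$ are scale-invariant, so only the $c$ from the target-side bounded turning survives.
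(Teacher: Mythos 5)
Your key insight is correct — the pointwise sandwich
\begin{align*}
d_Y(f(x),f(x_0))\leq f^*d_Y(g(x),g(x_0))\leq c\,d_Y(f(x),f(x_0))
\end{align*}
valid for $x$ in a small metric ball around $x_0$, is exactly what drives the paper's proof. But you then introduce a complication that is not there, and it derails your argument for the forward dilatations.

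You write that $L_f,l_f$ are computed over spheres $\{y:d_X(x_0,y)=r\}$ in $\tilde\Omega$ while ``$L_g,l_g$ use the balls/spheres of $X^f$.'' That second claim is false. Both $f\colon\tilde\Omega\to\Omega$ and $g\colon\tilde\Omega\to\tilde\Omega^f$ have the \emph{same} source space $(\tilde\Omega,d_X)$, and by the definition in Section~1 the quantities $L_\psi(x_0,r),\,l_\psi(x_0,r)$ for a map $\psi$ are always a $\sup$ and $\inf$ over the metric sphere $\{y:d_X(x_0,y)=r\}$ of the \emph{source}. So $L_f(x_0,r)$ and $L_g(x_0,r)$ range over exactly the same set of competitors $y$; only the target distance differs ($d_Y(f(x_0),f(y))$ versus $f^*d_Y(g(x_0),g(y))$). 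Once this is observed, the sandwich inequality applied pointwise over the common sphere immediately yields $L_f\leq L_g\leq cL_f$ and $l_f\leq l_g\leq cl_f$, hence $c^{-1}H_f(x_0,r)\leq H_g(x_0,r)\leq cH_f(x_0,r)$, and passing to $\liminf$ or $\limsup$ finishes — this is the paper's proof, essentially in full. Your detour through the $U(x_0,\cdot,\rho)$ neighbourhoods is therefore unnecessary, and moreover it would not close the argument for $h_f,H_f$ on its own: the forward linear dilatation is defined via metric spheres, not via the topological $U$-sets, so even after establishing estimates over $\partial U$ you would still need a separate (nontrivial, and here unnecessary) comparison between the $U$-based quantity and the genuine metric dilatation. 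Your treatment of the inverse dilatations, by contrast, is essentially sound — there the $U$-sets genuinely are the right objects by definition, and the inclusions $U_g(x_0,s)\subset U_f(x_0,s)\subset U_g(x_0,cs)$ together with the observation that $L^*,l^*$ measure $d_X$-distances on both sides is the right way to proceed, matching what the paper leaves implicit in ``the other assertions follow similarly.''
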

\begin{proof}
	This follows from the fact that when $\Omega$ has $c$-bounded turning, $\pi\colon \tilde{\Omega}^f\to \Omega$ satisfies the following property: for each $z_0\in \tilde{\Omega}^f$ and $z\in U(z_0,\pi,r)$ (with $r$ sufficiently small),
	\begin{align*}
		d(\pi(z),\pi(z_0))\leq f^*d(z,z_0)\leq cd(\pi(z),\pi(z_0)),
	\end{align*}
	 which is a direct consequence of~Lemma~\ref{lemma:pullback property 2}. Indeed, for each $r>0$ small enough and each $x\in \partial B(x_0,r)$, we have
	 \begin{align*}
	 	d(f(x),f(x_0))\leq f^*d(g(x),g(x_0))\leq cd(f(x),f(x_0)).
	 \end{align*}
	 In particular, this implies that $L_g(x_0,r)\leq cL_f(x_0,r)$ and $l_g(x_0,r)\geq l_f(x_0,r)$ and so
	 \begin{align*}
	 	h_g(x_0)=\liminf_{r\to 0}\frac{L_g(x_0,r)}{l_g(x_0,r)}\leq c\liminf_{r\to 0}\frac{L_f(x_0,r)}{l_f(x_0,r)}=ch_f(x_0).
	 \end{align*}
	 Similarly, since $L_f(x_0,r)\leq L_g(x_0,r)$ and $l_g(x_0,r)\leq cl_f(x_0,r)$, we also have 
	 \begin{align*}
	 h_g(x_0)=\liminf_{r\to 0}\frac{L_g(x_0,r)}{l_g(x_0,r)}\geq \frac{1}{c}\liminf_{r\to 0}\frac{L_f(x_0,r)}{l_f(x_0,r)}=\frac{1}{c}h_f(x_0).
	 \end{align*}
	 The other assertions follow similarly.
\end{proof}

\subsubsection{Proof of the main result}
We are now ready to prove Theorem~\ref{thm:equivalence of G and A} following closely the approach in~\cite[Proof of Theorem 1.1]{w12proc}.

\begin{proof}[Proof of Theorem~\ref{thm:equivalence of G and A}]
Note first that by Proposition~\ref{prop:characterization of QR via pullback factorization}, $f$ is analytically $K_O$-quasiregular with exponent $Q$ if and only if $g$ is analytically $K_O$-quasiconformal with exponent $Q$. 

Next, we show that if $g$ is geometrically $K_O$-quasiconformal with exponent $Q$, then $f$ is also geometrically $K_O$-quasiregular with exponent $Q$. Let $\tilde{\Omega}_0\subset \tilde{\Omega}$ and let $\Gamma$ be a curve family in $\title{\Omega}_0$. If $\tilde{\rho}\colon f(\tilde{\Omega}_0)\to \R$ is admissible for $f(\Gamma)$, then $\tilde{\rho}$ is also admissible for the curve family $\pi(\Gamma')$ with $\Gamma'=g(\Gamma)$. As a consequence of Lemma~\ref{lemma:1.3}, we have
\begin{align*}
	\Modd_Q(\Gamma)&\leq K_O\Modd_Q(g(\Gamma))\leq K_O\int_{\Omega}\rho(y)^QN(y,\pi,\tilde{\Omega}_0)d\nu(y)\\
	 &=K_O\int_{\Omega}\rho(y)^QN(y,f,\tilde{\Omega}_0)d\nu(y).
\end{align*}

Finally, we show that if $f$ is geometrically $K_O$-quasiregular with exponent $Q$, then it is analytically $K_O$-quasiregular with exponent $Q$ as well. For every $\varepsilon>0$ and every open subset $\tilde{\Omega}_0\subset \tilde{\Omega}$, the function $\rho=\varepsilon^{-1}\chi_{f(\tilde{\Omega}_0)}$ is admissible for $f(\mathcal{C}_\varepsilon f|_{\tilde{\Omega}_0})$, and so we have the inequalities
\begin{align*}
	\varepsilon^Q\Modd_Q(\mathcal{C}_\varepsilon f|_{\tilde{\Omega}_0})\leq K_O\varepsilon^Q\int_{\Omega}\varepsilon^{-Q}N(y,f,\tilde{\Omega}_0)d\nu(y)=K_Of^*\nu(\tilde{\Omega}_0).
\end{align*}
The proof now follows by applying Corollary~\ref{coro:1.9} with $\xi=K_Of^*\nu$.

We now turn to the second set of equivalences. Note that if $g$ satisfies the $K_I$-inequality with exponent $Q$, then it follows from Lemma~\ref{lemma:1.4} that $f$ satisfies the generalized V\"ais\"al\"a's inequality with exponent $Q$ and hence also the Poletsky's inequality with exponent $Q$. On the other hand, if $f$ satisfies the Poletsky's inequality with exponent $Q$, then for every $\varepsilon>0$ and every open subset $\title{\Omega}^f_0\subset \title{\Omega}^f$, we have
\begin{align*}
	\varepsilon^Q\Modd_Q(\pi(\mathcal{C}_\varepsilon g^{-1}|_{\title{\Omega}^f_0}))&=\varepsilon^Q\Modd_Q(f(\mathcal{C}_\varepsilon \iota|_{g^{-1}(\title{\Omega}^f_0)}))\leq K_I\varepsilon^Q\Modd_Q(\mathcal{C}_\varepsilon \iota|_{g^{-1}(\title{\Omega}^f_0)})\\
	&\leq K_I\varepsilon^{Q}\varepsilon^{-Q}\mu(g^{-1}(\title{\Omega}^f_0))=K_I(g^{-1})^*\mu(\title{\Omega}^f_0),
\end{align*}
where in the last inequality we have used~\cite[Lemma 5.3.1]{hkst15}. It follows from Corollary~\ref{coro:1.8} that $g^{-1}$ is analytically $K_I$-quasiconformal with exponent $Q$. 	
\end{proof}

\subsection{Ahlfors $Q$-regularity and the metric definitions}\label{subsec:Ahlfors regularity and the metric definitions}
When beginning from metric assumptions, the results and methods of~\cite{hk98,bkr07,w14} are particularly useful. For this setting, to get some analytic information, we need to relate the measures $\mu$ and $\nu$ to the diameters of balls, and so we will need to assume that the spaces under consideration are Ahlfors $Q$-regular.
 
For the quasiconformal case, it was shown in~\cite{hk95,bkr07,w14}, that even a bound on the weak linear dilatation $h_f$ was sufficient to prove the analytic characterization of quasiregular mappings (and hence also the geometric one). It has also been observed in~\cite{w14} that a bound on $h_f^*$ also suffices to prove the $K_O$-inequality, and so by symmetry, a bound on either quantity suffices to prove the analytic (and also, by the previous section, geometric) characterization of quasiregularity. It follows fairly quickly from the results
above that the same is true for the general branched case, though some care is needed to deal with the local multiplicity of $f$.

Our main result of this section will be the following.

\begin{theorem}\label{thm:metric implies all the other}
	If $\tilde{\Omega}$ and $\Omega$ are locally Ahlfors $Q$-regular, and $\Omega$ has $c$-bounded turning, then either of the following two conditions
	\begin{itemize}
		\item[i).] $h_f(x)\leq h$ for all $x\in \tilde{\Omega}$;
		\item[ii).] $h_f^*(x)\leq h$ for all $x\in \tilde{\Omega}$,
	\end{itemize}
	implies that $f$ is analytically $K_O$-quasiregular with exponent $Q$ and inverse analytically $K_I$-quasiregular with exponent $Q$, with both constants $K_O$ and $K_I$ depending only on the constant of Ahlfors $Q$-regularity, and on $c$ and $h$.
\end{theorem}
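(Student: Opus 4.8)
The plan is to reduce everything to the homeomorphic situation through the pullback factorization and then to apply the covering machinery developed from Balogh--Koskela--Rogovin. Write $f = \pi\circ g$ with $g\colon\tilde\Omega\to\tilde\Omega^f$ the identity and $\pi\colon\tilde\Omega^f\to\Omega$ the $1$-BDD projection, and equip $\tilde\Omega^f$ with $\lambda=\pi^*\nu$. By Proposition~\ref{prop:characterization of QR via pullback factorization}, $f$ is analytically $K_O$-quasiregular with exponent $Q$ iff $g$ is analytically $K_O$-quasiconformal with exponent $Q$; by Theorem~\ref{thm:equivalence of G and A}, $f$ is inverse analytically $K_I$-quasiregular with exponent $Q$ iff $g$ satisfies the $K_I$-inequality (equivalently $g^{-1}$ is analytically $K_I$-quasiconformal) with exponent $Q$. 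So it suffices to prove the two analytic estimates for the \emph{homeomorphism} $g$ (and its inverse). Next, by Proposition~\ref{prop:characterization of QR via pullback factorization II}, the bound $h_f\le h$ (resp.\ $h_f^*\le h$) on $\tilde\Omega$ transfers, up to the factor $c$, to $h_g\le ch$ (resp.\ $h_g^*\le ch$) on $\tilde\Omega^f$. Because $\tilde\Omega$ is locally Ahlfors $Q$-regular and $\Omega$ has $c$-bounded turning, Lemma~\ref{lemma:pullback property 2} (via \eqref{eq:estimate for lambda}) tells us that $\lambda$ is pointwise Ahlfors $Q$-regular, with the subtlety that the upper regularity constant at $z$ carries a factor $i_{\ess}(z,\pi)$.

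The heart of the argument is then the covering estimate. I would formulate an auxiliary statement (this is the role played by the promised Theorem~\ref{thm:for analytic regularity}): for a homeomorphism between pointwise Ahlfors $Q$-regular spaces, a pointwise bound on the (weak) linear dilatation $h_g$ gives, via a Vitali-type covering argument with the admissible pointed neighborhoods of Definition~\ref{def:admissible pointed nbhd} and Lemmas~\ref{lemma:bkr covering 1}--\ref{lemma:bkr variant 2}, a $Q$-weak upper gradient for $g$ controlled by (a constant times) $J_g^{1/Q}$, and symmetrically a bound on $h_g^*$ controls a weak upper gradient of $g^{-1}$ by $J_{g^{-1}}^{1/Q}$; then Corollary~\ref{coro:1.8} (or Corollary~\ref{coro:1.9}) upgrades the modulus-of-curves estimate to the pointwise Sobolev inequality $|\nabla g|^Q\le K_O J_g$ a.e.\ (resp.\ $|\nabla g^{-1}|^Q\le K_I J_{g^{-1}}$ a.e.). The key new point, relative to the purely homeomorphic literature, is the $i_{\ess}$ factor appearing in the regularity of $\lambda$: a single run of the covering argument would leave $K_O$ (or $K_I$) depending on $\sup i_{\ess}$, which is not allowed. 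The remedy — the ``run Theorem~\ref{thm:for analytic regularity} twice'' trick mentioned in the introduction — is: first apply the estimate with the crude constant to deduce that $g$ (resp.\ $g^{-1}$) lies in $N^{1,Q}_{\loc}$; then use Lemma~\ref{lemma:on essential index}, which says $i_{\ess}(z,\pi)=1$ for $\lambda$-a.e.\ $z$, so that \eqref{eq:estimate for lambda} holds with the clean constant $C$ at $\lambda$-a.e.\ point; re-running the covering estimate at these good points gives the pointwise inequality with $K_O$, $K_I$ depending only on the Ahlfors regularity constant, $c$ and $h$.

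I expect the main obstacle to be precisely this decoupling of the quasiregularity constant from the essential index. One must be careful that the set where $i_{\ess}>1$ is $\lambda$-null (hence $\mu$-null, since $\lambda$ and $\mu$ are mutually absolutely continuous off the degenerate set via the Jacobians), so that discarding it is legitimate when establishing an a.e.\ pointwise inequality, and that the covering lemmas (Lemmas~\ref{lemma:covering lemma}, \ref{lemma:Lebesgue differentiation theorem pb}, Corollary~\ref{coro:Lebesgue point pb}) apply to $\psi^*\nu=\lambda$ exactly as in the doubling ball case. A secondary technical point is the passage $h_f\le h$ versus $h_f^*\le h$: the two cases are symmetric once one recalls that a bound on $h_g^*$ still yields the $K_O$-inequality for $g$ (as in~\cite{w14}), so either hypothesis feeds both the $K_O$ and the $K_I$ conclusion after applying the factorization in the appropriate direction; I would treat case (i) in full and indicate that (ii) follows by the same reasoning applied to $g^{-1}$ together with the symmetry of the covering estimate. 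The Ahlfors regularity is used only locally, so I would fix a point, pass to a relatively compact normal neighborhood $U(x,\pi,r)$ on which all constants are uniform, prove the inequality there, and conclude by the local-to-global nature of both the Sobolev condition and the a.e.\ pointwise bound.
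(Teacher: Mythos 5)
Your proposal tracks the paper's proof closely: the reduction to the lift $g$ via the pullback factorization, the transfer of the dilatation bound through Proposition~\ref{prop:characterization of QR via pullback factorization II}, the observation that $\lambda$ is only pointwise Ahlfors $Q$-regular with an $i_{\ess}$ factor in the upper bound, and the application of Theorem~\ref{thm:for analytic regularity} twice to strip the constant of its dependence on the multiplicity. This is the same route the paper takes.

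One justification in your outline is not quite right and, if taken at face value, would leave a hole. You write that the set where $i_{\ess}>1$ may be discarded ``when establishing an a.e.\ pointwise inequality'' because it is $\lambda$-null (and hence, you claim, $\mu$-null via the Jacobians --- an equivalence that is not established at this point and not needed). But Theorem~\ref{thm:for analytic regularity} does not allow an arbitrary null exceptional set $E$: it requires that $\mathcal{H}^1\big(h(\gamma\cap E)\big)=0$ for $p$-almost every curve $\gamma$. Nullity of $E$ alone gives only $\mathcal{H}^1(\gamma\cap E)=0$ for a.e.\ curve; to push this through $h=g^{-1}$ one needs absolute continuity of $g^{-1}$ along a.e.\ curve, and that is precisely what the first (crude) run buys: it yields $g^{-1}\in N^{1,Q}_{\loc}$, equivalently the $K_O$-inequality for $g^{-1}$ by Theorem~\ref{thm:equivalence of G and A}, which gives the needed Luzin-$N$ property along $Q$-a.e.\ curve. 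Only then does $\lambda\big(g(\mathcal{B}_f^e)\big)=0$ (from Lemma~\ref{lemma:on essential index}) upgrade to $\mathcal{H}^1\big(g^{-1}(\gamma)\cap\mathcal{B}_f^e\big)=0$ for a.e.\ $\gamma$, so that the second run with $E=g(\mathcal{B}_f^e)$ is legitimate. You have both ingredients in your write-up (the first run, and the $\lambda$-nullity), but the link between them --- absolute continuity on curves --- needs to be made explicit, replacing the heuristic ``a.e.\ conclusions tolerate null discards.'' Your remark that cases (i) and (ii) are symmetric should also be softened: the paper stresses the asymmetry, since in case (i) the forward analytic estimate has no index issue while the inverse one does, whereas in case (ii) the construction of the admissible neighborhoods starts from the target rather than the source; the run-twice device is needed in both inverse-direction arguments, but the covering set-ups differ.
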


Here, as was done in~\cite{bkr07}, we have assumed a stronger condition, that the dilatation is everywhere bounded, rather than simply everywhere finite and essentially bounded. We may drop this assumption (again, as in \cite{bkr07}), in the presence of a Loewner condition. In fact, even without the Loewner condition, it seems not be unnecessary to bound the dilatation everywhere, rather than essentially. However, this issue is rather technical, and so we eschew such considerations in this paper, as they would lead us too far astray. 

Note also that we did not impose the usual LLC condition on either domains, as we have done in \cite{w14}, in the above theorem, though this condition is rather mild, and simplifies the exposition considerably.

\subsubsection{Auxiliary results}
The main result of this section is the following criterion for analytic quasiregularity, which generalizes~\cite[Theorem 1.1]{bkr07}.
\begin{theorem}\label{thm:for analytic regularity}
	Let $1\leq p\leq Q$ and let $h\colon \Delta\to W$ be a homeomorphism between (pointwise) doubling metric measure spaces $(\Delta,d_\Delta,\sigma)$ and $(W,d_W,\tau)$. Suppose there is a subset $E\subset \Delta$ such that for $p$-almost every curve $\gamma$ in $\Delta$, $\mathcal{H}^1\big(h(\gamma\cap E)\big)=0$, and a function $\eta\colon \Delta\backslash E\to \R$ such that the following condition is satisfied:
	
	For every $v\in \Delta\backslash E$ and $\varepsilon>0$, there are neighborhoods $D_{v,\varepsilon}$, $D_{v,\varepsilon}'$ and $D_{v,\varepsilon}''$ of $v$ such that $D_{v,\varepsilon}'\cup D_{v,\varepsilon}''\subset D_{v,\varepsilon}\subset B(v,\varepsilon)$, satisfying the inequalities
	\begin{align*}
		\Big(\frac{\diam\big(h(D_{v,\varepsilon})\big)}{\diam\big(D_{v,\varepsilon}\big)}\Big)^Q\sigma(D_{v,\varepsilon}')\leq \eta(v)\tau\big(h(D_{v,\varepsilon}'')\big),
	\end{align*}
	and
	\begin{align*}
		\sigma\big(B(v,10\diam(D_{v,\varepsilon}))\big)\leq C\sigma(D_{v,\varepsilon}'),
	\end{align*}
	and satisfying the following property: For every subset $A\subset \Delta$, and every set of indices $I\subset\Delta\times (0,\infty)$ such that $A\subset \bigcup_{\alpha\in I}D_\alpha$, there is a countable subset $\{\alpha_i\}\subset I$ such that $A\subset \bigcup_{i=1}^\infty D_{\alpha_i}$ and whenever $i\neq j$, 
	$$D_{\alpha_i}'\cap D_{\alpha_j}'=\emptyset=D_{\alpha_i}''\cap D_{\alpha_j}''.$$
	Then if $p<Q$ and $\eta$ is essentially bounded, or $p=Q$ and $\eta$ is bounded, then $h\in N^{1,p}(\Delta,W)$, and the minimal $p$-weak upper gradient $|\nabla h|$ satisfies 
	\begin{equation}\label{eq:analytic inequality in general mms}
		\int_{\Delta}|\nabla h|^Q d\sigma\leq C'\tau(\Delta),
	\end{equation}
	where $C'$ depends only on $C$ and $\esssup \eta$.
\end{theorem}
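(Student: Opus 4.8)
The plan is to run the covering argument of Balogh--Koskela--Rogovin~\cite{bkr07}, adapted to the abstract neighborhood families $D_{v,\varepsilon}$ rather than metric balls. First I would reduce to a local statement: fixing a relatively compact open subset $\Delta_0\subset\Delta$, it suffices to produce, for each $\varepsilon>0$, a Borel function $\rho_\varepsilon$ on $\Delta_0$ that is admissible (or almost $p$-admissible, in the sense recalled in the proof of Theorem~\ref{thm:characterization of Sobolev spaces via curves}) for the curve family $\mathcal{C}_\varepsilon h|_{\Delta_0}$, with a uniform bound $\int_{\Delta_0}\rho_\varepsilon^p\,d\sigma\leq C'\tau(h(\Delta_0))$. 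Given such a family, Corollary~\ref{coro:1.9} (applied with $\xi=C'h^*\tau$, i.e.\ the pushforward of $\tau$ under $h^{-1}$, which since $h$ is a homeomorphism is just $\tau$ transported to $\Delta$) yields $h\in N^{1,p}_{loc}(\Delta,W)$ together with the pointwise bound $|\nabla h|^p\leq \frac{d\xi}{d\sigma}$, and integrating gives~\eqref{eq:analytic inequality in general mms}; a standard exhaustion argument then upgrades $N^{1,p}_{loc}$ to $N^{1,p}$ when $\tau(\Delta)<\infty$.

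To construct $\rho_\varepsilon$: given a curve $\gamma\in\mathcal{C}_\varepsilon h|_{\Delta_0}$, the hypothesis $\mathcal{H}^1(h(\gamma\cap E))=0$ for $p$-a.e.\ $\gamma$ lets us discard an exceptional family and assume $\gamma$ spends (in the image) all its length outside $h(E)$. Then for each point $v\in\gamma\setminus E$ choose the neighborhood $D_{v,\delta}$ with $\delta$ small; these cover $\gamma\setminus E$ except an $\mathcal{H}^1$-image-null set, and I would apply the abstract Vitali-type selection property built into the hypothesis to extract a countable subfamily $\{D_{\alpha_i}\}$ covering the relevant part of $\gamma$ with the double disjointness $D'_{\alpha_i}\cap D'_{\alpha_j}=\emptyset=D''_{\alpha_i}\cap D''_{\alpha_j}$. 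The natural candidate is
\[
\rho_\varepsilon(x)=\sum_i \frac{\diam(h(D_{\alpha_i}))}{\diam(D_{\alpha_i})}\,\chi_{D'_{\alpha_i}}(x),
\]
summed over a master countable family chosen (via the selection property applied to all of $\Delta_0$) so that it works simultaneously for almost every curve. Admissibility follows because along $\gamma$ the sum of $\diam(h(D_{\alpha_i}))$ over the $D_{\alpha_i}$ met by $\gamma$ controls $\mathrm{diam}(h(\gamma))\geq\varepsilon$, while the integral is estimated by
\[
\int_{\Delta_0}\rho_\varepsilon^p\,d\sigma\leq \sum_i\Big(\frac{\diam(h(D_{\alpha_i}))}{\diam(D_{\alpha_i})}\Big)^p\sigma(D'_{\alpha_i}),
\]
using disjointness of the $D'_{\alpha_i}$; here one uses the first structural inequality $\big(\diam(h(D))/\diam(D)\big)^Q\sigma(D')\leq\eta(v)\tau(h(D''))$ together with essential boundedness of $\eta$ (and, when $p<Q$, Hölder against the measure-doubling inequality $\sigma(B(v,10\diam D))\leq C\sigma(D')$ to absorb the gap between exponents $p$ and $Q$), and finally the disjointness of the $D''_{\alpha_i}$ to sum the $\tau(h(D''_{\alpha_i}))$ into $\tau(h(\Delta_0))$.

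The main obstacle I expect is the bookkeeping in the case $p<Q$: the structural hypothesis is stated with exponent $Q$, but admissibility and the desired upper-gradient bound are at exponent $p$, so one must interpolate. Concretely, one writes $\big(\diam(h(D))/\diam(D)\big)^p\sigma(D')=\big[\big(\diam(h(D))/\diam(D)\big)^Q\sigma(D')\big]^{p/Q}\sigma(D')^{1-p/Q}$ and controls the second factor by the measure-doubling bound, which is exactly why the condition $\sigma(B(v,10\diam D))\leq C\sigma(D')$ is imposed; then Hölder with exponents $Q/p$ and $Q/(Q-p)$ over the countable family converts the sum into the product of $\big(\sum_i\eta\,\tau(h(D''_{\alpha_i}))\big)^{p/Q}$ and a geometric-type sum of $\sigma(D'_{\alpha_i})$, the latter bounded by $\sigma$ of a fixed neighborhood of $\Delta_0$. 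The case $p=Q$ is cleaner (no interpolation, just boundedness of $\eta$), which is why the hypothesis only requires $\eta$ bounded rather than essentially bounded there. A secondary technical point is the Fuglede-type argument needed to pass from "admissible along $p$-a.e.\ curve" to a genuine $p$-weak upper gradient and to justify discarding the curves meeting $E$ badly; this is routine given Proposition~\ref{prop:1.6} and the hypothesis on $h(\gamma\cap E)$.
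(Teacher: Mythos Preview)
Your overall plan is the right one, but there is a genuine gap in the construction of $\rho_\varepsilon$. The candidate
\[
\rho_\varepsilon=\sum_i \frac{\diam(h(D_{\alpha_i}))}{\diam(D_{\alpha_i})}\,\chi_{D'_{\alpha_i}}
\]
is \emph{not} admissible for $\mathcal{C}_\varepsilon h$: a curve $\gamma$ meeting $D_{\alpha_i}$ need not enter the smaller set $D'_{\alpha_i}$ at all, and even if it does, there is no lower bound on $\ell(\gamma\cap D'_{\alpha_i})$ in terms of $\diam(D_{\alpha_i})$. So the line integral $\int_\gamma\rho_\varepsilon\,ds$ does not pick up the contribution $\diam(h(D_{\alpha_i}))$ from each $D_{\alpha_i}$ touched by $\gamma$, and your admissibility claim fails. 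The paper fixes this by supporting the summand on the enlarged ball $B(v_i,2\diam(D_{\alpha_i}))$ instead: if $\gamma$ has diameter at least $4\varepsilon$ and meets $D_{\alpha_i}\subset B(v_i,\diam(D_{\alpha_i}))$, it must exit $B(v_i,2\diam(D_{\alpha_i}))$, so $\ell(\gamma\cap B(v_i,2\diam(D_{\alpha_i})))\geq\diam(D_{\alpha_i})$, and admissibility follows.

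Of course, with the enlarged supports one loses disjointness, so your estimate $\int\rho_\varepsilon^p\,d\sigma\leq\sum_i(\cdots)^p\sigma(D'_{\alpha_i})$ no longer comes for free. This is exactly what the hypothesis $\sigma(B(v,10\diam D_{v,\varepsilon}))\leq C\sigma(D'_{v,\varepsilon})$ is for: it feeds into a Hardy--Littlewood--type maximal inequality (Lemma~\ref{lemma:harmonic analysis lemma}) that lets you replace $\chi_{B(v_i,2\diam D_{\alpha_i})}$ by $\chi_{D'_{\alpha_i}}$ inside the $L^p$ norm at the cost of a constant. You have misidentified the role of this condition; it is \emph{not} used to interpolate between exponents $p$ and $Q$. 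A second, smaller gap: when $p<Q$ and $\eta$ is only essentially bounded, the set $\{\eta>\esssup\eta\}$ has measure zero but is not covered by neighborhoods with $\eta\leq\esssup\eta$, so you cannot simply invoke essential boundedness. The paper handles this by a level-set decomposition $A_k=\eta^{-1}\big((k\esssup\eta,(k+1)\esssup\eta]\big)$, trapping each $A_k$ ($k\geq1$) in an open set of small $\sigma$-measure $\delta_k$, and showing via H\"older that the resulting contribution to $\rho_\varepsilon$ has $L^{p'}$ norm $\lesssim(k+1)^{p'/Q}\delta_k^{(Q-p')/Q}$, which can be made summable.
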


We require the following generalization of a well-known covering lemma.
\begin{lemma}\label{lemma:harmonic analysis lemma}
	Let $p\geq 1$, let $\xi$ be a measure on $\tilde{\Omega}$, and let $\{A_i\}$ and $\{B_i\}$ be sequences of $\xi$-measurable sets and balls, respectively, such that for each $i\in \mathbb{N}$, $A_i\subset B_i\subset \tilde{\Omega}$, and $\xi(5B_i)\leq C\xi(A_i)$, and let $\{c_i\}$ be a sequence of nonnegative integers. Then
	\begin{align*}
	\int_{\tilde{\Omega}}\Big(\sum_{i=1}^\infty c_i\chi_{B_i}\Big)^pd\xi\leq C_p\int_{\tilde{\Omega}}\Big(\sum_{i=1}^\infty c_i\chi_{A_i}\Big)^pd\xi,
	\end{align*}
	where $C_p$ is a constant depending only on $C$ and $p$.
\end{lemma}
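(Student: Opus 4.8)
The plan is to run the classical duality argument for such maximal‑type estimates, with the global doubling of $\xi$ replaced by the local doubling hypothesis $\xi(5B_i)\le C\xi(A_i)$. More precisely, I would dualize the left‑hand side against $L^{p'}(\xi)$, bound the resulting sum pointwise by a suitable \emph{discrete} maximal function built from the given families, and then prove the boundedness of that operator on $L^{p'}(\xi)$ using only the comparison $\xi(5B_i)\le C\xi(A_i)$ and the basic $5r$‑covering lemma.

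First I would dispose of the case $p=1$, which is immediate from linearity, since $\xi(B_i)\le\xi(5B_i)\le C\xi(A_i)$ gives $\int_{\tilde\Omega}\sum_i c_i\chi_{B_i}\,d\xi=\sum_i c_i\xi(B_i)\le C\sum_i c_i\xi(A_i)$. So assume $p>1$, put $p'=p/(p-1)$, and write $F=\sum_i c_i\chi_{B_i}$, $G=\sum_i c_i\chi_{A_i}$; we may assume $G\in L^p(\xi)$, the other case being trivial. By the norm duality $\|F\|_{L^p(\xi)}=\sup\{\sum_i c_i\int_{B_i}g\,d\xi:\ g\ge0\text{ Borel},\ \|g\|_{L^{p'}(\xi)}\le1\}$, it suffices to bound $\sum_i c_i\int_{B_i}g\,d\xi$ by a constant multiple of $\|G\|_{L^p(\xi)}$ for every such $g$. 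To this end introduce the discrete maximal function $\mathcal{M}g$, defined for $x\in\tilde\Omega$ by
\[
\mathcal{M}g(x)=\sup\Big\{\tfrac{1}{\xi(B_i)}\int_{B_i}g\,d\xi:\ i\in\mathbb{N},\ x\in A_i,\ \xi(B_i)>0\Big\},
\]
with $\mathcal{M}g(x)=0$ when this index set is empty. Since $\mathcal{M}g\ge\xi(B_i)^{-1}\int_{B_i}g\,d\xi$ on $A_i$ and $\xi(B_i)\le\xi(5B_i)\le C\xi(A_i)$, one gets $\int_{B_i}g\,d\xi\le C\int_{A_i}\mathcal{M}g\,d\xi$ for every $i$; summing against $c_i$ and applying H\"older's inequality yields $\sum_i c_i\int_{B_i}g\,d\xi\le C\int_{\tilde\Omega}G\,\mathcal{M}g\,d\xi\le C\,\|G\|_{L^p(\xi)}\|\mathcal{M}g\|_{L^{p'}(\xi)}$. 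Thus everything reduces to the boundedness of $\mathcal{M}$ on $L^{p'}(\xi)$ with a constant depending only on $C$ and $p$.

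For that boundedness I would note that trivially $\|\mathcal{M}g\|_{L^\infty(\xi)}\le\|g\|_{L^\infty(\xi)}$, and then prove the weak‑type $(1,1)$ bound $\xi(\{\mathcal{M}g>t\})\le(C/t)\int_{\tilde\Omega}g\,d\xi$, after which Marcinkiewicz interpolation gives the $L^{p'}$ estimate. For the weak bound, fix $t>0$, set $I_t=\{i:\xi(B_i)^{-1}\int_{B_i}g\,d\xi>t\}$, and observe $\{\mathcal{M}g>t\}=\bigcup_{i\in I_t}A_i\subset\bigcup_{i\in I_t}B_i$. The basic $5r$‑covering lemma (valid in any metric space, after the routine handling of possibly unbounded radii) produces a pairwise disjoint subfamily $\{B_{i_k}\}_k$ with $i_k\in I_t$ and $\bigcup_{i\in I_t}B_i\subset\bigcup_k5B_{i_k}$, whence, using $\xi(5B_{i_k})\le C\xi(A_{i_k})$, then $A_{i_k}\subset B_{i_k}$, then $i_k\in I_t$, then disjointness,
\begin{align*}
\xi\big(\{\mathcal{M}g>t\}\big)&\le\sum_k\xi(5B_{i_k})\le C\sum_k\xi(A_{i_k})\le C\sum_k\xi(B_{i_k})\\
&\le\frac{C}{t}\sum_k\int_{B_{i_k}}g\,d\xi\le\frac{C}{t}\int_{\tilde\Omega}g\,d\xi.
\end{align*}
Interpolating against the $L^\infty$ bound gives $\|\mathcal{M}g\|_{L^{p'}(\xi)}\le C'(C,p)\|g\|_{L^{p'}(\xi)}$, and combining with the previous step yields $\|F\|_{L^p(\xi)}\le CC'(C,p)\|G\|_{L^p(\xi)}$; raising to the $p$‑th power finishes the proof with $C_p=(CC'(C,p))^p$.

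The only genuinely delicate point is the weak‑type $(1,1)$ estimate, and specifically its covering step: the whole argument works precisely because the hypothesis supplies the ``doubling'' comparison $\xi(5B_{i_k})\le C\xi(A_{i_k})$ exactly on the balls selected by the covering lemma, so no global doubling assumption on $\xi$ is needed. Everything else---the duality reduction, the pointwise domination by $\mathcal{M}g$, and the Marcinkiewicz interpolation---is routine.
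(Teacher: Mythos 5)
Your proposal is correct and follows essentially the same route as the paper: dualize against $L^{p'}(\xi)$, control the pairing by a discrete maximal operator whose boundedness on $L^{p'}(\xi)$ is obtained from a weak‑type $(1,1)$ estimate via a $5r$‑covering argument plus Marcinkiewicz interpolation, using the hypothesis $\xi(5B_i)\le C\xi(A_i)$ in place of global doubling. The only cosmetic difference is that you take the supremum in the maximal operator over indices with $x\in A_i$ rather than $x\in B_i$ as in the paper, which is immaterial; you also write out the weak‑type covering step that the paper only cites.
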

\begin{proof}
	The proof is the same as the usual argument found in, for example~\cite{b88}, only instead of the usual (uncentered) Hardy-Littlewood maximal operator, one uses the operator
	\begin{align*}
		M(\rho)(v)=\sup_{i\in \mathbb{N}:B_i\ni v}\dashint_{B_i}\rho(x) d\xi(x).
	\end{align*}
	Then the mapping $\rho\mapsto M(\rho)$ is a bounded operator from $L^1(\xi)$ to weak-$L^1(\xi)$, i.e.,
	for every $\rho\in L^1(\xi)$ and $t>0$, we have
	\begin{align*}
		t\xi\big(\{v\in \tilde{\Omega}:M(\rho)(v)>t\}\big)\leq c\|\rho\|_{L^1(\xi)}.
	\end{align*}
	The proof of this is identical to the analogous proof in~\cite{h01,hkst15}, except their covering lemma is replaced by Lemma~\ref{lemma:covering lemma}, and the doubling condition is replaced by the inequality $\xi(5B_i)\leq C\xi(A_i)$ (since $A_i\subset B_i$). From the weak estimate above, we obtain boundedness of the operator on $L^q(\xi)$ for each $q>1$:
	\begin{align*}
		\|M(\rho)\|_{L^p(\xi)}\leq c_q\|\rho\|_{L^q(\xi)}.
	\end{align*}
	The proof of the above inequality is entirely measure-theoretic, and thus is identical to that found in \cite{h01,hkst15}. The proof of the theorem now proceeds as in the case for doubling spaces (see e.g. \cite{b88}). 
	
	The theorem is trivial for $p=1$. For $p>1$, let $q=p/(p-1)$. Then for all $\rho\in L^q(\xi)$,
	\begin{align*}
		\int_{\tilde{\Omega}}\Big(\sum_{i=1}^\infty c_i\chi_{B_i}\Big)\rho d\xi&=\sum_{i=1}^{\infty}c_i\xi(B_i)\dashint_{B_i}\rho d\xi\leq C\sum_{i=1}^\infty c_i\xi(A_i)\dashint_{B_i}\rho d\xi\\
		&\leq C\sum_{i=1}^{\infty}c_i\xi(A_i)\dashint_{A_i}M(\rho)d\xi=C\sum_{i=1}^{\infty}c_i\int_{A_i}M(\rho)d\xi\\
		&=C\int_{\tilde{\Omega}}\Big(\sum_{i=1}^{\infty}c_i\chi_{B_i}\Big)M(\rho)d\xi\leq C\Big\|\sum_{i=1}^{\infty}c_i\chi_{B_i}\Big\|_{L^p(\xi)}\|M(\rho)\|_{L^q(\xi)}\\
		&\leq Cc_q\Big\|\sum_{i=1}^{\infty}c_i\chi_{B_i}\Big\|_{L^p(\xi)}\|\rho\|_{L^q(\xi)}.
	\end{align*}
	Since this holds for all $\rho\in L^q(\xi)$, we conclude that our claim holds with $C_p=(Cc_q)^p$.
\end{proof}

We are now ready to prove Theorem~\ref{thm:for analytic regularity}. The proof of~\cite[Theorem 1.1]{bkr07} generalizes almost word for word, so we do not repeat the argument in full details, but instead outline it in the following proof and comment on the necessary changes.
\begin{proof}[Proof of Theorem~\ref{thm:for analytic regularity}]

	Let $p'=p$ if $p>1$, otherwise choose $1 < p' < Q$. We first fix $\varepsilon> 0$ and construct an approximate upper gradient
	as follows: Decompose $\Delta\backslash E=\bigcup_{k=0}^\infty A_k$, where
	\begin{align*}
		A_k=\eta^{-1}\big((k\esssup \eta, (k+1)\esssup \eta] \big),
	\end{align*}
	so that $\sigma\big(\bigcup_{k=1}^\infty A_k\big)=0$. Then choose
	(omitting subscripts $\varepsilon$ here on out for ease of notation) neighborhoods $U_k$ of $A_k$ for each $k\geq 1$ such that $\tau(U_k)\leq \delta_k$, for some small $\delta_k$ to be chosen momentarily. By
	the covering assumption, we obtain a sequence of subsets $D_i=D_{v_i,\varepsilon_i}$ such that each $i$, $\varepsilon_i\leq \varepsilon$, and such that if $I_k=\{i\in \mathbb{N}:v_i\in A_k\}$, then $\bigcup_{i\in I_k}D_i\subset U_k$. Then if 
	\begin{align*}
		T_k=\sum_{i\in I_k}\frac{\diam(h(D_i))}{\diam(D_i)}\chi_{B(v_i,2\diam(D_i))},
	\end{align*}
	then applying Lemma~\ref{lemma:harmonic analysis lemma}, along with the hypotheses on the sets $D_i$, $D_i'$ and $D_i''$, we obtain 
	\begin{align*}
		\int_{\Delta}&T_0^Qd\sigma\leq C\int_{\Delta}\sum_{i\in I_0}\Big(\frac{\diam(h(D_i))}{\diam(D_i)}\Big)^Q\chi_{D_i'}d\sigma\\
		&\leq C\sum_{i\in I_0}\Big(\frac{\diam(h(D_i))}{\diam(D_i)}\Big)^Q\sigma(D_i')\leq C\sum_{i\in I_0}\tau(h(D_i''))\leq C\tau(\Delta).\numberthis\label{eq:uniform upper bound}
	\end{align*}
	For each $k\geq 1$ and $p<Q$, we have the estimates
	\begin{align*}
		\int_{\Delta}T_k^{p'}d\sigma&\leq C\int_{\Delta}\sum_{i\in I_k}\Big(\frac{\diam(h(D_i))}{\diam(D_i)}\Big)^{p'}\chi_{D_i'}d\sigma\leq C\sum_{i\in I_k}\Big(\frac{\diam(h(D_i))}{\diam(D_i)}\Big)^{p'}\sigma(D_i')\\
		&\leq C(k+1)^{p'/Q}\sum_{i\in I_k}\tau(h(D_i''))^{p'/Q}\sigma(D_i)^{(Q-p')/Q}\\
		&\leq C(k+1)^{p'/Q}\Big(\sum_{i\in I_k}\tau(h(D_i''))\Big)\Big(\sum_{i\in I_k}\sigma(D_i')\Big)^{(Q-p')/p'}\\
		&\leq C(k+1)^{p'/Q}\tau(\Omega)\sigma(U_k)^{(Q-p')/Q}\leq C(k+1)^{p'/Q}\tau(\Delta)\delta_k^{(Q-p')/Q}.
	\end{align*}
	In particular, this implies that $T_k=0$ when $k\geq 1$, since may choose $\delta_k$ so that $\|T_k\|_p\leq \varepsilon/2^k$. Note also that the above estimate holds for $p=Q$ as well.
	
	Now, let $\rho_{\varepsilon,0}=T_0$, $\rho_{\varepsilon,+}=\sum_{k=1}^\infty T_k$, and $\rho_{\varepsilon}=\rho_{\varepsilon,0}+\rho_{\varepsilon,+}$. Observe that if a rectifiable curve $\gamma$ satisfies $\mathcal{H}^{Q-p}\big(h(\gamma\cap E)\big)=0$ and $\diam(\gamma)\geq 4\varepsilon$, $\gamma$ intersects one of the sets $D_i$, then $\gamma$ joins $D_i\subset B(v_i,\diam(D_i)))$ with $\Delta\backslash B(v_i,2\diam(D_i))$, so that $l\big(\gamma\cap B(v_i,2\diam(D_i))\big)\geq \diam(D_i)$. Therefore, 
	\begin{equation}\label{eq:admissible lower bound}
		\int_{\gamma}\rho_\varepsilon ds\geq \sum_{i=1}^\infty\Big(\frac{\diam(h(D_i))}{\diam(D_i)}\Big)l\big(\gamma\cap D_i\big)\geq \diam(h(\gamma)),  
	\end{equation}
	where the last inequality follows from the fact that $\mathcal{H}^1(h(\gamma\cap E))=0$.
	
	Since the functions $\rho_{\varepsilon,0}$ uniformly satisfy inequality~\eqref{eq:uniform upper bound}, we may apply reflexivity and Mazur's Lemma to obtain a sequence of convex combinations $\rho_n=\rho_{n,0}+\rho_{n,+}$
	of the functions $\rho_\varepsilon$ such that $\rho_{n,0}$ converges strongly in $L^Q$ to a function $\rho$ satisfying~\eqref{eq:uniform upper bound} as well, and such that inequality~\eqref{eq:admissible lower bound} is satisfied for $p$-almost every curve of diameter at least $1/n$. A fortiori, $\rho_{n,0}$ also converges to $\rho$ in $L^{p'}$, and hence so does $\rho_n$, since $\rho_{n,+}$ converges to 0. A well-known theorem of Fuglede (see e.g.~\cite[Fuglede's Lemma, P.131]{hkst15}) yields a further subsequence $n_j$ such that $\int_{\gamma}\rho_{n,j}ds$ converges to $\int_{\gamma}\rho ds$ on $p'$-almost every (and hence $p$-almost every) curve $\gamma$. It follows that $\rho$ is a $p$-weak upper gradient satisfying inequality~\eqref{eq:uniform upper bound}, and so the proof for the case $p<Q$ is complete. 
	
	It only remains to show that in the case that $p = Q$ and $\eta$ is bounded, the final constant $C$ depends only on the essential bound on $\eta$, rather than the actual bound. To see this, we apply the
	theorem once to see that $h\in N^{1,Q}(\Delta,W)$, and so $h$ is absolutely continuous on $Q$-almost every curve. Let $E=\{v\in \Delta:\eta(v)>\esssup \eta\}$. Since $\mu(E)=0$, $Q$-almost every curve intersects $E$ with Hausdorff 1-measure 0. Absolute continuity on curves then implies that $E$ satisfies the requirements of the theorem, and so we apply the theorem again, this time excluding $E$, to obtain a result depending only on $\esssup \eta$.
\end{proof}

\begin{remark}\label{rmk:on removable sets}
	The results in \cite[Theorem 1.1]{bkr07} allow for removable sets in the case $p<Q$. We have avoided this discussion for purposes of simplicity, but the same removability arguments can be made here to obtain similar results. Our main use for the case $p<Q$ in Theorem~\ref{thm:for analytic regularity} will be for the analysis of Loewner spaces in the forthcoming
	sections, where the self-improving nature of the Poincar\'e inequality will allow us to remove the requirement that, for the case $p=Q$, $h_f(x)$ must be bounded. We could also remove the aforementioned need for boundedness without requiring a Poincar\'e inequality, but this result is more technical than what we are interested in here.
	
\end{remark}

\subsubsection{Proof of the main result}

With the aid of Theorem~\ref{thm:for analytic regularity}, we are now ready to prove our main result of this section. The idea used in the implication, from Theorem~\ref{thm:metric implies all the other} i) to the quantitative analytic characterization, is quite similar to~\cite[Proof of Theorem~1.1]{bkr07}. But some care is needed in order to show that i) implies the quantitative inverse analytic characterization, due to the multiplicity issue. We will overcome this technical difficulty by applying Theorem~\ref{thm:metric implies all the other} twice in a suitable way. We would like to point out that there is a great asymmetry between the proofs with assuming i) or ii). This symmetry was already observed in~\cite{w14} in the homeomorphism case. It is also suggestive to compare our following proofs with~\cite[Proof of Theorem 1.1]{bkr07} and~\cite[Proof of Theorem 1.6]{w14} to understand the differences with the homeomorphic case.

\begin{proof}[Proof of Theorem~\ref{thm:metric implies all the other}]
Case i). $h_f(x)\leq h$ for all $x\in \tilde{\Omega}$.

We first show that $f$ is analytically $K$-quasiregular with exponent $Q$, quantitatively. This case is slightly easier as there is no index issue and we invoke Theorem~\ref{thm:for analytic regularity}, with $\Delta=\tilde{\Omega}_0\subset \tilde{\Omega}$, where $\tilde{\Omega}_0$ is an arbitrary relatively compact open subset of $\tilde{\Omega}$, $W=\tilde{\Omega}^f$, $h=g$, and $p=Q$. We may additionally assume that $\Omega^f$ is 1-bounded turning, otherwise just replace the original metric $d^*=f^*d_Y$ on $\tilde{\Omega}^f$ by $\hat{d}^*(x,z)=\inf_{\alpha}d^*(x,z)$, where the infimum is taken over all continua $\alpha$ that connect $x$ and $z$ in $\tilde{\Omega}^f$.

For each $x\in \tilde{\Omega}$, $\varepsilon>0$ and $A\subset \Delta$, consider the family of balls $B(x,r_\varepsilon)$, where $r_\varepsilon<\varepsilon$ is chosen so that $\frac{L_g(x,r_\varepsilon)}{l_g(x,r_\varepsilon)}<2h_f(x)$. It follows from the covering lemmas 2.2 and 2.3 in~\cite{bkr07} that there is a sequence of balls $B_i=B(x_i,r_i)$ from the family so that

\begin{itemize}
	\item For each $i\neq j$, $B_i/3\cap B_j/3=\emptyset$ and 
	$$B\Big(g(x_i),\frac{L_g(x_i,r_i)}{10h_f(x_i)^2}\Big)\cap B\Big(g(x_j),\frac{L_g(x_j,r_j)}{10h_f(x_i)^2}\Big)=\emptyset.$$
	
	\item $A\subset \bigcup_{i}B_i$.
\end{itemize}
%
We now set $D_{x,\varepsilon}=B(x,r_\varepsilon)$, $D_{x,\varepsilon}'=B(x,r_\varepsilon/3)$, and $D_{x,\varepsilon}''=g^{-1}\big(B(g(x),L_g(x,r_\varepsilon)/10h_f(x)^2)\big)$. The covering properties of the family of balls $B(x,r_\varepsilon)$ imply that these neighborhoods satisfy all the covering assumptions of Theorem~\ref{thm:for analytic regularity}. 

On the other hand,  since $\mu$ is locally Ahlfors $Q$-regular and $\lambda$ satisfies~\eqref{eq:estimate for lambda}, we have
\begin{align*}
	\Big(\frac{\diam(g(D_{x,\varepsilon}))}{\diam(D_{x,\varepsilon})}\Big)^Q&\mu(D_{x,\varepsilon}')\leq c_\mu\Big(\frac{L_g(x,r_\varepsilon)}{r_\varepsilon}\Big)^Qr_\varepsilon^Q\\
	&\leq ch_f(x)^{2Q}\Big(\frac{L_g(x,r_\varepsilon)}{10h_f^2(x)}\Big)^Q\leq ch_f(x)^{2Q}\lambda\big(g(D_{x,\varepsilon}'')\big)
\end{align*}
and
\begin{align*}
	\mu\big(B(x,10\diam(D_{x,\varepsilon}))\big)\leq c_\mu\big(10\diam D_{x,\varepsilon}\big)^Q\leq cr_\varepsilon^Q\leq c\mu(D_{x,\varepsilon}'),
\end{align*}
where the constant $c$ depends only on the bounded turning constant of $\Omega$ and the Ahlfors regularity constants of $\mu$ and $\lambda$. Thus all the assumptions of Theorem~\ref{thm:for analytic regularity} are satisfied upon choosing $\eta(x)=ch_f(x)^{2Q}$. 

By Theorem~\ref{thm:for analytic regularity}, $g\in N^{1,Q}(\tilde{\Omega}_0,\tilde{\Omega}^f)$ and there exists some constant $K_O$ (depending only on $c$ and $\esssup h_f$) such that
\begin{align*}
	\int_{\tilde{\Omega}_0}|\nabla g|^Qd\mu(x)\leq K_O\lambda(g(\tilde{\Omega}_0)).
\end{align*}
Since $\tilde{\Omega}_0$ is arbitrary, it follows by definition,
\begin{align*}
	|\nabla g|^Q(x)\leq K_OJ_g(x)\quad \mu\text{-a.e. in }\tilde{\Omega}.
\end{align*}

We next show that $f$ is inverse analytically $K$-quasiregular with exponent $Q$, quantitatively. Observe that by the proof of Theorem~\ref{thm:bounded geometry I}, it suffices to show that $g^{-1}$ is analytically $K_O$-quasiregular with exponent $Q$, quantitatively. Thus we apply Theorem~\ref{thm:for analytic regularity} with $\Delta=\tilde{\Omega}_0^f$, where $\tilde{\Omega}_0^f\subset \tilde{\Omega}^f$ is relatively compact and open, and $W=\tilde{\Omega}$, $h=g^{-1}$, and $p=Q$. 

For each $y\in \tilde{\Omega}_0^f$ and $\varepsilon>0$, let $r_\varepsilon<\varepsilon$ be chosen so that $\frac{L_g(x,r_\varepsilon)}{l_g(x,r_\varepsilon)}<2h_f(x)$. We may argue as in the previous case to choose neighborhoods $D(y,\varepsilon)=g(B(x,s_\varepsilon))$, $D_{y,\varepsilon}'=g(B(x,s_\varepsilon/3))$, and $D_{y,\varepsilon}''=B(g(x),L_g(x,s_\varepsilon)/10h_f(x)^2)$, where $s_\varepsilon=L_g(x,r_\varepsilon)$ such that the covering requirement in Theorem~\ref{thm:for analytic regularity} are satisfied.

On the other hand,  since $\lambda$ satisfies~\eqref{eq:estimate for lambda} and $\mu$ is locally Ahlfors $Q$-regular, we have
\begin{align*}
\Big(\frac{\diam(g^{-1}(D_{y,\varepsilon}))}{\diam(D_{y,\varepsilon})}\Big)^Q&\lambda(D_{y,\varepsilon}')\leq ci_{\ess}(x,f)\Big(\frac{s_\varepsilon}{L_g(x,s_\varepsilon)}\Big)^QL_g(x,s_\varepsilon)^Q\\
&\leq ci_{\ess}(x,f)h_f(x)^{2Q}\mu\big(g^{-1}(D_{y,\varepsilon}'')\big)
\end{align*}
and
\begin{align*}
\lambda\Big(B(y,10\diam(D_{y,\varepsilon}))\Big)\leq ci_{\ess}(x,f)\lambda(D_{y,\varepsilon}'),
\end{align*}
where the constant $c$ depends only on the bounded turning constant of $\Omega$ and the Ahlfors regularity constants of $\mu$ and $\lambda$. Set $\eta(y)=ci_{\ess}(g^{-1}(y),f)h_f(g^{-1}(y))^{2Q}$.

Applying Theorem~\ref{thm:for analytic regularity} as before, we obtain that $g^{-1}\in N(\tilde{\Omega}_0^f,\tilde{\Omega}_0)$ and there exists some constant $K_I'$ (this time depending also on $N(f,\tilde{\Omega}_0)$) such that
\begin{align*}
|\nabla g^{-1}|^Q(y)\leq K_I'J_{g^{-1}}(y)\quad \lambda\text{-a.e. in }\tilde{\Omega}_0^f.
\end{align*}

We thus obtain, by Theorem~\ref{thm:equivalence of G and A}, the $K_O$-inequality with exponent $Q$ for $g^{-1}$. Since $\lambda(g(\mathcal{B}_f^e))=0$, almost every curve in 
$\tilde{\Omega}_0^f$ intersects $g(\mathcal{B}_f^e)$ with $\mathcal{H}^1$-measure 0. But now the $K_O$-inequality implies that for almost every curve $\gamma$ in $\tilde{\Omega}_0^f$, $\mathcal{H}^1\big(g^{-1}(\gamma)\cap \mathcal{B}_f^e\big)=0$, so that we may apply Theorem~\ref{thm:for analytic regularity}, this time with $E=g(\mathcal{B}_f^e)$, to obtain the theorem with $K_O$ independent of the multiplicity.


Case ii). $h_f^*(x)\leq h$ for all $x\in \tilde{\Omega}$.

%
We only show that $f$ satisfies the $K_I$-inequality with exponent $Q$ and the fact that $f$ also satisfies the  $K_O$-inequality with exponent $Q$ can be argued similar as in the first part of case i) (and is indeed simpler since there is no index issue). Again, by Theorem~\ref{thm:equivalence of G and A}, it suffices to show that $g^{-1}\colon \tilde{\Omega}^f\to \tilde{\Omega}$ is analytically $K_I$-quasiconformal with exponent $Q$. Thus we apply the exact same argument in reverse to invoke Theorem~\ref{thm:for analytic regularity}, with $\Delta=\tilde{\Omega}_0^f$ and $W=\tilde{\Omega}_0:=g^{-1}(\tilde{\Omega}_0^f)$ for an arbitrary relatively compact open subset $\tilde{\Omega}_0^f$ of $\tilde{\Omega}^f$. The only difference is that this time around, we need to take care of the essential branch set $\mathcal{B}_f^e$ of $f$. 

For $y=g(x)\in \tilde{\Omega}_0^f$ and $\varepsilon>0$, we select $r_\varepsilon$ small enough so that $h_g^*(x)<2h_f^*(x)$ (note that $h_{g^{-1}}(y)=h_g^*(x)$). As in the proof of case i), we may select the neighborhoods $D_{y,\varepsilon}$, $D_{y,\varepsilon}'$ and $D_{y,\varepsilon}''$  with $g^{-1}$ and $h_f$ being replaced by $g$ and $h_f^*$, respectively, so that these neighborhoods satisfy all the covering assumptions of Theorem~\ref{thm:for analytic regularity}.  

On the other hand,  since $\lambda$ satisfies~\eqref{eq:estimate for lambda} and $\mu$ is locally Ahlfors $Q$-regular, we have
\begin{align*}
\Big(\frac{\diam(g^{-1}(D_{y,\varepsilon}))}{\diam(D_{y,\varepsilon})}\Big)^Q&\lambda(D_{y,\varepsilon}')\leq ci_{\ess}(x,f)\Big(\frac{L_{g^{-1}}(y,r_\varepsilon)}{r_\varepsilon}\Big)^Qr_\varepsilon^Q\\
&\leq ci_{\ess}(x,f)h_f^*(x)^{2Q}\Big(\frac{L_{g^{-1}}(y,r_\varepsilon)}{10h_f^{*2}(x)}\Big)^Q\leq ci_{\ess}(x,f)h_f^*(x)^{2Q}\mu\big(g^{-1}(D_{y,\varepsilon}'')\big)
\end{align*}
and
\begin{align*}
	\lambda\Big(B(y,10\diam(D_{y,\varepsilon}))\Big)\leq ci_{\ess}(x,f)\lambda(D_{y,\varepsilon}'),
\end{align*}
where the constant $c$ depends only on the bounded turning constant of $\Omega$ and the Ahlfors regularity constants of $\mu$ and $\lambda$. Set $\eta(y)=ci_{\ess}(g^{-1}(y),f)h_f^*(g^{-1}(y))^{2Q}$ and we can get rid of the index in the exact argument as in the second part of case i).

%

\end{proof}

\begin{remark}\label{rmk:on proof of m imply A}
   1). Our reasoning in this section was a bit more general than necessary
	for only these results, and perhaps might seem repetitive and roundabout. However, we need Theorem~\ref{thm:for analytic regularity} for later use anyway, and after it is proved, the remaining arguments seem to be the simplest way to complete the proof of Theorem~\ref{thm:metric implies all the other}.
	
   2). Using the pullback integration theory developed in Section~\ref{sec:basic pullback studies}, one can also argue directly to prove Theorem~\ref{thm:metric implies all the other}. Indeed, our original approach was based on it. However, the presentation will become much more complicated since the ``pullback line integral" and the ``pullback upper gradient" were necessary. 
   
   3). In Theorem~\ref{thm:metric implies all the other}, $h_f$ or $h_f^*$ is assumed to be everywhere bounded. As observed in~\cite[Remark 4.1]{bkr07}, the proof of Theorem~\ref{thm:metric implies all the other} works if $h_f$ (or $h_f^*$) is assumed to be finite everywhere and bounded outside a countable set when $p=Q$, finite everywhere and bounded outside a set of $\sigma$-finite $\mathcal{H}^{Q-p}$-measure when $1<p<Q$. By~\cite[Theorem 5.1 and Remark 5.3]{bkr07}, if $X$ supports a local $(1,q)$-Poincar\'e inequality (i.e. each point $x$ in $X$ has a neighborhood $U_x$ such that $(U_x,d,\mu)$ supports a local $(1,q)$-Poincar\'e inequality) for some $1\leq q\leq Q$, then the conclusion of Theorem~\ref{thm:metric implies all the other} remains valid if we assume that $h_f$ (or $h_f^*$) is finite everywhere and bounded from above $\mu$-a.e. in $X$.
   
   The assumption of a local $(1,q)$-Poincar\'e inequality on $X$ ensures that if $|\nabla f|\in L^Q_{loc}(X)$ is a $q$-weak upper gradient of a continuous mapping $f\colon X\to Y$, $1\leq q<Q$, then $f\in N^{1,Q}_{loc}(X,Y)$~(see e.g.~\cite[Proposition 4.4]{km98}). The self-improving nature of the Poincar\'e inequality, as discovered by Keith and Zhong~\cite{kz08}, implies that the above fact holds for $q=Q$ as well. Note, however, that in general (without a Poincar\'e inequality) the $p$-weak upper gradient depends on $p$; see~\cite[Theorem 1.1]{ds15}.

\end{remark}

\subsection{Spaces of locally $Q$-bounded geometry}\label{subsec:Spaces of locally $Q$-bounded geometry}

In this section we consider the case where $\tilde{\Omega}$ and/or $\Omega$ have locally $Q$-bounded geometry. The ``data" of $\tilde{\Omega}$ and/or $\Omega$, for this section, includes both the constant of (local) Ahlfors $Q$-regularity and the Loewner function.

In the previous theorem, (local) Ahlfors $Q$-regularity allows us to obtain analytic and geometric conditions from metric ones. In the absence of rectifiable curves, however, analytic and geometric conditions become vacuous. To get any mileage out of these, we need some conditions guaranteeing the existence of sufficently many rectifiable curves. For the next results, then, we consider the case where the domain and/or target have locally $Q$-bounded geometry. Here, the homeomorphic case was proved, mostly, in~\cite{hkst01}, though the equivalence between the liminf definition and the others was only known for $\R^n$~\cite{hk95} until that result was generalized to the Loewner case in~\cite{bkr07}.

When the source domain $\tilde{\Omega}$ has (locally) bounded geometry, we have the following forward-to-inverse result.

\begin{theorem}\label{thm:bounded geometry I}
	Suppose $\tilde{\Omega}$ has locally $Q$-bounded geometry, and $\Omega$ is locally Ahlfors $Q$-regular and locally LLC. Let $f\colon\tilde{\Omega}\to \Omega$ be an onto branched covering.  Then the following conditions are quantitatively equivalent:
	\begin{enumerate}
		\item $f$ is metrically $H$-quasiregular;
		\item $f$ is inverse metrically $H^*$-quasiregular;
		\item $f$ is weak metrically $h$-quasiregular with exponent $Q$;
		\item $f$ is weak metrically $h^*$-quasiregular with exponent $Q$;
	    \item $f$ is analytically $K$-quasiregular with exponent $Q$;
	    \item $f$ is geometrically $K$-quasiregular with exponent $Q$.
	\end{enumerate}
	Moreover, if these equivalent conditions are satisfied, then the inverse geometric and analytic definitions of quasiregularity hold with exponent $Q$ as well.
\end{theorem}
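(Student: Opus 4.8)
The plan is to push every condition through the pullback factorization $f=\pi\circ g$, converting each statement about the branched covering $f$ into the corresponding statement about the quasiconformal homeomorphism $g\colon\tilde\Omega\to\tilde\Omega^f$, and then to invoke the homeomorphic theory of \cite{hkst01,bkr07} — available once $\tilde\Omega^f$ is shown to be again of locally $Q$-bounded geometry. Concretely, I would prove $(1)\Rightarrow(3)$, $(2)\Rightarrow(4)$, $(3)\Rightarrow(5)$, $(4)\Rightarrow(5)$, $(5)\Leftrightarrow(6)$, $(5)\Rightarrow(1)$, $(5)\Rightarrow(2)$, together with $(5)\Rightarrow(A^*)\Rightarrow(G^*)$; these give the asserted equivalences and the ``moreover'' clause.

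\emph{Trivial implications and the passage to analytic quasiregularity.} Since $h_f\le H_f$ and $h_f^*\le H_f^*$ pointwise, (1) implies (3) and (2) implies (4) (with $h=H$, $h^*=H^*$). For (3)$\Rightarrow$(5) and (4)$\Rightarrow$(5): $\Omega$ is locally LLC, hence has locally bounded turning (LLC-1 forces this, the infimum defining bounded turning being attained since $\Omega$ is complete and locally compact), and $\tilde\Omega$, having locally $Q$-bounded geometry, carries a local uniform $(1,Q)$-Poincar\'e inequality; so by Theorem~\ref{thm:metric implies all the other} together with Remark~\ref{rmk:on proof of m imply A}(3) — which allows replacing ``bounded everywhere'' by ``finite everywhere and bounded $\mu$-a.e.'' — condition (3) or (4) implies that $f$ is analytically $K_O$-quasiregular with exponent $Q$, i.e. (5) (and, simultaneously, inverse analytically $K_I$-quasiregular).

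\emph{The equivalence $(5)\Leftrightarrow(6)$ and the reduction to $g$.} The equivalence $(5)\Leftrightarrow(6)$ is the first half of Theorem~\ref{thm:equivalence of G and A}. By Proposition~\ref{prop:characterization of QR via pullback factorization}, (5) is equivalent to $g$ being analytically $K_O$-quasiconformal with exponent $Q$, whence (Theorem~\ref{thm:equivalence of G and A} again) $g$ satisfies the $K_O$-inequality. I then claim $\tilde\Omega^f$ has locally $Q$-bounded geometry. It is locally uniformly Ahlfors $Q$-regular: by~\eqref{eq:estimate for lambda} one has $C^{-1}r^Q\le\lambda(B(z,r))\le C\,i_{\ess}(z,\pi)\,r^Q$ for small $r$, and $i_{\ess}(z,\pi)\le i(z,\pi)\le N(\pi,U)<\infty$ locally (Lemma~\ref{lemma:on essential index} and the locally bounded multiplicity of $f$). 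It is locally LLC: LLC-1 holds because $\tilde\Omega^f$ has $1$-bounded turning (Lemma~\ref{lemma:pullback property 1}), and LLC-2 follows from Lemma~\ref{lemma:pullback property 3}, since $\Omega$ is locally LLC-2 and $\tilde\Omega$ — being locally LLC by Proposition~\ref{prop:Linear locally contra implies LLC} — has no local cut points (a local cut point would contradict the local LLC-2 property). Finally it is locally uniformly $Q$-Loewner: $g$ is a homeomorphism satisfying the $K_O$-inequality from the locally Ahlfors $Q$-regular, locally $Q$-Loewner space $\tilde\Omega$ onto the locally Ahlfors $Q$-regular, locally LLC space $\tilde\Omega^f$, so by the Heinonen--Koskela geometric-to-quasisymmetric theorem \cite{hk98,hkst01} $g$ is locally quasisymmetric, and a quasisymmetry onto an Ahlfors $Q$-regular space transports the Loewner condition.

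\emph{Closing the loop and the ``moreover''.} Now $g$ is a homeomorphism between two spaces of locally $Q$-bounded geometry that is analytically $K_O$-quasiconformal, so \cite[Theorem 9.8]{hkst01} (with \cite{bkr07}) yields, quantitatively, that $g$ is metrically $H$-quasiconformal, $g^{-1}$ is metrically $H^*$-quasiconformal, $g^{-1}$ is analytically $K_I$-quasiconformal, and $g^{-1}$ satisfies the $K_O$-inequality (equivalently, $g$ satisfies the $K_I$-inequality). Transferring back via Proposition~\ref{prop:characterization of QR via pullback factorization II} (using the local bounded turning of $\Omega$, so $H_f\le c\,H_g$ and $H_f^*\le c\,H_g^*$, where $H_g^*$ coincides with the metric dilatation of $g^{-1}$): metric quasiconformality of $g$ gives (1) and that of $g^{-1}$ gives (2); and by Proposition~\ref{prop:characterization of QR via pullback factorization} with Definition~\ref{def:inverse analytic def for qr}, $g^{-1}$ being analytically $K_I$-quasiconformal is exactly the inverse analytic quasiregularity $(A^*)$ of $f$, which in turn gives the generalized V\"ais\"al\"a inequality — in particular the Poletsky, i.e. inverse geometric, inequality $(G^*)$ — for $f$ by Theorem~\ref{thm:equivalence of G and A} (take $m=1$). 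This closes the cycle $(1)\Rightarrow(3)\Rightarrow(5)\Leftrightarrow(6)\Rightarrow(1)$, with $(2)\Rightarrow(4)\Rightarrow(5)$ and $(5)\Rightarrow(2)$, and establishes the last sentence of the theorem. The step I expect to be the main obstacle is the verification that $\tilde\Omega^f$ has locally $Q$-bounded geometry — especially the local uniform Loewner property — since the pullback measure $\lambda=\pi^*\nu$ is only pointwise Ahlfors $Q$-regular, with a constant that degenerates (through $i_{\ess}$) exactly along the essential branch set, so both the regularity and the Loewner property of $\tilde\Omega^f$ must be extracted from the sharp bound~\eqref{eq:estimate for lambda} and a genuine quasisymmetry argument rather than by any soft inheritance.
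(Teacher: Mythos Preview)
Your overall plan---push everything through the pullback factorization and reduce to the homeomorphism $g$---is the paper's plan too, and your handling of $(1)\Rightarrow(3)$, $(2)\Rightarrow(4)$, $(3)/(4)\Rightarrow(5)$ via Theorem~\ref{thm:metric implies all the other} and Remark~\ref{rmk:on proof of m imply A}(3), and $(5)\Leftrightarrow(6)$ via Theorem~\ref{thm:equivalence of G and A}, is correct. The gap is in the return trip $(5)\Rightarrow(1),(2),(A^*),(G^*)$.

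You try to close the loop by showing that $\tilde\Omega^f$ has locally $Q$-bounded geometry and then invoking \cite[Theorem~9.8]{hkst01} as a black box. Local Ahlfors regularity and local LLC of $\tilde\Omega^f$ are fine (your arguments there are correct). The problem is the Loewner step: you claim a ``Heinonen--Koskela geometric-to-quasisymmetric theorem'' that takes $g$ from the $K_O$-inequality alone to local quasisymmetry, with source Loewner and target merely Ahlfors regular and LLC. Neither \cite{hk98} nor \cite{hkst01} provides this. The result in \cite[Theorem~4.7]{hk98} starts from \emph{metric} quasiconformality, and \cite[Theorem~9.8]{hkst01} assumes locally $Q$-bounded geometry on \emph{both} sides. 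The standard modulus argument bounding $H_g(x)$ (as in \cite[Theorem~22.3]{v71}) genuinely uses \emph{both} modulus inequalities; with only $K_O$ you cannot control how large $g(E)$ becomes, and the Loewner transport then becomes circular. You correctly flag this as the main obstacle in your last paragraph, but the proposed resolution does not work.

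The paper sidesteps the issue: it never proves $\tilde\Omega^f$ is Loewner and never cites \cite[Theorem~9.8]{hkst01} off the shelf. Instead it proves $(G)\Rightarrow(G^*)$ directly (``Step~1''), by running the key geometric estimate of Proposition~\ref{prop:key geometric BG case} with the roles of $g$ and $g^{-1}$ swapped; here the source $\tilde\Omega^f$ is only \emph{pointwise} Ahlfors $Q$-regular via~\eqref{eq:estimate for lambda}, but as noted in Remark~\ref{rmk:on Ahlfors regularity for key geometric argument} that suffices for the one place regularity is used. This feeds Theorem~\ref{thm:for analytic regularity} (applied twice, the second time with $E=g(\mathcal{B}_f^e)$ to erase the $i_{\ess}$-dependence) and yields analytic quasiconformality of $g^{-1}$, i.e.\ $(A^*)$ and hence $(G^*)$. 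With \emph{both} $K_O$ and $K_I$ for $g$ in hand, ``Step~2'' runs a \cite[Theorem~22.3]{v71}--type argument directly against the pointwise regularity of $\lambda$ to obtain $(1)$ and $(2)$. In short: the missing idea is not to prove Loewner for $\tilde\Omega^f$, but to establish $K_I$ first via Proposition~\ref{prop:key geometric BG case}, working with pointwise regularity throughout.
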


Bounded geometry in the target does not give us quite as much, but we still have
the following inverse-to-forward result.
\begin{theorem}\label{thm:bounded geometry II}
  If $\tilde{\Omega}$ is locally Ahlfors $Q$-regular and locally LLC, and $\Omega$ has locally $Q$-bounded geometry, then the inverse geometric or analytic definitions with exponent $Q$ imply the corresponding forward with exponent $Q$, quantitatively.
\end{theorem}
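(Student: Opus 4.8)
The plan is to use the pullback factorization $f=\pi\circ g$ to convert the statement into a purely homeomorphic ``inverse implies forward'' assertion for $g$, and then to bootstrap that through the metric definition. By Theorem~\ref{thm:equivalence of G and A} (and the proof of its second half), each of the inverse analytic and the inverse geometric (Poletsky) definitions of $f$ with exponent $Q$ is equivalent to the statement that $g^{-1}\colon\tilde\Omega^f\to\tilde\Omega$ is analytically $K_I$-quasiconformal with exponent $Q$; dually, by Theorem~\ref{thm:equivalence of G and A} the forward analytic and forward geometric ($K_O$) definitions of $f$ coincide, and by Proposition~\ref{prop:characterization of QR via pullback factorization} they are equivalent to $g\colon\tilde\Omega\to\tilde\Omega^f$ being analytically $K_O$-quasiconformal with exponent $Q$. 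Hence it suffices to show: if the homeomorphism $g^{-1}$ is analytically quasiconformal, then $g$ is analytically quasiconformal, quantitatively.

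The first substantial step is to prove that $(\tilde\Omega^f,\lambda)$ has locally $Q$-bounded geometry, with data depending only on the data of $\Omega$ and on the local multiplicity $N=N(f,\cdot)$. Local uniform Ahlfors $Q$-regularity follows from~\eqref{eq:estimate for lambda} together with $i_{\ess}(z,\pi)\le i(z,\pi)\le N$ (Lemma~\ref{lemma:on essential index}), giving $C^{-1}r^Q\le\lambda(B(z,r))\le CNr^Q$ for small $r$; local quasiconvexity is inherited from $\Omega$ by Lemma~\ref{lemma:pullback property 4} (with the constant worsening by the factor $2N-1$), and $\tilde\Omega^f$ has $1$-bounded turning by Lemma~\ref{lemma:pullback property 1}. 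The local $(1,Q)$-Poincar\'e inequality (equivalently, the local $Q$-Loewner condition, given Ahlfors regularity) is transferred from $\Omega$ through the $1$-BDD, $1$-Lipschitz, $c$-co-Lipschitz projection $\pi$: combining Floyd's lifting Lemma~\ref{lemma:lift Floyd} with the modulus comparisons of Lemmas~\ref{lemma:1.3} and~\ref{lemma:1.4} gives, up to the multiplicity constant, two-sided control of $Q$-moduli of curve families in $\tilde\Omega^f$ by those of their $\pi$-images, which promotes the Loewner estimate of $\Omega$ to one in $\tilde\Omega^f$.

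With $\tilde\Omega^f$ now of locally $Q$-bounded geometry, I would apply Theorem~\ref{thm:bounded geometry I} to the homeomorphic branched covering $g^{-1}\colon\tilde\Omega^f\to\tilde\Omega$ — whose source $\tilde\Omega^f$ has locally $Q$-bounded geometry and whose target $\tilde\Omega$ is locally Ahlfors $Q$-regular and locally LLC, precisely the hypotheses of that theorem. In this homeomorphic setting the argument moreover produces local quasisymmetry of $g^{-1}$ (as in~\cite{hk98,hkst01}), so $g$ is locally quasisymmetric as well, and hence $H_g(x)\le h$ for \emph{every} $x\in\tilde\Omega$, with $h$ depending only on $K_I$ and the data. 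Now $g$ is a branched covering between the locally Ahlfors $Q$-regular spaces $\tilde\Omega$ and $\tilde\Omega^f$, the latter of $1$-bounded turning, so Theorem~\ref{thm:metric implies all the other} (with the everywhere dilatation bound just obtained) shows $g$ is analytically $K_O$-quasiconformal with exponent $Q$, $K_O$ depending only on $h$ and the Ahlfors constants. Translating back through Proposition~\ref{prop:characterization of QR via pullback factorization} and Theorem~\ref{thm:equivalence of G and A} then gives that $f$ is analytically, and hence geometrically, $K_O$-quasiregular with exponent $Q$, which is the assertion.

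The main obstacle is the middle step: showing that $\tilde\Omega^f$ genuinely has locally $Q$-bounded geometry despite the branching. The Ahlfors regularity half is the delicate one, since $\lambda=\pi^*\nu$ a priori charges the branch set (the $i_{\ess}$ factor in~\eqref{eq:estimate for lambda}); here one leans on the fact that $i_{\ess}\equiv 1$ off a $\lambda$-null set (Lemma~\ref{lemma:on essential index}) and on the pullback Lebesgue-Radon-Nikodym machinery of Section~\ref{sec:basic pullback studies}, together with local boundedness of $N(f,\cdot)$, to keep the regularity constant quantitative. A related point to watch is that $\tilde\Omega$ itself is only assumed locally Ahlfors $Q$-regular and locally LLC — it need not support a Poincar\'e inequality — so the argument cannot be run symmetrically on $\tilde\Omega$; this asymmetry is exactly why one must route the implication through $g^{-1}$, whose domain $\tilde\Omega^f$ \emph{is} Loewner, and recover $g$ only afterwards via its metric dilatation and Theorem~\ref{thm:metric implies all the other}.
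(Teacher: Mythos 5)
Your overall frame — reduce to the lift $g$ via Theorem~\ref{thm:equivalence of G and A} and Proposition~\ref{prop:characterization of QR via pullback factorization}, then treat the purely homeomorphic problem on $\tilde\Omega^f$ — is the correct reduction, and you have correctly diagnosed the central difficulty (the $i_{\ess}$ factor in~\eqref{eq:estimate for lambda}). But the route you take to handle it does not close the gap, and in fact cannot, for a reason you gesture at but do not resolve: the resulting constant inherits a dependence on the local multiplicity $N=N(f,\cdot)$, which the theorem expressly excludes.

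Concretely: you want to regard $\tilde\Omega^f$ as a space of locally $Q$-bounded geometry, apply the Heinonen--Koskela theory to the homeomorphism $g^{-1}\colon\tilde\Omega^f\to\tilde\Omega$ to obtain local quasisymmetry, and so bound $H_g$ everywhere. But the uniform Ahlfors $Q$-regularity constant of $(\tilde\Omega^f,\lambda)$ scales like $N$ — this is exactly Lemma~\ref{lemma:pullback property 2}, and the fact that $i_{\ess}\equiv 1$ only $\lambda$-a.e. does not repair it, since quasisymmetry requires a uniform two-sided measure bound on \emph{every} small ball, not merely at Lebesgue points. The Heinonen--Koskela quasisymmetry homeomorphism $\eta$ depends on that Ahlfors constant, hence so does your $h$, hence so does the final $K_O$ produced by Theorem~\ref{thm:metric implies all the other}. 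The statement ``with $h$ depending only on $K_I$ and the data'' is therefore unjustified. (Separately, invoking Theorem~\ref{thm:bounded geometry I} here is circular as the paper is organized — Step~1 of its proof is explicitly ``almost identical to the proof of Theorem~\ref{thm:bounded geometry II}'' — though one could substitute the external HKST/BKR homeomorphism results; this does not, however, fix the $N$-dependence.)

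The paper's proof avoids this by never appealing to a global (uniform) Ahlfors or Loewner structure on $\tilde\Omega^f$. Instead it establishes the pointwise geometric estimate of Proposition~\ref{prop:key geometric BG case}, $\diam(g^{-1}(B(z,5r)))^Q\le c(K,n)\,\mu(g^{-1}(B(z,r)))$ for $z$ with $i_{\ess}(z,\pi)=n$, and feeds the normal neighborhoods $D_{x,\varepsilon}=U(x,f,10r_x)$, $D'_{x,\varepsilon}=D''_{x,\varepsilon}=U(x,f,2r_x)$ directly into Theorem~\ref{thm:for analytic regularity}, with $\eta(x)$ allowed to depend on $i_{\ess}(x,f)$ and $N$. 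This yields the $K_O$-inequality for $g$ with an $N$-dependent constant. It then uses the exceptional-set clause of Theorem~\ref{thm:for analytic regularity}: since $\lambda(g(\mathcal{B}_f^e))=0$, and since the just-established $K_O$-inequality forces $Q$-a.e.\ curve in $\tilde\Omega$ to meet $\mathcal{B}_f^e$ in $\mathcal{H}^1$-null measure, one may rerun Theorem~\ref{thm:for analytic regularity} with $E=g(\mathcal{B}_f^e)$, where $\eta$ is now bounded by a quantity independent of $N$ because $i_{\ess}\equiv 1$ off $E$. This second pass is the mechanism that produces the genuinely quantitative constant. Your proposal has no counterpart to it, so as written the proof is incomplete.
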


Combining Theorems~\ref{thm:bounded geometry I} and~\ref{thm:bounded geometry II} gives us our main result of this section. 
\begin{theorem}\label{thm:bounded geometry III}
	If both $\tilde{\Omega}$ and $\Omega$ have locally $Q$-bounded geometry, then all of the metric, geometric and analytic definitions with exponent $Q$ are quantitatively equivalent.
\end{theorem}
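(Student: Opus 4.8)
The plan is to obtain Theorem~\ref{thm:bounded geometry III} by chaining together the one–sided implications of Theorems~\ref{thm:bounded geometry I} and~\ref{thm:bounded geometry II}, once we observe that when both $\tilde{\Omega}$ and $\Omega$ have locally $Q$-bounded geometry the auxiliary hypotheses of those two theorems are automatically in force. Indeed, by the discussion following Definition~\ref{def:space of bounded geometry}, a space of locally $Q$-bounded geometry is locally uniformly Ahlfors $Q$-regular and supports a local uniform $(1,Q)$-Poincar\'e inequality; being moreover complete and locally compact, Proposition~\ref{prop:Linear locally contra implies LLC}, applied on the compact-closure neighbourhoods provided by Definition~\ref{def:space of bounded geometry}, shows that such a space is locally $\theta$-LLC with $\theta$ depending only on the data. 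Hence both $\tilde{\Omega}$ and $\Omega$ are simultaneously (i) of locally $Q$-bounded geometry, (ii) locally Ahlfors $Q$-regular, and (iii) locally LLC.

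With this in hand I would first apply Theorem~\ref{thm:bounded geometry I}, with $\tilde{\Omega}$ serving as the domain of locally $Q$-bounded geometry and $\Omega$ as a space that (as just checked) is locally Ahlfors $Q$-regular and locally LLC. This yields the quantitative equivalence of the forward-type conditions $(M)$, $(M^*)$, $(m)$, $(m^*)$, $(A)$, $(G)$, together with the fact that each of them implies the inverse analytic and inverse geometric conditions $(A^*)$, $(G^*)$ with exponent $Q$. Then I would apply Theorem~\ref{thm:bounded geometry II}, which is available since $\tilde{\Omega}$ is locally Ahlfors $Q$-regular and locally LLC while $\Omega$ has locally $Q$-bounded geometry; it supplies the reverse implications, namely that $(A^*)$ or $(G^*)$ quantitatively imply $(A)$ and $(G)$. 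Composing the two closes the loop, so the inverse geometric/analytic definitions become equivalent to the forward ones and hence to all the definitions in the list. Finally, Theorem~\ref{thm:equivalence of G and A} furnishes, in complete generality, the equivalences $(A)\Leftrightarrow(G)$ and $(A^*)\Leftrightarrow$ (strong inverse geometric), so the strong inverse geometric definition is subsumed as well, giving the equivalence of all nine definitions.

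On the quantitative side there is nothing further to do: each implication cited above is asserted to be quantitative with constants depending only on the data (Ahlfors regularity constants, Loewner functions, LLC and bounded-turning constants), and the composition of finitely many quantitative implications around the cycle is again quantitative, with the final constants depending only on each other and on that data. The only genuine, and rather minor, obstacle here is the bookkeeping needed to verify that the cycle of implications really reaches every definition — in particular the inverse metric conditions $(M^*)$ and $(m^*)$, which are handled inside Theorem~\ref{thm:bounded geometry I} rather than by the combination itself — together with the verification that locally $Q$-bounded geometry entails local Ahlfors regularity and local linear local connectivity. All the substantive work, namely the pullback factorization, the analytic-regularity criterion of Theorem~\ref{thm:for analytic regularity}, and the metric-to-analytic implication of Theorem~\ref{thm:metric implies all the other}, has already been carried out in establishing Theorems~\ref{thm:bounded geometry I} and~\ref{thm:bounded geometry II}.
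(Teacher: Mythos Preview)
Your proposal is correct and matches the paper's approach exactly: the paper states just before Theorem~\ref{thm:bounded geometry III} that ``Combining Theorems~\ref{thm:bounded geometry I} and~\ref{thm:bounded geometry II} gives us our main result of this section,'' and provides no further argument. Your additional care in checking that locally $Q$-bounded geometry yields the local Ahlfors regularity and local LLC hypotheses required by those theorems is appropriate and completes the deduction.
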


Theorem~\ref{thm:bounded geometry III} can be viewed as a branched version of~\cite[Theorem 9.8]{hkst01}, except that we do not yet have the notion of ``branched quasisymmetric mappings". We will introduce such a natural class of branched quasisymmetric mappings in Section~\ref{subsec:Branched quasisymmetric mappings} and prove the quantitatively equivalence of (weak) metic quasiregularity and local branched quasisymmetry in Theorem~\ref{thm:qr with bounded multiplicity is branched qs}.

\subsubsection{Auxiliary results}

We assume in thi section that $\tilde{\Omega}$ is locally Ahlfors $Q$-regular, locally LLC with constant $\lambda_{\tilde{\Omega}}$, and that $\Omega$ has locally $Q$-bounded geometry. Note in particular that $\Omega$ has the local LLC property as well, quantitatively. 
Since $\tilde{\Omega}$ is locally compact, connected and locally connected, the locall LLC property is equivalent to the local path LLC property.

Since $\tilde{\Omega}$ is locally path LLC, it is not hard to see that $\tilde{\Omega}$ (and hence $\tilde{\Omega}^f$ as well) must be locally path connected, via the path-lifting
property (cf.~Lemma~\ref{lemma:lift Floyd}).

We introduce one harmless topological assumption on $\tilde{\Omega}$; a non-quantitative
version of the LLC property, which will always hold when we equip $\tilde{\Omega}$ with a metric giving it locally $Q$-bounded geometry. Namely, we assume that $\tilde{\Omega}$ is \textit{strongly locally connected}, i.e., for every $x\in \tilde{\Omega}$, and every open neighborhood $\tilde{\Omega}_0$ of $x$, there are open neighborhoods $\tilde{\Omega}_3\subset\tilde{\Omega}_2\subset\tilde{\Omega}_1\subset\tilde{\Omega}_0$ of $x$ such that every two points in $\tilde{\Omega}_1\backslash \tilde{\Omega}_2$ can
be joined in $\tilde{\Omega}_0\backslash \tilde{\Omega}_3$ is connected.

For the moment, it should be emphasized that we are not equipping $\tilde{\Omega}$ with a metric, and instead we will analyze the pullback metric space $\tilde{\Omega}^f$ under the assumption that $\Omega$ has locally $Q$-bounded geometry. We will use the notation $\mathbb{A}(x,r,R)$ to denote the annulus $B(x,R)\backslash \overline{B}(x,r)$.

We need the following result, which strengthens the strong local path LLC condition for $\tilde{\Omega}$. Recall that the local path LLC condition requires  that for each $x\in \Omega$, there exists a radius $R_x>0$ such that $B(x,R_x)$ is path LLC with constant $\lambda_{\tilde{\Omega}}$.

\begin{lemma}\label{lemma:strong local path LLC}
	Let $n\in \mathbb{N}$. Then there is a constant $\lambda_n$, depending only on $n$ and $\lambda_{\tilde{\Omega}}$, such that for every $x\in \tilde{\Omega}$, every $x_1,\dots,x_n\in B(x,R_x/2)$, every $r<R_x/2$, and	every pair of points $x',x''\in B(x,R_x)\backslash \bigcup_{i=1}^nB(x_i,r)$, there is a path joining $x'$ to	$x''$ in $B(x,\lambda_nR_x)\backslash \bigcup_{i=1}^nB(x_i,r/\lambda_n)$.
\end{lemma}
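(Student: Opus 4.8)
The plan is to bootstrap from the single-ball LLC property (which holds on $B(x,R_x)$ with constant $\lambda_{\tilde\Omega}$) to the multi-ball version by an iterated rerouting argument, shrinking the ambient radius and the excluded radii by a controlled factor at each step. I would argue by induction on $n$. The base case $n=0$ is vacuous (or $n=1$ is precisely the local path LLC-2 condition applied on $B(x,R_x)$), so assume the statement holds with a constant $\lambda_{n-1}$. Given $x_1,\dots,x_n\in B(x,R_x/2)$, $r<R_x/2$, and two points $x',x''\in B(x,R_x)\setminus\bigcup_{i=1}^n B(x_i,r)$, first apply the inductive hypothesis to the $n-1$ balls $B(x_1,r),\dots,B(x_{n-1},r)$ to get a path $\beta$ from $x'$ to $x''$ inside $B(x,\lambda_{n-1}R_x)\setminus\bigcup_{i=1}^{n-1}B(x_i,r/\lambda_{n-1})$. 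This path may, however, enter the remaining forbidden ball $B(x_n,r)$.

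The heart of the argument is then a \emph{local surgery near $x_n$} to push $\beta$ out of $B(x_n,r)$ without re-entering the other excluded balls. Since $x_n\in B(x,R_x/2)$ we have $B(x_n,R_x/2)\subset B(x,R_x)$, so the LLC-2 property is available on a ball around $x_n$ of radius comparable to $R_x$; choose a scale $\delta\approx r$ small enough (this requires $r<R_x/2$, but one also wants $\delta$ much smaller than the mutual distances that matter, so a pigeonhole/separation argument on how many of the $x_i$ can be close to $x_n$ at scale $r$ is needed) so that $\beta$ enters and leaves an annulus $\mathbb{A}(x_n, r, \Lambda r)$ for a suitable dimensional-free $\Lambda$, and the portion of $\beta$ inside $B(x_n,\Lambda r)$ avoids the other balls $B(x_j,r/\lambda_{n-1})$ for $j\ne n$ — here one uses that if $B(x_j,r)$ were close enough to $x_n$ to interfere, it would have been swept up already, or one simply absorbs the loss into a further worsening of the constant. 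Then I replace each excursion of $\beta$ into $B(x_n,r)$ by an arc lying in $B(x_n,\lambda_{\tilde\Omega}\Lambda r)\setminus B(x_n, r/\lambda_{\tilde\Omega})$, obtained from the LLC-2 property connecting the entry and exit points on the boundary sphere. Concatenating these reroutings with the untouched pieces of $\beta$ yields a path from $x'$ to $x''$ avoiding all $n$ balls $B(x_i, r/\lambda_n)$ and contained in $B(x,\lambda_n R_x)$, where $\lambda_n$ is obtained from $\lambda_{n-1}$ and $\lambda_{\tilde\Omega}$ by an explicit (exponential in $n$) recursion.

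The main obstacle I anticipate is the bookkeeping of the excluded radii: each rerouting step both enlarges the ambient radius and, more delicately, forces the \emph{inner} excluded radius around $x_n$ down to $r/\lambda_{\tilde\Omega}$, which must then be reconciled with the radii $r/\lambda_{n-1}$ coming from the inductive step, and one must be certain that the surgery near $x_n$ does not create a new intersection with some $B(x_j,\cdot)$ that was already avoided. Handling this cleanly probably requires doing all $n$ reroutings "simultaneously" at a common small scale rather than one at a time — i.e., fixing a single scale $r/\lambda_n$ up front, covering $\bigcup_i B(x_i,r)$ by controlled annular regions, and applying the LLC-2 property once in each region — so that the constant degradation is a single clean function of $n$. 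An alternative, and perhaps cleaner, route is to use the LLC-1 property to first connect into a "core" region far from all the $x_i$ and then the LLC-2 property to navigate around the union of small balls viewed as one obstacle of controlled diameter; I would try both and keep whichever gives the more transparent constant. Either way, the argument is purely metric-topological and uses only the (strong) local path LLC hypothesis on $\tilde\Omega$ together with the elementary fact that $B(x_i,R_x/2)\subset B(x,R_x)$ when $x_i\in B(x,R_x/2)$.
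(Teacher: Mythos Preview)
Your approach is essentially the same as the paper's: induction on $n$, use the inductive hypothesis to avoid the first $n-1$ balls, then perform a single LLC-2 surgery around $x_n$, with the key technical point being exactly the one you flag---ensuring the detour annulus around $x_n$ stays clear of the other $B(x_j,\cdot)$. The paper handles this with the dichotomy you anticipate: either $x_{n}$ is within $2r/(3\lambda_{n-1})$ of some earlier $x_j$ (in which case $B(x_n,r/3\lambda_{n-1})\subset B(x_j,r/\lambda_{n-1})$ is already avoided and one takes $\lambda_n=3\lambda_{n-1}$), or $x_n$ is separated from all $x_j$ by at least $2r/(3\lambda_{n-1})$, in which case a detour through $\mathbb{A}(x_n,\, r/(3\lambda_{\tilde\Omega}^3\lambda_{n-1}),\, r/(3\lambda_{n-1}))$ is automatically disjoint from the other excluded balls; only one surgery is needed (take the first and last times $\gamma$ enters the inner ball), not one per excursion.
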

\begin{proof}
	We argue by induction, as the case for $n=1$ follows easily from the local
	path LLC property of $\tilde{\Omega}$. Suppose the result holds for $n=k$, and that we are given
	$r<R_x/2$, $k+1$ points $x_1,\dots,x_{k+1}$, and points $x'$ and $x''$ as in the statement of the lemma. By assumption, we may join $x'$ and $x''$ with a path $\gamma\colon [0,1]\to B(x,\lambda_kR_x)\backslash \bigcup_{i=1}^kB(x_i,r/\lambda_k)$. We may assume with no loss of generality that $d(x_{k+1},x_i)\geq 2r/3\lambda_k$ for each $i=1,\dots,k$, since otherwise the conclusion holds with $\lambda_{k+1}=3\lambda_k$.
	
	Let $t_1,t_2\in [0,1]$ be the first and last values of $t$, respectively, for which $d(\gamma(t),x_{k+1})\leq r/(3\lambda_{\tilde{\Omega}}\lambda_k)$, if any such values exist. Then we may modify $\gamma$ by replacing $\gamma|_{[t_1,t_2]}$ with a path joining $\gamma(t_1)$ to $\gamma(t_2)$ lying entirely in the annulus $\mathbb{A}\big(x_{k+1},r/(3\lambda_{\tilde{\Omega}}^3\lambda_k),r/3\lambda_k\big)$. Since
	\begin{align*}
		\mathbb{A}\big(x_{k+1},r/(3\lambda_{\tilde{\Omega}}^3\lambda_k),r/3\lambda_k\big)\subset B(x,\lambda_kR_x)\backslash \bigcup_{i=1}^k B(x_i,r/(3\lambda_{\tilde{\Omega}}^3\lambda_k)),
	\end{align*}
	the conclusion holds for $\lambda_{k+1}=3\lambda_{\tilde{\Omega}}^3\lambda_k$.
\end{proof}

For a couple of standard modulus estimates we are going to present below, it is convenient to introduce the following concept, which essentials requires the projection mapping fullfills the assumption of Lemma~\ref{lemma:1.4}.
\begin{definition}[Good branching property]\label{def:good branch property}
The projection $\pi\colon \tilde{\Omega}^f\to \Omega$ is said to have the \textit{good branching property} if the following holds: Let $\Gamma$ and $\Gamma'$ be families of curves in $\tilde{\Omega}^f$ and $\Omega$, respectively. Suppose for each $\gamma'\in \Gamma'$, there are curves $\gamma_1,\dots,\gamma_m\in \Gamma$ and subcurves $\gamma_1',\dots,\gamma_m'$ of $\gamma'$ such that for each $i=1,\dots,m$, $\gamma_i'=\pi(\gamma_i)$, and for a.e. $s\in [0,l(\gamma')]$, $\gamma_i(s)=\gamma_j(s)$ if and only if $i=j$.
\end{definition}

For each $y\in \Omega$ and $0<r<s$, we let $\mathcal{A}(y,r,s)$ denote the family of curves in $B(y,s)$ joining $\partial B(y,r)$ and $\partial B(y,s)$.

We need the following rather standard modulus estimates.
\begin{lemma}\label{lemma:modulus estimates BG case}
	The radii $R(z)$ in Proposition~\ref{prop:existence of normal nbhd} may be chosen so that there are constants $C_1>C_2>0$, and $C_0>0$, depending only on the data of $\Omega$, such that for every $z\in \tilde{\Omega}^f$, every $C>2$, and every $2r<s<R(z)$,
	\begin{align}\label{eq:modulus estimates 1 BG case}
	 \min\{C_1\log(s/r)^{1-Q},C_0\}\leq \Modd_Q(\pi_z^{-1}(\mathcal{A}(z,r,s)))\leq i_{\ess}(z,\pi)C_2\log(s/r)^{1-Q}.
	\end{align}
	Moreover, if $\pi$ has the good branching property, then the radii may chosen so that
		\begin{align}\label{eq:modulus estimates 2 BG case}
		\min\{C_1\log(s/r)^{1-Q},C_0\}i(z,\pi)\leq \Modd_Q(\pi_z^{-1}(\mathcal{A}(z,r,s)))\leq i_{\ess}(z,\pi)C_2\log(s/r)^{1-Q}.
		\end{align}
\end{lemma}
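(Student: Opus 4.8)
The plan is to compare the lifted annular family $\pi_z^{-1}(\mathcal{A}(z,r,s))$ — by which we mean the family of curves in $U(z,\pi,s)$ joining $\partial U(z,\pi,r)$ to $\partial U(z,\pi,s)$, equivalently, via Floyd's lifting (Lemma~\ref{lemma:lift Floyd}), the collection of all lifts through $\pi$ lying in $U(z,\pi,s)$ of the curves of $\mathcal{A}(\pi(z),r,s)$ — with the annular family $\mathcal{A}(\pi(z),r,s)$ itself. Write $y_0=\pi(z)$. First I would shrink the radius function $R$ of Proposition~\ref{prop:existence of normal nbhd} so that, for every $z$ and every $t<R(z)$: (a) $U(z,\pi,t)$ is contained in a neighbourhood of $z$ in $\tilde{\Omega}^f$ witnessing local $Q$-bounded geometry with the Ahlfors and Loewner data inherited from $\Omega$ — recall from Lemma~\ref{lemma:pullback property 2} and~\eqref{eq:estimate for lambda} that $\tilde{\Omega}^f$ is pointwise Ahlfors $Q$-regular with the target's constant and that $\Omega$ is locally uniformly $Q$-Loewner; and (b) $\lambda(U(z,\pi,t))\le 2\,i_{\ess}(z,\pi)\,\nu(B(y_0,t))$, which is possible since $i_{\ess}(z,\pi)=\limsup_{t\to 0}\lambda(U(z,\pi,t))/\nu(B(y_0,t))$ by definition of the essential index and $\lambda=\pi^*\nu$. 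The constants $C_0,C_1,C_2$ below then depend only on the Ahlfors constant and the Loewner function of $\Omega$.

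For the upper bounds I would apply the $K_O$-inequality for the projection, Lemma~\ref{lemma:1.3}, with $\tilde{\Omega}^f_1=U(z,\pi,s)$. Since $\pi$ is $1$-Lipschitz, $\pi\big(\pi_z^{-1}(\mathcal{A}(z,r,s))\big)\subset\mathcal{A}(y_0,r,s)$, so the usual test function $\rho(y)=(\log(s/r))^{-1}d(y,y_0)^{-1}\chi_{\mathbb{A}(y_0,r,s)}(y)$ is admissible for $\pi\big(\pi_z^{-1}(\mathcal{A}(z,r,s))\big)$, and Lemma~\ref{lemma:1.3} gives
\[
\Modd_Q\big(\pi_z^{-1}(\mathcal{A}(z,r,s))\big)\le\int_{\mathbb{A}(y_0,r,s)}\rho(y)^Q\,N(y,\pi,U(z,\pi,s))\,d\nu(y).
\]
I would then split $\mathbb{A}(y_0,r,s)$ into the $\approx\log_2(s/r)$ dyadic shells $\mathbb{A}(y_0,2^{-k-1}s,2^{-k}s)$. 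On such a shell $\rho^Q\le(\log(s/r))^{-Q}(2^{-k-1}s)^{-Q}$, while by Proposition~\ref{prop:existence of normal nbhd} (the only $\pi$-preimage of $y_0$ inside $U(z,\pi,s)$ is $z$) one has $N(y,\pi,U(z,\pi,s))=N(y,\pi,U(z,\pi,2^{-k}s))$ for $y\in B(y_0,2^{-k}s)$, so the shell integral of the multiplicity is at most $\pi^*\nu(U(z,\pi,2^{-k}s))=\lambda(U(z,\pi,2^{-k}s))\le 2\,i_{\ess}(z,\pi)\,\nu(B(y_0,2^{-k}s))\le C\,i_{\ess}(z,\pi)(2^{-k}s)^Q$ by (b) and Ahlfors regularity. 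Each shell thus contributes $\lesssim i_{\ess}(z,\pi)(\log(s/r))^{-Q}$, and summing the $\approx\log_2(s/r)$ of them yields the bound $i_{\ess}(z,\pi)C_2(\log(s/r))^{1-Q}$, which is the right-hand side of both~\eqref{eq:modulus estimates 1 BG case} and~\eqref{eq:modulus estimates 2 BG case}.

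For the lower bounds I would use that $\mathcal{A}(y_0,r,s)\subset\pi\big(\pi_z^{-1}(\mathcal{A}(z,r,s))\big)$ (every annular curve lifts, by Lemma~\ref{lemma:lift Floyd}, to a curve in $\pi_z^{-1}(\mathcal{A}(z,r,s))$ after truncation at its first hit of $\partial U(z,\pi,s)$), so that Lemma~\ref{lemma:1.4} together with monotonicity of modulus gives
\[
\Modd_Q\big(\pi_z^{-1}(\mathcal{A}(z,r,s))\big)\ge\Modd_Q\big(\pi(\pi_z^{-1}(\mathcal{A}(z,r,s)))\big)\ge\Modd_Q\big(\mathcal{A}(y_0,r,s)\big).
\]
Since $2r<s<R(z)$ and $\Omega$ has local $Q$-bounded geometry, $\mathcal{A}(y_0,r,s)$ minorizes $\Gamma\big(\overline{B}(y_0,r),\Omega\setminus B(y_0,s),\Omega\big)$, whose $Q$-modulus is at least $\min\{C_1(\log(s/r))^{1-Q},C_0\}$ — via the Loewner estimate~\eqref{eq: Loewner property for n regular space} when $s/r$ is large and a uniform positive constant otherwise — which gives~\eqref{eq:modulus estimates 1 BG case}. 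If in addition $\pi$ has the good branching property (Definition~\ref{def:good branch property}), then the same Floyd lifting produces, for each $\gamma'\in\mathcal{A}(y_0,r,s)$, exactly $m=i(z,\pi)$ lifts $\gamma_1,\dots,\gamma_m\in\pi_z^{-1}(\mathcal{A}(z,r,s))$ with $\gamma_i(t)=\gamma_j(t)$ only when $i=j$ — the $i(z,\pi)$ sheets of $\pi$ over $U(z,\pi,s)$, which stay distinct off the branch set and whose separation is precisely what the good branching property ensures — so Lemma~\ref{lemma:1.4} applies with this $m$ and yields $\Modd_Q(\pi_z^{-1}(\mathcal{A}(z,r,s)))\ge i(z,\pi)\,\Modd_Q(\mathcal{A}(y_0,r,s))\ge i(z,\pi)\min\{C_1(\log(s/r))^{1-Q},C_0\}$, which is~\eqref{eq:modulus estimates 2 BG case}.

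The main obstacle I expect is the multiplicity bookkeeping in the upper bound: one must shrink $R(z)$ so that the \emph{average} multiplicity of $\pi$ over small balls about $y_0$ is controlled by $i_{\ess}(z,\pi)$ rather than by the possibly larger $i(z,\pi)$, and this is exactly where the $\limsup$ in the definition of the essential index, the identity $\lambda=\pi^*\nu$, and the shell identity $N(y,\pi,U(z,\pi,s))=N(y,\pi,U(z,\pi,2^{-k}s))$ on $B(y_0,2^{-k}s)$ must be combined carefully. A secondary point requiring care, for~\eqref{eq:modulus estimates 2 BG case}, is justifying that the good branching property genuinely supplies $i(z,\pi)$ essentially-disjoint lifts of a typical annular curve; this reduces to $\pi|_{U(z,\pi,s)}$ being $i(z,\pi)$-to-one away from its branch set, together with the fact that a generic curve of $\mathcal{A}(y_0,r,s)$ can be taken to avoid the (negligible) image of that branch set.
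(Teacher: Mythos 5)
Your overall strategy matches the paper's: Lemma~\ref{lemma:1.4} plus the Loewner property of $\Omega$ for the lower bound, a pointwise Ahlfors-type upper estimate at $z$ for the upper bound, and the stronger conclusion of Lemma~\ref{lemma:1.4} for the $i(z,\pi)$-enhanced lower bound in~\eqref{eq:modulus estimates 2 BG case}. The lower-bound argument is essentially the same (your minorization of $\Gamma(\overline{B}(y_0,r),\Omega\setminus B(y_0,s),\Omega)$ by $\mathcal{A}(y_0,r,s)$ is exactly what the Loewner estimate wants; note the intermediate inclusion $\mathcal{A}(y_0,r,s)\subset\pi(\pi_z^{-1}(\mathcal{A}(z,r,s)))$ is only true after truncating the lifts, but this is harmless because minorization of modulus is all you need).

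Where you genuinely diverge is the upper bound. The paper works entirely on $\tilde{\Omega}^f$: after shrinking $R(z)$ so that the pointwise Ahlfors estimate~\eqref{eq:estimate for lambda} holds for all $t<R(z)$, the curves of $\pi_z^{-1}(\mathcal{A}(z,r,s))$ cross a $d^*$-metric annulus around $z$ of comparable radii, and the standard modulus upper bound for Ahlfors $Q$-regular measures (which only tests $\lambda(B(z,t))$ for $r\lesssim t\lesssim s$) immediately yields $i_{\ess}(z,\pi)C_2(\log(s/r))^{1-Q}$. Your route pushes everything to $\Omega$ via Lemma~\ref{lemma:1.3}, then requires the multiplicity-localization identity $N(y,\pi,U(z,\pi,s))=N(y,\pi,U(z,\pi,2^{-k}s))$ for $y\in B(y_0,2^{-k}s)$. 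That identity is not stated in Proposition~\ref{prop:existence of normal nbhd}; it follows, after a further shrinking of $R(z)$, from properties (2), (3), and (8) of that proposition (in short, if $R(z)\le R_D(y_0)$ for a surrounding normal domain $D$, then $\pi^{-1}(B(y_0,t))\cap U(z,\pi,s)=U(z,\pi,t)$ for $t\le s$), but you present it as an immediate consequence of the proposition, and that step is exactly what you flag as the main obstacle. The point worth noticing is that your shell sum, after changing variables via $\lambda=\pi^*\nu$, is literally the paper's integral $\int_{U(z,\pi,s)}(\rho\circ\pi)^Q\,d\lambda$; by computing it in the $\tilde{\Omega}^f$ coordinates you would never encounter the localization question at all and would simply invoke~\eqref{eq:estimate for lambda}. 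So the proposal is correct, but the paper's phrasing of the upper bound is both shorter and avoids the one step you correctly identified as delicate.
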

\begin{proof}
	The lower modulus estimate in~\eqref{eq:modulus estimates 1 BG case refined} follows directly from Lemma~\ref{lemma:1.4} and from the Loewner property of $\Omega$. The upper estimate follows from~\eqref{eq:estimate for lambda} and the standardard modulus inequality for local Ahlfors $Q$-regular metric spaces (see e.g.~\cite[Lemma 3.14]{hk98}). If $\pi$ has the good branch property, we may use the stronger conclusion of Lemma~\ref{lemma:1.4} to obtain the lower estimiate as in~\eqref{eq:modulus estimates 2 BG case}.
\end{proof}

The next lemma shows that near a Lebesgue point $z$ of a subset  $S\subset \tilde{\Omega}^f$, many of the curves in $\pi_z^{-1}(\mathcal{A}(\pi(z),r,s))$ intersect $S$. Denote by $\Gamma_S$ the family of curves in $\tilde{\Omega}^f$ intersecting $S$ in at least one point.

\begin{lemma}\label{lemma:Lebesgue point BG case}
	Let $z\in S$ be a $\pi$-Lebesgue point of a Borel set $S\subset X$ and let $C\geq 2$. Then
	\begin{align*}
		\lim_{r\to 0}\Modd_Q(\pi_z^{-1}(\mathcal{A}(\pi(z),r,s))\backslash \Gamma_S)=0.
	\end{align*}
\end{lemma}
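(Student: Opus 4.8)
The plan is to bound $\Modd_Q$ from above by a single admissible function supported on the normal neighbourhood of $z$ projecting to the outer ball, whose $L^Q$-mass is controlled by the $\lambda$-measure of the part of that neighbourhood lying outside $S$; the hypothesis that $z$ is a $\pi$-Lebesgue point of $S$ will then force this quantity to be negligible compared with $r^Q$ as $r\to 0$. Write $y=\pi(z)$ and, as is implicit in the statement, take the outer radius to be $Cr$, with $r$ small enough that $Cr$ is below the radius $R(z)$ from Proposition~\ref{prop:existence of normal nbhd}.

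First I would unwind the notation. A curve in $\pi_z^{-1}(\mathcal{A}(y,r,Cr))$ is a curve $\tilde\gamma\subset U(z,\pi,Cr)$ whose projection $\pi\circ\tilde\gamma$ joins $\partial B(y,r)$ to $\partial B(y,Cr)$ inside $B(y,Cr)$. Since $\partial B(y,\varrho)\subset\{w:d(y,w)=\varrho\}$, the endpoints of $\pi\circ\tilde\gamma$ lie at mutual distance at least $(C-1)r$, and because $\pi$ is $1$-Lipschitz (Lemma~\ref{lemma:pullback property 1}),
\[
l(\tilde\gamma)\ \geq\ l(\pi\circ\tilde\gamma)\ \geq\ (C-1)r .
\]
Restricting to $\pi_z^{-1}(\mathcal{A}(y,r,Cr))\setminus\Gamma_S$, every such curve additionally avoids $S$, hence lies entirely in $U(z,\pi,Cr)\setminus S$. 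Therefore $\rho_r:=\tfrac{1}{(C-1)r}\chi_{U(z,\pi,Cr)\setminus S}$ is admissible for this family, and
\[
\Modd_Q\bigl(\pi_z^{-1}(\mathcal{A}(y,r,Cr))\setminus\Gamma_S\bigr)\ \leq\ \frac{\lambda\bigl(U(z,\pi,Cr)\setminus S\bigr)}{\bigl((C-1)r\bigr)^{Q}} .
\]

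Next I would bring in the two quantitative ingredients. Since $\Omega$ has locally $Q$-bounded geometry it is locally Ahlfors $Q$-regular and locally quasiconvex, in particular of locally bounded turning, so the pointwise regularity estimate \eqref{eq:estimate for lambda} applies to $\lambda$; together with the inclusions $B(z,\varrho)\subset U(z,\pi,\varrho)\subset B(z,2\varrho)$ of Lemma~\ref{lemma:pullback property 1} and with $i_{\ess}(z,\pi)\leq i(z,\pi)<\infty$ (Lemma~\ref{lemma:on essential index} and local bounded multiplicity), this gives $\lambda(U(z,\pi,Cr))\leq c\,r^{Q}$ for all small $r$, where $c$ depends only on $C$, on $i(z,\pi)$, and on the data of $\Omega$. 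On the other hand, since $z\in S$ is a $\pi$-Lebesgue point of $\chi_S$,
\[
\frac{\lambda\bigl(U(z,\pi,\varrho)\setminus S\bigr)}{\lambda\bigl(U(z,\pi,\varrho)\bigr)}\ =\ \dashint_{U(z,\pi,\varrho)}\lvert\chi_S-1\rvert\,d\lambda\ \xrightarrow[\ \varrho\to 0\ ]{}\ 0 .
\]
Taking $\varrho=Cr$ and combining the three displays,
\[
\Modd_Q\bigl(\pi_z^{-1}(\mathcal{A}(y,r,Cr))\setminus\Gamma_S\bigr)\ \leq\ \frac{\lambda\bigl(U(z,\pi,Cr)\setminus S\bigr)}{\lambda\bigl(U(z,\pi,Cr)\bigr)}\cdot\frac{c\,r^{Q}}{\bigl((C-1)r\bigr)^{Q}}\ \longrightarrow\ 0
\]
as $r\to 0$, which is the assertion.

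The proof is essentially routine and I do not expect a serious obstacle; the only points needing genuine care are (i) the length lower bound $l(\tilde\gamma)\geq(C-1)r$ for the lifted curves, which uses only that $\pi$ is $1$-Lipschitz together with $\partial B(y,\varrho)\subset\{d(y,\cdot)=\varrho\}$, and (ii) the passage from the ball estimate \eqref{eq:estimate for lambda} to a measure bound on the normal neighbourhood $U(z,\pi,Cr)$, for which one invokes Lemma~\ref{lemma:pullback property 1} and the finiteness of $i(z,\pi)$ at the fixed point $z$. Everything else is the standard admissible-function computation combined with the pullback Lebesgue differentiation theorem (Corollary~\ref{coro:Lebesgue point pb}).
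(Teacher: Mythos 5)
Your proof is correct and follows essentially the same route as the paper: one writes down the admissible function $\frac{1}{(C-1)r}$ times the characteristic function of (the relevant neighbourhood of $z$)${}\setminus S$, and then combines the $\pi$-Lebesgue-point ratio estimate with pointwise $Q$-regularity of $\lambda$. The only cosmetic difference is that the paper uses $B(z,Cr)\setminus S$ (a pullback-metric ball) where you use $U(z,\pi,Cr)\setminus S$; since $B(z,\varrho)\subset U(z,\pi,\varrho)\subset B(z,2\varrho)$ the two give the same bound up to a fixed constant, and your choice is in fact slightly more natural because the $\pi$-Lebesgue-point condition in \eqref{eq:def for Lebesgue point} is phrased in terms of the sets $U(\cdot,\pi,\cdot)$, making the ratio $\lambda(U(z,\pi,Cr)\setminus S)/\lambda(U(z,\pi,Cr))\to 0$ immediate.
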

\begin{proof}
	Let $z\in S$ be a $\pi$-Lebesgue point of $S$. Note that the function $\rho=\frac{1}{(C-1)r}\chi_{B(z,Cr)\backslash S}$ is admissible for $\pi_z^{-1}(\mathcal{A}(z,r,Cr))\backslash \Gamma_S$, and so
	\begin{align*}
		\Modd_Q(\pi_z^{-1}(\mathcal{A}(\pi(z),r,s))\backslash \Gamma_S)\leq \int_{\tilde{\Omega}^f}\rho^Q d\lambda=\frac{1}{(C-1)^Qr^Q}\lambda(B(z,Cr)\backslash S).
	\end{align*}
	The right hand side of the last equation converges to 0 with $r\to 0$, since $z$ is a $\pi$-Lebesgue point of $S$ and $\lambda$ is pointwise Ahlfors $Q$-regular (see e.g.~\eqref{eq:estimate for lambda}).
\end{proof}

Applying the previous result to the sets $S=D_n$, $1\leq n\leq N(\pi,\tilde{\Omega}_0^f)$ immediately yields a refinement of Lemma~\ref{lemma:modulus estimates BG case}. Recall that the sets $D_n$ are defined in Section~\ref{subsec:some topological fact} as those points in $\tilde{\Omega}^f$ whose image under $\pi$ has multiplicity $n$ in $\tilde{\Omega}^f$.

\begin{lemma}\label{lemma:modulus estimates BG case refined}
	The radii $R(z)$ in Proposition~\ref{prop:existence of normal nbhd}, and the constants $C_1>C_2>0$ from Lemma~\ref{lemma:modulus estimates BG case}, may be chosen so that for $1\leq n\leq N(\pi,\tilde{\Omega}_0^f)$, $\lambda$-a.e. $z\in D_n$, and $2r<s<R(z)/\lambda_\Omega$,
	\begin{align}\label{eq:modulus estimates 1 BG case refined}
	\min\{C_1\log(s/r)^{1-Q},C_0\}\leq \Modd_Q(\pi_z^{-1}(\mathcal{A}(\pi(z),r,s))\cap \Gamma_{D_n})\leq C_2\log(s/r)^{1-Q}.
	\end{align}
	Moreover, if $\pi$ has the good branching property, then the radii may chosen so that
	\begin{align}\label{eq:modulus estimates 2 BG case refined}
	\min\{C_1\log(s/r)^{1-Q},C_0\}i(z,\pi)\leq \Modd_Q(\pi_z^{-1}(\mathcal{A}(\pi(z),r,s))\cap \Gamma_{D_n})\leq C_2\log(s/r)^{1-Q}.
	\end{align}
\end{lemma}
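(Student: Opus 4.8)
\emph{The upper bound.} This is immediate. Since $\pi_z^{-1}(\mathcal{A}(\pi(z),r,s))\cap\Gamma_{D_n}\subset\pi_z^{-1}(\mathcal{A}(\pi(z),r,s))$, monotonicity of the $Q$-modulus together with Lemma~\ref{lemma:modulus estimates BG case} gives the bound $i_{\ess}(z,\pi)\,C_2\log(s/r)^{1-Q}$; and for $\lambda$-a.e.\ $z\in D_n$ we have $i_{\ess}(z,\pi)=1$, because such $z$ is a $\pi$-Lebesgue point of $D_n$ (apply Corollary~\ref{coro:Lebesgue point pb} to $\chi_{D_n}$) and Lemma~\ref{lemma:on essential index} then yields $i_{\ess}(z,\pi)=1$. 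So the whole content is the lower bound, which I would prove at every $\pi$-Lebesgue point $z$ of $D_n$; by Corollary~\ref{coro:Lebesgue point pb} this covers $\lambda$-a.e.\ $z\in D_n$.

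\emph{Reduction of the lower bound.} Fix such a $z$, write $\Gamma=\pi_z^{-1}(\mathcal{A}(\pi(z),r,s))$ and $\Lambda=\Gamma\setminus\Gamma_{D_n}$ (the curves of $\Gamma$ avoiding $D_n$ entirely). By countable subadditivity of the modulus, $\Modd_Q(\Gamma\cap\Gamma_{D_n})\ge\Modd_Q(\Gamma)-\Modd_Q(\Lambda)$, and by Lemma~\ref{lemma:modulus estimates BG case} one has $\Modd_Q(\Gamma)\ge\min\{C_1\log(s/r)^{1-Q},C_0\}$ in general, and $\Modd_Q(\Gamma)\ge i(z,\pi)\min\{C_1\log(s/r)^{1-Q},C_0\}$ when $\pi$ has the good branching property. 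Hence it suffices to show that, after shrinking the radius $R(z)$ (depending on $z$), $\Modd_Q(\Lambda)$ is small compared to $\log(s/r)^{1-Q}$, uniformly over all $2r<s<R(z)/\lambda_\Omega$.

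\emph{Controlling $\Modd_Q(\Lambda)$ by subdivision.} Take the integer $N$ with $2^N\le s/r<2^{N+1}$ and put $s_j=2^jr$ for $j=0,\dots,N$; the annuli $\mathbb{A}_j=\mathbb{A}(\pi(z),s_j,s_{j+1})$, $j=0,\dots,N-1$, are pairwise disjoint and contained in $B(\pi(z),s)$, and every curve whose $\pi$-image crosses $\mathcal{A}(\pi(z),r,s)$ crosses each $\mathbb{A}_j$. Since the sets $\pi_z^{-1}(\mathbb{A}_j)$ are pairwise disjoint, each curve of $\Lambda$ restricts to a crossing subcurve of $\pi_z^{-1}(\mathbb{A}_j)$ that still avoids $D_n$ (here one uses that $\pi$ is $1$-Lipschitz and $c$-co-Lipschitz, and the path-lifting Lemma~\ref{lemma:lift Floyd}), so the serial law for the $Q$-modulus gives
\begin{align*}
\Modd_Q(\Lambda)\le\Big(\sum_{j=0}^{N-1}\Modd_Q(\Lambda_j)^{-1/(Q-1)}\Big)^{1-Q},\qquad \Lambda_j:=\pi_z^{-1}\big(\mathcal{A}(\pi(z),s_j,s_{j+1})\big)\setminus\Gamma_{D_n}.
\end{align*}
For each $j$ the function $\tfrac{c}{s_j}\chi_{B(z,4s_j)\setminus D_n}$ is admissible for $\Lambda_j$ --- this is exactly the estimate in the proof of Lemma~\ref{lemma:Lebesgue point BG case} with eccentricity $2$ --- so $\Modd_Q(\Lambda_j)\le c\,s_j^{-Q}\lambda(B(z,4s_j)\setminus D_n)$. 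Because $z$ is a Lebesgue density point of $D_n$ and $\lambda$ is pointwise Ahlfors $Q$-regular at $z$ by~\eqref{eq:estimate for lambda}, for any prescribed $\delta>0$ one may choose $R(z)$ so small that $\lambda(B(z,\rho)\setminus D_n)\le\delta\rho^Q$ for all $\rho<R(z)$; combined with $4s_j<4s<4R(z)/\lambda_\Omega$ this gives $\Modd_Q(\Lambda_j)\le c'\delta$ for every $j$. Feeding this into the serial estimate and using $N\asymp\log(s/r)$ yields $\Modd_Q(\Lambda)\le c''\delta\,\log(s/r)^{1-Q}$. Choosing $\delta$ small enough (which only shrinks $R(z)$ further) and subtracting from the lower bound for $\Modd_Q(\Gamma)$, we obtain~\eqref{eq:modulus estimates 1 BG case refined} after relabelling $C_0,C_1$; the good-branching case~\eqref{eq:modulus estimates 2 BG case refined} is identical, the only change being that the lower bound for $\Modd_Q(\Gamma)$ now carries the factor $i(z,\pi)\ge1$, which absorbs the error term since the bound on $\Modd_Q(\Lambda)$ does not see the branching.

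\emph{Main obstacle.} The delicate point is passing from the single-scale estimate of Lemma~\ref{lemma:Lebesgue point BG case} to one valid on annuli of arbitrarily large modulus: $\Modd_Q(\Lambda)$ must be controlled at all scales below $R(z)$ simultaneously, which is why the density of $D_n^c$ at $z$ must be bounded by one $\delta$ on a whole range of scales (forcing $R(z)$ to depend on $z$, not just on the data), and why the subdivision/serial-law step is needed to convert scale-by-scale smallness into the sharp $\log(s/r)^{1-Q}$ decay. Verifying that curves of $\Lambda$ genuinely restrict to $D_n$-avoiding crossing subcurves of each $\pi_z^{-1}(\mathbb{A}_j)$ is routine but must be done with some care using the co-Lipschitz and lifting properties of $\pi$.
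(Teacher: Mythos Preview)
Your argument is correct and follows the same outline the paper intends: subtract off the curves that miss $D_n$ using the Lebesgue-density estimate (Lemma~\ref{lemma:Lebesgue point BG case}) applied with $S=D_n$, and combine with Lemma~\ref{lemma:modulus estimates BG case}. The paper's proof is a single sentence declaring this ``immediately yields'' the refined lemma.

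The difference is that you are more honest about a genuine subtlety. Lemma~\ref{lemma:Lebesgue point BG case}, as stated and proved, only controls $\Modd_Q(\Lambda)$ for a \emph{fixed} eccentricity $s/r=C$, giving an error of order $\delta$ once $R(z)$ is small; this suffices to recover the lower bound $C_0$ (the only case the paper actually uses later, with $s=2r$), but does not by itself give an error comparable to $\log(s/r)^{1-Q}$ when $s/r$ is unbounded. Your dyadic subdivision and the serial law for modulus supply exactly that missing decay, turning the scale-by-scale density bound into the sharp $\log(s/r)^{1-Q}$ control needed for the full statement of~\eqref{eq:modulus estimates 1 BG case refined} and~\eqref{eq:modulus estimates 2 BG case refined}. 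So the two approaches agree in spirit; yours is a careful completion of what the paper leaves implicit.
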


We now come to our key geometric argument for this section. Recall that by the pullback factorization, $f=\pi\circ g$, where $g\colon \tilde{\Omega}\to \tilde{\Omega}^f$ is a homeomorphism and $\pi\colon \tilde{\Omega}^f\to \Omega$ is a 1-BDD mapping.
\begin{proposition}\label{prop:key geometric BG case}
	Let $\tilde{\Omega}$ be (pointwise) Ahlfors $Q$-regular and (locally) LLC, let $\Omega$ be a locally geodesic metric space with locally $Q$-bounded geometry, and let $g$ satisfy the $K_I$-inequality with $K_I=K$ with exponent $Q$. Then there is a constant $c=c(K,n)\geq 1$, for each $n\in \mathbb{N}$, depending only on $K$, $n$, and the data of $\Omega$ and $\tilde{\Omega}$, such that $R(z)$ may be chosen (not necessarily quantitatively) small enough, so that
	\begin{align}\label{eq:regularity of g inverse}
		\diam(g^{-1}(B(z,5r)))^Q\leq c\mu(g^{-1}(B(z,r)))
	\end{align}
	for all $z\in \tilde{\Omega}^f$ with $i_{\ess}(z,\pi)=n$ and $r<R(z)/10$.
\end{proposition}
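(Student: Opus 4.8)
The plan is to extract the diameter bound \eqref{eq:regularity of g inverse} from modulus comparisons, exploiting the $K_I$-inequality for $g$ together with the (pointwise) Ahlfors $Q$-regularity and LLC geometry in the source and the local $Q$-bounded geometry in the target. First I would fix $z\in\tilde\Omega^f$ with $i_{\ess}(z,\pi)=n$ and $r<R(z)/10$, and set $x=g^{-1}(z)\in\tilde\Omega$. The key players are two curve families: on the target side, an annular family $\pi_z^{-1}(\mathcal{A}(\pi(z),r,5r))$ inside $B(z,5r)\subset\tilde\Omega^f$ (in pullback geometry the ball and the normal neighborhood $U(z,\pi,r)$ are comparable by \eqref{eq:pullback property 1}, so this is legitimate for $r<R(z)/10$); on the source side, its image under $g^{-1}$, which is a family of curves in $\tilde\Omega$ joining the small ``ball'' $g^{-1}(B(z,r))$ to the complement of $g^{-1}(B(z,5r))$ — a condenser-type family. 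The strategy is: a lower bound for the modulus of the target annular family coming from the Loewner property of $\Omega$ (via Lemma~\ref{lemma:modulus estimates BG case}, using $i_{\ess}(z,\pi)=n$ to control the dependence), then the $K_I$-inequality pushes this to a lower bound for the modulus of the source condenser family (here one must be careful that $g(\Gamma)$ for $\Gamma$ the source family is essentially the target family, up to the relation between $\pi$ and $g$), and finally a standard upper modulus estimate for condensers in a pointwise Ahlfors $Q$-regular, LLC space gives the desired inequality relating $\diam(g^{-1}(B(z,5r)))$ and $\mu(g^{-1}(B(z,r)))$.

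More concretely: I would invoke Lemma~\ref{lemma:modulus estimates BG case} to get, after possibly shrinking $R(z)$, a lower bound $\Modd_Q\big(\pi_z^{-1}(\mathcal{A}(\pi(z),r,5r))\big)\geq c_1>0$ depending only on the data of $\Omega$ (since $\log 5$ is an absolute constant, the $\log(s/r)^{1-Q}$ term is harmless). Since $g$ satisfies the $K_I$-inequality with constant $K$, and since the relevant source curve family $\Gamma$ maps under $g$ onto (or contains a subfamily mapping onto) the annular family, we obtain $\Modd_Q(\Gamma)\geq c_1/K$. On the other hand, every curve in $\Gamma$ joins $g^{-1}(B(z,r))$ to $\tilde\Omega\setminus g^{-1}(B(z,5r))$. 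Now I use that $\tilde\Omega$ is pointwise Ahlfors $Q$-regular and locally LLC near $x$: by a standard capacity/modulus upper bound for ring-type condensers (the one behind \cite[Lemma 3.14]{hk98} and its LLC refinements), the modulus of a family of curves joining a set $E$ to a ``far'' set in an Ahlfors $Q$-regular space is bounded above by $C\,\mu(E)\,/\,\mathrm{dist}$-type quantities; applied with $E=g^{-1}(B(z,r))$ inside the bounded-turning-controlled neighborhood $g^{-1}(B(z,5r))$, this yields $\Modd_Q(\Gamma)\leq C\,\mu(g^{-1}(B(z,r)))/\diam(g^{-1}(B(z,5r)))^Q$. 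Combining the two inequalities gives \eqref{eq:regularity of g inverse} with $c=CK/c_1$, and tracking the constants shows $c$ depends only on $K$, $n$, and the data of $\Omega$ and $\tilde\Omega$.

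The main obstacle I anticipate is the bookkeeping between $g$ and $\pi$ and the two pullback metrics: the $K_I$-inequality is stated for $g\colon\tilde\Omega\to\tilde\Omega^f$, but the natural Loewner lower bound lives on the target $\Omega$ via $\pi$, so I need to pass cleanly through $f=\pi\circ g$ and use that $\pi$ is $1$-BDD and $1$-Lipschitz (Lemma~\ref{lemma:pullback property 1}) together with Corollary~\ref{coro:1.5} / Lemma~\ref{lemma:1.4} so that no modulus is lost (or only a controlled factor is lost) when transferring between $\tilde\Omega^f$ and $\Omega$. A second delicate point is that \eqref{eq:regularity of g inverse} is asserted only for $R(z)$ chosen ``not necessarily quantitatively'' small — this signals that the LLC/annulus-joining argument in $\tilde\Omega^f$ (which is locally LLC by Lemma~\ref{lemma:pullback property 3}, but only on a scale depending non-quantitatively on $z$) must be invoked, and one should phrase the condenser upper bound so that it only requires $g^{-1}(B(z,5r))$ to lie in a region where $\tilde\Omega$ is LLC and pointwise $Q$-regular with uniform constants. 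Handling the dependence on $n=i_{\ess}(z,\pi)$ (which enters through the upper modulus estimate \eqref{eq:estimate for lambda} for $\lambda$, hence through the comparison of $\lambda$-balls and $\mu$-balls under $g^{-1}$) is the remaining technical wrinkle, but it is exactly the kind of factor the statement allows $c$ to depend on.
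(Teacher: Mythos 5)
Your high-level structure — lower-bound the modulus of an annular family on the $\Omega$ side via the Loewner property, push through the $K_I$-inequality, then upper-bound a corresponding modulus on the $\tilde\Omega$ side to extract \eqref{eq:regularity of g inverse} — is the right skeleton, and you correctly flag the $g$ vs.\ $\pi$ bookkeeping and the role of the essential index $n$. However, the upper bound you invoke is not a valid modulus estimate, and this is where the argument breaks.

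You claim $\Modd_Q(\Gamma)\leq C\,\mu(g^{-1}(B(z,r)))/\diam(g^{-1}(B(z,5r)))^Q$ for the condenser family joining $g^{-1}(B(z,r))$ to the complement of $g^{-1}(B(z,5r))$. No standard capacity estimate has this form: the admissible test function $\frac{1}{s}\chi_A$ (with $s=\dist(E,\partial A)$, $A\supset E$) gives $\mu(A)/s^Q$ with the measure of the \emph{larger} set in the numerator and a \emph{distance} (not a diameter) in the denominator. In fact the claimed inequality already fails in $\R^n$: taking $E=B(0,\varepsilon)$ inside $A=B(0,1)$, the modulus of the joining family is $\sim(\log(1/\varepsilon))^{1-n}$, which decays only logarithmically, while $\mu(E)/\diam(A)^n\sim\varepsilon^n$ decays polynomially, so the inequality is false for small $\varepsilon$. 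To replace $\dist$ by $\diam$, and $\mu(\text{large set})$ by $\mu(\text{small set})$, one would already need to know the preimage ``ring'' $g^{-1}(B(z,5r))\setminus g^{-1}(B(z,r))$ is thick — that the inner radius of $g^{-1}(B(z,r))$ is comparable to the outer radius of $g^{-1}(B(z,5r))$ — which is essentially the conclusion, so the argument is circular.

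The missing idea is that the admissible function for the source-side family is not constructed from a ring geometry at all; it is $\rho\approx L^{-1}\chi_{U(x,r)}$ with $L=L_{g^{-1}}(z,5r)$, and its admissibility rests on a \emph{diameter lower bound on lifts}: one must first show that every $\gamma\in\mathcal{A}(\pi(z),r/2,r)\cap\pi(\Gamma_S)$ admits a lift along $f$ inside $U(x,r)$ with diameter $\gtrsim L/n$. That step is a contradiction argument: if some $\gamma$ had only small-diameter lifts, then (since there are at most $n$ of them, by $i_{\ess}(z,\pi)=n$, and they are trapped in $\leq n$ small balls) the strong local path-LLC property in $\tilde\Omega$ (Lemma~\ref{lemma:strong local path LLC}) produces a curve $\tilde\gamma'$ in $U(x,10r)$ well-separated from all those balls; building a condenser between $f(\tilde\gamma')$ and $\gamma$, the Loewner lower bound on $\Omega$, the $K_I$-inequality, and the Ahlfors upper bound on $\mu$ for the annuli around the (few) bad points force a free parameter $\lambda$ to be bounded, which one contradicts by choosing $\lambda$ large. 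Only after this does the explicit test function become admissible, whereupon $\int\rho^Q\,d\mu\approx\mu(U(x,r))/L^Q$ together with the Loewner lower bound and the $K_I$-inequality yields \eqref{eq:regularity of g inverse}. Without this intermediate diameter control on lifts, there is no route from the modulus inequalities to a diameter-vs.-measure estimate.
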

\begin{proof}
	Without loss of generality we assume $\tilde{\Omega}$ is relatively compact. Let $z\in \tilde{\Omega}^f$, $y=\pi(z)$, $x=g^{-1}(z)$, and $i_{\ess}(z,\pi)=n$. Fix a number $\lambda>2$, to be determined
	momentarily.
	
	Let $r<R(z)/10$ and let $L= L_g^*(x, 5r)= L_{g^{-1}}(z, 5r)$.
	
	We pause to note that since $\Omega$ is locally geodesic, we may assume $R(z)$ was chosen so that 
	\begin{align*}
		U(z,\pi,s)=g(U(x,f,s))=B(y,s)
	\end{align*}
	for each $s\leq 10r$, and thus for the remainder of the proof, we reserve the ``$U$" notation for $\tilde{\Omega}$, and simply abbreviate $U(x, s) := U (x, f, s)$. With our notation, it suffices to prove that
	\begin{align}\label{eq:key geometric BG aim}
		L^Q\leq c(K,n)\mu(U(x,r)).
	\end{align}
	
	By the definition of the essential index, there is a set $S\ni z$, contained in $B(z,R(z))$, such that $z$ is a $\pi$-Lebesgue point of $S$, and such that 
	\begin{align*}
		\lim_{t\to 0}N(\pi(z),\pi, B(z,t)\cap S)=n.
	\end{align*}
	We assume that $R(z)$ was chosen to guarantee that in fact $N(\pi(z),\pi, B(z,t)\cap S)=n$.
	
	Suppose then that there is a curve $\gamma\in \mathcal{A}(y,r/2,r)\cap \pi(\Gamma_S)$ such that for every lift $\tilde{\gamma}$ of $\gamma$ along $f$ in $U(x,r)$, $\diam(\tilde{\gamma})<L/(2n\lambda\lambda_n)$, where $\lambda_n$ is given as in Lemma~\ref{lemma:strong local path LLC}. Let $y'\in \gamma\cap \pi(S)$. Then it follows from Lemma~\ref{lemma:lift Floyd} that there are points $x_1',\dots,x_m'\in U(x,r)$, $m\leq n$, such that every such lift $\tilde{\gamma}$ of $\gamma$ intersects some $x_i'$, $1\leq i\leq m$. It then follows that each $\tilde{\gamma}$ is contained entirely in $\bigcup_{i=1}^mB(x_i',L/(2n\lambda\lambda_n))$.
	
	Since $U(x, 5r)$ is a connected set with diameter at least $L$, and $m\leq n$, there is some point $x'\in U(x,5r)\backslash \bigcup_{i=1}^m B(x_i',L/2n)$. By Lemma~\ref{lemma:strong local path LLC}, there is a curve $\tilde{\gamma}'$ joining $x'$ and $\partial U(x, 10r)$, contained in $\overline{U(x,10r)}\backslash \bigcup_{i=1}^mB(x_i',L/(2n\lambda_n))$.
	
	Let $\gamma'=f(\tilde{\gamma}')$. By the path lifting property, and the normality of $B(z, r)$, we may lift every curve joining $\gamma'$ to $\gamma$ to a curve joining $\tilde{\gamma}'$ to one of the lifts of $\gamma$.
	
	Now we consider the family $\Gamma$ of curves in $B(y, 10r)$ joining $\gamma'$ to $\gamma$, and the family $\tilde{\Gamma}=f^{-1}(\Gamma)\cap \mathcal{C}(U(x,5r))$. By the previous remark, $f(\tilde{\Gamma})=\Gamma$. Since every curve in $\Gamma$ intersects both $\overline{U(x,10r)}\backslash \bigcup_{i=1}^mB(x_i',L/(2n\lambda_n))$ and $\bigcup_{i=1}^mB(x_i',L/(2n\lambda\lambda_n))$, the local Ahlfors $Q$-regularity of $\mu$ ($R$ having been chosen appropriately) implies the modulus estimate
    \begin{align}\label{eq:key geometric BG 14}
    	\Modd_Q(\tilde{\Gamma})\leq \sum_{i=1}^{m}\Modd_Q\big(\mathcal{A}(x_i',L/(2n\lambda\lambda_n),L/(2n\lambda_n))\big)\leq cn(\log \lambda)^{1-Q}
    \end{align}
    Note that 
    \begin{align*}
    	\frac{d(\gamma',\gamma)}{\min\{\diam(\gamma'),\diam(\gamma)\}}\leq \frac{5r}{\min\{6r,r\}}=6.
    \end{align*}
	Invoking the local Loewner property and the $K_I$-inequality for $g$ and hence for $f$ by Theorem~\ref{thm:equivalence of G and A}, we obtain
	\begin{align}\label{eq:key geometric BG 15}
		C\leq \Modd_Q(\Gamma)\leq K\Modd_Q(\tilde{\Gamma}).
	\end{align}
	Combining inequalities~\eqref{eq:key geometric BG 14} and~\eqref{eq:key geometric BG 15}, and applying the geometric definition of quasiregularity,
	we obtain the estimate
	\begin{align}\label{eq:key geometric BG 17}
		\lambda\leq C_1(K,n)
	\end{align}
	where $C_1(K, n)$ depends only on $K$, $n$, and the data of the spaces. However, we have not yet chosen $\lambda$. Thus we choose $\lambda> C_1(K, n)$, so that inequality~\eqref{eq:key geometric BG 17} is a contradiction, invalidating our original assumptions about $\gamma$. We thus conclude
	that for every $\gamma\in \mathcal{A}(y,r/2,r)\cap \pi(\Gamma_S)$ has a lift $\tilde{\gamma}$ along $f$ in $U(x,r)$, such that $\diam(\tilde{\gamma})\geq L/(2n\lambda\lambda_n)$. 
	
	Let $\tilde{\mathcal{A}}$ be the family of all such lifts, so that $f(\tilde{\mathcal{A}})=\mathcal{A}(y,r/2,r)\cap \pi(\Gamma_S)$. By the definition of $\tilde{\mathcal{A}}$, the function $\frac{2n\lambda\lambda_n}{L}\chi_{U(x,r)}$ is admissible for $\tilde{\mathcal{A}}$. Combining this with the $K_I$-inequality and Lemma~\ref{lemma:modulus estimates BG case refined}, we have
	\begin{align*}
		C_0\leq \Modd_Q(\mathcal{A}(y,r/2,r)\cap \pi(\Gamma_S))\leq K_I\Modd_Q(\tilde{\mathcal{A}})\leq C_2(K,n)\mu(U(x,r))/L^Q
	\end{align*}
	for some constant $C_2(K, n)$ depending only on $K$, $n$, and the data of the underlying spaces, and $C_0$ depending on the local Loewner function. Letting $c(K, n)=C_2 (K, n)/C_0$ completes the proof.
\end{proof}

\begin{remark}\label{rmk:on Ahlfors regularity for key geometric argument}
	It is clear from the above proof that the (local) Ahlfors $Q$-regularity for the measure $\mu$ is not essential. Indeed, the only place where we have used this regularity assumption was in~\eqref{eq:key geometric BG 14}. The conclusion remains true, if the upper $Q$-regularity for $\mu$ is replaced by a pointwise upper $Q$-regularity, i.e., $\mu(B(x,r))\leq Ci_{\ess}(x,f)r^Q$ for $r$ small.
\end{remark}

\subsubsection{Absolute precontinuity}
In this section, we introduce the notion of absolute precontinuity of a branched covering $f\colon X\to Y$. The main result states that if a branched covering satisfy both the $K_O$-inequality and the $K_I$-inequality with the same exponent $Q$, then $f$ satisfies both Condition $N$ and Condition $N^{-1}$. We will use this fact later in the proof of Theorem~\ref{thm:bounded geometry I}.

Let $f\colon X\to Y$ be a branched covering. Let $\beta\colon I_0\to Y$ be a closed rectifiable curve and let $\alpha\colon I\to X$ be a curve in $X$ such that $f\circ \alpha\subset \beta$, i.e. $f\circ \alpha$ is the restriction of $\beta$ to some subinterval of $I_0$. If the length function $s_\beta\colon I_0\to [0,l(\beta)]$ is constant on some interval $J\subset I$, $\beta$ is also constant on $J$, and the discreteness of $f$ implies that $\alpha$ is constant on $J$ as well. It follows that there is a unique mapping $\alpha^*\colon s_\beta(I)\to X$ such that $\alpha=\alpha^*\circ(s_\beta|_I)$. 
The curve $\alpha^*$ is called the \textit{$f$-representation of $\alpha$ with respect to $\beta$}. We say that $f$ is \textit{absolutely precontinuous on $\alpha$} if $\alpha^*$ is absolutely continuous. 

\begin{figure}[h]
	\[
	\xymatrix{
		                                       &   I_0  \ar[dr]^{\beta}   &     \\
		I \ar[r]^{\alpha} \ar[ur]^{\iota} \ar[dr]_{s_\beta|_I} & X \ar[r]^{f} & Y  \\
		& [0,l(\beta)] \ar[u]_{\alpha^*} &
	}
	\]
	\caption{The absolute precontinuity of $f$ on $\alpha$}
\end{figure}
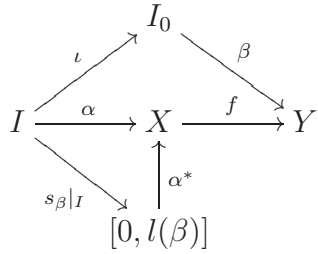

\begin{remark}\label{rmk:on absolutely precontinuity}
i). It follows immediately from the definition that if $f\colon X\to Y$ is a homeomorhpism, then $f$ is absolutely precontinuous on $\alpha$ if and only if $f^{-1}$ is absolutely continuous on $f\circ \alpha$. 

ii). It follows easily from the definition that a BLD mapping $h\colon X\to Y$ is always absolutely precontinuous on every absolutely continuous curve $\alpha$ of $X$.
\end{remark}

\begin{lemma}\label{lemma:absolutely precontinuity}
Suppose the branched covering $f\colon X\to Y$ satisfies both $K_O$-inequality and $K_I$-inequality with the same exponent $Q$. Then $f$ is absolutely precontinous on $Q$-almost every curve $\gamma$ in $X$.
\end{lemma}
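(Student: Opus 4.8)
The plan is to use the pullback factorization $f=\pi\circ g$ to reduce absolute precontinuity of $f$ along a curve to ordinary absolute continuity of the homeomorphism $g^{-1}\colon X^{f}\to X$ along the corresponding curve in $X^{f}$, and then to transfer the resulting $Q$-exceptional family from $X^{f}$ back to $X$ via the $K_O$-inequality. So first I would record the Sobolev consequences of the two hypotheses. Since $f$ satisfies the $K_O$-inequality with exponent $Q$, Theorem~\ref{thm:equivalence of G and A} and Proposition~\ref{prop:characterization of QR via pullback factorization} show that $g$ is analytically $K_O$-quasiconformal with exponent $Q$; in particular $g\in N^{1,Q}_{loc}(X,X^{f})$, and $g$ satisfies the geometric $K_O$-inequality with exponent $Q$ as well, which for the homeomorphism $g$ (multiplicity one) reads $\Modd_Q(\Gamma)\le K_O\,\Modd_Q(g(\Gamma))$ for every curve family $\Gamma$ in $X$. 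Since $f$ satisfies the $K_I$-inequality with exponent $Q$, the argument in the proof of Theorem~\ref{thm:equivalence of G and A} shows that $g^{-1}$ is analytically $K_I$-quasiconformal with exponent $Q$, hence $g^{-1}\in N^{1,Q}_{loc}(X^{f},X)$. As a continuous map lying in $N^{1,Q}_{loc}$, $g^{-1}$ is absolutely continuous along every rectifiable curve outside some $Q$-exceptional family $\Gamma'_{*}$ in $X^{f}$ (Proposition~\ref{prop:1.6}); likewise $g\in N^{1,Q}_{loc}$ forces the family of curves $\gamma$ in $X$ with $g\circ\gamma$ non-rectifiable to be $Q$-exceptional.

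The heart of the argument is to identify, for a curve $\gamma$ in $X$ with $\beta:=f\circ\gamma$ rectifiable, the $f$-representation $\gamma^{*}$. Set $\delta:=g\circ\gamma$, a curve in $X^{f}$ with $\pi\circ\delta=\beta$. Since $\pi$ is $1$-BLD (Lemma~\ref{lemma:pullback property 1}), $\delta$ is rectifiable and its length function coincides with that of $\pi\circ\delta=\beta$ on every initial subcurve, i.e.\ $s_{\delta}=s_{\beta}$. Writing $\delta=\delta_{s}\circ s_{\delta}$ for the arclength parametrization of $\delta$, we get $\gamma=g^{-1}\circ\delta=(g^{-1}\circ\delta_{s})\circ s_{\beta}$, and the uniqueness clause in the definition of the $f$-representation — whose constancy issues are dispatched by discreteness of $f$ and injectivity of $g$, exactly as there — forces $\gamma^{*}=g^{-1}\circ\delta_{s}$. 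Therefore $f$ is absolutely precontinuous on $\gamma$ if and only if $g^{-1}$ is absolutely continuous along $\delta=g\circ\gamma$.

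Now I would assemble the exceptional families. Let $\Gamma_{0}$ consist of all curves $\gamma$ in $X$ along which $f$ is not absolutely precontinuous, together with all $\gamma$ for which $f\circ\gamma$ is non-rectifiable. By the $1$-BLD property of $\pi$, $f\circ\gamma$ is rectifiable if and only if $g\circ\gamma$ is, so the non-rectifiable part of $\Gamma_{0}$ is $Q$-exceptional. For $\gamma\in\Gamma_{0}$ with $f\circ\gamma$ rectifiable, the previous step puts $g\circ\gamma$ into $\Gamma'_{*}$, so that part of $\Gamma_{0}$ lies in $g^{-1}(\Gamma'_{*})$, and the $K_O$-inequality for $g$ gives
\[
\Modd_Q\big(g^{-1}(\Gamma'_{*})\big)\le K_O\,\Modd_Q\big(g(g^{-1}(\Gamma'_{*}))\big)=K_O\,\Modd_Q(\Gamma'_{*})=0 .
\]
By countable subadditivity of the modulus, $\Modd_Q(\Gamma_{0})=0$, which is the claim.

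I expect the main obstacle to be the bookkeeping in the second paragraph: verifying precisely that $\gamma^{*}$ equals $g^{-1}$ precomposed with the arclength parametrization of $g\circ\gamma$, i.e.\ that passing to the arclength parametrization in $Y$ and in $X^{f}$ are compatible; this is exactly the point where the $1$-BLD property of $\pi$ enters and makes the whole reduction work. A secondary, purely technical, matter is that the Sobolev and covering inputs are local, so one argues on relatively compact pieces and then globalizes using separability and countable subadditivity of the modulus, as in Corollary~\ref{coro:1.5}.
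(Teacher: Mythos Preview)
Your proof is correct and follows essentially the same route as the paper: reduce via the pullback factorization to showing $g^{-1}\in N^{1,Q}_{loc}(X^f,X)$ is absolutely continuous on $Q$-almost every curve (from the $K_I$-inequality via Theorem~\ref{thm:equivalence of G and A}), and then use the $K_O$-inequality for $g$ to pull the exceptional family back to $X$. The paper compresses your second paragraph into citations of Remark~\ref{rmk:on absolutely precontinuity} (i) and (ii), whereas you explicitly compute $\gamma^{*}=g^{-1}\circ\delta_{s}$ using $s_{\delta}=s_{\beta}$ from the $1$-BLD property of $\pi$; this is exactly the content of those remarks, so the arguments are substantively identical.
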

\begin{proof}
	Recall that the pullback factorization gives us $f=\pi\circ g$, where $g\colon X\to X^f$ is a homeomorphism and $\pi\colon X^f\to Y$ is 1-BLD. By Remark~\ref{rmk:on absolutely precontinuity} ii), it suffices to show that $g$ is absolutely precontinuous on $Q$-almost every curve $\gamma$ in $X$. Note also that since $f$ satisfies the $K_I$-inequality, so is $g$. 
	
	Let $\tilde{\Gamma}$ be the curve family in $X$ such that $g$ fails to be absolutely precontinuous on $\gamma$, and let $\Gamma$ be the family of curves in $X^f$ such that $g^{-1}$ fails to be absolutely continuous. Note first that, by Remark~\ref{rmk:on absolutely precontinuity} i), $g$ is absolutely precontinuous on $\gamma$ if and only if $g^{-1}$ is absolutely continuous on $g\circ \gamma$. This together with the fact that $g$ satisfies the $K_O$-inequality implies that it suffices to show that $g^{-1}$ is absolutely continuous on $Q$-almost every curve $\gamma'\in \Gamma$. 
	
	On the other hand, the latter property follows immediately from the proof of Theorem~\ref{thm:equivalence of G and A}, which implies that $g^{-1}$ is analytically $K_I$-quasiconformal.  
\end{proof}

\begin{remark}\label{rmk:on KI imply abs precont}
Denote by $\Gamma_0$ the family of curves $\gamma$ in $X$ on which $f$ fails to be absolutely precontinuous on a subcurve of $\gamma$. The proof of Lemma~\ref{lemma:absolutely precontinuity} implies that if we only assume the $K_I$-inequality with exponent $Q$ for $f$, then $\Modd_Q(f(\Gamma_0))=0$. In other word, $f$ satisfies the \emph{Poletsky's lemma} with exponent $Q$ (e.g.~\cite[Section II 5]{r93}) whenever it satisfies the Poletsky's inequality with exponent $Q$.
\end{remark}

It is clear that if $f$ is absolutely continuous on a curve $\gamma\colon [a,b]\to X$, then it satisfies Condition $N$ on the curve $\gamma$, i.e. if $E$ intersects $\gamma$ on a set of zero $\mathcal{H}^1$-measure, then $f(E)$ intersects $f(\gamma)$ on a set of zero $\mathcal{H}^1$-measure. We have the following reverse statement for absolute precontinuity.

\begin{lemma}\label{lemma:Condition N inverse on curve}
	If $f\colon X\to Y$ is absolutely precontinuous on $\gamma\colon [a,b]\to X$, then $f$ satisfies Condition $N^{-1}$ on the curve $\gamma$, i.e. if $E$ intersects $\gamma$ on a set of positive $\mathcal{H}^1$-measure, then $f(E)$ intersects $f(\gamma)$ on a set of positive $\mathcal{H}^1$-measure. 
\end{lemma}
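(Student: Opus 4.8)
The plan is to reduce the statement, via the $f$-representation of $\gamma$, to the basic fact that an arclength-parametrized curve cannot carry a set of parameters of positive Lebesgue measure into an $\mathcal{H}^1$-null subset of $Y$. The hypothesis of absolute precontinuity enters the argument exactly once, to guarantee that the $f$-representation satisfies Luzin's Condition $N$.

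Set up notation as follows. Since $f$ is absolutely precontinuous on $\gamma$, the curve $f\circ\gamma$ is rectifiable, and we may take the ambient curve in $Y$ to be $\beta := f\circ\gamma$ itself; let $L = l(f\circ\gamma)$, let $\beta_s\colon[0,L]\to Y$ be its arclength parametrization, let $s_\beta\colon[a,b]\to[0,L]$ be the (continuous, non-decreasing, surjective) length function, and let $\gamma^*\colon[0,L]\to X$ be the $f$-representation of $\gamma$ with respect to $\beta$, so that $\gamma=\gamma^*\circ s_\beta$ and $f\circ\gamma^*=\beta_s$. Surjectivity of $s_\beta$ gives $\gamma^*([0,L])=\gamma([a,b])$, the image of $\gamma$, and $\beta_s([0,L])=\beta([a,b])$, the image of $f\circ\gamma$, i.e.\ ``$f(\gamma)$''. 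By definition of absolute precontinuity $\gamma^*$ is an absolutely continuous curve, so writing $\gamma^*=\tilde\gamma\circ v_{\gamma^*}$ with $v_{\gamma^*}$ an absolutely continuous real-valued function and $\tilde\gamma$ a $1$-Lipschitz arclength parametrization, we see that $\gamma^*$ maps Lebesgue-null sets to $\mathcal{H}^1$-null sets.

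Now assume $\mathcal{H}^1(E\cap\gamma)>0$ and put $A:=(\gamma^*)^{-1}(E)\subset[0,L]$, so that $\gamma^*(A)=E\cap\gamma$ and hence $\mathcal{H}^1(\gamma^*(A))>0$. By the Condition $N$ property of $\gamma^*$ just noted, $|A|>0$, where $|\cdot|$ denotes Lebesgue measure on $[0,L]$. Since $f\circ\gamma^*=\beta_s$ we get $\beta_s(A)=f(\gamma^*(A))\subset f(E)$ and trivially $\beta_s(A)\subset\beta_s([0,L])=f(\gamma)$, so $\beta_s(A)\subset f(E)\cap f(\gamma)$; it remains to verify $\mathcal{H}^1(\beta_s(A))>0$. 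Here I invoke that $\beta_s$, being an arclength parametrization, has unit metric speed almost everywhere: by the area (length) formula for Lipschitz maps of an interval into a metric space,
\[
|A|=\int_A |\dot\beta_s|\,dt=\int_Y \sharp\bigl(\beta_s^{-1}(y)\cap A\bigr)\,d\mathcal{H}^1(y),
\]
and the right-hand integrand vanishes off $\beta_s(A)$; as the left-hand side is positive, $\mathcal{H}^1(\beta_s(A))>0$. Thus $\mathcal{H}^1(f(E)\cap f(\gamma))\ge\mathcal{H}^1(\beta_s(A))>0$, which is precisely Condition $N^{-1}$ for $f$ on $\gamma$. (Alternatively, one could first reduce to the homeomorphism case through the pullback factorization $f=\pi\circ g$, using Remark~\ref{rmk:on absolutely precontinuity}(i) and the fact that $\pi$ is $1$-BLD, but the direct argument above is shorter and still funnels into the same last step.)

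The only genuinely non-formal point is that last step: bounding below the $\mathcal{H}^1$-measure of the image of a positive-measure parameter set under a unit-speed curve. A bi-Lipschitz comparison along $\beta_s$ is unavailable, since $\beta_s$ may self-intersect with unbounded multiplicity, so the clean device is the area/length formula for Lipschitz curves into a metric space; with that in hand the remainder is routine bookkeeping with the $f$-representation.
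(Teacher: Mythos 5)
Your proof is correct and runs along essentially the same lines as the paper's: both route through the $f$-representation $\gamma^*$, invoke its absolute continuity, and hinge on the measure-theoretic fact that the arclength parametrization $\beta_s$ cannot compress a parameter set of positive Lebesgue measure into an $\mathcal{H}^1$-null image. You make that last point explicit via the one-dimensional area formula for Lipschitz curves into a metric space, whereas the paper argues by contrapositive, covering $s_\beta(E_0)$ by small intervals and appealing to the same fact implicitly; your version is a bit cleaner on precisely the step where a naive bi-Lipschitz comparison along $\beta_s$ would fail.
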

\begin{proof}
	It suffices to show that if $f(E)$ intersects $f(\gamma)$ on a set of zero $\mathcal{H}^1$-measure, then $E$ intersects $\gamma$ on a set of zero $\mathcal{H}^1$-measure. Let $\beta=f\circ \gamma$ and $E=\gamma(E_0)$, where $E_0\subset [a,b]$. Let $s_\beta\colon I\to [0,l(\beta)]$ be the length function of $\beta$. Then $\gamma=\gamma^*\circ s_\beta$ and $\gamma^*\colon [0,l(\beta)]\to X$ is absolutely continuous. 
	
	Given any $\varepsilon>0$, there exists some $\delta>0$ such that if $\{(b_k,b_{k+1})\}$ is a collection of disjoint intervals in $[0,l(\beta)]$ with $\sum_{k}|b_{k+1}-b_{k}|<\delta$, then $\sum_{k}d(\gamma^*(b_{k+1}),\gamma^*(b_k))<\varepsilon$.
	
	On the other hand, since $\mathcal{H}^1(\beta(E_0))=0$, for any given $\delta>0$, there exists $b_{k}=s_\beta(a_k)$, where $a_k\in E_0\subset I$, such that $\sum_k|s_\beta(a_{k+1})-s_\beta(a_k)|<\delta$. In particular, this implies that 
	$$\mathcal{H}^1(\gamma(E_0))\leq \sum_kd(\gamma(a_{k+1}),\gamma(a_k))=\sum_{k}d(\gamma^*(b_{k+1}),\gamma^*(b_k))<\varepsilon.$$ 
\end{proof}

We next point out the $K_O$-inequality and $K_I$-inequality together (with the same exponent $Q$) imply both Condition $N$ and Condition $N^{-1}$. In the lemma below, we assume that $X$ has locally $Q$-bounded geometry, and $Y$ is (locally) Ahlfors $Q$-regular and (locally) LLC.

\begin{lemma}\label{lemma:Condition N+N inverse}
Suppose the branched covering $f\colon X\to Y$ satisfies both $K_O$-inequality and $K_I$-inequality with the same exponent $Q$. Then $f$ satisfies both Condition $N$ and Condition $N^{-1}$.
\end{lemma}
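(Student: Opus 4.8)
The plan is to pass everything through the pullback factorization $f=\pi\circ g$ of Section~\ref{sec:the pullback factorization} and reduce to the homeomorphism $g\colon X\to X^{f}$. Since the measure on $X^{f}$ is $\lambda=\pi^{*}\nu$, one has $\lambda(A)=0$ if and only if $\nu(\pi(A))=0$ for every $A\subset X^{f}$, so $\pi$ trivially satisfies both Condition $N$ and Condition $N^{-1}$; hence $f$ satisfies Condition $N$ (resp.\ $N^{-1}$) precisely when $g$ does. For the homeomorphism $g$, Condition $N$ is the statement $g^{*}\lambda\ll\mu$ and Condition $N^{-1}$ is $(g^{-1})^{*}\mu\ll\lambda$; equivalently, $g$ satisfies $N$ iff $g^{-1}$ satisfies $N^{-1}$, and conversely. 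By Proposition~\ref{prop:characterization of QR via pullback factorization} together with the last part of the proof of Theorem~\ref{thm:equivalence of G and A}, the $K_{O}$-inequality for $f$ gives that $g$ is analytically $K_{O}$-quasiconformal with exponent $Q$, and the $K_{I}$-inequality for $f$ gives that $g^{-1}$ is analytically $K_{I}$-quasiconformal with exponent $Q$; composing the resulting modulus inequalities shows that both $g$ and $g^{-1}$ lie in $N^{1,Q}_{\loc}$ and satisfy both the $K_{O}$- and the $K_{I}$-inequality with exponent $Q$ between $X$ (of locally $Q$-bounded geometry) and $X^{f}$ (pointwise Ahlfors $Q$-regular by~\eqref{eq:estimate for lambda}). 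By the quasiconformal invariance of the Loewner condition among $Q$-regular spaces, $X^{f}$ then also has locally $Q$-bounded geometry, so it supports a local $(1,Q)$-Poincar\'e inequality; alternatively this structure can be extracted locally from the growth estimate of Proposition~\ref{prop:key geometric BG case}.

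With this reduction in hand, the argument for Condition $N$ of $f$ runs as follows. Fix $E\subset X$ with $\mu(E)=0$; we must show $\lambda(g(E))=0$. The function $\rho=\infty\cdot\chi_{E}$ is admissible for the family $\Gamma_{E}^{+}$ of curves in $X$ that spend positive $\mathcal{H}^{1}$-time in $E$, and $\int\rho^{Q}\,d\mu=0$, so $\Modd_{Q}(\Gamma_{E}^{+})=0$. Since $g$ is absolutely continuous on $Q$-almost every curve of $X$ (being in $N^{1,Q}_{\loc}$), and since a rectifiable curve of $X^{f}$ descends to a rectifiable curve of $X$ for $Q$-almost every curve (Lemma~\ref{lemma:absolutely precontinuity} and Remark~\ref{rmk:on KI imply abs precont}, applied to $g$ and $g^{-1}$), one transports $\Gamma_{E}^{+}$ through $g$ using the $K_{O}$- and $K_{I}$-inequalities to conclude that the family $\Gamma_{g(E)}^{+}$ of curves in $X^{f}$ spending positive $\mathcal{H}^{1}$-time in $g(E)$ also has $Q$-modulus zero. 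But $X^{f}$ supports a local $(1,Q)$-Poincar\'e inequality, so a set met by $Q$-almost no curve in positive $\mathcal{H}^{1}$-measure must be $\lambda$-null (the indicator of its complement has a $Q$-weak upper gradient equal to zero along $Q$-almost every curve, hence is locally a.e.\ constant); therefore $\lambda(g(E))=0$, which is Condition $N$ for $f$. The mirror argument — starting from $A\subset X^{f}$ with $\lambda(A)=0$, forming $\Gamma_{A}^{+}$ in $X^{f}$, transporting it back to $X$ via $g^{-1}$ and using Lemma~\ref{lemma:Condition N inverse on curve} — yields $\mu(g^{-1}(A))=0$, i.e.\ Condition $N^{-1}$ for $f$.

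The step I expect to be the main obstacle is the faithful transfer of $Q$-exceptional curve families between $X$, $X^{f}$ and (through $\pi$) $Y$. Two issues must be handled with care: first, that for $Q$-almost every curve the pathology ``a rectifiable curve does not lift or drop to a rectifiable curve'' does not occur, which is exactly what Lemma~\ref{lemma:absolutely precontinuity} and Remark~\ref{rmk:on KI imply abs precont} supply once one knows $f$ (hence $g$ and $g^{-1}$) satisfies both modulus inequalities; and second, that the a priori only pointwise $Q$-regular space $X^{f}$ carries enough rectifiable curves for modulus to control measure, i.e.\ that it supports a Poincar\'e inequality — here the hypothesis that $X$ (rather than $Y$) has locally $Q$-bounded geometry, combined with the two-sided modulus inequalities for $g$ and, if desired, with Proposition~\ref{prop:key geometric BG case}, is what makes it work. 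Once these points are secured, the differentiation estimates for the absolutely continuous parts are routine, and pulling back through $\pi$ completes the proof; the conclusion then feeds into the proof of Theorem~\ref{thm:bounded geometry I} as intended.
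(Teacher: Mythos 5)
Your reduction to the lift $g\colon X\to X^f$ is sound: since $\lambda=\pi^*\nu$, one has $\lambda(A)=0$ iff $\nu(\pi(A))=0$, so $\pi$ is measure-theoretically transparent and Conditions $N$, $N^{-1}$ for $f$ are equivalent to the corresponding conditions for $g$. Your argument for Condition $N^{-1}$ (equivalently, Condition $N$ for $g^{-1}$) then lands on essentially the same ingredients as the paper's: zero modulus of the curve family hitting a null set, absolute precontinuity on $Q$-a.e.\ curve (Lemma~\ref{lemma:absolutely precontinuity}, Remark~\ref{rmk:on KI imply abs precont}), Lemma~\ref{lemma:Condition N inverse on curve}, and finally the fact that $X$ (not $X^f$) has locally $Q$-bounded geometry so that a set hit by $Q$-almost no curve with positive $\mathcal{H}^1$-measure must be $\mu$-null. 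This direction is fine and is basically the paper's own Condition-$N^{-1}$ argument transported through the factorization.

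The Condition-$N$ half, however, has a genuine gap. After correctly deriving $\Modd_Q(\Gamma_{g(E)}^+)=0$ in $X^f$, you must convert this into $\lambda(g(E))=0$. That step requires $X^f=(X,f^*d_Y,\lambda)$ to carry enough rectifiable curves for $Q$-modulus to detect positive measure, i.e.\ a local Poincar\'e or Loewner property. You assert this via ``quasiconformal invariance of the Loewner condition among $Q$-regular spaces'' or via Proposition~\ref{prop:key geometric BG case}, but neither is adequate here: by~\eqref{eq:estimate for lambda} the measure $\lambda$ is only \emph{pointwise} Ahlfors $Q$-regular, with an upper constant that degenerates by a factor of $i_{\ess}(z,\pi)$ near the branch set, so the usual quasiconformal invariance of Loewner (which presupposes genuine two-sided $Q$-regularity and a quasisymmetric map) does not apply directly; invoking it would also be circular, since establishing quasisymmetry of $g$ in this generality is part of what the machinery of this section is trying to produce, not a starting hypothesis. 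Proposition~\ref{prop:key geometric BG case} gives a one-sided diameter/measure growth bound for $g^{-1}$ near points of fixed essential index, not a Poincar\'e inequality on $X^f$.

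The paper's proof of Condition~$N$ avoids this entirely: by Theorem~\ref{thm:equivalence of G and A} one already has $f\in N^{1,Q}_{\loc}(X,Y)$, and since $f$ is open it is locally pseudomonotone (cite{g14}, Lemma~3.3); then the Malý--Martio-type result \cite[Theorem~7.2]{hkst01} gives Condition~$N$ for a pseudomonotone Sobolev map out of a space supporting a Poincar\'e inequality into an arbitrary metric space. This uses only the Poincar\'e inequality on the \emph{source} $X$ (which is assumed) and imposes no geometric hypotheses on the target at all, so no claim about $X^f$ (or $Y$) being Loewner is needed. If you want to keep your factorization framework, the cleanest repair is to apply that same pseudomonotonicity theorem to the homeomorphism $g\colon X\to X^f$ directly (a homeomorphism is trivially pseudomonotone, and $g\in N^{1,Q}_{\loc}$ by Proposition~\ref{prop:characterization of QR via pullback factorization}), rather than trying to establish a Poincar\'e inequality on $X^f$.
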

\begin{proof}
	Note that by Theorem~\ref{thm:equivalence of G and A}, $f\in N^{1,Q}_{loc}(X,Y)$ and since $f$ is open, $f$ is \emph{locally psudomonotone} by~\cite[Lemma 3.3]{g14}, and so it follows from~\cite[Theorem 7.2]{hkst01} that $f$ satisfies Condition $N$.
	
	We next show that $f$ also satisfies Condition $N^{-1}$. Fix a set $E\subset X$ with $\mu(E)>0$. Let $\Gamma$ be the family of curves in $X$ that intersects $E$ with positive length, i.e., $\mathcal{H}^1(\gamma\cap E)>0$. Then $\Modd_Q(\Gamma)>0$ (see e.g.~\cite[Remark 6.7]{s00}), and so by the $K_O$-inequality, $\Modd_Q(f(\Gamma))>0$. On the other hand, Lemma~\ref{lemma:absolutely precontinuity} implies that $f$ is absolutely precontinuous on $Q$-almost every curve $\gamma\in \Gamma$. By Lemma~\ref{lemma:Condition N inverse on curve}, this implies that for $Q$-almost every curve $\gamma\in \Gamma$, if it intersects $E$ with positive length, then its image $f(\gamma)$ intersects $f(E)$ with positive length. Thus the family $\hat{\Gamma}$ of curves in $Y$ that intersects $f(E)$ with positive length has positive $Q$-modulus. Consequently, $\nu(f(E))>0$ (see e.g.~\cite[Lemma 5.2.15]{hkst15}). 
\end{proof}


\subsubsection{Proof of the main results}

It is clear, by Theorem~\ref{thm:equivalence of G and A}, to prove Theorem~\ref{thm:bounded geometry II}, it suffices to show that the Poletsky's inequality for $f$ with exponent $Q$ implies also the $K_O$-inequality for $f$ with exponent $Q$. This can be done quite directly with the help of Theorem~\ref{thm:for analytic regularity} and Proposition~\ref{prop:key geometric BG case}. 

\begin{proof}[Proof of Theorem~\ref{thm:bounded geometry II}]
	Note first that both the Poletsky's inequality and the property of locally $Q$-bounded geometry are easily seen to be preserved under locally Bilipschitz transformations. By	the uniform local quasi-convexity and local compactness of $\Omega$, the length metric $l_\Omega$ on $\Omega$ is uniformly locally bi-Lipschitz equivalent to the original metric $d$ on $\Omega$.
	We therefore may assume with no loss of generality that $\Omega$ is a length space. We also assume with no loss of generality that $f$ has multiplicity bounded by $N$. This is a harmless assumption, since proving our theorem on every compact neighborhood immediately proves it for all of $\tilde{\Omega}$, by the separability of $\tilde{\Omega}$. 
	
	We now observe that by Proposition \ref{prop:key geometric BG case}, we have, for every $\varepsilon>0$, and every $x\in \tilde{\Omega}$, some radius
	$r_x$ satisfying the conclusion of Proposition \ref{prop:key geometric BG case} with the additional property that $U(x,f,10r_x)\subset B(x,\varepsilon)$. We then set $D_{x,\varepsilon}= U(x, f, 10r_x )$, and $D_{x,\varepsilon}'=D_{x,\varepsilon}''=U(x, f, 2r_x)$. We claim that these neighborhoods satisfy the inequalities in Theorem~\ref{thm:for analytic regularity}, for $\eta(x)=c_3(n,i_{\ess}(x,f),K)$, where $c_3(n,i_{\ess}(x,f),K)$ depends only on the data of $\tilde{\Omega}$ and $\Omega$, $n$, $i_{\ess}(x,f)$ and $K$. Indeed, it follows from~\eqref{eq:estimate for lambda} that
	\begin{align*}
		\Big(\frac{\diam g(D_{x,\varepsilon})}{\diam D_{x,\varepsilon}}\Big)^Q\mu(D_{x,\varepsilon}')&\leq \Big(\frac{\diam B(g(x),10r_x)}{\diam D_{x,\varepsilon}}\Big)^Q\mu(D_{x,\varepsilon})\\
		&\leq c{r_x}^Qi(x,f)\leq ci_{\ess}(x,f)\lambda(B(g(x),2r_x))
	\end{align*}
	and additionally from Proposition~\ref{prop:key geometric BG case} that 
	\begin{align*}
	\mu\big(B(x,10 \diam D_{x,\varepsilon})\big)&\leq ci_{\ess}(x,f)\big(\diam g^{-1}(B(g(x),10r_x))\big)^Q\\
	&\leq ci_{\ess}(x,f)\mu\big(g^{-1}(B(g(x),2r_x))\big)=ci_{\ess}(x,f)\mu(D_{x,\varepsilon}').
	\end{align*}
    Moreover, by Lemma~\ref{lemma:covering lemma}, we may additionally assume that the covering criterion in Theorem~\ref{thm:for analytic regularity} is satisfied. Note that finite multiplicity of  $f$ implies that $\eta$ is bounded.
    As in the proof of Theorem~\ref{thm:metric implies all the other}, we may apply  Theorem~\ref{thm:for analytic regularity} twice to get rid of the depend on the multiplicity. In particular, this implies that $g$ is in $N^{1,Q}(\tilde{\Omega}, \tilde{\Omega}^f)$, and there is a constant $c(K)$ such that the minimal $Q$-weak upper gradient $|\nabla g|$ satisfies
	\begin{align*}
		\int_{\tilde{\Omega}}|\nabla g|^Qd\mu\leq c(K)\lambda(g(\tilde{\Omega})).
	\end{align*}
	
	Applying this inequality to every open subset of $\tilde{\Omega}$, and thus also to every Borel	subset of $\tilde{\Omega}$, yields the analytic definition for $g$, with the same constant $K'=c(K)$.
	By Theorem~\ref{thm:equivalence of G and A}, this is equivalent to the $K_O$-inequality with exponent $Q$ for $f$, with the same constant $K'$.
\end{proof}

We next turn to the proof of our main result, Theorem~\ref{thm:bounded geometry I}. For simplicity, we introduce the following notation 
$$\Delta(E,F)=\frac{\min\{\diam E,\diam F\}}{d(E,F)}.$$
\begin{proof}[Proof of Theorem~\ref{thm:bounded geometry I}]
	Our strategy to prove Theorem~\ref{thm:bounded geometry I} is to show that the geometric definition $G$ (with exponent $Q$) implies directly the inverse geometric definition $G^{-1}$ (with exponent $Q$) and that the combination of $G$ and $G^{-1}$ implies both the metric definition $H$ and inverse metric definition $H^*$. 
	
	\textit{Step 1}: $G$ implies $G^{-1}$.
	
	The proof of this part is almost identical to the proof of Theorem~\ref{thm:bounded geometry II} (which essentially deals with the case $G^{-1}$ implies $G$) and hence we quickly indicate the difference. As we have observed in the proof of Theorem~\ref{thm:bounded geometry II}, we essentially only need a version of Proposition~\ref{prop:key geometric BG case}. Note that when replace $g$ with $g^{-1}$ in Proposition~\ref{prop:key geometric BG case}, we only lose the fact that for $g^{-1}\colon \tilde{\Omega}^f\to \tilde{\Omega}$, the source domain $\tilde{\Omega}^f$ is not Ahlfors $Q$-regular. But this does not affect the proof of Proposition~\ref{prop:key geometric BG case}, since, as observed in Remark~\ref{rmk:on Ahlfors regularity for key geometric argument}, the only place where we have used this assumption was in~\eqref{eq:key geometric BG 14}, whereas the upper estimate in this case now depends on the essential index of $f$. Thus we may replace the Ahlfors $Q$-regularity of $\lambda$ by the pointwise Ahlfors $Q$-regular estimate~\eqref{eq:estimate for lambda}. It is easy to check, via the previous observation, that the conclusion of Proposition~\ref{prop:key geometric BG case} remains valid and hence we infer $G\Rightarrow G^{-1}$.
	
	\textit{Step 2}: $G+G^{-1}$ implies $H$ and $H^*$.
	
	\emph{First proof: } By Theorem~\ref{thm:equivalence of G and A}, we know $g\colon \tilde{\Omega}\to \tilde{\Omega}^f$ satisfies both the $K_O$-inequality and the $K_I$-inequality with the same exponent $Q$. We may thus view $g$ as a geometric quasiconformal mapping with exponent $Q$ in the sense of~\cite{hkst01} (with double side modulus inequalities) from a space of locally $Q$-bounded geometry to a LLC pointwise Ahlfors $Q$-regular space, where $\lambda$ satisfies~\eqref{eq:estimate for lambda}. Then one can run a similar argument as in~\cite[Proof of Theorem 9.8]{hkst01},~\cite[Section 4]{hk98} or essentially~\cite[Proof of Theorem 22.3]{v71} to show that for each $x\in \tilde{\Omega}$, $H_g(x)\leq C$, where $C$ depends on $i_{\ess}(x,f)$ (because of the upper bound from~\eqref{eq:estimate for lambda} on $\lambda$). The condition $i_{\ess}(x,f)=1$ $\mu$-a.e. in $\tilde{\Omega}$ then implies that $g$ is metrically quasiconformal, quantitatively, and so by Proposition~\ref{prop:characterization of QR via pullback factorization II} $f$ is metrically quasiregular, quantitatively.
	
	\emph{Second proof: } We will only show that $G+G^{-1}\Rightarrow H$, since the proof of the implication $G+G^{-1}\Rightarrow H^*$ is almost identical (and indeed is more standard in literature, e.g.~\cite[Theorem 1.4]{gw15}). 
	We will basically follow the idea of~\cite[Theorem 5.2]{mrv71} but in a ``inverse" manner (similar to the proof of Theorem 2 in~\cite{c06}). 
	
	For $x\in \tilde{\Omega}$ and $r>0$, we set $L=L_f(x,r)$, $l=l_f(x,r)$, $L^*=L^*(x,L)$ and $l^*=l^*(x,l)$. Let $D$ be the $x$-component of the ball $B(x,\lambda_{\tilde{\Omega}} r)$, $D'$ be the $y$-component of $B(x,r/\lambda_{\tilde{\Omega}})^c$ for some $y\notin B(x,r)$, $U_L=U(x,f,L)$, $U_l=U(x,f,l)$, and $U_R=U(x,f,R(x))$. We assume that $r$ is small enough so that $L^*\leq R(x)/2$. Then it is clear that
	\begin{align*}
		U_l\subset D\subset B(x,\lambda_{\tilde{\Omega}}r)\subset D'\subset U_L\subset U_R.
	\end{align*} 
	We may additionally assume that $N(y,f,U_R)\leq i_{\ess}(x,f)$ for all $y\in f(U_R)$ (otherwise repeat the argument as in the proof of Proposition~\ref{prop:key geometric BG case} to reduce to this case).
	
	Let $\gamma_1''$ be a path in $\overline{U_R\backslash D}$ joining $\partial U_R$ and some point $x_1\in \partial D$ such that $d(f(x),f(x_1))\geq L/\lambda_\Omega$. Now let $\gamma_1$ be a subcurve of $\gamma_1''$, still intersecting $\partial U_R$, but truncated at the first point of intersection with $\partial U_L$. 
	
	We construct a curve $\gamma_2$ similarly. Let $\gamma_2''$ be a path in $\overline{D'}$ joining $x$ and some point $x_2\in \partial B(x,r)$ such that $d(f(x),f(x_2))\leq \lambda_\Omega l$. Let $\gamma_2$ be a subcurve of $\gamma_2''$, still beginning at $x$, but truncated at the first point of intersection with $\partial U_l$.
	
	We may assume without loss of generality that $L/l\geq 2$.
	
	Let $E=\gamma_1\cap B(x,r)$ and $F=\gamma_2$. For simplicity, we write $B=B(x,r)$.
	
	Since $\diam(f(B))\geq \diam(f(\partial B))\geq L$, $\diam(f(F))\geq l/\lambda_\Omega$, and $d(f(B),f(F))\leq d(f(\partial B),f(F))\leq \lambda_\Omega l$, we have a quantitative lower bound for $\Delta(f(B),f(E))$, and similarly for $\Delta(f(B),f(F))$. Since every curve joining $E$ and $B$ has
	a subcurve joining $B(x,l^*)$ but not hit $B(x,r/\lambda_{\tilde{\Omega}})$, we may apply Lemma~\ref{lemma:modulus estimates BG case refined}, the
	Loewner property and $K_I$-inequality to obtain the following estimate
	\begin{align}\label{eq:BG 45}
		C\leq \Modd_Q(f(B),f(E))\leq K_I\Modd_Q(B,E)\leq C'K_I\log(r/l^*)^{1-Q}.
	\end{align}
	Similarly, we have
	\begin{align}\label{eq:BG 46}
	C\leq \Modd_Q(f(B),f(F))\leq K_I\Modd_Q(B,F)\leq C'K_I\log(L^*/r)^{1-Q}.
	\end{align}
	Combining the above two inequalities gives us
	\begin{align}\label{eq:BG 47}
		\log(L^*/l^*)\leq \log(L^*/r)+\log(r/l^*)\leq c(C,K_I,C').
	\end{align}
	
	On the other hand, by the $K_O$-inequality, and the fact that $f(E)\subset B(f(x),l)$ and $f(F)\subset B(f(x),L)^c$, we have
	\begin{align*}
		C\log(L/l)^{1-Q}&\geq K_O\Modd_Q(f(E),f(F))\geq \frac{\Modd_Q(E,F)}{i_{\ess}(x,f)}\\
		&\geq \frac{1}{i_{\ess}(x,f)}\min\big\{C_1\log(L^*/l^*)^{1-Q},C_0\big\}.\numberthis\label{eq:BG 48}
	\end{align*}
	Combining the estimates~\eqref{eq:BG 47} and~\eqref{eq:BG 48}, we conclude that $\frac{L}{l}\leq C(i_{\ess}(x,f),C,C_0,C_1)$. Note that $i_{\ess}(x,f)=1$ $f^*\nu$-a.e. in $\tilde{\Omega}$ and so by Lemma~\ref{lemma:Condition N+N inverse}, $i_{\ess}(x,f)=1$ $\mu$-a.e. in $\tilde{\Omega}$. In particular, this implies that $H_f(x)\leq C(i_{\ess}(x,f))<\infty$ for all $x\in X$ and $H_f(x)\leq C$ for $\mu$-a.e. $x\in \tilde{\Omega}$.
	
\end{proof}

\begin{remark}\label{rmk:on direct proof of group property of gemetric/analytic qc mapping}
It is worth pointing out that the arguments above give a relatively direct proof of the fact that geometrically (and hence analytically) quasiconformal mappings form a \emph{group}. The typical proof of this fact relies on the (quantitative) local equivalence of with quasisymmetric mappings and the easy fact that quasisymmetric mappings form a group, whereas the proof given here relies on the (quantitative) equivalence of analytic and geometric definitions of quasiconformality.
\end{remark}

\subsection{Size of the branch set and V\"ais\"al\"a's inequality}\label{subsec:Size of the branch set and V\"ais\"al\"a's inequality}
The V\"ais\"al\"a's inequality was first proved by V\"ais\"al\"a~\cite{v72} and it plays an important role in the theory of quasiregular mappings, in particular, many profound value-distributional type results; see~\cite{r93}.

We first recall the definition of V\"ais\"al\"a's inequality.
\begin{definition}[V\"ais\"al\"a's inequality]\label{def:vaisala inequality}
	We say that $f\colon X\to Y$ satisfies \textit{V\"ais\"al\"a's inequality with exponent $Q$} and with constant $K_I$ if the following condition holds: Suppose $m\in \mathbb{N}$, and $\Gamma$ and $\Gamma'$ are curve families in $X$ and $Y$ respectively, such that for each $\gamma'\in \Gamma'$, there are curves $\gamma_1,\dots,\gamma_m\in \Gamma$ such that $f(\gamma_k)$ is a subcurve of $\gamma'$ for each $k$, and for each $t\in [0,l(\gamma)]$ and each $x\in X$, we have $\#\{k:\gamma_k(t)=x\}\leq i(x,f)$. Then
	$$
	\Modd_Q(\Gamma')\leq K_I\Modd_Q(\Gamma)/m.
	$$
\end{definition}

V\"ais\"al\"a's inequality can be viewed as a refined version of the Poletsky's inequality. The following result implies that under mild assumptions on the image of the branch set, V\"ais\"al\"a's inequality follows from the Poletsky's inequality.
\begin{theorem}\label{thm:update poletsky to vaisala}
	Let $f\colon X\to Y$ be a branched covering between two metric measure spaces. Suppose $f$ satisfies the Poletsky's inequality with exponent $Q$ and with constant $K_I$, i.e., for every curve family $\Gamma$ in $X$,
	\begin{align*}
		\Modd_Q(f(\Gamma))\leq K_I\Modd_Q(\Gamma).
	\end{align*}
	If, additionally, $\nu(f(\mathcal{B}_f))=0$, then $f$ satisfies V\"ais\"al\"a's inequality with exponent $Q$ and with the same constant $K_I$.
\end{theorem}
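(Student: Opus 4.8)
The plan is to upgrade the Poletsky inequality to the V\"ais\"al\"a inequality by working with the pullback factorization $f = \pi\circ g$ and exploiting that $\pi$ already satisfies the generalized V\"ais\"al\"a inequality (Lemma~\ref{lemma:1.4}). The point where the branch set enters is that Lemma~\ref{lemma:1.4} applies verbatim only when the lifts $\gamma_1,\dots,\gamma_m$ of a target curve $\gamma'$ are pairwise distinct at almost every parameter; the V\"ais\"al\"a hypothesis only asks for $\#\{k:\gamma_k(t)=x\}\le i(x,f)$, so several $\gamma_k$ may coincide at branch points. The hypothesis $\nu(f(\mathcal{B}_f))=0$ is exactly what lets us discard this overlap modulo a null curve family.

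First I would reduce to curves that avoid the branch set. Let $\Gamma_{\mathcal{B}}$ denote the family of curves in $X$ whose image under $f$ meets $f(\mathcal{B}_f)$ on a set of positive $\mathcal{H}^1$-measure, equivalently (after lifting via Lemma~\ref{lemma:lift Floyd} and using that $\pi$ is 1-BLD) the family of curves $\gamma'$ in $Y$ meeting $f(\mathcal{B}_f)$ on a set of positive length. Since $\nu(f(\mathcal{B}_f))=0$, the family of curves in $Y$ through $f(\mathcal{B}_f)$ with positive length has $Q$-modulus zero (cf.~\cite[Lemma 5.2.15]{hkst15} or~\cite[Remark 6.7]{s00}); hence both $\Modd_Q(\Gamma_{\mathcal{B}})$-related target families are $Q$-exceptional, and by the Poletsky inequality and countable subadditivity of modulus we may delete from $\Gamma$ and $\Gamma'$ the curves interacting with $f(\mathcal{B}_f)$ without changing either modulus. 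After this reduction, for any target curve $\gamma'\in\Gamma'$ and any associated subcurves $\gamma_k$, at a.e. parameter $t$ the points $\gamma_k(t)$ lie outside $\mathcal{B}_f$, so $i(\gamma_k(t),f)=1$ and the V\"ais\"al\"a hypothesis $\#\{k:\gamma_k(t)=x\}\le i(x,f)$ forces $\gamma_i(t)=\gamma_j(t)\iff i=j$ for a.e. $t\in[0,l(\gamma')]$.

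Next I would pass to the lifting space. Let $\Gamma^g = g(\Gamma)$ in $\tilde\Omega^f$; since $g$ is a homeomorphism, $\Modd_Q(\Gamma^g)$ and $\Modd_Q(\Gamma)$ are comparable via $g$, and in fact by Proposition~\ref{prop:characterization of QR via pullback factorization} / the identity $f^*\nu = g^*\lambda$ one checks (exactly as in the proof of Theorem~\ref{thm:equivalence of G and A}) that it suffices to run the modulus bookkeeping upstairs, where $\pi$ replaces $f$. With the overlap removed, each $\gamma'\in\Gamma'$ now has lifts $g(\gamma_1),\dots,g(\gamma_m)$ in $\tilde\Omega^f$ with $\pi(g(\gamma_k))=f(\gamma_k)$ a subcurve of $\gamma'$, and these lifts satisfy the disjointness hypothesis of Lemma~\ref{lemma:1.4} at a.e. parameter. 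Applying Lemma~\ref{lemma:1.4} to $\Gamma^g$ and $\Gamma'$ yields $m\,\Modd_Q(\Gamma')\le\Modd_Q(\Gamma^g)$. Finally, combining with the Poletsky inequality for $g$ (which holds with constant $K_I$ because $f$ satisfies it and $\pi$ is 1-BLD, by the argument in Theorem~\ref{thm:equivalence of G and A}) — or, more directly, combining $m\,\Modd_Q(\Gamma')\le\Modd_Q(\Gamma^g)$ with the comparison $\Modd_Q(\Gamma^g)\le K_I\Modd_Q(\Gamma)$ coming from the $K_I$-inequality applied along the factorization — gives $\Modd_Q(\Gamma')\le K_I\Modd_Q(\Gamma)/m$, which is the V\"ais\"al\"a inequality with the same constant $K_I$.

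The main obstacle is the careful justification that the exceptional-curve removal is legitimate on \emph{both} sides of the correspondence simultaneously: one must check that throwing away the $Q$-exceptional subfamily of $\Gamma'$ also lets one discard the corresponding subfamily of $\Gamma$ (and vice versa) without affecting either $Q$-modulus, using Fuglede-type reasoning together with the Poletsky inequality to transport exceptionality across $f$, and using Corollary~\ref{coro:1.5} to transport it across $\pi$. Once this measure-zero bookkeeping is in place, the rest is a direct appeal to Lemma~\ref{lemma:1.4} and the already-established comparison between $f$ and $g$ from Section~\ref{sec:the pullback factorization}.
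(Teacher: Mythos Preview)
Your overall strategy is correct and coincides with the paper's: both reduce the standard V\"ais\"al\"a inequality to the \emph{generalized} V\"ais\"al\"a inequality (equivalently, Lemma~\ref{lemma:1.4} together with the $K_I$-inequality for $g$, which the paper has already shown to follow from Poletsky for $f$ in Theorem~\ref{thm:equivalence of G and A}). The only thing to check is the a.e.\ disjointness $\gamma_i(s)=\gamma_j(s)\iff i=j$, and this is where your argument and the paper's differ.

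Your route to disjointness is actually cleaner than the paper's. You discard from $\Gamma'$ the $Q$-exceptional family of curves with $\int_{\gamma'}\chi_{f(\mathcal{B}_f)}\,ds>0$; for a surviving $\gamma'$ (arc-length parametrized) one has $|\{t:\gamma'(t)\in f(\mathcal{B}_f)\}|=0$, and since $f(\gamma_k(t))=\gamma'(t)$ on the relevant subinterval, the containment $\{t:\gamma_k(t)\in\mathcal{B}_f\}\subset\{t:\gamma'(t)\in f(\mathcal{B}_f)\}$ gives $\gamma_k(t)\notin\mathcal{B}_f$ for a.e.\ $t$, whence $i(\gamma_k(t),f)=1$ forces the disjointness. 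The paper instead argues by contradiction and invokes absolute precontinuity (Remark~\ref{rmk:on KI imply abs precont} and Lemma~\ref{lemma:Condition N inverse on curve}) to pass from a positive-measure coincidence set in the parameter to positive $\mathcal{H}^1$-length in $f(\mathcal{B}_f)$; your direct inclusion bypasses that machinery.

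One correction: your ``main obstacle'' is a non-issue. You only need to prune $\Gamma'$, not $\Gamma$. Since $\Modd_Q(\Gamma')=\Modd_Q(\Gamma'\setminus\Gamma'_0)$ for the exceptional subfamily $\Gamma'_0$, and each surviving $\gamma'$ still comes with its lifts $\gamma_1,\dots,\gamma_m\in\Gamma$ (unchanged), you apply the generalized V\"ais\"al\"a inequality to $(\Gamma,\Gamma'\setminus\Gamma'_0)$ directly. There is no need to transport exceptionality across $f$ or to touch $\Gamma$, so the bookkeeping with Corollary~\ref{coro:1.5} and the Poletsky inequality for that purpose can be dropped.
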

\begin{proof}
	Given $\Gamma$ and $\Gamma'=f(\Gamma)$ as in the definition of V\"ais\"al\"a's inequality, it is clear that we only need to consider the rectifiable curves $\gamma'\in \Gamma$ and hence we may assume each $\gamma'\colon [0,l(\gamma')]\to Y$ is parametrized by arc-length. We may further assume that $\Modd_Q(\Gamma')>0$, since otherwise there is nothing to prove. Denote by $\Gamma_0$ the family of curves $\gamma$ in $X$ on which $f$ fails to be absolutely precontinuous on a subcurve of $\gamma$. By Remark~\ref{rmk:on KI imply abs precont}, $\Modd_Q(f(\Gamma_0))=0$.
	
	By Lemma~\ref{lemma:1.4} and the above fact, we only need to show that for $Q$-almost every $\gamma'\in \Gamma'\backslash f(\Gamma_0)$, there are curves $\gamma_1,\dots,\gamma_m\in \Gamma\backslash \Gamma_0$ and subcurves $\gamma_1',\dots,\gamma_m'$ of $\gamma'$ such that for each $i=1,\dots,m$, $\gamma_i'=f(\gamma_i)$, and for almost every $s\in [0,l(\gamma')]$, $\gamma_i(s)=\gamma_j(s)$ if and only if $i=j$.
	
	Towards a contradiction, suppose this is not true, then there exists a subfamily $\Gamma_0'\subset \Gamma'\backslash f(\Gamma_0)$ with $\Modd_Q(\Gamma_0')>0$ such that for each $\gamma'\subset \Gamma_0'$, there are curves $\gamma_1,\dots,\gamma_m\in \Gamma\backslash \Gamma_0$ and subcurves $\gamma_1',\dots,\gamma_m'$ of $\gamma'$ with $\gamma_i'=f(\gamma_i)$ for each $i=1,\dots,m$, but for some $i\neq j$ and some set $E\subset [0,l(\gamma')]$ of positive $\mathcal{H}^1$-measure, $\gamma_i(s)=\gamma_j(s)$ for all $s\in E$. Since $\#\{k:\gamma_k(t)=x\}\leq i(x,f)$, $\gamma_i(E)\subset \mathcal{B}_f$ and so $\gamma_i'(E)=f(\gamma_i(E))\subset f(\mathcal{B}_f)$.
	On the other hand, since $f$ is absolutely precontinuous on every curve $\gamma$, we have by Lemma~\ref{lemma:Condition N inverse on curve} that
	$$\mathcal{H}^1(\gamma'(E)\cap f(\mathcal{B}_f))>0$$ 
	for every $\gamma'\in \Gamma_0'$. In particular, this means that the family $\hat{\Gamma}$ of curves in $Y$ that intersects $f(\mathcal{B}_f)$ with positive length is not null with respect to the $Q$-modulus, which  contradicts with the fact that $\nu(f(\mathcal{B}_f))=0$ (cf.~\cite[Lemma 5.2.15]{hkst15}).
\end{proof}

\subsection{Geometric modulus inequalities for mappings of finite linear dilatation}\label{subsec:Geometric modulus inequalities for mappings of finite linear dilatation}
In this section, we give a brief discussion on weighted geometric modulus inequalities for mappings with finite linear dilatation. These weighted modulus inequalities are basically known for the so-called \textit{mappings of finite distortion} with certain integrability assumptions on the distortion; see for instance~\cite{ko06} for the $K_I$-inequality, \cite{r04} for the $K_O$-inequality, and the monographs~\cite{im01,hk14LNM} for more information on the theory of mappings of finite distortion in Euclidean spaces. As far as we know, there are no known results of this type even in the Euclidean spaces, due to the fact that the linear dilatation is of infinitesimal nature and is often difficult to argue directly. We also refer the interested readers to~\cite{ka02,km02} for more (analytic) properties of such mappings.

Let $f\colon X\to Y$ be a homeomorphism between two Ahlfors $Q$-regular metric measure spaces. Let us assume that $h_f(x)<\infty$ for all $x\in X$. Then by~\cite[Theorem 1.1]{w14}, $f$ satisfies Condition $N$ on $Q$-a.e. curve $\gamma$ in $X$. Moreover, $\lip f$ is a $Q$-weak upper gradient of $f$.

Set the ``outer distortion" $K_O$ of $f$ as
\begin{equation*}
K_O(x)=
\begin{cases}
\frac{\lip f(x)^Q}{J_f(x)} & \text{if } J_f(x)>0 \\
1 & \text{if } J_f(x)=0.
\end{cases}
\end{equation*}
Then it is clear that $c\leq K_O(x)<\infty$. Thus if we define a new measure $\lambda$ on $X$ via $d\lambda(x)=\frac{d\mu(x)}{K_O(x)}$. Then we also have that $f$ satisfies Condition $N$ on $Q$-a.e. curve with respect to the measure $\lambda$ as well. Consequently, $\lip f$ is also a $Q$-weak upper gradient of $f$ (with respect to the measure $\lambda$). Moreover, by definition, we have 
\begin{align*}
	\lip f(x)^Q\leq cK_O(x)J_f(x)=c\frac{d\nu}{d\lambda} \quad \text{for }\lambda\text{-a.e. } x\in X,
\end{align*}
which means $f\colon (X,\lambda)\to (Y,\nu)$ is an analytically $c$-quasiconformal mapping with exponent $Q$. As a consequence of the equivalence of geometric and analytic definitions of quasiconformality for arbitrary metric measure spaces~\cite[Theorem 1.1]{w12proc}, we obtain the following weighted $K_O$-inequality with exponent $Q$:
\begin{align*}
		\Modd_{Q,(cK_O)^{-1}}(\Gamma)\leq \Modd_Q(f(\Gamma)).
\end{align*} 
Note that the above modulus inequality holds for every curve family $\Gamma$, without omitting a exceptional family of curves $\gamma$ where $f$ fails to be absolutely continuous along $\gamma$; compare with the $K_O$-inequality from~\cite{r04}.

Let us now turn to the appropriate weighted $K_I$-inequality. First of all, note that we have (see e.g.~\cite[Equation 8]{w14}) 
\begin{align*}
	\lip f^{-1}(y)^Q\leq ch_f(f^{-1}(y))^QJ_{f^{-1}}(y)
\end{align*}
for all $y\in Y$. If we set 
\begin{equation*}
K_I(x)=
\begin{cases}
\frac{\lip f^{-1}(f(x))^Q}{J_{f^{-1}}(f(x))} & \text{if } J_f(x)>0 \\
1 & \text{if } J_f(x)=0,
\end{cases}
\end{equation*}
then $c\leq K_I(x)<\infty$. If we define a new measure $\lambda$ on $X$ by $d\lambda(x)=K_I(x)d\mu(x)$, then $\lambda\ll \mu$. Thus if $f^{-1}$ satisfies Condition $N$ with respect to $\mu$, i.e., $(f^{-1})^*\mu\ll \nu$, then $f^{-1}$ also satisfies Condition $N^{-1}$ with respect to $\lambda$. By a similar reason as in the previous situation, we know that $\lip f^{-1}$ is a $Q$-weak upper gradient of $f$. Moreover, we have      
\begin{align*}
	\lip f^{-1}(y)^Q\leq c\frac{d(f^{-1})^*\lambda}{d\nu}(y)\quad \text{for }\nu\text{-a.e. }y\in Y.
\end{align*}
Indeed, since $f^{-1}$ satisfies Condition $N$, $J_f(x)>0$ for $\mu$-a.e. $x\in X$. It suffices to show that for $\nu$-a.e. $y\in Y$, we have
\begin{align*}
	K_I(f^{-1}(y))\frac{d(f^{-1})^*\mu}{d\nu}(y)=\frac{d(f^{-1})^*\lambda}{d\nu}(y)
\end{align*} 
or, equivalently,
\begin{align*}
	\int_B 	K_I(f^{-1}(y))\frac{d(f^{-1})^*\mu}{d\nu}(y)d\nu(y)=\int_B \frac{d(f^{-1})^*\lambda}{d\nu}(y)d\nu(y)
\end{align*}
for all Borel set $B$ in $Y$. This follows from the area formula~\eqref{eq:area inequality general 1} and our definition of $\lambda$. Namely,
\begin{align*}
\int_B 	K_I(f^{-1}(y))\frac{d(f^{-1})^*\mu}{d\nu}(y)d\nu(y)&=\int_B K_I(f^{-1}(y))d(f^{-1})^*\mu(y)=\int_{f^{-1}(B)}K_I(x)d\mu(x)\\
&=\int_{f^{-1}(B)}d\lambda(x)=\int_{B} \frac{d(f^{-1})^*\lambda}{d\nu}(y)d\nu(y).
\end{align*}
This means that $f\colon (Y,\nu)\to (X,\lambda)$ is analytically $c$-quasiconformal with exponent $Q$. Again, by the equivalence of geometric and analytic definitions of quasiconformality for arbitrary metric measure spaces, we obtain the following weighted $K_I$-inequality with exponent $Q$:
\begin{align*}
	\Modd_Q(\Gamma)\leq \Modd_{Q,cK_I}(f(\Gamma)).
\end{align*}

Note that the assumption $f^{-1}$ satisfies Condition $N$ is equivalent to that $f$ satisfies Condition $N^{-1}$ (or $J_f>0$ $\mu$-a.e. in $X$), we may summarize the preceding results in the following form.

\begin{theorem}\label{thm:K_O for map of finite linear dilatation}
	Let $f\colon X\to Y$ be a homeomorphism between two Ahlfors $Q$-regular metric measure spaces. If $h_f(x)<\infty$ for all $x\in X$, then $f$ satisfies the weighted $K_O$-inequality with exponent $Q$, i.e.,
	\begin{align*}
		\Modd_{Q,K_O^{-1}}(\Gamma)\leq \Modd_Q(f(\Gamma)),
	\end{align*}	
	for some measurable function $K_O\colon X\to [1,\infty)$. Moreover, if $f$ satisfies Condition $N^{-1}$, then there exists a measurable function $K_I\colon X\to [1,\infty)$, such that $f$ satisfies the weighted $K_I$-inequality with exponent $Q$
	\begin{align*}
	\Modd_Q(\Gamma)\leq \Modd_{Q,K_I}(f(\Gamma)).
	\end{align*}
\end{theorem}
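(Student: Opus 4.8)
The plan is to derive both weighted modulus inequalities from the unconditional equivalence of the analytic and geometric definitions of quasiconformality in arbitrary metric measure spaces, \ie from \cite[Theorem 1.1]{w12proc}, by reweighting the measure on $X$ so that the (reweighted) homeomorphism becomes analytically quasiconformal with a universal constant. The inputs I would take from \cite{w14} are: since $h_f(x)<\infty$ for every $x\in X$, the map $f$ satisfies Condition $N$ along $Q$-almost every curve in $X$, $\lip f$ is a $Q$-weak upper gradient of $f$, and (applied to $f^{-1}$) one has the pointwise estimate $\lip f^{-1}(y)^Q\le c\,h_f(f^{-1}(y))^Q J_{f^{-1}}(y)$ for every $y\in Y$; Ahlfors $Q$-regularity enters only through the two-sided comparison of $\mu$- and $\nu$-measures of balls with powers of their radii.

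For the $K_O$-inequality I would first introduce the pointwise outer distortion $K_O(x)=\lip f(x)^Q/J_f(x)$ where $J_f(x)>0$, and $K_O(x)=1$ otherwise, and check that $c\le K_O(x)<\infty$ for every $x$: the upper bound is the everywhere-finiteness of $\lip f$, the lower bound is the Ahlfors-regularity estimate $J_f(x)\le c^{-1}\lip f(x)^Q$. Then I set $d\lambda=K_O^{-1}\,d\mu$; since $K_O$ and $K_O^{-1}$ are positive and $\mu$-a.e.\ finite, $\lambda$ and $\mu$ are mutually absolutely continuous, so that $\lip f$ remains a $Q$-weak upper gradient of $f\colon(X,\lambda)\to(Y,\nu)$ and Condition $N$ along $Q$-almost every curve is preserved relative to $\lambda$. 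By the chain rule for Radon--Nikodym derivatives, $\lip f(x)^Q=K_O(x)J_f(x)=\frac{d(f^*\nu)}{d\lambda}(x)$ wherever $J_f(x)>0$, while $\lip f(x)=0$ holds $\lambda$-a.e.\ on the complementary (null) set by the area inequality~\eqref{eq:area inequality general 1}; hence $f\colon(X,\lambda)\to(Y,\nu)$ is analytically $c$-quasiconformal with exponent $Q$. Applying \cite[Theorem 1.1]{w12proc} yields the geometric (\ie $K_O$-) inequality for $f$ with respect to $\lambda$, which, after unwinding $d\lambda=K_O^{-1}d\mu$, is precisely $\Modd_{Q,(cK_O)^{-1}}(\Gamma)\le\Modd_Q(f(\Gamma))$; relabelling $cK_O$ as $K_O$ (still valued in $[1,\infty)$) gives the first assertion.

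For the $K_I$-inequality I would argue symmetrically with $f^{-1}$ in place of $f$, now using the hypothesis that $f$ satisfies Condition $N^{-1}$, equivalently $J_f>0$ $\mu$-a.e., equivalently $f^{-1}$ satisfies Condition $N$ with respect to $\mu$. I set $K_I(x)=\lip f^{-1}(f(x))^Q/J_{f^{-1}}(f(x))$ (and $=1$ on the exceptional null set), note $c\le K_I(x)<\infty$ via the estimate above, and define $d\lambda=K_I\,d\mu$ on $X$, so $\lambda\ll\mu$. The area formula~\eqref{eq:area inequality general 1} together with the definition of $\lambda$ identifies, for $\nu$-a.e.\ $y$, the Radon--Nikodym derivative $\frac{d(f^{-1})^*\lambda}{d\nu}(y)$ with $K_I(f^{-1}(y))\,\frac{d(f^{-1})^*\mu}{d\nu}(y)=\lip f^{-1}(y)^Q$, so that $f^{-1}\colon(Y,\nu)\to(X,\lambda)$ is analytically $c$-quasiconformal with exponent $Q$ --- here $\lip f^{-1}$ is unaffected as a $Q$-weak upper gradient of $f^{-1}$ because changing the measure only on the \emph{target} space $X$ does not alter moduli of curve families in $Y$. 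One more application of \cite[Theorem 1.1]{w12proc}, this time to $f^{-1}$, gives the geometric inequality for $f^{-1}$ relative to $\nu$ and $\lambda$, which, rewritten in terms of $\Gamma$ and the weight $K_I$, is the asserted weighted $K_I$-inequality $\Modd_Q(\Gamma)\le\Modd_{Q,K_I}(f(\Gamma))$ (again absorbing the constant $c$ into $K_I$).

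The step I expect to be the main obstacle is the measure-theoretic bookkeeping around the degenerate set $\{J_f=0\}$ (respectively the set on which $\lip f$ or $\lip f^{-1}$ vanishes): one must know that there $\lip f=0$ holds $\lambda$-a.e., so that the pointwise distortion inequality is genuinely an analytic quasiconformality bound, and this is where the Condition-$N$-along-curves statement and the area inequality from \cite{w14} are used. A secondary delicate point is that reweighting must not create or destroy $Q$-exceptional curve families: in the $K_O$ case this is automatic because $\lambda$ and $\mu$ are mutually absolutely continuous with $\mu$-a.e.\ finite densities in both directions, and in the $K_I$ case it is harmless because the measure is changed only on the target of $f^{-1}$. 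Finally, one should be careful about which side of the homeomorphism carries the weight so that the $K_O$-version of \cite[Theorem 1.1]{w12proc} applies verbatim and produces the weighted inequalities in the stated orientation.
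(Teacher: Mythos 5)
Your proposal reproduces the paper's argument essentially verbatim: the same pointwise definitions of $K_O$ and $K_I$, the same reweighting $d\lambda=K_O^{-1}\,d\mu$ (resp.\ $d\lambda=K_I\,d\mu$), the same inputs from~\cite{w14} ($\lip f$ a $Q$-weak upper gradient, Condition~$N$ along $Q$-a.e.\ curve, the pointwise bound on $\lip f^{-1}$), and the same final appeal to the unconditional analytic--geometric equivalence of~\cite[Theorem 1.1]{w12proc} applied once to $f$ and once to $f^{-1}$. The one small slip is the parenthetical ``(null)'' attached to the set $\{J_f=0\}$ in the $K_O$ argument: that set need not be $\mu$-null (that would be Condition~$N^{-1}$, which is not assumed there); what one actually needs, and what the area inequality supplies, is only that $\lip f$ vanishes $\mu$-a.e.\ (hence $\lambda$-a.e.) on that set, so the analytic inequality holds there trivially.
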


For cleanness of our exposition, we did not restrict ourself to the most general situation. However, it is worth pointing out that similar results as in Theorem~\ref{thm:K_O for map of finite linear dilatation} hold if we replace the assumption $h_f<\infty$ by the symmetric one $h_f^*<\infty$. There is nothing essentially new taking into account the asymmetry of our preceding arguments. Secondly, the assumption that $f$ is a homeomorphism can be weakened as $f$ is a branched covering, i.e., continuous, discrete and open mapping with locally bounded multiplicity. But some care need to be taken to conclude that $f$ satisfies Condition $N$ and/or Condition $N^{-1}$ on $Q$-almost every curves. This can be done by following the arguments from~\cite{w14} and using the pullback factorization, but we do not repeat the arguments here and leave it as an exercise for those interested readers.

\subsection{Branched quasisymmetric mappings}\label{subsec:Branched quasisymmetric mappings}

In the theory of metrically quasiconformal mappings, there is a proper subclass of mappings, termed \textit{quasisymmetric mappings}, which carry stronger, global metric information, but less restrictive as those bi-Lipschitz mappings.

\begin{definition}[Quasisymmetric mappings]\label{def:quasisymmetric mapping}
Let $\eta\colon [0,\infty)\to [0,\infty)$ be a homeomorphism. A homeomorphism $f\colon X\to Y$ is $\eta$-\textit{quasisymmetric} if 
\begin{equation}\label{eq:def of QS}
\frac{d(f(x),f(y))}{d(f(x),f(z))}\leq \eta\Big(\frac{d(x,y)}{d(x,z)} \Big)
\end{equation}
for all distinct triple $x,y,z\in X$.
\end{definition} 

The class of $\eta$-quasisymmetric mappings was introduced by Tukia and V\"ais\"al\"a~\cite{tv80} in their study of geometric embeddings of metric spaces. The importance of these mappings was soon realized in the study of (metrically) quasiconformal mappings; we refer the interested readers to the fundamental paper of Heinonen and Koskela~\cite{hk98} for more information on these development.

We next define a proper subclass of metrically quasiregular mappings that carry similar global metric information, but less restrictive as those BDD mappings.

\begin{definition}[Branched quasisymmmetric mappings]\label{def:branched qs}
Let $f\colon X\to Y$ be a branched covering. We say that $f$ is \textit{branched quasisymmetric} (BQS) if there exists a homeomorphism $\eta\colon [0,\infty)\to [0,\infty)$ such that
\begin{equation}\label{eq:def of BQS}
\frac{\diam f(E)}{\diam f(F)}\leq \eta\Big(\frac{\diam E}{\diam F} \Big)
\end{equation}
for all intersected 
continua $E, F\subset X$.
\end{definition}

\begin{definition}[Generalized quasisymmetric mappings]\label{def:generalized qs}
A homeomorphism $f\colon X\to Y$ is \textit{generalized $\eta$-quasisymmetric} if it is branched $\eta$-quasisymmetric. 
\end{definition}

We have the following concrete characterization of branched quasisymmetric mappings via the pullback factorization.
\begin{proposition}\label{prop:branched qs via pullback factorization}
A branched covering	$f\colon X\to Y$ is branched $\eta$-quasisymmetric if and only if $g\colon X\to X^f$ is generalized $\eta$-quasisymmetric.
\end{proposition}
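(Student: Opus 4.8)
The plan is to unwind all three maps in the pullback factorization $f=\pi\circ g$, where $g\colon X\to X^f$ is the identity on the level of sets and $\pi\colon X^f\to Y$ is the $1$-BDD projection (Lemma~\ref{lemma:pullback property 1}), and to exploit the fact that $\pi$ preserves diameters of continua exactly. Concretely, for any continuum $E\subset X$ we have, setting $\tilde E=g(E)\subset X^f$ (the same underlying set, different metric), that $\diam_Y(f(E))=\diam_Y(\pi(\tilde E))=\diam_{X^f}(\tilde E)=\diam_{X^f}(g(E))$, the middle equality being precisely the $1$-BDD property of $\pi$ established in Lemma~\ref{lemma:pullback property 1}. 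Thus the quantity $\diam f(E)$ appearing in Definition~\ref{def:branched qs} coincides with $\diam g(E)$, measured in the pullback metric $f^*d_Y$ on $X^f$.

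With this identification the proof is essentially a tautology. First I would record that $g$, being the identity map between the topological space $X$ and $X^f$, is a homeomorphism (Lemma~\ref{lemma:pullback property 1}), so that it makes sense to ask whether $g$ is generalized $\eta$-quasisymmetric in the sense of Definition~\ref{def:generalized qs}, i.e.\ branched $\eta$-quasisymmetric. Next, for any pair of intersecting continua $E,F\subset X$, the images $g(E),g(F)\subset X^f$ are the same sets and still intersect, and conversely every pair of intersecting continua in $X^f$ is of this form via $g^{-1}$; since $g$ is a homeomorphism, $E\subset X$ is a continuum if and only if $g(E)\subset X^f$ is. Substituting the identity $\diam f(E)=\diam_{X^f} g(E)$ (and likewise for $F$) into the defining inequality~\eqref{eq:def of BQS} for $f$ yields exactly the defining inequality~\eqref{eq:def of BQS} for $g$ regarded as a branched covering $X\to X^f$, with the same $\eta$; and the reverse substitution gives the converse direction.

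There is essentially no hard step here: the only point requiring a word of care is that in Definition~\ref{def:branched qs} ``continua'' may be replaced by curves (the paragraph following Definition~\ref{def:BDD mapping}, using local path connectivity and the fact established in Remark~\ref{rmk:on continuum and path} that the infimum in the pullback metric is realized by curves), so one should make sure the class of competitors $E,F$ matches on the two sides; since $g$ is a homeomorphism it carries curves to curves and continua to continua, so this is automatic. I would therefore structure the proof as: (1) invoke Lemma~\ref{lemma:pullback property 1} to get that $\pi$ is $1$-BDD and $g$ is a homeomorphism; (2) deduce $\diam_Y(f(E))=\diam_{X^f}(g(E))$ for every continuum $E\subset X$; (3) observe that $E\mapsto g(E)$ is a bijection between intersecting pairs of continua in $X$ and in $X^f$; (4) substitute into~\eqref{eq:def of BQS} to pass between the two statements with the identical modulus $\eta$. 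The main ``obstacle'', such as it is, is purely bookkeeping: keeping straight which metric ($d_X$, $f^*d_Y$, or $d_Y$) is being used to measure each diameter, and noting that the domain side of $g$ carries the original topology of $X$ but its target carries $f^*d_Y$, so that $\diam g(E)$ always refers to the pullback metric.
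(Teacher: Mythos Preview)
Your proposal is correct and follows essentially the same approach as the paper: both hinge on the identity $\diam_Y(f(E))=\diam_{X^f}(g(E))$ for every continuum $E\subset X$, which is immediate from the $1$-BDD property of $\pi$ (Lemma~\ref{lemma:pullback property 1}), and then substitute this into~\eqref{eq:def of BQS} in both directions. The paper's proof is slightly more terse, but the logic and the key lemma invoked are identical.
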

\begin{proof}
	Suppose first that $g\colon X\to X^f$ is generalized $\eta$-quasisymmetric. Fix two continua $E,F\subset X$ with $E\cap F\neq \emptyset$. Note that $\pi\colon X^f\to Y$ is 1-BDD, and we have 
	\begin{align*}
	\frac{\diam f(E)}{\diam f(F)}=\frac{\diam \pi\circ g(E)}{\diam \pi\circ g(F)}=\frac{\diam g(E)}{\diam g(F)}\leq \eta\Big(\frac{\diam E}{\diam F} \Big),
	\end{align*}
	from which we infer that $f$ is branched $\eta$-quasisymmetric.
	
	Next, suppose that $f\colon X\to Y$ is branched $\eta$-quasisymmetric. Fix two continua $E,F\subset X$ with $E\cap F\neq \emptyset$. Then
	\begin{align*}
	\frac{\diam g(E)}{\diam g(F)}=\frac{\diam \pi\circ g(E)}{\diam \pi\circ g(F)}=\frac{\diam f(E)}{\diam f(F)}\leq \eta\Big(\frac{\diam E}{\diam F} \Big),
	\end{align*}
	from which we conclude that $g$ is generalized $\eta$-quasisymmetric.
\end{proof}

We next show that generalized quasisymmetric mappings between bounded turning metric spaces are quasisymmetric, quantitatively.

\begin{proposition}\label{prop:injective branched qs}
	Let $X$ have $c_0$-bounded turning and $Y$ $c$-bounded turning, and let $f\colon X\to Y$ be a homeomorphism. Then $f$ is generalized $\eta$-quasisymmetric if and only if it is $\psi$-quasisymmetric, quantitatively.
\end{proposition}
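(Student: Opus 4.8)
The plan is to prove both implications of Proposition~\ref{prop:injective branched qs} by translating the metric conditions about pairs of points into the equivalent conditions about intersecting continua, using the bounded turning hypotheses on $X$ and $Y$ as the two-way dictionary. The key observation is that for distinct points $x,y,z \in X$ with, say, $d(x,y) \geq d(x,z)$, one can form the two continua $E = \{x\} \cup \gamma_{xy}$ and $F = \{x\} \cup \gamma_{xz}$, where $\gamma_{xy}$ and $\gamma_{xz}$ are continua joining $x$ to $y$ and $x$ to $z$ respectively; these intersect (at $x$), and by $c_0$-bounded turning we may take $\diam(\gamma_{xy}) \leq c_0 d(x,y)$ and $\diam(\gamma_{xz}) \leq c_0 d(x,z)$. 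Conversely, $\diam E \geq d(x,y)$ and $\diam F \geq d(x,z)$, so the diameter ratio $\diam E / \diam F$ is comparable, within a factor $c_0$ and up to the trivial lower bound, to the distance ratio $d(x,y)/d(x,z)$.

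First I would establish the easier direction: generalized $\eta$-quasisymmetry implies $\psi$-quasisymmetry. Given distinct $x,y,z$, construct $E$ and $F$ as above. Since $f$ is a homeomorphism, $f(E)$ and $f(F)$ are continua in $Y$ intersecting at $f(x)$, so $\diam f(E) \geq d(f(x),f(y))$ and, using $c$-bounded turning of $Y$ together with the fact that $f(E)$ is contained in the $f(x)$-joining continuum, one also gets control $\diam f(E) \leq$ (something comparable to) the appropriate distance. More precisely, since $f(E)$ is a continuum joining $f(x)$ and $f(y)$, its diameter is at least $d(f(x),f(y))$, but I would rather go through the target bounded turning in the other direction: choose $E$ itself so that $f(E)$ has controlled diameter. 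The cleanest route is: take $E$ to be a continuum in $X$ joining $x$ and $y$, and $F$ a continuum joining $x$ and $z$, with $\diam E \leq c_0 d(x,y)$, $\diam F \leq c_0 d(x,z)$; then $\diam f(F) \geq d(f(x),f(z))$ and $\diam f(E) \leq$ ... here one needs an \emph{upper} bound on $\diam f(E)$ in terms of $d(f(x),f(y))$, which is where I instead run the construction so that $E$ is the \emph{smallest} such continuum and invoke that $f(E)$ joins $f(x),f(y)$ plus the inequality $\diam f(E)/\diam f(F) \leq \eta(\diam E/\diam F) \leq \eta(c_0 d(x,y)/d(x,z))$, combined with $\diam f(E) \geq d(f(x),f(y))$ and — for the denominator — $\diam f(F) \leq c\, d(f(x),f(z))$ by $c$-bounded turning of $Y$ applied to any continuum joining $f(x)$ to $f(z)$, noting $f(F)$ need not be minimal but we can instead bound $\diam f(F)$ below and use monotonicity of $\eta$. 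Assembling, $d(f(x),f(y)) \leq \diam f(E) \leq \eta(c_0 d(x,y)/d(x,z)) \diam f(F)$, and bounding $\diam f(F)$ above by $c\,d(f(x),f(z))$ after re-selecting $F$ as a near-minimal-image continuum, yields $\psi(t) = c\,\eta(c_0 t)$.

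For the converse, $\psi$-quasisymmetry implies generalized $\eta$-quasisymmetry: given intersecting continua $E, F \subset X$, pick $w \in E \cap F$, pick $x_E \in E$ and $x_F \in F$ with $d(w,x_E)$ close to $\diam E$ (so $d(w,x_E) \geq \tfrac12 \diam E$, say, using that for a continuum $\diam E \leq 2\sup_{p \in E} d(w,p)$) and similarly $d(w,x_F) \geq \tfrac12 \diam F$. Then apply $\psi$-quasisymmetry to the triple $w, x_E, x_F$ (and separately to $w, x_F, x_E$) to control $d(f(w),f(x_E))/d(f(w),f(x_F))$ by $\psi$ of a bounded multiple of $\diam E/\diam F$; since $d(f(w),f(x_E)) \geq \tfrac12 \diam f(E) \cdot$ (wait — one needs the reverse: to \emph{lower} bound $d(f(w),f(x_E))$ relative to $\diam f(E)$ we again use that $f(w) \in f(E)$ so $\diam f(E) \leq 2 \sup_{p} d(f(w),f(p))$, and the sup is attained near a diametral point which may differ from $f(x_E)$, so one applies $\psi$-quasisymmetry to that diametral triple instead). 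This bookkeeping — always applying the quasisymmetry inequality to the triple whose distances are comparable to the diameters in question, in the correct order — is the only subtle point. I would organize it by proving the elementary lemma: for a continuum $C$ and $w \in C$, $\sup_{p \in C} d(w,p) \in [\tfrac12 \diam C, \diam C]$, and then systematically reduce diameter ratios to distance ratios from the common point $w$ on both sides.

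The main obstacle will be the asymmetry between "joining" continua and "minimal-image" continua: the bounded turning hypotheses give continua with small \emph{source} diameter, but quasisymmetry of $f$ is stated for points, so one must be careful that the continuum realizing small source diameter also has controlled image diameter, which is not automatic and forces the argument to pass through points at every step rather than ever using continua as primitive objects. Once one commits to always reducing to triples of points via the diameter-versus-radius lemma, everything is routine; the constants $c_0$, $c$, and the modulus of continuity $\psi$ versus $\eta$ compose in the obvious way, giving the claimed quantitative dependence. I expect the proof to be about a page, with the elementary lemma stated first and then the two implications dispatched in parallel.
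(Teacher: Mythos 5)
Your proposal, once the false starts are pruned, is the paper's argument: in the forward direction you pick $E$ with small source diameter via bounded turning of $X$ and $F$ with small \emph{image} diameter via bounded turning of $Y$, then run the chain to get $\psi(t)=c\,\eta(c_0 t)$; in the reverse direction you pick $y\in E$ with $f(y)$ near-diametral in $f(E)$ and $z\in F$ near-diametral in $F$ itself and apply quasisymmetry to the triple $(x,y,z)$, giving $\psi(t)=2\eta(2t)$. The asymmetric selection you flag as the subtle point is exactly what the paper does, so the approach matches.
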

\begin{proof}
	Let us first assume that $f$ generalized $\eta$-quasisymmetric. Fix $x,y,z\in X$. Since $Y$ has $c$-bounded turning, there exists a continuum $F'\subset Y$ that joins $f(x)$ and $f(z)$ with $\diam F'\leq cd(f(x),f(z))$. For any continuum $E'\subset Y$ that joins $f(x)$ and $f(y)$, we have
	\begin{align*}
	 \frac{d(f(x),f(y))}{d(f(x),f(z))}&\leq \frac{\diam E'}{c^{-1}\diam F'}=\frac{c\diam f(E)}{\diam f(F)}\\
	 &\leq c\eta\Big(\frac{\diam E}{\diam F}\Big)\leq c\eta\Big(\frac{\diam E}{d(x,z)}\Big),
	\end{align*}
	where $E=f^{-1}(E')$ and $F=f^{-1}(F')$. Now the $c_0$-bounded turning condition on $X$ implies that we may select a continuum $E$, as the preimage of $E'$, so that $\diam E\leq c_0d(x,y)$. Thus we obtain
	\begin{align*}
		\frac{d(f(x),f(y))}{d(f(x),f(z))}\leq c\eta\Big(\frac{c_0d(x,y)}{d(x,z)}\Big).
	\end{align*}
	This means that $f$ is $\psi$-quasisymmetric with $\psi(t)=c\eta(c_0t)$.
	
	We next assume that $f$ is $\eta$-quasisymmetric. Fix two continua $E,F\subset X$ so that $E\cap F\neq \emptyset$. Let $x\in E\cap F$. Since $x\in E\cap F$, we may select $y\in E$ and $z\in F$ so that 
    $$\diam f(E)\leq 2d(f(x),f(y))\quad \text{and}\quad \diam F\leq 2d(x,z).$$
    Then it follows
    \begin{align*}
    	\frac{\diam f(E)}{\diam f(F)}&\leq \frac{2d(f(x),f(y))}{d(f(x),f(z))}\leq 2\eta\Big(\frac{d(x,y)}{d(x,z)}\Big)\\
    	&\leq 2\eta\Big(\frac{\diam E}{2^{-1}\diam F}\Big)=2\eta\Big(\frac{2\diam E}{\diam F}\Big).
    \end{align*}
    This implies that $f$ is generalized $\psi$-quasisymmetric with $\psi(t)=2\eta(2t)$.
\end{proof}
\begin{remark}\label{rmk:on qs to generalized qs}
	It is clear from the above proof that an $\eta$-quasisymmetric mapping $f\colon X\to Y$, between two arbitrary metric spaces (not necessarily having bounded turning), is generalized  $\psi$-quasisymmetric with $\psi(t)=2\eta(2t)$. 
\end{remark}

It is well-known that weak metrically quasiconformal mappings between metric spaces with locally $Q$-bounded geometry are locally quasisymmetric, quantitatively. The following result can be viewed as an analog in the branched category.
\begin{theorem}\label{thm:qr with bounded multiplicity is branched qs}
	Let $f\colon X\to Y$ be a weak metrically $H$-quasiregular mapping such that $N=N(f,X)<\infty$. Assume that both $X$ and $Y$ have locally $Q$-bounded geometry. Then $f$ is locally $\eta$-branched quasisymmetric, quantitatively, with $\eta$ depending only on $H$, $N$, and the data of $X$ and $Y$. 
\end{theorem}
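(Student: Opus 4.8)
The plan is to exploit the pullback factorization $f=\pi\circ g$ together with the branched characterization of quasisymmetry from Proposition~\ref{prop:branched qs via pullback factorization}, thereby reducing the branched statement to the known homeomorphic theory. By Proposition~\ref{prop:branched qs via pullback factorization}, it suffices to show that the lifting map $g\colon X\to X^f$ is locally generalized $\eta$-quasisymmetric. Since $Y$ has locally $Q$-bounded geometry, it has locally bounded turning (Lemma~\ref{lemma:poincare implies quasiconvex} makes $Y$ quasiconvex, quantitatively), and hence by the discussion following Definition~\ref{def:BDD mapping} and by Lemma~\ref{lemma:pullback property 2} the pullback space $X^f$ is locally uniformly Ahlfors $Q$-regular with the pointwise estimate~\eqref{eq:estimate for lambda}; moreover $X^f$ has $1$-bounded turning and, using $N(f,X)<\infty$ together with Lemma~\ref{lemma:pullback property 4}, $X^f$ is locally quasiconvex. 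Thus, away from the exceptional behavior of the branch set, $g$ is (locally) a homeomorphism between a space of locally $Q$-bounded geometry and a pointwise Ahlfors $Q$-regular, LLC space.

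First I would record, via Proposition~\ref{prop:characterization of QR via pullback factorization II}, that $h_g(x)\le c\,h_f(x)\le cH$ for all $x$, so $g$ is weak metrically quasiconformal (with a constant depending on $H$ and the bounded-turning constant of $Y$). Next, using Theorem~\ref{thm:metric implies all the other} applied to $g$ (with $\Delta=X$, $W=X^f$), the bound on $h_g$ upgrades to the analytic and geometric definitions for $g$, with constants depending only on $H$, $c$, and the data; running Theorem~\ref{thm:for analytic regularity} twice as in the proof of Theorem~\ref{thm:metric implies all the other} removes the dependence on the local (essential) index. Then the combination of the $K_O$- and $K_I$-inequalities for $g$, via Theorem~\ref{thm:bounded geometry I} (Step 2) — or directly the classical argument of Heinonen--Koskela~\cite{hk98} on spaces of bounded geometry — shows $g$ is metrically quasiconformal, and since $X^f$ is locally of $Q$-bounded geometry after also invoking that $\mu$ on $X$ is locally $Q$-regular, the standard theory (Heinonen--Koskela~\cite{hk98}, Heinonen--Koskela--Shanmugalingam--Tyson~\cite{hkst01}, cf.~\cite{tv80}) gives that $g$ is locally $\eta'$-quasisymmetric, quantitatively, with $\eta'$ depending only on $H$, $N$, and the data of $X$ and $Y$.

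Finally, since $X$ is locally of $Q$-bounded geometry it has locally bounded turning, and $X^f$ has $1$-bounded turning; hence Proposition~\ref{prop:injective branched qs} (the ``$\Leftarrow$'' direction, applied locally) converts the local $\eta'$-quasisymmetry of $g$ into local generalized $\eta$-quasisymmetry of $g$, with $\eta(t)=2\eta'(2t)$ up to the bounded-turning constants. Then Proposition~\ref{prop:branched qs via pullback factorization} transfers this back to $f$: $f$ is locally branched $\eta$-quasisymmetric with $\eta$ depending only on $H$, $N$, and the data of $X$ and $Y$.

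The main obstacle I anticipate is the multiplicity bookkeeping: the pullback measure $\lambda$ on $X^f$ is only \emph{pointwise} Ahlfors $Q$-regular, with the upper bound in~\eqref{eq:estimate for lambda} carrying a factor $i_{\ess}(z,\pi)$, so a naive application of the quasiconformal-to-quasisymmetric machinery would produce constants depending on $N$ in a way that must be tracked carefully rather than discarded; the point is that $N=N(f,X)<\infty$ keeps everything quantitative, and the double application of Theorem~\ref{thm:for analytic regularity} (exactly as in the proof of Theorem~\ref{thm:metric implies all the other}) removes the index-dependence from the \emph{dilatation} constants, leaving only an honest dependence on $N$ in the final modulus of quasisymmetry. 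A secondary technical point is that ``local'' quasisymmetry must be formulated on normal neighborhoods $U(x,f,r)$ (which form a neighborhood basis by Proposition~\ref{prop:existence of normal nbhd}), and one must check that the local quasiconvexity and local LLC constants of $X^f$ on such neighborhoods are uniform, which follows from Lemmas~\ref{lemma:pullback property 2}, \ref{lemma:pullback property 3}, and~\ref{lemma:pullback property 4} together with the local uniformity built into the definition of locally $Q$-bounded geometry.
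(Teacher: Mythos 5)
Your proposal is correct, but it re-derives within the paper's own machinery a fact that the paper simply cites. The paper's proof is much shorter: after the same initial reductions (pullback factorization, $g$ is weak metrically quasiconformal, $X^f$ inherits locally $Q$-bounded geometry from $Y$ via $N<\infty$), it directly invokes Balogh--Koskela--Rogovin~\cite[Theorem 5.2 and Remark 5.3]{bkr07}, which says precisely that an everywhere-finite, essentially bounded weak metric dilatation implies local quasisymmetry between spaces of locally $Q$-bounded geometry. You instead push $g$ through the full chain ``weak metric $\Rightarrow$ analytic $\Rightarrow$ geometric $\Rightarrow$ metric $\Rightarrow$ quasisymmetric'' via Theorem~\ref{thm:metric implies all the other}, Theorem~\ref{thm:for analytic regularity} (applied twice to kill index dependence), and Step 2 of Theorem~\ref{thm:bounded geometry I}, then cite~\cite{hkst01} at the end. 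That route works and is self-contained within the paper's framework, but it is essentially an in-house reconstruction of the very theorem of~\cite{bkr07} that the paper invokes; the extra care you take with the factor $i_{\ess}(z,\pi)$ in~\eqref{eq:estimate for lambda} is unnecessary here, since $N<\infty$ makes $\lambda$ honestly (uniformly) Ahlfors $Q$-regular with constant depending on $N$ by Lemma~\ref{lemma:pullback property 2}, which is all the cited result needs. Your concluding step — converting local quasisymmetry of $g$ to local branched quasisymmetry of $f$ via Propositions~\ref{prop:injective branched qs} and~\ref{prop:branched qs via pullback factorization} — matches the paper exactly.
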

\begin{proof}
	Let $f=\pi\circ g$ be the pullback factorization. Since $f$ has bounded multiplicity, $X^f$ has locally $Q$-bounded geometry. Since $f\colon X\to Y$ is weak metrically $H$-quasiregular, $g\colon X\to X^f$ is weak metrically $H$-quasiconformal, and by~\cite[Theorem 5.2 and Remark 5.3]{bkr07}, $g$ is locally $\eta$-quasisymmetric, quantitatively. The claim follows directly from Proposition~\ref{prop:branched qs via pullback factorization} and Proposition~\ref{prop:injective branched qs}. 
\end{proof}
\begin{remark}\label{rmk:on qr not branched qs}
	In Theorem~\ref{thm:qr with bounded multiplicity is branched qs}, the homeomorphism $\eta$ depends, quantitatively on the multiplicity $N$. In general, one can not get rid of this dependence from the theorem, as the simple analytic function $z\mapsto z^k$ indicated.
\end{remark}

\newpage
\section{Quasiregular mappings between equiregular subRiemannian manifolds}\label{sec:Quasiregular mappings between subRiemannian manifolds}
In this section, we collect the most recent development on the theory of quasiregular mappings in \textit{equiregular subRiemannian manifolds}. It is not known whether these manifolds have locally bounded geometry and thus our results from Section~\ref{sec:foundations of QR mappings in mms} cannot be applied directly in this setting. However, a combination of our results with that  from~\cite{gnw15,gw15} do give a relatively complete picture of the theory in these manifolds.

\newcommand{\Dst}{\mathcal{D}}

Let $M$ be a $C^\infty$-smooth manifold of dimension $n$ and let $\Dst\subset TM$ be a subbundle of constant rank $k$.
Define the following \emph{flag of distributions} inductively for $s\in\N$:
\[
\begin{cases}
\Dst^{0} &:= \{0\} \\
\Dst^{1} &:= \Gamma(\Dst) \\
\Dst^{s+1} &:= \Dst^{s} + \Co^\infty(M)\text{-}\Span\left\{[X,Z] : X\in\Dst^{1},\ Z\in\Dst^{s} \right\} ,
\end{cases}
\]
where $\Gamma(\Dst)$ is the set of all smooth sections of $\Dst$. For any set $S$ of vector fields, $\Co^\infty(M)\text{-}\Span(S)$ is the set of linear combinations of elements of $S$ with coefficients in the ring $\Co^\infty(M)$ of smooth functions $M\to\R$.

By definition we have
\[
\{0\}\subset \Dst^{1} \subset\dots\subset\Dst^{s}\subset\Dst^{s+1}\subset\dots\subset\Vec(M), 
\]
where $\Vec(M)$ is the space of all vector fields on $M$.
For any point $p\in M$, we have a \emph{pointwise flag} 
\[
\Dst^{s}_p := \{ Z(p):Z\in\Dst^{s}\} \subset T_pM .
\]

To such a flag we associate the following functions $M\to\N\cup\{+\infty\}$:
\begin{description}
	\item[Ranks] 	
	$k_s(p) := \dim(\Dst_p^{s})$, $s=1,2,\ldots$. 
	\item[Growth vector] 
	$n_s(p) := k_s(p)-k_{s-1}(p) = \dim( \Dst_p^{s}/\scr \Dst_p^{s-1} )$. 
	
	\item[Step]	
	$m(p) := \inf\{ s: \Dst_p^{s} = T_pM\}$. 
	
	\item[Weight]	
	for $i\in\{1,\dots,n\}$: $w_i:=s$ if and only if $i\in\{k_{s-1}+1,\dots,k_s\}$. 
	
\end{description} 
We have that that $k=k_1\le k_2\le\dots\le n$ and also $\sum_{i=1}^s n_i = k_s$. The function $p\mapsto (n_1(p),n_2(p),\dots)\in\N^\N$ is usually called the \emph{growth vector}. Notice that if $m(p)<\infty$, then 
\[
\{0\}\subset\Dst_p^{1}\subset\dots\subset\Dst_p^{m(p)}=T_pM .
\]
The subbundle $\Dst$ is usually called the \emph{horizontal distribution} and it is said to be \emph{equiregular} if $k_s$, and hence $n_s$ and $m$, are constants.
If $m<\infty$, then $\Dst$ is said to be \emph{bracket generating} and we have $k_s(p)=k_s=n$.
\begin{definition}[subRiemannian manifold]
	An \emph{equiregular subRiemannian manifold} is a triple $(M,\Dst,g)$ where $M$ is a smooth and connected manifold, $\Dst\subset TM$ is a bracket generating equiregular subbundle, and $g$ is a smooth inner product on the fibers $\mD_p$, $p\in M$, of $\mD$. 
\end{definition}

The inner product $g$ is called a \emph{horizontal metric} of $\Dst$. We use the notation $|v|_{g_p}$ or $\|v\|_{g_p}$ for the norm $\sqrt{g_p(v,v)}$ of a horizontal vector $v\in \Dst_p$. 
When there is no risk of confusion, we sometimes remove the subscript and write simply $|v|$ or $|v|_g$ etc.

\begin{definition}[subRiemannian distance]
	An absolutely continuous curve 
	$\gamma\colon [0,1]\to M$ is called a \emph{horizontal curve} if $\gamma'(t)\in \Dst_{\gamma(t)}$ for almost every $t\in[0,1]$. 
	
	The \emph{length} $l(\gamma)$ of a horizontal curve $\gamma\colon [0,1]\to M$ is
	\[
	l(\gamma) := \int_0^1\|\gamma'(t)\|\dd t.
	\]
	The \emph{subRiemannian distance} is defined by:
	\[
	d_g(p,q) := \inf_\gamma\left\{
	l(\gamma):
	\text{ $\gamma$ is a horizontal curve joining $p\in M $ to $q\in M$}
	\right\} .
	\]
\end{definition}

An equiregular subRiemannian manifold $M$ can be endowed in a canonical way with a smooth volume $\Vol_M$ that is called \emph{Popp measure}. The construction can be found in \cite{br13,m02}. Moreover, when edowed with the subRiemannian distance and the Popp volume measure, an equiregular subRiemannian manifold $M=(M,d,\Vol_M)$ becomes a geodesic metric measure space. 

For each $x_0\in M$, there exists a neighborhood $U$ of $x_0$ such that $(U,d,\Vol_M)$ has locally $Q$-bounded geometry. But we caution that it is not known whether the data associated to the local $Q$-bounded geometry at each point is uniformly bounded. Namely, for two dist points $x$ and $y$, both the metric spaces $(U,d,\Vol)$ and $(V,d,\Vol)$ has locally $Q$-bounded geometry, but the associated data (for locally $Q$-bounded geometry) might depend on $U$ and $V$, respectively.

All the different definitions of quasiregularity, as introduced in Section~\ref{subsec:Definitions of quasireguarity in general metric measure spaces}, directly make senses in the subRiemannian manifolds. However, there is a ``better" definition of quasiregularity that reflects the geometry of subRiemannian manifolds. This definition was introduced in~\cite{gl15}\footnote{There is one more equivalent definition, the so-called subRiemannian quasiregular mappings, introduced in~\cite{gl15}, but the formulation requires the local Popp extension and so we do not include it here.}.
\begin{definition}(Horizontally $K$-quasiregular mappings)
	Let $f: (M,g)\to(N,h)$ be a branched covering between equiregular subRiemannian manifolds $(M,g)$ and $(N,h)$. We say that the mapping $f$ is \textit{horizontally $K$-quasiregular with exponent $Q$} if $f\in N^{1,Q}_{loc}(M,N)$ and it satisfies
	$$
	||g^{-1}f^*h||^r\leq K \det(g^{-1}f^*h) \mbox{ a.e. in } M.
	$$
\end{definition}
The norm $||g^{-1}f^*h||$ is the sup-norm of $g^{-1}f^*h$. We remark that the horizontal $\binom{1}{1}$-tensor $g^{-1}f^*h$ is the same as 
$$
Df^*Df : (\mD_M,g) \to (\mD_N,h),
$$
where the adjoint $Df^*$ is defined as the adjoint of $Df$ between inner product spaces $(\mD_M,g)$ and $(\mD_N,h)$, cf.~\cite{l14}. We also note that we can regard $g^{-1}f^*h$ as an element of $\mbox{End}(\mD_M)$ implying that the eigenvalue problem for $g^{-1}f^*h$ is well defined, i.e. independent of the choice of basis for $\mD_M$.

The following result was obtained very recently by Liimatainen and the first-named author~\cite{gl15}.
\begin{theorem}\label{thm:equivalence of quasiregular mappings}
		Let $f\colon (M,g)\to (N,h)$ be a branched covering between two equiregular subRiemannian manifolds of homogeneous dimension $Q\geq 2$ and rank $k$. Then the following conditions are quantitatively equivalent:
		\begin{align*}
		1)\quad  f  &\mbox{ is a metrically $H$-quasiregular mapping}, \\
		2)\quad f  &\mbox{ is a weak metrically $H$-quasiregular mapping}, \\ 
		3)\quad f &\mbox{ is a horizontally $\widehat{H}$-quasiregular mapping with exponent $Q$}, \\ 
		4)\quad f  &\mbox{ is an analytically $K$-quasiregular mapping with exponent $Q$}, \\ 
		5)\quad f &\mbox{ is a geometrically $K$-quasiregular mapping with exponent $Q$}.
		\end{align*}
		Moreover, we have the following precise dependences on the quasiregularity constants $H,\widehat{H}$, and $K$:
		\begin{itemize}
			\item If $f$ is weak metrically $H$-quasiregular, then it is analytically $K$-quasiregular with $K=H^{Q-1}$ and horizontally $\widehat{H}$-quasiregular with $\widehat{H}=H^{k-1}$. 
			
			\item If $f$ is analytically $K$-quasiregular, then it is metrically $H$-quasiregular with $H=K$ and horizontally $\widehat{H}$-quasiregular with $\widehat{H}=K$. 
			
			\item If $f$ is horizontally $\widehat{H}$-quasiregular, then $f$ is analytically $K$-quasiregular with $K=\widehat{H}^{Q-1}$ and metrically $H$-quasiregular with $H=\widehat{H}$.
		\end{itemize}
\end{theorem}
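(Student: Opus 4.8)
The plan is to route the bulk of the argument through the local bounded-geometry theory of Section~\ref{sec:foundations of QR mappings in mms} (the pullback factorization entering only indirectly, through Theorem~\ref{thm:bounded geometry III}), and to use genuinely subRiemannian input --- namely a.e.\ Pansu differentiability --- for exactly two purposes: interpreting the intrinsic ``horizontal'' condition~(3), and extracting the \emph{sharp} constants, which is what the quantitative statement really requires, since equiregular subRiemannian manifolds are not known to have \emph{uniformly} locally $Q$-bounded geometry. For the first step I would record that an equiregular subRiemannian manifold of homogeneous dimension $Q$ is, in a neighborhood of each point, a metric measure space of locally $Q$-bounded geometry: the ball--box theorem gives local Ahlfors $Q$-regularity of the Popp volume, and these manifolds support a local $(1,1)$-, hence $(1,Q)$-, Poincar\'e inequality. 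Restricting $f$ to small relatively compact neighborhoods $U\subset M$, $V\subset N$ and applying Theorem~\ref{thm:bounded geometry III} to $f|_{U\cap f^{-1}(V)}$, then patching over a countable cover by separability of $M$, shows that (1), (2), (4), (5) are equivalent \emph{qualitatively}, with a priori non-uniform constants. The one place where the absence of uniform bounded geometry could bite is ``weak metric $\Rightarrow f\in N^{1,Q}_\loc(M,N)$''; here I would invoke Remark~\ref{rmk:on proof of m imply A}(3), i.e.\ the self-improvement of the Poincar\'e inequality (Keith--Zhong \cite{kz08}), which yields Sobolev regularity from $h_f$ finite everywhere and essentially bounded --- and \emph{membership} in the Sobolev class is a qualitative conclusion, insensitive to the size of constants. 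Along the way Lemma~\ref{lemma:Condition N+N inverse}, or the branch-set estimates of \cite{gw15,gnw15}, gives that $f$ satisfies Conditions $N$ and $N^{-1}$, so $J_f=d(f^*\Vol_N)/d\Vol_M$ is compatible with the area formulas of Section~\ref{sec:basic pullback studies}.

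\textbf{The Pansu-differential dictionary.} Granting $f\in N^{1,Q}_\loc(M,N)$, the differentiability theorem for Sobolev maps between equiregular subRiemannian manifolds (Margulis--Mostow \cite{mm95}) gives that $f$ is Pansu differentiable at $\Vol_M$-a.e.\ $x$, with horizontal differential $D_Hf(x)\colon\mD_x\to\mD_{f(x)}$ a graded linear map; let $\sigma_1(x)\ge\dots\ge\sigma_k(x)$ be its singular values. I would then establish the a.e.\ identities $|\nabla f|(x)=\|D_Hf(x)\|_{\mathrm{op}}=\sigma_1(x)$ (test the minimal $Q$-weak upper gradient against short horizontal segments in a Pansu chart), $J_f(x)=$ the Jacobian of the graded extension of $D_Hf(x)$ relative to the Popp volumes --- which, by dilation-homogeneity of the Popp volume, is a fixed symmetric function of $\sigma_1(x),\dots,\sigma_k(x)$ --- and, since $g^{-1}f^*h=(D_Hf)^{\ast}D_Hf$ as an endomorphism of $\mD_x$, $\|g^{-1}f^*h\|=\sigma_1^2$ and $\det(g^{-1}f^*h)=(\sigma_1\cdots\sigma_k)^2$.

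\textbf{Sharp constants, and closing the cycle.} Under this dictionary each of (1), (3), (4) becomes an a.e.\ inequality among $\sigma_1,\dots,\sigma_k$: metric $H$ reads $\sigma_1\le H\sigma_k$; analytic $K$ reads $\sigma_1^Q\le K\,J(D_Hf)$; horizontal $\widehat H$ reads a homogeneous inequality of the form $\|g^{-1}f^*h\|^{r}\le\widehat H\det(g^{-1}f^*h)$. The stated dependences --- weak metric $H\Rightarrow$ analytic $K=H^{Q-1}$, analytic $K\Rightarrow$ metric $H=K$ and horizontal $\widehat H=K$, horizontal $\widehat H\Rightarrow$ analytic $K=\widehat H^{Q-1}$ and metric $H=\widehat H$ --- are then just the (elementary but bookkeeping-heavy) optimal constants in these finite-dimensional inequalities, the exponents $Q-1$ and $k-1$ emerging from the interplay of the weight sequence $(w_i)$ with the homogeneity of the Popp Jacobian, and depending only on $Q$ and $k$ rather than on the local geometry; this is precisely what promotes the qualitative equivalence of the first step to the quantitative one. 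Finally $(4)\Leftrightarrow(5)$ \emph{with the same $K$} is Theorem~\ref{thm:equivalence of G and A} (valid in arbitrary metric measure spaces). Assembling $(2)\Rightarrow(4)\Rightarrow(1)\Rightarrow(2)$, $(4)\Leftrightarrow(3)$ and $(4)\Leftrightarrow(5)$ closes the loop.

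\textbf{Expected main obstacle.} The two delicate points are exactly (i) deducing $f\in N^{1,Q}_\loc$ from the weak metric hypothesis in the absence of \emph{uniform} local bounded geometry --- which works only because Sobolev membership is a qualitative conclusion and the self-improving Poincar\'e inequality is at hand --- and (ii) pinning down $J_f$ as a homogeneous function of the horizontal singular values, the worry being that the tangent-cone structure of a general equiregular manifold might spoil the symmetry of the Popp Jacobian; equiregularity and the dilation-homogeneity of the Popp volume are what rescue this. Everything downstream of Pansu differentiability --- the pointwise linear algebra and the general $(A)\Leftrightarrow(G)$ equivalence --- is then routine.
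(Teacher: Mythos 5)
The paper does not actually prove this theorem: Theorem~\ref{thm:equivalence of quasiregular mappings} is stated verbatim as a result "obtained very recently by Liimatainen and the first-named author~\cite{gl15}" and is used later (Corollary~\ref{coro:QR subRiemannian}) as a black box. So there is no internal proof to compare your proposal against; what follows is a review of the proposal as a self-contained argument.

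Your overall scaffolding --- localize to relatively compact neighborhoods where the Popp volume is Ahlfors $Q$-regular and a local $(1,Q)$-Poincar\'e inequality holds, feed this into Theorem~\ref{thm:bounded geometry III} and Theorem~\ref{thm:equivalence of G and A} to get the qualitative equivalences of (1),(2),(4),(5), and then use a.e.\ Pansu differentiability to identify $|\nabla f|$, $J_f$ and $g^{-1}f^*h$ pointwise and pin down the sharp constants --- is a sensible plan, and correctly isolates the real difficulties (non-uniform local bounded geometry; the structure of the subRiemannian Jacobian). Two of your steps, however, have genuine gaps.

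First, the pointwise identification of $J_f$. You assert that, relative to Popp volumes, the Jacobian of the Pansu differential $Df(x)$ is "a fixed symmetric function of $\sigma_1(x),\dots,\sigma_k(x)$" by "dilation-homogeneity of the Popp volume." Dilation homogeneity controls scaling but says nothing about rotations, and the polar-decomposition argument implicit in this claim --- write $D_Hf(x)=U\Sigma V^*$ and absorb $U,V$ into Popp-preserving graded automorphisms --- fails in general, because an orthogonal map of the horizontal layer need not extend to a graded Lie algebra automorphism. The Engel group is a concrete obstruction: there a graded automorphism has (in an adapted basis) lower-triangular horizontal part $A=\begin{pmatrix}a&0\\c&d\end{pmatrix}$, the full Jacobian of the graded extension is $a^4d^3$, and this is visibly not a function of the singular values of $A$ alone (it is independent of $c$, while the singular values are not). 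So in step three you cannot reduce the horizontal condition (3) and the volume Jacobian to the same scalar invariants by this route; the cited paper's reliance on the "Popp extension" is precisely where this degeneracy is handled, and it does not reduce to elementary symmetric-function bookkeeping.

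Second, the differentiability input. You invoke "the differentiability theorem for Sobolev maps between equiregular subRiemannian manifolds (Margulis--Mostow~\cite{mm95})," but \cite{mm95} proves $P$-differentiability a.e.\ for \emph{quasiconformal} homeomorphisms, not for general $N^{1,Q}_{\loc}$ branched coverings. At the point in your argument where you want to differentiate you have only Sobolev regularity plus one of the modulus inequalities; upgrading this to a.e.\ Pansu differentiability requires a separate result (for the branched case, something along the lines of~\cite{gnw15}), and is not a free corollary of the local bounded-geometry theory. This does not break the strategy --- the pullback factorization $f=\pi\circ g$ with $\pi$ 1-BLD and $g$ quasiconformal is a natural way to reduce it to the homeomorphic case --- but as written the step is unjustified.

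A smaller issue: the exponent $r$ in the definition of horizontal quasiregularity is never specified in this paper, and the constant bookkeeping you sketch ("exponents $Q-1$ and $k-1$ emerging from the weight sequence") does not obviously reproduce the stated exponents; with $\|g^{-1}f^*h\|=\sigma_1^2$ and $\det(g^{-1}f^*h)=\prod\sigma_i^2$ the naive computation gives $H^{2(k-1)}$ rather than $H^{k-1}$, so either the normalization of $\|\cdot\|$ or the value of $r$ in \cite{gl15} differs from what you assumed. This should be tracked down before asserting the sharp constants.
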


Based on Theorem~\ref{thm:equivalence of quasiregular mappings}, we will simply say that $f\colon M\to N$ is a $K$-quasiregular mapping if it is $K$-quasiregular according to one of five definitions in Theorem~\ref{thm:equivalence of quasiregular mappings}. We also refer the interested readers to~\cite{gnw15} for more analytic properties of quasiregular mappings in the subRiemannian manifolds.

As a particular application of our general theory of quasiregular mappings studied in the previous section, we obtain the important V\"ais\"al\"a's inequality when the subRiemannian manifolds are Ahlfors regular. 

\begin{corollary}\label{coro:QR subRiemannian}
	Let $f\colon M\to N$ be a $K$-quasiregular mapping between two Ahlfors $Q$-regular, $Q\geq 2$, equiregular subRiemannian manifolds. Then $f$ satisfies the V\"ais\"al\"a's inequality with constant $K_I$, where $K_I$ depends only on $K$ and the Ahlfors regularity constants of $M$ and $N$.
\end{corollary}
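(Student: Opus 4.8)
The plan is to obtain the conclusion from the machinery of Section~\ref{sec:foundations of QR mappings in mms}; concretely, from Theorem~\ref{thm:update poletsky to vaisala}, which promotes the Poletsky's inequality to the full V\"ais\"al\"a's inequality as soon as one knows that the image of the branch set is $\nu$-null. Hence it suffices to prove two things: (a) $f$ satisfies the Poletsky's inequality with exponent $Q$ and a constant $K_I$ depending only on $K$ and the Ahlfors regularity constants of $M$ and $N$; and (b) $\nu(f(\mathcal{B}_f)) = 0$.

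To prove (a), recall first that, by Theorem~\ref{thm:equivalence of quasiregular mappings}, the mapping $f$ is metrically $H$-quasiregular with $H$ depending only on $K$. Equipped with the subRiemannian distance and the Popp volume, $M$ and $N$ are geodesic metric measure spaces, hence length spaces, and in particular have $1$-bounded turning; by hypothesis they are moreover (globally, hence locally) Ahlfors $Q$-regular, and every point of $M$ has a neighborhood with locally $Q$-bounded geometry, so $M$ supports a local $(1,Q)$-Poincar\'e inequality. Since $h_f(x)\le H_f(x)<\infty$ for every $x\in M$ and $h_f(x)\le H_f(x)\le H$ for $\Vol_M$-a.e.\ $x$, the form of Theorem~\ref{thm:metric implies all the other} recorded in the third part of Remark~\ref{rmk:on proof of m imply A} (where, in the presence of a local Poincar\'e inequality, the everywhere boundedness of the linear dilatation is relaxed to a.e.\ boundedness) applies, and shows that $f$ is inverse analytically $K_I$-quasiregular with exponent $Q$, with $K_I$ depending only on $K$ and the Ahlfors constants. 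By Theorem~\ref{thm:equivalence of G and A}, $f$ is then strong inverse geometrically $K_I$-quasiregular with exponent $Q$; applying Definition~\ref{def:strong inverse geometric def for qr} with $m=1$ and $\Gamma'=f(\Gamma)$ gives precisely $\Modd_Q(f(\Gamma))\le K_I\Modd_Q(\Gamma)$ for every curve family $\Gamma$ in $M$, i.e.\ the Poletsky's inequality.

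To prove (b), note that the same reasoning (or directly Theorem~\ref{thm:equivalence of quasiregular mappings} together with Theorem~\ref{thm:equivalence of G and A}) also gives that $f$ is geometrically $K_O$-quasiregular with exponent $Q$. Therefore the quantitative porosity theorem of~\cite{gw15}, the metric-space counterpart of the Bonk--Heinonen--Sarvas theorem that is established directly from the $K_O$-inequality without appealing to the deeper analytic facts about $f$, shows that $f(\mathcal{B}_f)$ is a porous subset of $N$. As $N$ is Ahlfors $Q$-regular, a porous subset has Hausdorff dimension strictly smaller than $Q$, hence $\mathscr{H}^Q$-measure zero, and since $\nu\approx\mathscr{H}^Q$ by~\eqref{eq: comparable with Hausdorff measure}, we get $\nu(f(\mathcal{B}_f))=0$. (Alternatively, one may invoke the analysis of the branch set of subRiemannian quasiregular mappings in~\cite{gnw15}.)

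Combining (a) and (b) with Theorem~\ref{thm:update poletsky to vaisala} yields the V\"ais\"al\"a's inequality with exponent $Q$ and constant $K_I$, completing the proof. The step requiring the most care is (b): one has to invoke the porosity estimate of~\cite{gw15} in the version that depends only on the metric/geometric quasiregularity of $f$, so as not to argue circularly from V\"ais\"al\"a's inequality itself; a secondary technical point is the passage, via the local Poincar\'e inequality carried by subRiemannian manifolds, from the ``everywhere finite, essentially bounded'' linear dilatation coming out of the metric definition to the literally ``everywhere bounded'' hypothesis of Theorem~\ref{thm:metric implies all the other} (the conclusion being local, this localization is harmless).
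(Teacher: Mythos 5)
Your proposal is correct and follows essentially the same route as the paper's own proof: use Theorem~\ref{thm:equivalence of quasiregular mappings} to reduce to the metric definition, invoke Theorem~\ref{thm:metric implies all the other} together with Remark~\ref{rmk:on proof of m imply A} (to handle the a.e.\ versus everywhere boundedness issue via the local Poincar\'e inequality) to get the Poletsky's inequality, cite \cite{gw15} (or \cite{gnw15}) for $\mathscr{H}^Q(f(\mathcal{B}_f))=0$, and then conclude with Theorem~\ref{thm:update poletsky to vaisala}. Your write-up simply makes explicit a few intermediate steps (the passage through Theorem~\ref{thm:equivalence of G and A} and the strong inverse geometric definition, and the comparability $\nu\approx\mathscr{H}^Q$ needed for the branch-set nullity) that the paper leaves implicit.
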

\begin{proof}
By either~\cite[Corollary 1.2]{gw15} or~\cite[Theorem B]{gnw15}, we have $\mathcal{H}^Q\big(f(\mathcal{B}_f)\big)=0$ whenever $f\colon M\to N$ is metrically $K$-quasiregular. By the proof of Theorem~\ref{thm:metric implies all the other} and Remark~\ref{rmk:on proof of m imply A}, we know that $f$ satisfies the Poletsky's inequality with some constant $K_I'$ that depends quantitatively on $K$ and on the Ahlfors regularity constants of the spaces. Finally, the claim follows from Theorem~\ref{thm:update poletsky to vaisala}.
\end{proof}

We do not know whether an Ahlfors $Q$-regular equiregular subRiemannian manifold $M$ necessarily has locally $Q$-bounded geometry. If so, Corollary~\ref{coro:QR subRiemannian} would follow directly from Theorem~\ref{thm:bounded geometry III}. It is also clear from the above proof that Corollary~\ref{coro:QR subRiemannian} remains valid if both $M$ and $N$ are \textit{uniformly locally Ahlfors $Q$-regular}, i.e., there exists a positive constant $C_d$ such that for each point $x$, there is some positive radius $r_x$ making the measure $\Vol$ Ahlfors $Q$-regular with constant $C_d$ for the metric measure space $\big(B(x,r_x),d,\Vol\big)$. In other words, we require the Ahlfors regularity constant is uniform, but allowing the radius vary at each point.

As an obvious consequence of Theorem~\ref{thm:equivalence of quasiregular mappings} and Corollary~\ref{coro:QR subRiemannian}, we point out that under the same assumptions as in Corollary~\ref{coro:QR subRiemannian}, the class of quasiconformal mappings form a group. Namely, if $f\colon M\to N$ is a $K$-quasiconformal mapping, then $f^{-1}\colon N\to M$ is $K'$-quasiconformal, with $K'$ depending only on $K$ and the Ahlfors regularity constants of $M$ and $N$.

\newpage
\section{Bi-Lipschitz embeddability of BLD Euclidean spaces}\label{sec:biLipschitz embeddability}

\subsection{Bi-Lipschitz embedding problem}
An interesting question, also from the point of view of applications, asks when a given metric space admits a bi-Lipschitz embedding into some finite dimensional Euclidean spaces (see e.g.~\cite[Section 16.4]{h07}). However, in general, it turns out to be a  very difficult problem to find nontrivial naturally bi-Lipschitz invariant sufficient criteria for the embeddability; see~\cite{ds93,ds97,s93,s96b,s96c,s99,h03} and the references therein for both positive and negative results along this direction.

In this section, we consider the bi-Lipschitz embedding problem by assuming a priori the existence of a BDD covering with bounded multiplicity. This kind of assumption was imposed earlier by Heinonen and Rickman~\cite{hr02} in their study of geometric branched coverings and later it played an important role in the remarkable papers of Heinonen--Sullivan~\cite{hs02} and Heinonen--Keith~\cite{hk11} to obtain local bi-Lipschitz parametrization of certain metric spaces. 

Our first main result of this section is the following bi-Lipschitz embedding theorem for BDD coverings with finite multiplicity.

\begin{theorem}\label{thm:biLip embedding I}
	Let $f\colon X\to Y$ be a BDD branched covering, with $N=N(f,X)<\infty$, and suppose that $Y$ is doubling and that $X$ and $Y$ have bounded turning. Then there is a bi-Lipschitz embedding of $X$ into $Y\times \R^{c_d(N-1)}$, where $c_d$ depends only on the data of $Y$.
\end{theorem}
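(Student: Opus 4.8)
The plan is to pass to the pullback space, where the map becomes as close to an isometry as a branched covering can be, and then to append finitely many coordinates that ``unfold'' the at most $N$ sheets. First one reduces to $X^f$: say $f$ is $L$-BDD and $X$ has $c_X$-bounded turning; by Proposition~\ref{prop:nice properties of pullback factorization I} the lift $g\colon X\to X^f$ is $L$-BDD, and as recorded after that proposition $g$ is $Lc_X$-Lipschitz while $g^{-1}$ is $L$-Lipschitz, so $g$ is $Lc_X$-bi-Lipschitz. Hence it suffices to bi-Lipschitzly embed $X^f=(X,f^*d_Y)$ into $Y\times\R^{c_d(N-1)}$. On $X^f$ one has, by Lemmas~\ref{lemma:pullback property 1} and~\ref{lemma:pullback property 2}: $\pi$ is $1$-Lipschitz and $1$-BDD; since $Y$ has $c$-bounded turning, $\pi$ is $c$-co-Lipschitz and locally $c$-bi-Lipschitz on each stratum $X_k=\{z:N(z,\pi,X^f)=k\}$; $X^f$ has $1$-bounded turning; and, using $B(z,r)\subset U(z,\pi,r)\subset B(z,2r)$ together with the doubling of $Y$ and $N=N(\pi,X^f)<\infty$, one checks $X^f$ is (locally uniformly) doubling with constant depending only on the data of $Y$ and on $N$. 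One then builds a Lipschitz map $\Phi\colon X^f\to\R^{c_d(N-1)}$ for which $(\pi,\Phi)$ is bi-Lipschitz; the Lipschitz upper bound for $(\pi,\Phi)$ is automatic, so only the lower bound is at issue.

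Next comes a geometric dichotomy. Since $\pi$ is $1$-Lipschitz, $d_Y(\pi z_1,\pi z_2)\le f^*d_Y(z_1,z_2)$ always; if $d_Y(\pi z_1,\pi z_2)\ge\frac{1}{2c}f^*d_Y(z_1,z_2)$ then $\pi$ alone gives the lower bound. Otherwise, writing $t=f^*d_Y(z_1,z_2)$, every continuum $\alpha$ joining $z_1$ to $z_2$ has $\diam\pi(\alpha)=\diam\alpha\ge t>2c\,d_Y(\pi z_1,\pi z_2)$, so $\pi(\alpha)$ exits $B(\pi z_1,3t)$ and therefore $z_1,z_2$ lie in distinct components $U(z_1,\pi,3t),U(z_2,\pi,3t)$ of $\pi^{-1}(B(\pi z_1,3t))$. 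Moreover two distinct points of a $\pi$-fibre at $X^f$-distance $t$ force the normal radius there (Proposition~\ref{prop:existence of normal nbhd}) to be comparable to $t$, hence the branch set $\mathcal B_\pi$ is within distance $\asymp t$; so the remaining task for $\Phi$ is purely local: \emph{separate two points lying on distinct sheets over a common base ball of radius $r\asymp t$, sitting at resolution $\asymp r$ from $\mathcal B_\pi$, by an amount $\gtrsim r$}.

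The unfolding coordinate is built by a doubling-controlled colouring. Using local compactness and a Lipschitz partition of unity subordinate to a locally finite cover by normal neighbourhoods, one reduces to a patch of bounded diameter and uniform doubling, and works there at dyadic scales $r_j=2^{-j}$. For each $j$ fix a maximal $r_j$-net $\mathcal N_j\subset Y$; over each $p\in\mathcal N_j$ the set $\pi^{-1}(B(p,5r_j))$ splits into $\le N$ components (``$r_j$-sheets''), which one enumerates locally and labels by the vertices of a regular simplex in $\R^{N-1}$, the label faded to the barycentre exactly where one sheet's core comes within $2r_j$ of another's (near-merging). Distinct net points carry incompatible enumerations (monodromy), so one reconciles them by properly colouring $\mathcal N_j$ so that $r_j$-close net points get different colours, and placing the $\R^{N-1}$-label of a colour-$c$ net point into the $c$-th block of $\R^{c_d(N-1)}$; a count exploiting that sheets clustering at scale $r_j$ sit near a branch point of multiplicity $\le N$ (a clique of size $\le N$, costing $N-1$ labels) and that the doubling of $Y$ bounds by $c_d$ — a constant depending only on the data of $Y$ — the number of such clusters meeting a fixed one shows $c_d(N-1)$ colour-blocks suffice. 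Scaling each scale's contribution $\Phi_j$ so that $|\Phi_j|\asymp r_j$ and setting $\Phi=\sum_j\Phi_j$, a pair $z_1,z_2$ as above with $t\asymp r_j$ lies on distinct adjacent $r_j$-sheets receiving different labels in a common colour-block, while the other scales are either inactive near that pair or too small to cancel, so $|\Phi(z_1)-\Phi(z_2)|\gtrsim r_j\gtrsim t$; with the dichotomy this makes $(\pi,\Phi)$ bi-Lipschitz, and composing with $g$ embeds $X$ into $Y\times\R^{c_d(N-1)}$ (taking $Y=\R^n$ recovers Corollary~\ref{coro:biLip embedding II}).

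The delicate point is entirely in the last step: one must arrange that the multi-scale sum $\Phi=\sum_j\Phi_j$ is globally Lipschitz \emph{and} that its scales do not cancel each other's separating effect at a given pair's detection scale — the usual Assouad-type bookkeeping (annular supports concentrated at resolution $\asymp\dist(\cdot,\mathcal B_\pi)$, so each point lies in $O(1)$ active scales; overlaps controlled geometrically via doubling) — now complicated by branching, since the combinatorics of the sheet cover, hence the colouring, changes across scales (a label spent near a branch point at fine scales must be released consistently as sheets merge at coarser ones), and one must verify that the simplex-quotient device keeps the count at exactly $c_d(N-1)$ rather than $c_dN$. Checking that the locally finite patching preserves the bi-Lipschitz property is routine but must be carried out.
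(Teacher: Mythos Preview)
Your reduction to $X^f$ and your dichotomy are correct and match the paper: the paper formalises your dichotomy as a separate lemma (Lemma~\ref{lemma:for biLip embedding}), which says that an $L$-Lipschitz $\phi\colon X\to W$ satisfying $\varepsilon\,d(x,x')\le d(\phi(x),\phi(x'))$ \emph{only for pairs with $f(x)=f(x')$} already makes $f\times\phi$ bi-Lipschitz. So the task is exactly what you isolate: separate same-fibre pairs.

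The genuine gap is in your construction of $\Phi$. You propose an Assouad-style sum $\Phi=\sum_j\Phi_j$ over dyadic scales $r_j=2^{-j}$, with all scales sharing the same $c_d(N-1)$ coordinate blocks, and you correctly identify the obstruction: the sheet enumeration (and hence the simplex labels) has no reason to be consistent across scales, so contributions from nearby scales can cancel the separation achieved at the detection scale. Your remedy---localising activity to the annulus $\dist(\cdot,\mathcal B_\pi)\asymp r_j$ so only $O(1)$ scales are live at each point---does not by itself fix this, because those $O(1)$ live scales still land in the same coordinate block and can still interfere; you would need an additional colouring \emph{across scales}, and it is not clear that stays within $c_d(N-1)$ blocks. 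You flag this honestly (``the delicate point is entirely in the last step''), but you do not resolve it, and it is the crux.

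The paper sidesteps the whole multi-scale issue by indexing not by dyadic scale but by \emph{multiplicity level}. For each $k=1,\dots,N-1$ it sets
\[
R^k(y)=\inf\bigl\{R>0:\ f^{-1}(B(y,5R))\ \text{has at most $k$ components}\bigr\},
\]
checks that $R^k$ is $\tfrac15$-Lipschitz, and observes that for any same-fibre pair $x_1\neq x_2$ there is a \emph{single} $k$ with $\tfrac12 d(x_1,x_2)\le 5R^k(y)\le d(x_1,x_2)$. One then covers $Y^k=\{R^k>0\}$ by the adapted balls $B(y,R^k(y))$, extracts a net $T^k$ and a $c_d$-colouring $T^k=\bigcup_{j=1}^{c_d}T_j^k$ (doubling of $Y$) so that within a colour class the doubled balls are disjoint, labels the sheets over each net point by integers $\eta_j^k\in\{1,\dots,N\}$, and takes
\[
\phi_j^k(x)=\sum_{x'\in f^{-1}(T_j^k)}\eta_j^k(x')\,\min\bigl\{d(x,X\setminus 2U_{x'}^k),\,R^k(f(x'))\bigr\}.
\]
Because the doubled balls are disjoint within a colour class, this sum is locally a single term, so $\phi_j^k$ is $N$-Lipschitz with no multi-scale bookkeeping; and because exactly one $k$ is relevant to a given same-fibre pair, there is no cross-level cancellation to worry about. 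The map $\phi=(\phi_j^k)_{j,k}\colon X\to\R^{c_d(N-1)}$ then separates same-fibre pairs and Lemma~\ref{lemma:for biLip embedding} finishes. The idea you are missing is precisely this replacement of dyadic scales by the branching-adapted radii $R^k$.
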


Recall that a metric space $X$ is \emph{doubling with constant $M$}, $M\geq 1$ an integer, if for each ball $B(x,r)$ in $X$, every $r/2$-separated subset of $B(x,r)$ has at most $M$ points. 

As a consequence of Theorem~\ref{thm:biLip embedding I}, we obtain that the domain of a BDD branched covering inherits bi-Lipschitz embeddability into Euclidean space from the image. Taking into account of Proposition~\ref{prop:BLD to BDD}, we deduce the following corollary.

\begin{corollary}\label{coro:biLip embedding II}
	Let $f\colon X\to Y$ be a BDD branched covering, with $N=N(f,X)<\infty$, where $X$ and $Y$ have bounded turning and $Y$ embeds bi-Lipschitzly into some Euclidean space. Then $X$ may also be embedded bi-Lipschitzly into some (possibly larger) Euclidean space. In particular, every BLD covering of a locally compact quasiconvex metric space over $\R^n$ with finite maximal multiplicity embeds bi-Lipschitzly into some Euclidean space. 
\end{corollary}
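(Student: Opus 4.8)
The plan is to obtain the corollary from Theorem~\ref{thm:biLip embedding I} by composing with the given embedding of the target, and then to handle the ``in particular'' clause by upgrading the BLD hypothesis to a BDD one via Proposition~\ref{prop:BLD to BDD}.

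\emph{The general statement.} Fix a bi-Lipschitz embedding $\iota\colon Y\to\R^m$ for some $m\in\N$. Since any subset of $\R^m$ is doubling (with constant depending only on $m$), and doubling is a bi-Lipschitz invariant, $Y$ is doubling. Hence Theorem~\ref{thm:biLip embedding I} applies and yields a bi-Lipschitz embedding $\Phi\colon X\to Y\times\R^{c_d(N-1)}$, where $c_d$ depends only on the data of $Y$, and where $Y\times\R^{c_d(N-1)}$ is equipped with, say, the sum metric $\big((y,v),(y',v')\big)\mapsto d_Y(y,y')+|v-v'|$. The map $\iota\times\id\colon Y\times\R^{c_d(N-1)}\to\R^m\times\R^{c_d(N-1)}=\R^{m+c_d(N-1)}$ is bi-Lipschitz, since $\iota$ and the identity are, and since any two of the standard product metrics on a finite product of metric spaces are bi-Lipschitz equivalent. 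Therefore $(\iota\times\id)\circ\Phi\colon X\to\R^{m+c_d(N-1)}$ is the desired bi-Lipschitz embedding.

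\emph{The BLD case.} Let $X$ be a locally compact $c$-quasiconvex metric space and let $f\colon X\to\R^n$ be an $L$-BLD branched covering with $N:=N(f,X)<\infty$. We check the hypotheses of the first part with $Y=\R^n$. The space $\R^n$ has $1$-bounded turning and trivially embeds bi-Lipschitzly into a Euclidean space. Quasiconvexity of $X$ gives $c$-bounded turning: a curve $\gamma$ joining $x_1,x_2$ with $l(\gamma)\le c\,d(x_1,x_2)$ satisfies $\diam\gamma\le l(\gamma)\le c\,d(x_1,x_2)$. Moreover $f$ is $Lc$-Lipschitz, for if $\gamma$ is such a curve then $d(f(x_1),f(x_2))\le l(f\circ\gamma)\le L\,l(\gamma)\le Lc\,d(x_1,x_2)$. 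Since $f$ is $L$-BLD it is a fortiori $Lc$-BLD, so Proposition~\ref{prop:BLD to BDD}, applied with the $1$-quasiconvex target $\R^n$ and with $L'=Lc$, shows that $f$ is $NLc$-BDD. The first part now applies and produces a bi-Lipschitz embedding of $X$ into $\R^{n+c_d(N-1)}$, with $c_d$ depending only on $n$.

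The argument involves no essential obstacle; the only points demanding a little care are the observation that a BLD mapping out of a quasiconvex space is automatically Lipschitz (so that Proposition~\ref{prop:BLD to BDD} is available) and the harmless bookkeeping concerning which product metric one places on $Y\times\R^{c_d(N-1)}$, all of these being bi-Lipschitz equivalent.
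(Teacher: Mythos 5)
Your proposal is correct and follows essentially the same route the paper intends: the general statement is a direct composition of the embedding from Theorem~\ref{thm:biLip embedding I} with the given embedding of $Y$, and the ``in particular'' clause is obtained by upgrading the BLD map to a BDD one via Proposition~\ref{prop:BLD to BDD}, using exactly the observation (also implicit in the paper's discussion after Definition~\ref{def:BDD mapping}) that quasiconvexity of $X$ makes an $L$-BLD map $Lc$-Lipschitz. No gaps.
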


In general, we are restricting our attention as much as possible to the global statements to simplify the exposition; the numerous formulations and proofs of similar local statements are left as routine exercises for the interested readers. We do, however, wish to highlight a few consequences arising from localization.

First, by localizing Corollary \ref{coro:biLip embedding II}, we obtain that \emph{every locally BLD-Euclidean metric space of dimension $n$ admits local bilipschitz embeddings into Euclidean spaces, with the dimension
of the local embeddings depending only on $n$ and the local degree of the BLD covering}. This answers in the affirmative an open question of Heinonen and Rickman~\cite[Remark 6.32 (a)]{hr02}.

Of course, under such localized assumptions, the dimensions of the Euclidean spaces receiving the neighborhoods of $X$ might very well be unbounded. As a result, we cannot hope to obtain, from our methods, a single globally defined, locally bijective mapping from $X$ into a Euclidean space. If, on the other hand, conditions are ``uniformly local", in the sense that their associated constants are still defined globally, then we are able to obtain such a mapping.

\begin{theorem}\label{thm:biLip embedding III}
Let $f\colon X\to Y$ be a locally BDD branched covering, and suppose that $X$ and $Y$ have uniformly locally bounded turning, that $Y$ is uniformly locally doubling, and that $f$ has uniformly locally bounded multiplicity $M<\infty$. Then there is a globally defined, uniformly locally bi-Lipschitz mapping $\phi\colon X\to Y\times \R^{c_d(M-1)}$, where $c_d$ depends only on the data of $Y$.
\end{theorem}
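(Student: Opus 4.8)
The plan is to globalize the construction underlying Theorem~\ref{thm:biLip embedding I} by observing that the pullback factorization is itself a global object and that every ingredient of that construction is local in nature. First I would form the pullback metric $f^*d_Y$ on $X$; since $X$ is connected and $f$ is discrete this is a genuine metric, so one has the globally defined triple consisting of $X^f=(X,f^*d_Y)$, the identity homeomorphism $g\colon X\to X^f$, and the projection $\pi\colon X^f\to Y$ with $f=\pi\circ g$. Applying Lemma~\ref{lemma:pullback property 1} to the normal neighbourhoods $U(x,\pi,r)$ and gluing by the local isometries discussed before that lemma and in Lemma~\ref{lemma:pullback property 4}, $\pi$ is $1$-Lipschitz and $1$-BDD and $X^f$ has $1$-bounded turning; by Proposition~\ref{prop:nice properties of pullback factorization I} together with the discussion following it, $f$ being locally BDD and $X$ having uniformly locally bounded turning force $g$ to be uniformly locally bi-Lipschitz. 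Hence it suffices to produce a globally defined map $\Psi\colon X^f\to Y\times\R^{c_d(M-1)}$ which is uniformly locally bi-Lipschitz and whose first coordinate is $\pi$; then $\phi=\Psi\circ g$ is the desired map.

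Next I would record the metric structure of $X^f$. Since $Y$ has uniformly locally $c$-bounded turning, Lemma~\ref{lemma:pullback property 2} gives that $\pi$ is $c$-co-Lipschitz and is locally $c$-bi-Lipschitz on each stratum $X_k=\{z\in X^f:N(z,\pi,X^f)=k\}$, $1\le k\le M$. Combining this with the uniform local doubling of $Y$ and the uniform bound $M$ on the multiplicity shows that $X^f$ is uniformly locally doubling, with constant depending only on the data of $Y$ and on $M$; moreover Proposition~\ref{prop:existence of normal nbhd} provides, over each small ball $B(\pi(z),r)$, a decomposition of $\pi^{-1}(B(\pi(z),r))$ into at most $M$ sheets $U(z_i,\pi,r)$ on which $\pi$ is a homeomorphism, with the nesting/disjointness behaviour recorded in parts (7)--(8). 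This is the combinatorial input for the next step.

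The heart of the argument is the construction of the sheet-separating coordinate $\psi\colon X^f\to\R^{c_d(M-1)}$, carried out exactly as in the proof of Theorem~\ref{thm:biLip embedding I}: at each dyadic scale $2^{-j}$ cover $Y$ by balls of radius $\asymp 2^{-j}$ with overlap bounded by the local doubling constant; over each such ball use Proposition~\ref{prop:existence of normal nbhd} to list the at most $M$ sheets, single out a base sheet, and colour the remaining ones from a palette whose size is controlled by the doubling constant so that genuinely distinct sheets which are close in $f^*d_Y$ receive different colours; for each colour build a Lipschitz partition-of-unity function weighted by the distance in $X^f$ from the point to the remainder of its $\pi$-fibre; finally sum these contributions over all scales in the Assouad manner, collapsing the countably many scale contributions into $c_d(M-1)$ real coordinates with a controlled Lipschitz norm. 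Because the pullback metric on $X^f$ agrees locally with $(\pi|_U)^*d_Y$ and because the quantitative inputs --- the doubling overlap $c_d$, the bounded-turning constants, the multiplicity bound $M$ --- are uniform across $X$ by hypothesis, every choice above is available globally and the resulting $\psi$ is globally defined; the first coordinate $\pi$ is already global. Checking that $\Psi=(\pi,\psi)$ is uniformly locally bi-Lipschitz then reduces to the two standard estimates: points in distinct fibres are separated below by $\pi$ up to the co-Lipschitz constant, while two points of the same fibre at mutual distance $t$ are separated below by the colour-and-distance-to-fibre coordinates at the scale $\asymp t$, and the upper bound follows from the doubling-controlled Assouad summation.

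The main obstacle is precisely this construction and estimate. Two delicate points, inherited from Theorem~\ref{thm:biLip embedding I}, are: (i) making the colouring of sheets mutually consistent across overlapping dyadic balls and across neighbouring scales while keeping the palette size governed purely by the doubling geometry of $Y$; and (ii) controlling the behaviour near the branch set, where distinct sheets merge and the "distance to the rest of the fibre" degenerates, so that the lower bi-Lipschitz estimate nevertheless survives with uniform constants. For Theorem~\ref{thm:biLip embedding III} there is the additional point of verifying that these choices genuinely globalize: one is not given a single global doubling constant for $X$ or $Y$, only uniform local ones, so the dyadic covers, colourings and partitions of unity must be organized so that their constants never deteriorate --- which works because each is controlled by the local doubling constant of $Y$ and the global bound $M$, both assumed uniform, and because the target dimension $c_d(M-1)$ depends only on these uniform data.
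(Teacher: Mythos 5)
Your high-level frame is right: reduce via the pullback factorization to producing a map whose first coordinate is $\pi$ and whose remaining coordinates separate points of the same $\pi$-fibre, then verify uniform local bi-Lipschitzness by combining the co-Lipschitz estimate for $\pi$ with the fibre-separating estimate for the auxiliary coordinates. The setup through ``$g$ is uniformly locally bi-Lipschitz; it suffices to construct $\Psi$'' is sound and matches the reduction the paper makes via Remark~\ref{rmk:on pullback=orignial} and Lemma~\ref{lemma:for biLip embedding}. The paper is also explicit that Theorems~\ref{thm:biLip embedding I} and~\ref{thm:biLip embedding III} are proved by the same argument, so your globalization commentary is in the right spirit.

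However, the construction you describe as the ``heart of the argument'' is not what the paper does, and as stated it has a genuine gap. You write that $\psi$ is ``carried out exactly as in the proof of Theorem~\ref{thm:biLip embedding I}: at each dyadic scale $2^{-j}$ cover $Y$ by balls \dots; finally sum these contributions over all scales in the Assouad manner, collapsing the countably many scale contributions into $c_d(M-1)$ real coordinates with a controlled Lipschitz norm.'' The paper's proof contains no dyadic decomposition and no summation over a countable family of scales. Instead, for each $k=1,\dots,N-1$ it introduces the \emph{critical radius} $R^k(y)$, the smallest radius such that $f^{-1}(B(y,5R^k(y)))$ has at most $k$ components. These radii are $\frac{1}{5}$-Lipschitz in $y$, vary with the point, and satisfy: for any $x_1\neq x_2$ in the same fibre over $y$ there is some $k$ with $\frac12 d(x_1,x_2)\le 5R^k(y)\le d(x_1,x_2)$. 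A maximal $R^k$-separated net $T^k$ is then colored by $c_d$ colors, and the coordinate function $\phi_j^k$ is a sum of bump functions $\min\{d(\cdot,X\setminus 2U_{x'}^k),\,R^k(f(x'))\}$ over the $j$-colored lifts $x'\in S_j^k$, whose supports $2U_{x'}^k$ are \emph{pairwise disjoint or equal}. It is precisely this disjointness, forced by the scale-adaptive $R^k$, that makes the sum Lipschitz with bounded constant using only $c_d(N-1)$ coordinates — there is nothing to ``collapse.''

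The Assouad-type construction you propose runs into exactly the problem the critical radii were invented to avoid. Summing bump functions over all dyadic scales does not give a Lipschitz function with controlled norm: the contributions from nearby scales have overlapping supports, and the usual way to tame this — Assouad's theorem — requires snowflaking the metric, which is not available here (and would destroy the bi-Lipschitz conclusion). You flag ``collapsing the countably many scale contributions'' as delicate but do not say how to do it, and I do not see how your construction can work without the scale-adaptive $R^k$ idea (or something equivalent). Relatedly, the two ``delicate points'' you highlight — cross-scale colour consistency and degeneration of ``distance to the rest of the fibre'' near the branch set — are artifacts of the dyadic framing. In the paper's proof the coloring is done once per level $k$ so there is no cross-scale consistency to check, and the branch-set degeneracy is handled automatically because $R^k(y)\to 0$ exactly where the $k$-th sheet merges, so the bump $\min\{d(\cdot,X\setminus 2U_{x'}^k),R^k(f(x'))\}$ shrinks with it while the lower-level $\phi_j^{k'}$, $k'<k$, still separate.

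So: keep your opening reduction, but replace the proposed dyadic/Assouad sheet coordinate with the paper's critical-radius construction. The essential insight you are missing is that the separating scale need not be decomposed dyadically and aggregated — it can be read off locally from $R^k(y)$, yielding coordinates with disjointly supported bumps and hence Lipschitz control with no summation over scales.
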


By using the typical gluing argument, together with the Nashing embedding theorem, it is plausible that one can update Theorem~\ref{thm:biLip embedding III} so that $\phi$ is globally injective, and hence is 
bi-Lipschitz with respect to the length metric. For simplicity of our exposition, we do not get involved with this technical issue, but leave it to those interested readers. 

Another observation we would like to point out is that as a consequence of Theorem~\ref{thm:biLip embedding I}, we have for each $K$-quasiregular mapping $f\colon \R^n\to \R^n$ a canonical factorization $f=\pi\circ g$, where $g\colon \R^n\to X^f$ is $K$-quasiconformal and $\pi\colon X^f\to \R^n$ with the metric space $X^f$ being \emph{a generalized $n$-manifold of type $A$} (in the sense of Heinonen-Rickman~\cite{hr02}). This provides a ncie motivation of the study of BLD mappings from generalized $n$-manifolds of type $A$ to $\R^n$ and quasiconformal mappings from $\R^n$ to generalized $n$-manifolds of type $A$.

\subsection{Proof of the main results}

We first make a simple reduction, which is not strictly speaking necessary, but will allow us a somewhat cleaner exposition. Since $X$ and $Y$ have bounded turning, and $f$ is BDD, we have by Remark 3.7 that $Y$ is bi-Lipschitz equivalent to $\iota^*Y$ and $X$ is bi-Lipschitz equivalent to $f^*Y=f^*\iota^*Y$, and so we may assume with no loss of generality that $X$ and $Y$ have
1-bounded turning, and $f$ is 1-BDD. Notice that in the proof of Theorem~\ref{thm:biLip embedding I} given below, $c_d$ will depend explicitly only  on the doubling constant, but because of our reduction, it implicitly depends on the constant of bounded turning in $Y$ as well.

Motivated by the fact that the theorem is trivial when $f$ is one-to-one, our proof of Theorem~\ref{thm:biLip embedding I} begins with the following simple lemma, which makes quantitative the fact that the lack of injectivity is our only obstacle.

\begin{lemma}\label{lemma:for biLip embedding}
	Let $f\colon X\to Y$ satisfy the hypotheses of Theorem~\ref{thm:biLip embedding I}. Suppose that for some $\varepsilon>0$, there is an $L$-Lipschitz mapping $\phi\colon X\to W$ 
	into some metric space $W$ such that for all $x,x'\in X$ such that $f(x)=f(x')$, we have $\varepsilon d(x,x')\leq d(\phi(x),\phi(x'))$. Then the mapping
	\begin{align*}
		\psi=f\times \phi\colon X\to Y\times W
	\end{align*}
	is a bi-Lipschitz embedding.
\end{lemma}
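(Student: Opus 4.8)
The plan is to verify that $\psi = f\times\phi$ is both Lipschitz and co-Lipschitz on $X$, which together give that it is a bi-Lipschitz embedding onto its image. We equip $Y\times W$ with (say) the sup-metric $d\big((y_1,w_1),(y_2,w_2)\big)=\max\{d(y_1,y_2),d(w_1,w_2)\}$; any of the standard product metrics works and only changes constants. The Lipschitz bound is immediate: since $f$ is $1$-BDD (hence, as noted in the excerpt, $1$-Lipschitz) and $\phi$ is $L$-Lipschitz, for $x,x'\in X$ we have $d(\psi(x),\psi(x'))=\max\{d(f(x),f(x')),d(\phi(x),\phi(x'))\}\leq \max\{1,L\}\,d(x,x')$.

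The substance is the lower bound, and this is where the hypotheses on bounded turning and on $\phi$ along fibers enter. Fix $x,x'\in X$ and let $\gamma$ be a continuum in $X$ joining them; because $X$ has $1$-bounded turning, $\gamma$ may be chosen with $\diam(\gamma)$ as close to $d(x,x')$ as we like. The key dichotomy is whether the fibers of $f$ over $\gamma$ stay ``small'' relative to $\diam(\gamma)$ or not. More precisely, I would argue: if every $x''\in\gamma$ has the property that $f^{-1}(f(x''))\cap\gamma$ has diameter at most some fixed fraction $\delta\,d(x,x')$, then one shows $f$ does not collapse $\gamma$ too much, so that $\diam(f(\gamma))\gtrsim d(x,x')$ and hence $d(f(x),f(x'))$ is comparable to $d(x,x')$ (here one uses that $f$ is $1$-BDD, so $\diam(f(\gamma))=\diam(\gamma)$, together with a covering of $\gamma$ by boundedly many fibers of controlled size — the finiteness $N=N(f,X)<\infty$ bounds the number of such fibers). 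Otherwise there is a pair $x_1,x_2\in\gamma$ with $f(x_1)=f(x_2)$ and $d(x_1,x_2)\gtrsim d(x,x')$; applying the fiber hypothesis on $\phi$, $d(\phi(x_1),\phi(x_2))\geq\varepsilon\,d(x_1,x_2)\gtrsim\varepsilon\,d(x,x')$, and then since $\phi$ is $L$-Lipschitz and $x_1,x_2\in\gamma$ with $\diam(\gamma)\approx d(x,x')$, we get $d(\phi(x),\phi(x'))\geq d(\phi(x_1),\phi(x_2))-d(\phi(x),\phi(x_1))-d(\phi(x_2),\phi(x'))$, which can be made $\gtrsim\varepsilon\,d(x,x')$ once the "otherwise" threshold $\delta$ is chosen small compared with $1/L$. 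In either case $d(\psi(x),\psi(x'))\gtrsim\min\{1,\varepsilon\}/C\cdot d(x,x')$ for a constant $C$ depending only on $N$, $L$ and the bounded-turning constants.

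The main obstacle I expect is making the dichotomy quantitative in a way that is uniform in $x,x'$: one needs the ``collapsing forces a large fiber'' implication with explicit constants, and this is exactly where the $1$-BDD property ($\diam f(\gamma)=\diam\gamma$) plus the finite multiplicity $N$ must be combined — a continuum of diameter $D$ whose image has diameter $\ll D$ must, by a chaining/pigeonhole argument over its $N$-bounded fibers, contain two points in a common fiber at distance $\gtrsim D/N$. I would isolate this as a short sublemma: if $f$ is $1$-BDD with $N(f,X)\le N$ and $\gamma\subset X$ is a continuum with $\diam(f(\gamma))\le \tfrac{1}{2N}\diam(\gamma)$, then there exist $x_1,x_2\in\gamma$ with $f(x_1)=f(x_2)$ and $d(x_1,x_2)\ge \tfrac{1}{2N}\diam(\gamma)$. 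Granting this, the rest is bookkeeping with the triangle inequality and the choice of constants, and one concludes that $\psi$ is $\big(c^{-1},\max\{1,L\}\big)$-bi-Lipschitz with $c$ depending only on $N$, $L$, $\varepsilon$ and the turning constants, hence a bi-Lipschitz embedding. Finally I would remark that the doubling hypothesis on $Y$ is not actually needed for this particular lemma — it is used later when constructing the map $\phi$ — so it may simply be carried along harmlessly.
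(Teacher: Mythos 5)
Your high-level intuition is right -- split into a case where $f$ already separates $x$ and $x'$ and a case where you must find a pair in a common fiber and invoke the hypothesis on $\phi$ -- and your observation that doubling of $Y$ is not actually used in this lemma is correct. But there is a genuine error at the heart of your execution. The sublemma you propose to isolate, ``if $f$ is $1$-BDD and $\gamma\subset X$ is a continuum with $\diam(f(\gamma))\le\tfrac{1}{2N}\diam(\gamma)$, then $\gamma$ contains two points in a common fiber at distance $\gtrsim\diam(\gamma)/N$,'' has a vacuous hypothesis: by definition a $1$-BDD mapping satisfies $\diam(f(\gamma))=\diam(\gamma)$ for \emph{every} non-constant continuum, so the inequality $\diam(f(\gamma))\ll\diam(\gamma)$ never occurs. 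Consequently the dichotomy you set up (small fibers of $f|_\gamma$ versus a large fiber) never fires in the intended way, and the ``small fibers $\Rightarrow$ $d(f(x),f(x'))\gtrsim d(x,x')$'' implication is also unjustified: $\diam f(\gamma)=\diam\gamma$ is automatic and says nothing about the endpoint distance $d(f(x),f(x'))$, since $f(\gamma)$ can be a large continuum that loops back close to itself.

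The quantity you should be testing directly is $d(f(x_1),f(x_2))/d(x_1,x_2)$ itself, not the diameter of $f(\gamma)$. The paper's argument fixes $\delta$ with $\varepsilon(1-\delta)-L\delta>0$, and when $d(f(x_1),f(x_2))\le\delta\,d(x_1,x_2)$ it produces the needed fiber pair not inside a continuum from $x_1$ to $x_2$, but by lifting a short continuum in $Y$: since $Y$ has $1$-bounded turning, there is a continuum from $f(x_2)$ to $f(x_1)$ of diameter $d(f(x_1),f(x_2))$; lifting it from $x_2$ (path lifting plus $1$-BDD) yields $x_1'\in f^{-1}(f(x_1))$ with $d(x_1',x_2)\le d(f(x_1),f(x_2))\le\delta\,d(x_1,x_2)$. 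Then $x_1$ and $x_1'$ lie in the same fiber with $d(x_1,x_1')\ge(1-\delta)d(x_1,x_2)$, and the triangle inequality with $\varepsilon d(x_1,x_1')\le d(\phi(x_1),\phi(x_1'))\le d(\phi(x_1),\phi(x_2))+L d(x_2,x_1')$ closes the argument. Note that the multiplicity bound $N$ is not needed here at all (it enters in the \emph{construction} of $\phi$, not in this lemma), so your chaining/pigeonhole appeal to $N$ is a further sign the argument has drifted from the correct mechanism.
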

\begin{proof}
	That $\psi$ is Lipschitz is clear. For the reverse inequality, let $0<\delta<\frac{\varepsilon}{L+\varepsilon}$, so that $\varepsilon(1-\delta)-L\delta>0$, and suppose first that $x_1,x_2\in X$, with $d(y_1,y_2)\leq \delta d(x_1,x_2)$. Here we	denote $y_i=f(x_i)$, $w_i=\phi(z_i)$. Since $f$ is 1-BDD and $X, Y$ have 1-bounded turning, there is some $x_1'\in f^{-1}(y_1)$ such that
	\begin{align*}
		d(x_1',x_2)=d(y_1,y_2)\leq \delta d(x_1,x_2),
	\end{align*}
	whereby 
	\begin{align*}
		(1-\delta)d(x_1,x_2)\leq d(x_1,x_2)-d(x_1',x_2)\leq d(x_1,x_1').
	\end{align*}
	Then we have
	\begin{align*}
		(1-\delta)d(x_1,x_2)&\leq d(x_1,x_1')\leq \frac{d(w_1,w_1')}{\varepsilon}\leq \frac{d(w_1,w_2)+d(w_2,w_1')}{\varepsilon}\\
		&\leq \frac{d(w_1,w_2)+Ld(x_2,x_1')}{\varepsilon}\leq \frac{d(w_1,w_2)+L\delta d(x_1,x_2)}{\varepsilon},
	\end{align*}
	and so
	\begin{align*}
		(\varepsilon(1-\delta)-L\delta)d(x_1,x_2)\leq d(w_1,w_2).
	\end{align*}
	Thus, for every $x_1,x_2\in X$, we have
	\begin{align*}
		\min\big\{\varepsilon(1-\delta)-L\delta,\delta\big\}d(x_1,x_2)\leq \max\big\{d(w_1,w_2),d(y_1,y_2)\big\}.
	\end{align*}	
\end{proof}

\begin{remark}\label{rmk:on quantitative bi-Lipschitz dependence}
	It is clear from the proof that the bi-Lipschitz constant associated to the embedding depends quantitatively only on $\varepsilon$, $L$ and the bounded turning constants of $X$ and $Y$.
\end{remark}

\begin{proof}[Proofs of Theorem~\ref{thm:biLip embedding I} and Theorem~\ref{thm:biLip embedding III}]
	For each $y\in Y$, and each $k=1,\dots,N-1$, let $R^k(y)$ be the smallest
	number such that $f^{-1}(B(y,5R^k(y)))$ consists of at most $k$ (connected) components. We have	that $R^k(y)>0$ if and only if $N(y,f)>k$, that 
	 \begin{align*}
	 	0=R^N(y)=R^{N(y,f)}(y)<R^{N(y,f)-1}(y)\leq \cdots\leq R^1(y),
	 \end{align*}
	 and that for all $x_1\neq x_2\in f^{-1}(y)$, there is some $k$, $1\leq k\leq N(y,f)-1$, such that
	 \begin{align}\label{eq:biLip embed 5}
	 	\frac{1}{2}d(x_1,x_2)\leq 5R^k(y)\leq d(x_1,x_2);
	 \end{align}
	 the last statement follows from Lemma~\ref{lemma:pullback property 1}.
	 
	 Fix $k$, and let $B_y^k=B(y,R^k(y))$ for each $y\in Y$, so that $B_y^k$ is nonempty if and only if	$N(y,f)>k$. Also, let 
	 \begin{align*}
	 	Y^k=\big\{y\in Y:N(y,f)>k\big\}=\big\{y\in Y:R^k(y)>0\big\}.
	 \end{align*} 
	 Since balls are connected in $Y$, and $f^{-1}(B(y,r))$ has more than $k$ components for each $r<5R^k(y)$, Lemma~\ref{lemma:lift Floyd} implies that each $y'\in 5B_y^k$ has more than $k$ preimages, hence $5B_y^k\subset Y^k$.
	 (Observe this technically holds for all $y\in Y$, since the open ball $5B_y^k$ is empty for $y\notin Y^k$.)
	 
	 Thus we actually have $Y^k=\bigcup_{y\in Y}5B_y^k=\bigcup_{y\in Y}B_y^k$. Moreover, for every $y,y'\in Y$, the inclusion $B(y,5R^k(y))\subset B(y',5R^k(y)+d(y,y'))$ implies
	 \begin{align}\label{eq:biLip embed 6}
	 	5R^k(y')<5R^k(y)+d(y,y'),
	 \end{align}
	 so that $R^k$ is $\frac{1}{5}$-Lipschitz in $y$. This in turn implies that for every $y,y'\in Y$ such that $B_y^k\cap B_{y'}^k\neq \emptyset$, we have
	 \begin{align*}
	 	R^k(y')-R^k(y)\leq \frac{1}{5}d(y',y)\leq \frac{1}{5}(R^{k}(y')+R^k(y)),
	 \end{align*}
	 so that
	 \begin{align}\label{eq:biLip embed 7}
	 	\frac{2}{3}R^k(y)\leq R^k(y')\leq \frac{3}{2}R^k(y),
	 \end{align}
	 and similarly, for every $y,y'\in Y$ such that $2B_y^k\cap 2B_{y'}^k\neq \emptyset$, 
	 	 \begin{align}\label{eq:biLip embed 8}
	 	 \frac{3}{7}R^k(y)\leq R^k(y')\leq \frac{7}{3}R^k(y).
	 	 \end{align}
	 	 
	 We next choose a maximal subset $T^k\subset Y^k$ such that for each $y,y'\in T^k$, $d(y,y')\geq \frac{1}{2}R^k(y)$. By the inequality~\eqref{eq:biLip embed 7}, $Y^k=\bigcup_{y\in T^k}B_y^k$ and by inequality~\eqref{eq:biLip embed 8}, for each $y\in T^k$ there are at
	 most $c_d-1$ other points $y\in T_k$ such that $2B_y^k\cap 2B_{y'}^k\neq \emptyset$, where $c_d$ depends only on the doubling constant of $Y$. We may therefore choose $c_d$ disjoint subsets $T_j^k\subset T^k$, $j=1,\dots,c_d$, such that $T^k=\bigcup_{j=1}^{c_d}T_j^k$, and for each $y,y'\in T_j^k$, $y\neq y'$, we have $2B_y^k\cap 2B_{y'}^k=\emptyset$.
	 
	Now let $X^k=f^{-1}(Y^k)$. For each $x\in X^k$, let $U_x^k=U(x,f,R^k(f(x)))$, and for $\lambda\geq 0$, let $\lambda U_x^k=U(x,f,\lambda R^k(f(x)))$. We also let $S^k=f^{-1}(T^k)$, and for each $j=1,\dots, c_d$, let $S_j^k=f^{-1}(T_j^k)$. We then have $X^k=\bigcup_{x\in S^k}U_x^k$, and for each $x,x'\in S_j^k$, the sets $2U_x^k$ are	either disjoint or equal, the latter holding if and only if $f(x)=f(x')$. It follows that we may
	choose a function $\eta_j^k\colon S_j^k\to \{1, \dots , N\}$ such that for each $y\in Y^k$ and each $x_1$, $x_2\in f^{-1}(y)$, $\eta_j^k(x_1)=\eta_j^k(x_2)$ if and only if $2U_{x_1}^k=2U_{x_2}^k$. Note, as well, that since a continuum contained in $f^{-1}(B_{f(x)}^k)$ must lie completely inside $2U_x^k$, and $X$ has 1 bounded turning, we have $d(X\backslash 2U_x^k,U_x^k)\geq R^k(f(x))$, for all $x\in X^k$.
	
	Now let $\phi_j^k\colon X\to \R$ be defined by
	\begin{align*}
		\phi_j^k(x)=\sum_{x'\in S_j^k}\eta_j^k(x')\min\big\{d(x,X\backslash 2U_{x'}^k),R^k(f(x'))\big\},
	\end{align*} 	 
	and let $\phi\colon X\to \R^{c_d(N-1)}$ be given by
	\begin{align*}
		\phi=\phi_1^1\times \cdots\times \phi_j^k\times \cdots\times \phi_{c_d}^{N-1}.
	\end{align*}
	To complete the proof, it suffices to show that $\phi$ satisfies the hypotheses of Lemma~\ref{lemma:for biLip embedding}.
	
	To this end, we first note that each $\phi_j^k$ is clearly $N$-Lipschitz, and satisfies $\eta(x')R^k(f(x'))$ whenever $x\in U_x^k$. It follows that $\phi$ itself is Lipschitz.
	
	On the other hand, if $x_1=x_2$, and $y=f(x_1)=f(x_2)$, then let $k$ be chosen so that inequality~\eqref{eq:biLip embed 5} holds. A fortiori, $y\in Y_k$, so $y\in B_y^k$ for some $y\in T_j^k$, some $j$. By Lemma~\eqref{lemma:lift Floyd},	we have $x_1\in U_x^k$, $x_2\in U_x^k$ for some $x_1',x_2'\in f^{-1}(y')\subset S_j^k$. In fact, we may take
    $x_i\in B_{x_i'}^k:=B(x_i',R_k(y))$, since by lemma~\ref{lemma:pullback property 1}, $U_x^k$ is a union of such balls. Thus, applying inequalities \eqref{eq:biLip embed 5} and \eqref{eq:biLip embed 6}, we obtain
    \begin{align*}
    	d(x_1',x_2')&\geq d(x_1,x_2)-d(x_1',x_1)-d(x_2,x_2')>5R^k(y)-2R^k(y')\\
    	&\geq 5R^k(y')-d(y,y')-2R^k(y)\geq 2R^k(y'),
    \end{align*}
    and so by Lemma~\ref{lemma:pullback property 1}, $U_{x_1'}^k\neq U_{x_2'}^k$, so that $\eta_j^k(x_1')\neq \eta_j^k(x_2')$. Moreover, since $x_i\in U_{x_i'}$, we have $\phi_j^k(x_i)=\eta_j^k(x_i')R^k(y')$, $i=1,2$. We therefore have, again, by inequalities~\eqref{eq:biLip embed 5} and~\eqref{eq:biLip embed 6}, that
    \begin{align*}
    	d(x_1,x_2)&\leq 10R^k(y)\leq 10R^k(y')+2d(y,y')\leq 12R^k(y')\\
    	&\leq 12|\eta_j^k(x_1')-\eta_j^k(x_2')|R^k(y')=12|\phi_j^k(x_1)-\phi_j^k(x_2)|\leq 12|\phi(x_1)-\phi(x_2)|.
    \end{align*}
    Thus $\phi$ satisfies the hypotheses of Lemma \eqref{lemma:for biLip embedding}, as desired.
\end{proof}

\subsection{Some natural examples and counter-examples}\label{subsec:some natural examples}
In this section, we use the pullback metric to construct branched coverings between nice metric spaces so that the image of the branch set under the branched covering will have positive Lebesgue measure. 

Follow~\cite{hr02,hs02}, we say that an oriented generalized $n$-manifold $X$ is \emph{locally BLD-Euclidean of dimension $n$} if each point $x\in X$ has a neighborhood $U_x$ together with a Lipschitz BLD mapping $f\colon U_x\to \R^n$. By~\cite[Proposition 6.31]{hs02}, a locally BLD-Euclidean space $X$ of dimension $n$ is a Lipschitz $n$-manifold outside a singular set of topological dimension at most $n-2$. Moreover, $X$ is locally Ahlfors $n$-regular, $n$-rectifiable and has locally finite $\mathcal{H}^{n}$-measure.
 
Our main result of this section reads as follows.
\begin{theorem}\label{thm:counter-example branch set I}
	Let $Z$ be a topological (or generalized) $n$-manifold, and $g\colon Z\to \mathbb{M}$ a branched covering onto a Riemannian $n$-manifold $\mathbb{M}$, such that $N(g,Z)<\infty$ and that $\mathcal{L}^n(g(\mathcal{B}_g))>0$. Then $\mathbb{M}^g$ is a locally BLD-Euclidean metric space of dimension $n$ that is neither locally linearly locally contractible nor locally metrically orientable. In particular, $\mathbb{M}^g$ is $n$-rectifiable, locally Ahlfors $n$-regular, locally geodesic, locally satisfies a $(1,1)$-Poincar\'e inequality, and has local bi-Lipschitz embeddings into some Euclidean space, but is not locally quasiconformally equivalent to any neighborhood of $\R^n$.
\end{theorem}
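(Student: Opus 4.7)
The plan is to exploit the pullback factorization $g = \pi \circ \tilde{g}$, where $\tilde{g}\colon Z \to \mathbb{M}^g$ is the set-theoretic identity and $\pi\colon \mathbb{M}^g \to \mathbb{M}$ is the canonical 1-BDD (hence 1-BLD) projection from Lemma~\ref{lemma:pullback property 1}. Since $\tilde{g}$ is a homeomorphism, the branch sets match: $\pi(\mathcal{B}_\pi) = g(\mathcal{B}_g)$, a set of positive Lebesgue $n$-measure by hypothesis. All positive geometric/analytic conclusions will be pushed down from $\mathbb{M}$ through this 1-BLD projection, while the three failure statements will be derived by playing the positive-measure condition against quantitative porosity theorems for the branch set.

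For the positive assertions, I would proceed as follows. Local Ahlfors $n$-regularity of $\mathbb{M}^g$ is immediate from Lemma~\ref{lemma:pullback property 2}, using $N(\pi,\mathbb{M}^g) = N(g,Z) < \infty$ and that $\mathbb{M}$ is smooth Riemannian (hence locally Ahlfors $n$-regular with bounded turning). The locally geodesic property follows from Lemma~\ref{lemma:pullback property 4} together with the path-lifting Lemma~\ref{lemma:lift Floyd}: lifts of geodesics in $\mathbb{M}$ are length-realizing in $\mathbb{M}^g$ because $\pi$ is 1-BLD. The local $(1,1)$-Poincar\'e inequality is standard for BLD projections onto Riemannian targets; combine the 1-BLD property of $\pi$ with the modulus preservation of Lemma~\ref{lemma:1.3}--\ref{lemma:1.4} and the Poincar\'e inequality on $\mathbb{M}$. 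The locally BLD-Euclidean structure is obtained by composing $\pi$ with a smooth bi-Lipschitz coordinate chart of $\mathbb{M}$ near each point, producing a Lipschitz 1-BLD map to $\R^n$. Finally, local bi-Lipschitz embeddability into a Euclidean space follows by applying (a local version of) Theorem~\ref{thm:biLip embedding I} to $\pi$, giving a bi-Lipschitz embedding of a neighborhood of any point in $\mathbb{M}^g$ into $\mathbb{M}\times \R^{c_d(N-1)}$, and then composing with a Riemannian chart of $\mathbb{M}$; $n$-rectifiability is then immediate.

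The failure statements all rest on the Bonk--Heinonen--Sarvas type porosity/null-measure results generalized in~\cite{gw15} and summarized in Section~\ref{subsec:Size of the branch set and V\"ais\"al\"a's inequality}. Assuming $\mathbb{M}^g$ is locally linearly locally contractible, the 1-BLD mapping $\pi\colon \mathbb{M}^g \to \mathbb{M}$ would satisfy all the regularity hypotheses needed for those results to apply, forcing $\mathcal{L}^n(\pi(\mathcal{B}_\pi)) = 0$, contradicting $\mathcal{L}^n(g(\mathcal{B}_g)) > 0$. Exactly the same mechanism, using the variant of the Bonk--Heinonen--Sarvas theorem phrased under local metric orientability rather than linear local contractibility, rules out local metric orientability of $\mathbb{M}^g$. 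For the last conclusion, if there existed a local quasiconformal equivalence $\phi\colon U \subset \mathbb{M}^g \to V \subset \R^n$, with $U$ chosen so that $\pi(\mathcal{B}_\pi \cap U)$ still has positive $\mathcal{L}^n$-measure, then $\pi \circ \phi^{-1}\colon V \to \mathbb{M}$ would be a classical quasiregular mapping into a Riemannian manifold (composing with a Riemannian chart), whose branch set has null image by the classical theory of Martio--Rickman--V\"ais\"al\"a; since $\phi$ preserves null sets and its image of the branch set equals $\pi(\mathcal{B}_\pi)\cap \pi(U)$, we again reach a contradiction.

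The main obstacle is the third step: identifying a sufficiently clean version of the results from~\cite{gw15} whose hypotheses are exactly verified by the 1-BLD projection $\pi$ under local linear local contractibility (respectively, local metric orientation) of $\mathbb{M}^g$; once that black box is in place, all three failure statements are immediate from the hypothesis $\mathcal{L}^n(g(\mathcal{B}_g)) > 0$. The positive properties are essentially bookkeeping using the fine properties of the pullback metric already developed in Section~\ref{sec:the pullback factorization}.
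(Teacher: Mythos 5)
Your proposal takes the same route as the paper: set up the pullback factorization $g = \pi\circ \tilde g$ with $\pi$ the 1-BLD projection, push the positive geometric properties of $\mathbb{M}$ down to $\mathbb{M}^g$ via Lemmas~\ref{lemma:pullback property 1}, \ref{lemma:pullback property 2} and~\ref{lemma:pullback property 4}, and derive the negative conclusions by playing $\mathcal{L}^n(\pi(\mathcal{B}_\pi))=\mathcal{L}^n(g(\mathcal{B}_g))>0$ against a null-measure theorem for the branch set of a BLD mapping. The one thing you flagged as ``the main obstacle'' --- naming a clean result that gives $\mathcal{L}^n(\pi(\mathcal{B}_\pi))=0$ under local linear local contractibility or local metric orientability --- is resolved in the paper by citing \cite[Theorem~6.4]{hr02} directly, which is stated precisely for Lipschitz BLD maps from a locally BLD-Euclidean generalized manifold to $\R^n$ under exactly those two alternative hypotheses; you gesture instead at the Bonk--Heinonen--Sarvas porosity results from \cite{gw15}, which also do the job but only after a bit more setup, so you should prefer \cite[Theorem~6.4]{hr02} here. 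The other minor departures are harmless: for the $(1,1)$-Poincar\'e inequality the paper cites \cite[Theorem~9.8]{hr02} rather than re-deriving it from modulus preservation, and for the final claim about quasiconformal non-equivalence to $\R^n$ the paper leaves it implicit in the failure of local linear local contractibility (a quasisymmetric invariant), while your composition argument $\pi\circ\phi^{-1}$ with classical MRV branch-set theory is a valid alternative.
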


It is not hard to construct topological branched coverings for which the image of the branch set has positive measure. As a result, Theorem~\ref{thm:counter-example branch set I} provides a rather rich source of examples and counterexamples.

For example, a conjecture of Heinonen and Rickman~\cite[Remark 6.32 (b)]{hr02}  is that the branch set of a Lipschitz BLD map $f\colon X\to \R^n$ from a generalized $n$-manifold into Euclidean space has Hausdorff $n$-measure 0, provided that $X$ has local bi-Lipschitz embeddings into some larger Euclidean space. As a corollary to Theorem~\ref{thm:counter-example branch set I} (and invoking also Corollary~\ref{coro:biLip embedding II}), we can show that this conjecture is false, and that there are in fact counterexamples of degree 2 in all dimensions, even under the restrictions that $X$ is homeomorphic to $\bR^n$ and the bilipschitz embedding is global.

\begin{corollary}\label{coro:counter-example branch set II}
	For each $n\geq 3$, there is a subspace $X\subset \R^N$ for some $N\geq n$, homeomorphic to $\R^n$, and a 1-BLD branched covering $f\colon X\to \R^n$ of degree 2 such that $\mathcal{L}^n(\mathcal{B}_f)>0$.
\end{corollary}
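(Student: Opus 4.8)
The plan is to obtain the corollary by feeding a suitable \emph{topological} branched covering into Theorem~\ref{thm:counter-example branch set I} and then into the embedding machinery of Section~\ref{sec:biLipschitz embeddability}; all of the analytic content is already packaged in those results, so the only new input is a purely topological one. I would first isolate the reduction: \emph{for each $n\ge 3$ it suffices to construct a degree $2$ branched covering $g\colon Z\to\R^n$, with $Z$ a generalized $n$-manifold homeomorphic to $\R^n$ and $N(g,Z)=2$, such that $\mathcal{L}^n\big(g(\mathcal{B}_g)\big)>0$.} Granting such a $g$, apply Theorem~\ref{thm:counter-example branch set I} with $\mathbb{M}=\R^n$ and let $X:=\mathbb{M}^g$ carry the pullback measure; by that theorem $X$ is $n$-rectifiable, locally Ahlfors $n$-regular and locally BLD-Euclidean of dimension $n$. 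In the pullback factorization $g=\pi\circ\mathrm{id}_{Z}$ from Section~\ref{sec:the pullback factorization} the projection $\pi\colon X\to\R^n$ is $1$-BDD, hence $1$-BLD, by Lemma~\ref{lemma:pullback property 1}, has $N(\pi,X)=2$, and satisfies $\mathcal{B}_\pi=\mathcal{B}_g$ (the branch set being a topological invariant, preserved by the identity homeomorphism $Z\to X$) and $\pi(\mathcal{B}_\pi)=g(\mathcal{B}_g)$. Since $X$ is locally compact and, by Lemma~\ref{lemma:pullback property 4} (with $Y=\R^n$ a length space), quasiconvex, Corollary~\ref{coro:biLip embedding II} furnishes a global bi-Lipschitz embedding of $X$ into some $\R^N$; I identify $X$ with the resulting subspace of $\R^N$ and put $f:=\pi$. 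Then $X\cong Z\cong\R^n$, $f$ is a degree $2$ branched covering, $\mathcal{B}_f=\mathcal{B}_g$, and $f$ is $1$-BLD with respect to the pullback metric carried by $X$ (passing to the restricted Euclidean metric costs only the bi-Lipschitz constant of the embedding). Finally, because $\pi$ is $1$-Lipschitz onto the $n$-regular space $\R^n$ and $1$-Lipschitz maps do not increase Hausdorff measure,
\[
\mathcal{H}^n(\mathcal{B}_f)\ \ge\ \mathcal{H}^n\big(\pi(\mathcal{B}_f)\big)\ =\ \mathcal{H}^n\big(g(\mathcal{B}_g)\big)\ \asymp\ \mathcal{L}^n\big(g(\mathcal{B}_g)\big)\ >\ 0,
\]
which is the assertion.

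The work is therefore entirely in constructing $g$, and this is the step I expect to be the main obstacle. The branch set of a branched covering between generalized $n$-manifolds has topological codimension at least $2$, so $g(\mathcal{B}_g)$ must be a closed set $C\subset\R^n$ with $\dim_{\mathrm{top}}C\le n-2$ and yet $\mathcal{L}^n(C)>0$; this is possible in principle because Hausdorff dimension and topological dimension are independent (a finite product of fat Cantor sets is topologically $0$-dimensional but has positive Lebesgue measure). For $g$ to be genuinely branched, $C$ must moreover be \emph{wild}: $\pi_1(\R^n\setminus C)$ has to surject onto $\Z/2$, since for $n\ge 3$ tamely embedded Cantor sets (and more generally tame sets of codimension $\ge 2$) have simply connected complement. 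Choosing such a surjection so that all meridians of $C$ map to the nontrivial element and taking the Fox completion of the associated connected double cover of $\R^n\setminus C$ produces a degree $2$ branched covering $g\colon Z\to\R^n$ with $\mathcal{B}_g=g^{-1}(C)$ and $g(\mathcal{B}_g)=C$. The delicate part is to arrange simultaneously that $Z$ be a generalized $n$-manifold homeomorphic to $\R^n$: a \emph{compact} branch set is incompatible with this, because over the single end of $\R^n\setminus C$ the double cover is trivial and $Z$ would acquire two ends, so $C$ must be unbounded; I would prescribe the germ of $(Z\to\R^n)$ at infinity to be the flat model $(z,v)\mapsto(z^2,v)$ on $\R^2\times\R^{n-2}$ outside a large ball (making $Z$ one-ended and simply connected), with the positive-measure wild ``core'' supported inside the ball.

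Thus the single hardest point is the existence of such a $C$ — wild, of positive Lebesgue measure, of topological dimension $\le n-2$ — together with a double cover of its complement whose Fox completion is a generalized $n$-manifold homeomorphic to $\R^n$; reconciling positive measure with wildness is itself nontrivial, since the naive linked-torus (Antoine-type) constructions produce measure zero, so the refinement has to spend the measure loss only at a sparse sequence of scales while still generating the required linking. Once $g$ is in hand, everything else is the bookkeeping carried out above; I would also note, concerning the ``$1$-BLD'' in the statement, that the map actually exhibiting the example is the projection $\pi\colon\mathbb{M}^g\to\R^n$, which is $1$-BDD (a fortiori $1$-BLD) on the nose, and that ``$X\subset\R^N$'' is to be understood through the global bi-Lipschitz identification of Corollary~\ref{coro:biLip embedding II}.
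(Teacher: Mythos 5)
Your structural reduction is the same as the paper's: feed a suitable topological degree-$2$ branched covering $g$ into Theorem~\ref{thm:counter-example branch set I}, let $X=\mathbb{M}^g$ with the pullback metric, take $f=\pi$ (the $1$-BDD projection), and embed $X$ into Euclidean space by Corollary~\ref{coro:biLip embedding II}; the measure bound via $1$-Lipschitzness of $\pi$ is also what the paper uses. Where you diverge, and where you expend most of your effort, is the topological input. The paper's (very terse) proof simply says to choose $g\colon\R^n\to\R^n$ of degree $2$ with $\mathcal{L}^n(g(\mathcal{B}_g))>0$, pointing to the standard winding map $w(z,v)=(z^2,v)$ on $\R^2\times\R^{n-2}$ as the model; the intended modification is merely to post-compose $w$ with a homeomorphism $h$ of $\R^n$ that carries the codimension-$2$ axis $\{0\}\times\R^{n-2}$ to a topological $(n-2)$-plane of positive $\mathcal{L}^n$-measure (an Osgood-type fattening), so that $g=h\circ w$, $\mathcal{B}_g=\{0\}\times\R^{n-2}$ and $g(\mathcal{B}_g)=h(\{0\}\times\R^{n-2})$ has positive measure. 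This keeps the source equal to $\R^n$ outright, so there is no Fox completion and no question of whether $Z$ is a generalized manifold or homeomorphic to $\R^n$.

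Your route through the Fox completion of a double cover of $\R^n\setminus C$ is genuinely harder and introduces obstacles that the paper's construction avoids. In particular, the worry that $C$ ``must be wild'' so that $\pi_1(\R^n\setminus C)$ surjects onto $\Z/2$ is aimed at the wrong target: you are implicitly imagining $C$ as a Cantor set (topological dimension $0$), for which tameness indeed kills $\pi_1$. But $g(\mathcal{B}_g)$ need not have topological dimension $0$; in the winding-map construction it is a topological $(n-2)$-plane, and then $\pi_1(\R^n\setminus C)$ already surjects onto $\Z$ (hence onto $\Z/2$) by Alexander duality without any wildness needed for the covering to exist. (Of course such a plane of positive $\mathcal{L}^n$-measure is necessarily metrically wild --- a tame codimension-$2$ submanifold has $\mathcal{H}^n$-measure zero --- but that is not the kind of wildness that obstructs the double cover.) Your discussion of ends, one-endedness, and matching to the flat winding model at infinity is also rendered moot once $Z=\R^n$ is taken as the source from the start. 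Finally, you correctly flag that after the bi-Lipschitz embedding $f$ is $1$-BLD only with respect to the pullback metric (and $L$-BLD with respect to the restricted Euclidean metric); the paper does not address this, and it is worth keeping in mind that the ``$1$-BLD'' in the statement should be read against the pullback metric.
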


Heinonen and Semmes also asked~\cite[Question 33]{hs97} if every closed topological 4-manifold admits a metric so that it is Ahlfors 4-regular and locally linearly locally contractible. (The answer to this question is ``yes" for manifolds of dimension $n\neq 4$, by the work of Sullivan~\cite{s79} showing that every such manifold has a Lipschitz structure.) A motivation for this is that an affirmative answer would imply, by the work of Semmes~\cite{s96}, that every closed topological 4-manifold admits an $n$-regular, $n$-rectifiable
metric satisfying a $(1,1)$-Poincar\'e inequality.

The pullback construction gives another possible avenue to the question of whether 4-manifolds can be metrized to admit Poincar\'e inequalities. Heinonen and Semmes~\cite[Question 31]{hs97} asks if every closed topological 4-manifold is a branched cover of $\mathbb{S}^4$. If the answer to this question is ``yes", then by Theorem~\ref{thm:counter-example branch set I}, we may indeed give each 4-manifold such a metric.

\begin{proof}[Proof of Theorem~\ref{thm:counter-example branch set I}]
	Let $\pi\colon g^*\mathbb{M}\to \mathbb{M}$ be the projection (from the pullback factorization). That $g^*\mathbb{M}$ is a locally BLD-Euclidean space follows directly from Lemma~\ref{lemma:pullback property 1}. Note that if $g^*M$ is either locally linearly locally contractible or locally metrically orientable, then it would follow from~\cite[Theorem 6.4]{hr02} that $\mathcal{L}^n(g(\mathcal{B}_g))=\mathcal{L}^n(\pi(\mathcal{B}_\pi))=0$. Thus $g^*\mathbb{M}$ is neither locally linearly locally contractible nor locally metrically orientable. That $g^*M$ is locally $n$-rectifiable, locally Ahlfors $n$-regular, locally geodesic also follows immediately from Lemma~\ref{lemma:pullback property 1}, Lemma~\ref{lemma:pullback property 2}, and the local geometry of the Riemannian $n$-manifolds $\mathbb{M}$. The fact that $g^*\mathbb{M}$ supports a $(1,1)$-Poincar\'e inequality follows directly from~\cite[Theorem 9.8]{hr02}. The last assertion is a direct consequence of Corollary~\ref{coro:biLip embedding II}.
\end{proof}

\begin{proof}[Proof of Corollary~\ref{coro:counter-example branch set II}]
	Choose a topological branched covering $g\colon \R^n\to \R^n$ with degree 2 (for instance the standard winding mapping; e.g.~\cite[Example I 3.1]{r93}) so that $\mathcal{L}^n(g(\mathcal{B}_g))>0$ and then invoke Theorem~\ref{thm:counter-example branch set I}.
\end{proof}

\subsection{Geometric parametrization of metric spaces}
In the field of analysis and geometry on metric spaces, one of the celebrated open problems is to find good geometric parametrization of certain classes of metric spaces (see e.g.~\cite[Section 16.5]{h07}). 

For the bi-Lipschitz parametrization, it was initiated earliest by an observation due to Siebenmann and Sullivan~\cite{ss77}. Remarkable positive parametrization results were achieved by Toro~\cite{t94,t95}. There are also several highly non-trivial results, both positive and negative, were obtained by Semmes~\cite{s91,s96c,s96b}, David--Semmes~\cite{ds93,ds97}, Bonk--Lang~\cite{bl03}, Bonk--Heinonen--Saksman~\cite{bhs08}, and Heinonen--Rickman~\cite{hr02}. Inspired by Sullivan's work~\cite{s79,s95}, a simple geometric condition, that is sufficient for a space to admit local bi-Lipschitz parametrization by the Euclidean spaces, was provided in the remarkable works of Heinonen--Sullivan~\cite{hs02} and Heinonen--Keith~\cite{hk11}.

Instead of the bi-Lipschitz parametrization, people also ask for weaker geometric parametrizations, e.g. quasiconformal or quasisymmetric parametrization. The research along this direction often seeks for a version of the classical uniformization theorem for a certain class of two-dimensional metric spaces.  
Uniformization problems concerning quasiconformal and quasisymmetric mappings have received considerable attention in recent years, and they have found significant applications in geometry, complex dynamics, geometric topology and geometric measure theory, among other areas. In particular, several problems in the theory of hyperbolic groups can be interpreted as uniformization problems
concerning boundaries of the groups in question; see for instance~\cite{b06,b11,bk05,bm13,bm15,k06} and also \cite{tv80,c94,abt15,m02,m10}.

We would like to mention the remarkable result of Bonk--Kleiner~\cite{bk02}, where it has been shown that for an Ahlfors 2-regular topological sphere, it is quasisymmetric to the standard sphere $\mathbb{S}^2$ if and only if it is linearly locally contractible (LLC*). LLC* is a geometric condition that rules out cusp-like spaces. This beautiful result was later extended in several consequent works~\cite{mw13,w08,w10}. Very recently, in a celebrated result of Rajala~\cite{r14}, a quasiconformal analogy of the Bonk--Kleiner result was obtained via a geometric approach.

In higher dimensions, the uniformization problem does not have a satisfactory answer even for very nice metric spaces. Examples by Semmes~\cite{s96b}
show that the result of Bonk--Kleiner mentioned above does not generalize to dimension 3. Heinonen and Wu~\cite{hw10}, Pankka and Wu~\cite{pw14} and Pankka and Vellies~\cite{pv15} gave further examples of geometrically nice spaces without quasisymmetric  parametrizations.
 
\newpage
\section{Characterizations of BLD mappings in metric spaces with bounded geometry}\label{sec:characterization of BLD mappings}

\subsection{Background and formulation}
In this section, we give another application of the pullback factorization and the theory of quasiregular mapping that we have developed in Section~\ref{sec:foundations of QR mappings in mms}. 

BLD mappings in Euclidean spaces were first introduced by Martio and V\"ais\"al\"a~\cite{mv88} in their study of second-order elliptic operators. They established many interesting analytic and geometric properties of BLD mappings in $\R^n$ via the theory of quasiregular mappings. In particular, they obtained the following quantitative analytic characterization of BLD mappings: \textit{a continuous mapping $f\colon \Omega\to \R^n$, $n\geq 2$, is BLD if and only if $f$ is locally uniformly Lipschitz and the Jacobian determinant $J_f=\det Df$ is positive and uniformly bounded away from zero almost everywhere in $\Omega$}. This description of BLD mappings in Euclidean spaces  does not include the assumption that the mapping is discrete and open, nor that it is sense-preserving. In fact, mappings satisfying the latter analytic conditions as above form a strict subclass of quasiregular mappings. A deep theorem of Reshetnyak~\cite{r93} implies that (non-constant) quasiregular mappings are both discrete and open, and consequently they are sense-preserving as well.

In~\cite{hs02}, Heinonen and Sullivan successfully generalized Reshetnyak's theorem to quasiregular mappings from generalized $n$-manifolds of type $A$ into $\R^n$. As a typical application of this result, Heinonen and Rickman~\cite[Theorem 6.18]{hr02} have obtained a similar analytic characterization of BLD mappings in the setting of mappings from generalized $n$-manifolds of type $A$ to $\R^n$. 

Our main result of this section is the following quantitative characterizations of BLD mappings in Ahlfors $Q$-regular $Q$-Loewner spaces. Throughout the entire section, $Q$ will be a real number strictly larger than one and $n\geq 2$ an integer.
\begin{theorem}\label{thm:main thm BLD}
	Let $f\colon X\to Y$ be a branched covering between two Ahlfors $Q$-regular $Q$-Loewner spaces. Then the following statements are equivalent:
	
	1). $f$ is $L$-BLD;
	
	2). For each $x\in X$, there exists $r_x>0$ such that 
	\begin{align*}
	\frac{d(x,y)}{c}\leq d(f(x),f(y))\leq cd(x,y)
	\end{align*}
	for all $y\in B(x,r_x)$;
	
	3). $L_f(x)\leq c$ and $l_f(x)\geq \frac{1}{c}$ for each $x\in X$;
	
	4). $f$ is metrically $H$-quasiregular, locally $M$-Lipschitz, and $J_f(x)\geq c$ for a.e. $x\in X$.
	
	Moreover, all the constants involved depend quantitatively only on each other and on the data associated to $X$ and $Y$. 
\end{theorem}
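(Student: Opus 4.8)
The plan is to prove the equivalences by separating a purely metric block, $1)\Leftrightarrow 3)\Leftrightarrow 2)$, which I would handle by path--lifting, from the analytic block $1)\Leftrightarrow 4)$, which I would handle with the pullback factorization $f=\pi\circ g$ of Section~\ref{sec:the pullback factorization} and the equivalence machinery of Section~\ref{sec:foundations of QR mappings in mms}. First I record that an Ahlfors $Q$-regular $Q$-Loewner space supports a $(1,Q)$-Poincar\'e inequality, is $c_0$-quasiconvex (Lemma~\ref{lemma:poincare implies quasiconvex}) and has locally $Q$-bounded geometry; replacing $d_X,d_Y$ by their length metrics I may assume $X,Y$ geodesic. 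Throughout I write $L_f(x)=\limsup_{r\to0}L_f(x,r)/r$ and $l_f(x)=\liminf_{r\to0}l_f(x,r)/r$.

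For $1)\Rightarrow 3)$: an $L$-BLD map satisfies $L_f(x,r)\le Lr$ (join $x$ to a point at distance $r$ by a geodesic and use the length bound), so $L_f(x)\le L$; for the lower bound, pick $y$ with $d(x,y)=r$ almost realizing $l_f(x,r)$ and lift a geodesic from $f(y)$ to $f(x)$ \emph{starting at $y$} (Lemma~\ref{lemma:lift Floyd}) --- it has length $\le L\,d(f(x),f(y))$ and ends at a preimage of $f(x)$ that, by discreteness of $f$ and smallness of $r$, must be $x$ --- so $r\le L\,d(f(x),f(y))$ and $l_f(x)\ge 1/L$. The equivalence $3)\Leftrightarrow 2)$ is just the unwinding of the everywhere bounds on $L_f,l_f$ into the two-sided pointwise estimate of $2)$ on a small ball $B(x,r_x)$, and conversely. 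For $3)\Rightarrow 1)$: $L_f(z)\le c$ at every $z$ forces $\ell(f\circ\alpha)\le c\,\ell(\alpha)$ for every curve $\alpha$ (subdivide $\alpha$ into arcs lying in balls where the $\limsup$-estimate applies, then pass to the supremum over partitions), while $l_f(z)\ge 1/c$ at every $z$ gives, for fine partitions, $\sum_i d(f\alpha(t_{i-1}),f\alpha(t_i))\ge c^{-1}\sum_i d(\alpha(t_{i-1}),\alpha(t_i))$, hence $\ell(f\circ\alpha)\ge c^{-1}\ell(\alpha)$ after taking the supremum over partitions. All constants transform quantitatively.

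An $L$-BLD map is $L$-Lipschitz along curves, hence $L$-Lipschitz ($X$ geodesic) and locally $M$-Lipschitz; it is metrically $c^2$-quasiregular by $1)\Rightarrow 2)$; and openness of $f$ gives $B(f(x),l_f(x,r))\subset f(B(x,r))$, so that $\nu(f(B(x,r)))\gtrsim l_f(x,r)^Q\gtrsim r^Q\gtrsim\mu(B(x,r))$ and $J_f(x)\ge c'>0$ a.e.\ by~\eqref{eq:Jacobian presentation}; this is $1)\Rightarrow 4)$. For $4)\Rightarrow 1)$, the heart of the matter, I would argue as follows. Applying Section~\ref{sec:foundations of QR mappings in mms} to $f\colon X\to f(X)$ (both spaces of locally $Q$-bounded geometry), Theorems~\ref{thm:bounded geometry I} and~\ref{thm:bounded geometry III} promote the metric quasiregularity of $f$ to the full analytic/geometric package; in particular $f$ is inverse analytically $K_I$-quasiregular (Definition~\ref{def:inverse analytic def for qr}) with $K_I$ depending only on $H$ and the data, i.e.\ $g^{-1}\in N^{1,Q}_{\loc}(X^f,X)$ with $|\nabla g^{-1}|^Q\le K_I J_{g^{-1}}$. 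Since $J_g=J_f$ and $g$ satisfies Conditions $N$ and $N^{-1}$ (Lemma~\ref{lemma:Condition N+N inverse}), $J_{g^{-1}}=1/J_f\le 1/c$ a.e., so $|\nabla g^{-1}|\le(K_I/c)^{1/Q}=:M'$ a.e.; and $g$ is locally $M$-Lipschitz, as $d_{X^f}(g(x),g(y))=f^*d_Y(x,y)\le\diam f(\alpha)\le M\,d(x,y)$ for a short geodesic $\alpha$ between $x,y$. It remains to turn the gradient bound on $g^{-1}$ into a Lipschitz bound, for which I would first prove $X^f$ \emph{locally $c_0$-quasiconvex}: given $z,z'$ close in $X^f$, lift the reversed geodesic from $\pi(z')$ to $\pi(z)$ starting at $z'$; since $\pi$ is $1$-BLD (Lemma~\ref{lemma:pullback property 1}) this lift has the length of that geodesic, so for $z'$ close enough it stays inside a normal neighborhood $U$ of $z$ with $N(\pi(z),\pi,U)=1$ (Proposition~\ref{prop:existence of normal nbhd}), whence its endpoint is forced to be $z$, giving a curve of length $\le c_0\,d_{X^f}(z,z')$ joining $z,z'$ in $X^f$. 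A standard fact --- a continuous map with bounded $Q$-weak upper gradient on a locally quasiconvex space is locally Lipschitz, cf.~\cite{hkst15} --- then makes $g^{-1}$ locally $M'c_0$-Lipschitz; hence $g$ is locally bi-Lipschitz with a uniform constant $C_g=\max\{M,M'c_0\}$, and since $\pi$ is $1$-BDD, $f=\pi\circ g$ distorts the length of every curve by a factor in $[C_g^{-1},C_g]$, i.e.\ $f$ is $C_g$-BLD with $C_g$ depending only on $M,c,H$ and the data.

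I expect the main obstacle to be precisely this last step of $4)\Rightarrow 1)$. The pullback space $X^f$ absorbs all the branching of $f$ and, as the examples of Section~\ref{subsec:some natural examples} show, need not support any Poincar\'e inequality, so one cannot simply push the gradient bound for $g^{-1}$ through an intrinsic Poincar\'e inequality on $X^f$. What rescues the argument is that $\pi$ is $1$-BLD, hence lifts geodesics of $Y$ to \emph{length-preserving} curves of $X^f$, and that the normal-neighborhood structure of Proposition~\ref{prop:existence of normal nbhd} pins down which preimage a short lift reaches; together these yield local quasiconvexity of $X^f$ with the same constant as $Y$ and, crucially for the ``Moreover'' clause, keep the final BLD constant free of the local index of $f$.
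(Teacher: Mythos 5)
Your overall strategy for $4)\Rightarrow 1)$ tracks the paper's: pass to the pullback factorization $f=\pi\circ g$, use the machinery of Section~\ref{sec:foundations of QR mappings in mms} (Theorems~\ref{thm:bounded geometry I}, \ref{thm:bounded geometry III}, \ref{thm:equivalence of G and A}) to get $|\nabla g^{-1}|^Q\le K_I\,J_{g^{-1}}$, feed in $J_{g^{-1}}=1/J_g=1/J_f\le 1/c$ a.e.\ (via Condition $N$, $N^{-1}$ from Lemma~\ref{lemma:Condition N+N inverse}) to obtain an $L^\infty$ bound on $|\nabla g^{-1}|$, and then try to promote this to a Lipschitz bound for $g^{-1}$, so that $g$ is bi-Lipschitz and $f=\pi\circ g$ is BLD. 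You also supply direct proofs of $1)\Leftrightarrow 2)\Leftrightarrow 3)$ and $1)\Rightarrow 4)$ (which the paper outsources to a reference); those look sound, in particular the use of openness, path-lifting, and the Jacobian formula~\eqref{eq:Jacobian presentation} in $1)\Rightarrow 4)$.

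The gap is in the final step of $4)\Rightarrow 1)$, exactly the step you flag as the heart of the matter. You assert that a continuous map with a bounded $Q$-\emph{weak} upper gradient on a locally quasiconvex space must be locally Lipschitz. That is false. The $Q$-weak upper gradient inequality holds only along $Q$-almost every curve, and quasiconvexity gives no lower bound on the $Q$-modulus of the family of quasiconvexity curves joining a given pair of nearby points: a $Q$-exceptional family can swallow all of them. (Concretely: if the underlying measure is concentrated on a lower-dimensional set, every curve family has zero $Q$-modulus, so $0$ is a $Q$-weak upper gradient of every continuous map, yet the space can be geodesic.) What is actually needed is a $(1,Q)$-Poincar\'e inequality, and that is precisely the hypothesis the paper's Lemma~\ref{lemma:constant weak ug implies Lipschitz} uses, via the pointwise Hajlasz-type characterization of $N^{1,Q}$. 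So after you establish local quasiconvexity of $X^f$ (which is correct, and consistent with Lemma~\ref{lemma:pullback property 4}), you must also show that $(X^f,\lambda)$ supports a local $(1,Q)$-Poincar\'e inequality before the $L^\infty$ bound on $|\nabla g^{-1}|$ yields a Lipschitz constant. Incidentally, your appeal to the examples of Section~\ref{subsec:some natural examples} to argue that $X^f$ might fail every Poincar\'e inequality misreads Theorem~\ref{thm:counter-example branch set I}: those pullback spaces \emph{do} support a $(1,1)$-Poincar\'e inequality (via~\cite[Theorem 9.8]{hr02}); what they fail is local linear local contractibility and local metric orientability.
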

Recall that for a mapping $f\colon X\to Y$, $L_f$ and $l_f$ are defined as
\begin{align*}
L_f(x)=\limsup_{y\to x}\frac{d(f(x),f(y))}{d(x,y)}\quad \text{and}\quad l_f(x)=\liminf_{y\to x}\frac{d(f(x),f(y))}{d(x,y)}.
\end{align*}

The assumption that $f$ is $K$-quasiregular in Theorem~\ref{thm:main thm BLD} 4) can be dropped, since it is implied by the other two conditions; see Lemma~\ref{lemma:Lip with positive Jacobian is QR} below. We prefer the current formulation simply because we expect that Theorem~\ref{thm:main thm BLD} would hold in a wider class of metric spaces, where the (metric) quasiregularity does not necessarily follow from the locally Lipschitz regularity and the lower positive bounds on the (volume) Jacobian.

As commented in the beginning of this section, Theorem~\ref{thm:main thm BLD} was first proved in the Euclidean spaces by Martio and V\"ais\"al\"a~\cite[Theorem 2.16]{mv88}, and Later, generalized by Heinonen and Rickman~\cite[Theorem 6.18]{hr02}, to mappings from generalized $n$-manifolds of type $A$ into the Euclidean space $\bR^n$. The proof of Martio and V\"ais\"al\"a depends not only on the geometry of Euclidean spaces, but also on the differentiable structure of Euclidean spaces, thus it cannot be generalized to our setting. Heinonen and Rickman were able to give a proof independent of the differentiable structure of Euclidean spaces, but their proof still depends heavily on the geometry of Euclidean spaces (as the target space). It seems for us not so easy to adjust their proof to the setting of mappings between two generalized $n$-manifolds of type $A$.  

The new approach we have developed here is to use the pullback factorization from Section~\ref{subsec:Canonical factorization} to factorize $f$ as $f=\pi\circ g$ and then transfer the information of $f$ to its lift mapping $g\colon X\to X^f$. Then using the techniques from Section~\ref{sec:foundations of QR mappings in mms} and the basic properties of pullback factorizations to show that $g$ is bi-Lipschitz, quantitatively. The main obstacle here is to update the $\mu$-a.e. pointwise information of $f$ in Theorem \ref{thm:main thm BLD}~4) to the everywhere defined BLD condition in Theorem \ref{thm:main thm BLD}~1). At the level of $f$, this is not an easy task as already observed in~\cite[Lemma 6.19, Lemma 6.23 and Proof of Theorem 6.18]{hr02}. Somewhat surprisingly, this is quite easy to handle, via a simple lemma (see~Lemma \ref{lemma:constant weak ug implies Lipschitz} below), at the level of the lifting mapping $g$.

Very recently, Luisto~\cite{l15b} has shown that the equivalences of 1)--3) hold in very general metric spaces. Moreover, for a continuous discrete mapping $f\colon X\to Y$ between length spaces, it has been shown that $f$ is $L$-BLD if and only it is $L$-\textit{Lipschitz Quotient} ($L$-LQ for short), \ie, for each $x\in X$ and $r>0$,
\begin{align}\label{def:LQ mapping}
B\big(f(x),L^{-1}r\big)\subset f\big(B(x,r)\big)\subset B\big(f(x),Lr\big).
\end{align} 
Based on the afore-mentioned characterizations, Luisto has obtained an interesting convergence result that generalizes earlier works of Martio-V\"ais\"al\"a~\cite[Theorem 4.7]{mv88}, Heinonen-Keith~\cite[Lemma 6.2]{hk11} and Luisto~\cite[Corollary 1.3]{l15}. Using the recent result on the size of the branch set of a quasiregular mapping obtained by the authors~\cite{gw15}, we are able to improve on Luisto's result into the following form.

\begin{theorem}\label{thm:convergence result}
	Let $X$ and $Y$ be two Ahlfors $Q$-regular generalized $n$-manifolds. Assume additionally that $X$ is locally linearly locally contractible and locally quasiconvex and that $Y$ is locally quasiconvex. Let $\{f_i\}_{i\in \mathbb{N}}$ be a sequence of $L$-BLD mappings converging locally uniformly to a continuous mapping $f\colon X\to Y$. Then $f$ is $L$-BLD. 
\end{theorem}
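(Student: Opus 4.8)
The plan is to reduce the convergence statement for branched coverings to the corresponding statement for quasiconformal homeomorphisms via the pullback factorization, combining it with the size estimates on branch sets from~\cite{gw15}. First I would record what a locally uniform limit of $L$-BLD mappings automatically inherits: since each $f_i$ is $1$-Lipschitz after scaling (indeed $L$-Lipschitz), the limit $f$ is $L$-Lipschitz, and since each $f_i$ is $L^{-1}$-co-Lipschitz (by Luisto's equivalence of $L$-BLD with $L$-LQ on length spaces, see~\eqref{def:LQ mapping}), the limit $f$ is likewise $L^{-1}$-co-Lipschitz on the relevant neighborhoods — this uses only elementary $\varepsilon$-arguments together with local quasiconvexity of $X$ and $Y$, which lets us pass between the length metric and the original metric. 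Co-Lipschitz plus Lipschitz plus continuity already forces $f$ to be open; the subtler points are discreteness and the exact BLD inequality on curves.

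Next I would establish that $f$ is discrete and of locally bounded multiplicity. Here I would invoke the generalized-manifold hypothesis: for sense-preserving discrete open maps between generalized $n$-manifolds one has degree theory, and the local index $i(x,f)$ (as in Section~\ref{subsec:some topological fact}) is controlled. The key input is that each $f_i$ has a uniform local degree bound, and local uniform convergence of open co-Lipschitz maps preserves the local degree; this prevents "collapsing" of fibers and gives discreteness with locally bounded multiplicity. Once $f$ is known to be a branched covering that is $L$-Lipschitz and $L^{-1}$-co-Lipschitz (hence $L$-LQ), Luisto's theorem — already cited in the excerpt — gives that $f$ is $L$-BLD on length spaces; alternatively, one applies Theorem~\ref{thm:main thm BLD} (the equivalence of 1), 2), 3)) together with the fact that $L_f(x)\le L$ and $l_f(x)\ge L^{-1}$ follow from the Lipschitz and co-Lipschitz bounds that survive the limit.

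The role of the branch-set result from~\cite{gw15} is precisely to handle the measure-theoretic gap in the argument: to guarantee that the limit map is \emph{sense-preserving} (not sense-reversing or degenerate) and that the pointwise index information behaves well, one needs that $\mathcal{B}_{f_i}$ and $f_i(\mathcal{B}_{f_i})$ are small (null, or quantitatively porous) uniformly in $i$; the porosity/null estimates of~\cite{gw15}, valid under the Ahlfors $Q$-regularity and local linear local contractibility hypotheses on $X$, make this uniform, so that in the limit $\mathcal{B}_f$ is still null and $f$ is sense-preserving, whence the analytic characterization (Theorem~\ref{thm:main thm BLD} 4)) applies: $f$ is metrically $H$-quasiregular, locally $L$-Lipschitz, with $J_f\ge c>0$ a.e., and therefore $f$ is $L$-BLD.

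The main obstacle I expect is the passage from "co-Lipschitz open limit" to "branched covering", i.e.\ proving discreteness and the uniform multiplicity bound for $f$; a naive limit of discrete maps need not be discrete, and one must genuinely use degree theory on generalized manifolds plus the uniform smallness of the branch sets from~\cite{gw15} to rule out fiber collapse. A secondary technical point is keeping all the local constants genuinely uniform — local quasiconvexity, local Ahlfors regularity and local linear local contractibility are only assumed \emph{locally}, so one works on a fixed relatively compact neighborhood where the data are uniform, proves the BLD inequality there, and then uses separability to conclude globally, exactly as in the localization arguments of Section~\ref{sec:foundations of QR mappings in mms}.
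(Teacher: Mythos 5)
Your overall outline --- pass to the internal metrics via local quasiconvexity (Lemma~\ref{lemma:BLD via length metric}), prove that the limit $f$ is discrete, then invoke Luisto's discrete-LQ$\Rightarrow$BLD theorem --- is the same as the paper's, and you correctly single out discreteness as the crux. But there are two concrete gaps. First, you misidentify the role of~\cite{gw15}: in the paper it is applied to the \emph{approximating} maps $f_i$, not to $f$, in order to verify the hypothesis $\mathscr{H}^Q\big(f_i(\mathcal{B}_{f_i})\big)=0$ needed to invoke the Heinonen--Rickman multiplicity estimate, Theorem~\ref{thm:Heinonen-Rickman}. That estimate gives a \emph{uniform} quantitative bound on $N(y,f_i,B(x_0,r))$ in terms of ball measures, and it is precisely this bound --- not a ``degree is preserved under local uniform convergence'' heuristic, nor sense-preservation or nullity of $\mathcal{B}_f$ for the limit --- which feeds Luisto's discreteness argument (Theorem~1.4 of~\cite{l15b}) and rules out fiber collapse. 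Without the uniform multiplicity bound, your discreteness step is incomplete.

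Second, your fallback through the analytic characterization in Theorem~\ref{thm:main thm BLD}~4) is not available: that theorem assumes $X$ and $Y$ are Ahlfors $Q$-regular \emph{$Q$-Loewner} spaces, and no Loewner hypothesis appears in Theorem~\ref{thm:convergence result}. It would in any case only yield a BLD constant depending quantitatively on $L$ and the data, whereas the desired conclusion is $L$-BLD with the same constant $L$, which Luisto's Theorem~1.1 does recover. So the only viable path is the primary one you sketch --- discrete plus LQ plus Luisto plus the internal-metric reduction --- once the appeal to degree theory is replaced by the Heinonen--Rickman multiplicity bound supplied by the $\mathscr{H}^Q$-nullity of $f_i(\mathcal{B}_{f_i})$ from~\cite{gw15}.
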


Recall that $X$ is \textit{locally linearly locally contractible} if for each $x\in X$, there exists a neighborhood $U_x$ of $x$ such that $U_x$ is $c_x$-linearly locally contractible, i.e., each ball $B(y,r)\subset U_x$ is contractible in $B(y,c_xr)$.

The assumptions that $X$ and $Y$ are locally quasiconvex can be further relaxed; see Remark~\ref{rmk:on removing quasiconvexity} below. Our interest in Theorem~\ref{thm:convergence result} lies in seeking for an easy proof of the geometric porosity for the branch set of a BLD mapping in metric spaces via a blow up argument. We will not enter any technical details, since it clearly beyonds the scope of the current paper, but we will address this issue in a sequential paper.  

\subsection{Auxiliary results}
The following lemma is certainly well-known to experts. However, we present a simple proof here due to lack of a precise reference.
\begin{lemma}\label{lemma:constant weak ug implies Lipschitz}
	Let $h\colon X\to Y$ be a continuous $N^{1,Q}_{loc}(X,Y)$-mapping between two Ahlfors $Q$-regular $Q$-Loewner spaces such that the constant function $c$ is a $Q$-weak upper gradient of $h$. Then $h$ is $C$-Lipschitz, quantitatively.
\end{lemma}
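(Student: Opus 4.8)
The plan is to exploit the fact that in an Ahlfors $Q$-regular $Q$-Loewner space the Sobolev space $N^{1,Q}_{\loc}$ essentially detects Lipschitz regularity through the length of curves, so that a bounded upper gradient together with the abundance of good curves (guaranteed by the Loewner condition) forces a Lipschitz estimate. First I would reduce to a local statement: fix $x\in X$ and work in a small ball $B=B(x,r_0)$ on which both $X$ and $Y$ are Ahlfors $Q$-regular with uniform constants, and on which the Loewner function of $X$ is controlled; since the claim is quantitative and local, this loses nothing. Next I would use the Loewner/quasiconvexity input: by Lemma~\ref{lemma:poincare implies quasiconvex} (or directly the $Q$-Loewner property, via the standard fact that a complete Ahlfors $Q$-regular $Q$-Loewner space supports a $(1,Q)$-Poincar\'e inequality and is quasiconvex), $X$ is quasiconvex, so any two nearby points $p,q$ can be joined by a curve $\gamma$ of length $l(\gamma)\le C d(p,q)$.

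Given such a curve $\gamma$ joining $p$ to $q$, the defining inequality for a $Q$-weak upper gradient would give
\begin{equation*}
d_Y(h(p),h(q))\le \int_\gamma c\,ds = c\,l(\gamma)\le cC\,d(p,q),
\end{equation*}
\emph{provided} the upper gradient inequality $\int_\gamma c\,ds\ge d_Y(h(\gamma(b)),h(\gamma(a)))$ actually holds for the particular curve $\gamma$ we chose. This is the crux: a $Q$-weak upper gradient only controls $Q$-almost every curve, and a single quasiconvex geodesic-like curve between two fixed points need not lie in the ``good'' family. The standard device to get around this is to average: instead of one curve between $p$ and $q$, consider the whole family $\Gamma_{p,q}$ of quasiconvex curves joining small balls $B(p,t)$ and $B(q,t)$, show that $\Modd_Q(\Gamma_{p,q})$ is bounded below by the Loewner estimate~\eqref{eq: Loewner property for n regular space} (since $d(B(p,t),B(q,t))\lesssim d(p,q)$ and $\min\{\diam B(p,t),\diam B(q,t)\}\gtrsim t$ with $t\approx d(p,q)$, giving $\zeta$ bounded above and hence $\Modd_Q(\Gamma_{p,q})\gtrsim 1$), and then observe that the exceptional family on which $c$ fails to be an upper gradient has $Q$-modulus zero, so it cannot contain all of $\Gamma_{p,q}$. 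Therefore there exists at least one curve $\gamma\in\Gamma_{p,q}$ with $l(\gamma)\le C d(p,q)$ along which the upper gradient inequality holds, and the displayed chain of inequalities then yields $d_Y(h(p),h(q))\le C' d(p,q)$ with $C'$ depending only on $c$ and the data.

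The main obstacle, then, is precisely this modulus-vs-single-curve issue, and making it fully rigorous requires a small amount of care: one must choose the radius $t$ and the family $\Gamma_{p,q}$ so that (i) every member is rectifiable with length comparable to $d(p,q)$ — which follows from quasiconvexity applied after lifting endpoints onto $\partial B(p,t)$, $\partial B(q,t)$, or alternatively from the fact that curves of very large length contribute negligibly to the $Q$-modulus, so one may discard them; and (ii) the lower modulus bound survives. A clean way to package (i) is to note that the subfamily of $\Gamma_{p,q}$ consisting of curves of length $\ge \Lambda d(p,q)$ has modulus at most $C(\Lambda d(p,q))^{-Q}\mu(B(p,\kappa d(p,q)))\le C\Lambda^{-Q}$, which is smaller than the Loewner lower bound once $\Lambda$ is large enough; intersecting with the complement of the $Q$-exceptional set still leaves a nonempty family, and any curve in it does the job. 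Once the local Lipschitz bound $d_Y(h(p),h(q))\le C' d(p,q)$ is established for all $p,q\in B(x,r_0/2)$ with $C'$ independent of $x$, $h$ is $C'$-Lipschitz, quantitatively, completing the proof.
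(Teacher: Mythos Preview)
Your approach differs from the paper's, and the resolution you propose for the ``crux'' does not actually close the gap. The Loewner lower bound applies only to curve families joining nondegenerate continua, so the non-exceptional curve $\gamma$ you extract from $\Gamma_{p,q}$ has its endpoints on $B(p,t)$ and $B(q,t)$, not at $p$ and $q$ themselves; the upper gradient inequality along $\gamma$ therefore controls $d_Y\big(h(\gamma(a)),h(\gamma(b))\big)$, not $d_Y(h(p),h(q))$, and your ``displayed chain of inequalities'' does not follow. This cannot be repaired by shrinking $t$ or by asking directly for curves through $p$: in an Ahlfors $Q$-regular space with $Q>1$ the family of all rectifiable curves emanating from a single point has $Q$-modulus zero (test with $\rho_\epsilon(x) = \big(d(x,p)\log(R/\epsilon)\big)^{-1}\chi_{\{\epsilon \le d(x,p)\le R\}}$ and let $\epsilon\to 0$), so the exceptional family for the weak upper gradient may contain every curve through $p$, and no modulus argument can single out a good curve with prescribed endpoints. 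One can try to iterate your ball-to-ball estimate into a recursion $\omega(r)\le 2\,\omega(\theta r)+C_\theta r$ for the modulus of continuity, but closing this requires a priori H\"older control on $h$, which itself already needs Poincar\'e/Morrey input.

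The paper bypasses the endpoint problem entirely. Since a complete Ahlfors $Q$-regular $Q$-Loewner space supports a $(1,Q)$-Poincar\'e inequality, the Haj\l asz-type pointwise characterization of $N^{1,Q}_{\loc}$ (the paper cites \cite[Theorem~9.5]{h01}) gives
\[
d_Y(h(x_1),h(x_2))\le C\,d(x_1,x_2)\big(M(g_h^Q)(x_1)+M(g_h^Q)(x_2)\big)^{1/Q}
\]
for $\mu$-a.e.\ $x_1,x_2$, where $M$ is the Hardy--Littlewood maximal operator. With $g_h\equiv c$ one has $M(c^Q)=c^Q$, yielding the Lipschitz bound for a.e.\ pair; continuity of $h$ and density then extend it to all pairs. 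The telescoping over dyadic balls hidden inside the Poincar\'e--maximal function machinery is precisely the mechanism, missing from your argument, for passing from control on thick curve families to control at individual points.
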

\begin{proof}
	Since $X$ $Q$-Ahlfors regular and $Q$-Loewner, it supports a $(1,Q)$-Poincar\'e inequality and so by the well-known pointwise characterization of Sobolev spaces $N^{1,Q}_{loc}$ (cf.~\cite[Theorem 9.5]{h01}), we have
	\begin{align*}
	d(h(x_1),h(x_2))\leq Cd(x_1,x_2)\big(M(g_h^Q(x_1))+M(g_h^Q(x_2))\big)^{1/Q}
	\end{align*}
	for a.e. $x_1,x_2\in X$, where the positive constant $C$ depends quantitatively on the data of the spaces. Hence, for a.e. $x_1,x_2\in X$,
	\begin{align*}
	d(h(x_1),h(x_2))\leq 2cCd(x_1,x_2).
	\end{align*}
	Since $h$ is continuous, a simple density argument implies that the above inequality holds for all $x_1,x_2\in X$, from which it follows that $h$ is $C'$-Lipschitz, quantitatively.
\end{proof}

The following theorem is due to Heinonen and Rickman~\cite[Theorem 6.8]{hr02}. Note that in~\cite[Theorem 6.8]{hr02}, the spaces were assumed to be generalized $n$-manifolds of type $A$. But these assumptions were only used to deduce the fact that $\mathscr{H}^Q\big(f(\mathcal{B}_f)\big)=0$.
\begin{theorem}\label{thm:Heinonen-Rickman}
	Let $f\colon X\to Y$ be an $L$-BLD mapping between two Ahlfors $Q$-regular generalized $n$-manifolds with $\mathscr{H}^Q\big(f(\mathcal{B}_f)\big)=0$. Assume that $X$ is $c_X$-quasiconvex and that $Y$ is $c_Y$-quasiconvex. If $x_0\in X$, $r>0$, and if $\lambda>1$ such that the ball $B(x_0,\lambda r)$ has compact closure in $X$, then 
	\begin{align*}
	N(y,f,B(x_0,r))\leq \big(Lc_X\big)^Q\frac{\mathscr{H}^Q\big(B(x_0,\lambda r)\big)}{\mathscr{H}^Q\big(B(y,(\lambda-1)r/Lc_Y)\big)}
	\end{align*}
	for all $y\in Y$.
\end{theorem}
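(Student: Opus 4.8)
\textbf{Proof proposal for Theorem~\ref{thm:Heinonen-Rickman} (a multiplicity bound for BLD mappings).}

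The plan is to bound the multiplicity $N(y,f,B(x_0,r))$ by a volume-counting argument, exploiting that an $L$-BLD mapping distorts lengths—and hence, on small enough scales via quasiconvexity, distances—by at most a factor $L$, together with a factor coming from the quasiconvexity constants. First I would fix $y\in Y$ and enumerate the preimages $x_1,\dots,x_N\in f^{-1}(y)\cap B(x_0,r)$ (if this set is empty there is nothing to prove). The key geometric observation is that the balls $B(x_i,(\lambda-1)r/(Lc_Y))$ are pairwise disjoint: indeed, if $x_i\neq x_j$ both lie in $f^{-1}(y)$, then any curve $\gamma$ in $X$ joining $x_i$ to $x_j$ maps under $f$ to a closed loop at $y$, and the $L$-BLD property gives $l(f\circ\gamma)\geq L^{-1}l(\gamma)$; but since $B(x_0,\lambda r)$ is relatively compact and $Y$ is $c_Y$-quasiconvex, one can choose $\gamma$ so that $l(\gamma)\leq c_X d(x_i,x_j)$, and conversely for the loop $f\circ\gamma$ to be non-degenerate it must have length at least twice the distance from $y$ to the ``other sheet'', forcing a lower bound $d(x_i,x_j)\geq 2(\lambda-1)r/(Lc_Y)$ on the separation of distinct preimages lying inside $B(x_0,\lambda r)$. (This is precisely the kind of separation estimate packaged in Proposition~\ref{prop:existence of normal nbhd} and in the pullback-metric estimates of Lemma~\ref{lemma:pullback property 1}; in fact, since $B(x_i,(\lambda-1)r/(Lc_Y))\subset B(x_0,\lambda r)$ once $d(x_0,x_i)<r$, the lifting picture makes the disjointness transparent.)

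Next I would invoke Ahlfors $Q$-regularity twice. Since each ball $B(x_i,(\lambda-1)r/(Lc_Y))$ is contained in $B(x_0,\lambda r)$ and they are pairwise disjoint, additivity of $\mathscr{H}^Q$ gives
\begin{equation*}
N\cdot \min_{1\leq i\leq N}\mathscr{H}^Q\big(B(x_i,(\lambda-1)r/(Lc_Y))\big)\leq \sum_{i=1}^N\mathscr{H}^Q\big(B(x_i,(\lambda-1)r/(Lc_Y))\big)\leq \mathscr{H}^Q\big(B(x_0,\lambda r)\big).
\end{equation*}
By the lower Ahlfors regularity bound in $X$, together with the $L$-BLD (hence $L$-co-Lipschitz on small scales, or directly via path-lifting) control comparing radii in $X$ and $Y$, each $\mathscr{H}^Q\big(B(x_i,(\lambda-1)r/(Lc_Y))\big)$ is bounded below by a constant multiple of $\mathscr{H}^Q\big(B(y,(\lambda-1)r/(Lc_Y))\big)$; tracking the $L$-BLD distortion and the $c_X$-quasiconvexity in this comparison produces exactly the factor $(Lc_X)^{-Q}$. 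Rearranging yields
\begin{equation*}
N(y,f,B(x_0,r))\leq (Lc_X)^Q\frac{\mathscr{H}^Q\big(B(x_0,\lambda r)\big)}{\mathscr{H}^Q\big(B(y,(\lambda-1)r/(Lc_Y))\big)},
\end{equation*}
as claimed.

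The main obstacle, and the step where the hypothesis $\mathscr{H}^Q(f(\mathcal{B}_f))=0$ actually enters, is making the volume comparison between a ball downstairs $B(y,s)$ and a preimage neighborhood $U(x_i,f,s)$ rigorous: one needs that $f$ restricted to the appropriate normal neighborhood is essentially measure-non-increasing up to the BLD constant, which requires knowing that $f$ satisfies Condition $N$ and does not collapse positive measure on the branch set—equivalently, that the image of the branch set is null, so that the ``sheets'' of $f$ genuinely tile $B(y,s)$ and one can pull back the measure. This is where I would cite the path-lifting Lemma~\ref{lemma:lift Floyd} together with the fact (used already in Lemma~\ref{lemma:pullback property 2} and Lemma~\ref{lemma:decomposition Dn}) that $f$ restricted to each stratum $D_n$ is a local homeomorphism, so that on the complement of $f(\mathcal{B}_f)$ the counting of sheets is exact; the null hypothesis then guarantees this exact count governs the $\mathscr{H}^Q$-measure. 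With that in hand, the Ahlfors-regular estimates close the argument routinely.
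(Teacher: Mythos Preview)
The paper does not actually prove this theorem; it is cited from Heinonen--Rickman~\cite[Theorem~6.8]{hr02}, with the observation that the original ``type~$A$'' hypotheses there serve only to obtain $\mathscr{H}^Q(f(\mathcal{B}_f))=0$. So there is no in-paper argument to compare against, and I assess your proposal on its own merits and against the Heinonen--Rickman proof.

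Your argument has a genuine gap at its very first step: the claimed pairwise disjointness of the balls $B\big(x_i,(\lambda-1)r/(Lc_Y)\big)$ is false. Distinct preimages of $y$ can be arbitrarily close to one another---for $f(z)=z^2$ and $y=\varepsilon^2$ the preimages $\pm\varepsilon$ are $2\varepsilon$ apart, while $(\lambda-1)r/(Lc_Y)$ is fixed independently of $\varepsilon$. Your justification (``the loop $f\circ\gamma$ must have length at least twice the distance from $y$ to the other sheet'') is circular: there is no ``other sheet'' at a definite distance when $y$ lies near $f(\mathcal{B}_f)$. The references to Proposition~\ref{prop:existence of normal nbhd} and Lemma~\ref{lemma:pullback property 1} do not help, since the normal-neighborhood radii there are qualitative, not quantitative. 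A second issue is your comparison $\mathscr{H}^Q(B(x_i,s))\geq(Lc_X)^{-Q}\mathscr{H}^Q(B(y,s))$: these are balls of the same radius in \emph{different} Ahlfors-regular spaces, and nothing links their measures with the precise constant $(Lc_X)^Q$ without passing through $f$ via a change of variables.

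The Heinonen--Rickman argument avoids both problems by using the area formula directly. Since $f$ is $L$-BLD and $X$ is $c_X$-quasiconvex, $f$ is $Lc_X$-Lipschitz, so $J_f\leq(Lc_X)^Q$ a.e.\ and
\[
\int_Y N(y',f,B(x_0,\lambda r))\,d\mathscr{H}^Q(y')=\int_{B(x_0,\lambda r)}J_f\,d\mathscr{H}^Q\leq(Lc_X)^Q\,\mathscr{H}^Q(B(x_0,\lambda r)).
\]
With $s=(\lambda-1)r/(Lc_Y)$, one shows $N(y',f,B(x_0,\lambda r))\geq N$ for every $y'\in B(y,s)\setminus f(\mathcal{B}_f)$: choose a curve from $y$ to $y'$ of length $<c_Ys$, lift it from each $x_i$ (length $<(\lambda-1)r$ by BLD, so the endpoint lies in $B(x_0,\lambda r)$), and observe that since $y'\notin f(\mathcal{B}_f)$ the $N$ lifted endpoints are distinct. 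This is precisely where $\mathscr{H}^Q(f(\mathcal{B}_f))=0$ enters---it guarantees the a.e.\ lower bound on $N(y',f,\cdot)$---rather than in a ball-volume comparison as you suggest. Integrating over $B(y,s)$ then yields the stated inequality.
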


The following lemma is an easy consequence of Theorem~\ref{thm:bounded geometry III}.
\begin{lemma}\label{lemma:Lip with positive Jacobian is QR}
	Let $f\colon X\to Y$ be a branched covering between two Ahlfors $Q$-regular $Q$-Loewner metric spaces. If $f$ is locally $M$-Lipschitz and if $J_f>c$ a.e. in $X$, then $f$ is metrically $H$-quasiregular, quantitatively.
\end{lemma}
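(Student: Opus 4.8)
The plan is to deduce metric quasiregularity of $f$ from the analytic information on $f$ (locally Lipschitz, $J_f>c$ a.e.) by passing through the analytic definition and then invoking Theorem~\ref{thm:bounded geometry III}. Concretely, since $X$ is Ahlfors $Q$-regular and $Q$-Loewner, it is complete (equivalently proper) and supports a $(1,Q)$-Poincar\'e inequality, so it has locally $Q$-bounded geometry; the same is true of $Y$. Hence Theorem~\ref{thm:bounded geometry III} applies, and it suffices to verify one of the definitions appearing there — the analytic definition is the most convenient given our hypotheses.

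First I would check that $f\in N^{1,Q}_{\loc}(X,Y)$ with a controlled upper gradient. Because $f$ is locally $M$-Lipschitz, the local constant function $M$ is an upper gradient of $f$ on each neighborhood where the Lipschitz bound holds; in particular $f$ is locally in $N^{1,Q}$, and the minimal $Q$-weak upper gradient satisfies $|\nabla f|(x)\le M$ for a.e. $x$ (indeed $|\nabla f|\le \lip f\le M$). Next I would relate $|\nabla f|^Q$ to the volume Jacobian $J_f=\frac{d f^*\nu}{d\mu}$. Since $f$ is locally Lipschitz, it maps null sets to null sets (Condition $N$), so $f^*\nu\ll\mu$ and $J_f$ is a genuine $L^1_{\loc}$ density; combined with the hypothesis $J_f\ge c$ a.e., we get
\begin{align*}
	|\nabla f|(x)^Q\le M^Q\le \frac{M^Q}{c}\, c\le \frac{M^Q}{c}\, J_f(x)
\end{align*}
for a.e.\ $x\in X$. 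Thus $f$ is analytically $K$-quasiregular with exponent $Q$ and constant $K=M^Q/c$, quantitatively.

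Having the analytic definition, Theorem~\ref{thm:bounded geometry III} immediately yields that $f$ is metrically $H$-quasiregular with $H$ depending only on $K$ (hence on $M$ and $c$) and on the data of $X$ and $Y$. This completes the proof. The one point requiring a little care — and the place I expect the only genuine subtlety — is the justification that $J_f$ is the honest Radon–Nikodym derivative with $J_f\ge c$ usable pointwise a.e.: one must know $f$ satisfies Condition $N$ so that $f^*\nu\ll\mu$, which here follows from local Lipschitz continuity (a locally Lipschitz map between Ahlfors $Q$-regular spaces cannot increase $\mathcal H^Q$-null sets, using \eqref{eq: comparable with Hausdorff measure}), and also that the hypothesis ``$J_f>c$ a.e.'' is stated for exactly this density, which it is by the definition of $J_f$ in Section~\ref{sec:basic pullback studies}. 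Everything else is a direct chain of implications through results already established in the paper.
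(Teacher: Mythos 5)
Your proof is correct and follows essentially the same route as the paper's: pass from local Lipschitz control plus the a.e.\ Jacobian lower bound to the analytic definition (with constant $M^Q/c$), then invoke Theorem~\ref{thm:bounded geometry III} to upgrade to metric quasiregularity. The additional discussion of Condition~$N$ is harmless but unnecessary here, since the analytic definition only requires the pointwise inequality $|\nabla f|^Q\leq K J_f$ a.e., which follows directly from $|\nabla f|\leq M$ and $J_f\geq c$ a.e.\ regardless of whether $f^*\nu\ll\mu$.
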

\begin{proof}
	Since $f$ is locally $M$-Lipschitz, $f\in N^{1,Q}_{loc}(X,Y)$. Note also that $M$ is a $Q$-weak upper gradient of $f$ (see e.g.~\cite[Lemma 6.2.6]{hkst15} or~\cite[Proposition 10.2]{hk00}) and so we have the following pointwise estimates:
	\begin{align*}
	g_f(x)^Q\leq M^Q= \big(\frac{M^Q}{c}\big)c\leq CJ_f(x)\quad \text{a.e. in }X.
	\end{align*}
	This means that $f$ is analytically $C$-quasiregular with exponent $Q$. Since our spaces are Ahlfors $Q$-regular and $Q$-Loewner, by Theorem~\ref{thm:bounded geometry III}, $f$ is metrically $H$-quasiregular, quantitatively.
\end{proof}

For a metric space $Z=(Z,d_Z)$, we use the notation $\hat{Z}=(Z,\hat{d}_Z)$, where $\hat{d}_Z$ is the internal/length metric on $Z$. For a mapping $\gamma$ from an interval to the set $Z$, we donote by $l_Z(\gamma)$ the length of $\gamma$ with respect to $d_Z$ and $l_{\hat{Z}}(\gamma)$ the length of $\gamma$ with respect to $\hat{d}_{Z}$. The following simple lemma will be used in the proof of Theorem~\ref{thm:convergence result}.
\begin{lemma}\label{lemma:BLD via length metric}
	Let $X$ and $Y$ be two locally quasiconvex metric spaces. Then $f\colon X\to Y$ is $L$-BLD if and only if $g\colon \hat{X}\to \hat{Y}$ is $L$-BLD, where $g=f$ on the set $X$.
\end{lemma}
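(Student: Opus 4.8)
The plan is to observe that passing from a metric to its associated length metric changes neither the underlying topology (under local quasiconvexity) nor the length of any individual curve; since the $L$-BLD condition is phrased purely in terms of lengths of curves, it will then be literally the same condition for $f$ and for $g$.

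First I would record the topological part. By local quasiconvexity of $X$, every $x\in X$ has a neighbourhood $U_x$ and a constant $c_x\geq 1$ with $d_X(a,b)\leq \hat d_X(a,b)\leq c_x d_X(a,b)$ for all $a,b\in U_x$; hence $d_X$ and $\hat d_X$ are locally bi-Lipschitz and in particular induce the same topology on $X$, and likewise for $Y$. Consequently the identity map is a homeomorphism $X\to\hat X$ (respectively $Y\to\hat Y$), so that $g$ is continuous, discrete, open, and of locally bounded multiplicity if and only if $f$ is; thus $g$ is a branched covering precisely when $f$ is, and $\hat X,\hat Y$ carry exactly the same curves (and the same rectifiable curves) as $X,Y$.

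Next I would invoke the standard fact, already used in the proof of Lemma~\ref{lemma:pullback property 4}, that for an arbitrary metric $d$ and its length metric $l_d$, a curve $\gamma$ is rectifiable with respect to $d$ if and only if it is with respect to $l_d$, and in that case the $l_d$-length equals the $d$-length. One inequality is immediate from $d\leq l_d$; for the reverse, given a partition $a=t_0\leq t_1\leq\cdots\leq t_m=b$ of the parameter interval, $l_d(\gamma(t_{i-1}),\gamma(t_i))$ is bounded by the $d$-length of $\gamma|_{[t_{i-1},t_i]}$ by definition of the length metric, and summing and using additivity of length gives that the $l_d$-length of $\gamma$ is at most the $d$-length of $\gamma$. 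Applying this in $X$ and in $Y$ yields $l_{\hat X}(\alpha)=l_X(\alpha)$ and $l_{\hat Y}(g\circ\alpha)=l_Y(f\circ\alpha)$ for every curve $\alpha$ in $X=\hat X$, where we have used that $g\circ\alpha=f\circ\alpha$ as maps of sets.

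Finally, for any non-constant curve $\alpha$ in $X$ (equivalently in $\hat X$), the chain of inequalities $L^{-1}l_X(\alpha)\leq l_Y(f\circ\alpha)\leq L\,l_X(\alpha)$ is, by the previous step, identical to $L^{-1}l_{\hat X}(\alpha)\leq l_{\hat Y}(g\circ\alpha)\leq L\,l_{\hat X}(\alpha)$; since the curves of $X$ and of $\hat X$ coincide, one family of inequalities holds for all such $\alpha$ if and only if the other does, i.e.\ $f$ is $L$-BLD if and only if $g$ is $L$-BLD. The only point that requires any care is the identification of the curve families together with the invariance of length under passing to the length metric; there is no genuine obstacle beyond these two bookkeeping facts.
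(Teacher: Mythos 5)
Your proposal is correct and follows essentially the same route as the paper's own proof: same topological reduction (local quasiconvexity implies $\hat d_X$ and $d_X$ are locally bi-Lipschitz, hence the same topology, hence $f$ is a branched covering iff $g$ is), and the same key invariance of rectifiability and curve length under passage to the length metric, which is indeed the fact already used in Lemma~\ref{lemma:pullback property 4}. The one minor stylistic difference is that the paper's proof splits into the rectifiable and non-rectifiable cases, while you fold both into a single observation that $l_X(\alpha)=l_{\hat X}(\alpha)$ and $l_Y(f\circ\alpha)=l_{\hat Y}(g\circ\alpha)$ hold for every curve in the extended sense, after which the two BLD inequalities are literally identical. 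This is a tiny simplification and does not change the substance of the argument.
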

\begin{proof}
 Note first that since our metric spaces are locally quasiconvex, $\hat{d}_X$ and $d_X$ induces the same topology on $X$. Similarly, $\hat{d}_Y$ and $d_Y$ induces the same topology on $Y$. This implies that $f\colon X\to Y$ is a branched covering if and only if $g\colon \hat{X}\to \hat{Y}$ is. To prove the claim, we need to show the bi-Lipschitz condition on lengths of curves. Note also that for an arbitrary metric $d$, $\gamma$ is a rectifiable curve with respect to $d$ if and only if it is with respect to the internal metric, and in that case, the lengths coincide.
	
	Suppose now $f\colon X\to Y$ is $L$-BLD and we want to show that $g\colon \hat{X}\to \hat{Y}$ is $L$-BLD. To this end, fix a curve $\gamma\colon I\to \hat{X}$. If $\gamma$ is rectifiable with respect to $\hat{d}_X$, then the BLD condition on $f$ implies that
	\begin{align*}
		L^{-1}l_{\hat{X}}(\gamma)=L^{-1}l_{X}(\gamma)\leq l_Y(f\circ \gamma)\leq Ll_X(\gamma)=Ll_{\hat{X}}(\gamma)<\infty,
	\end{align*} 
	which in particular implies that the curve $f\circ \gamma$ is rectifiable with respect to $d_Y$ and hence has the same length with respect to $\hat{d}_Y$. Thus we have
	\begin{align*}
		L^{-1}l_{\hat{X}}(\gamma)\leq l_{\hat{Y}}(g\circ \gamma)\leq Ll_{\hat{X}}(\gamma)
	\end{align*}
	for all rectifiable curves $\gamma$ in $\hat{X}$. If $\gamma$ is not rectifiable with respect to $\hat{d}_X$, then it is also not rectifiable with respect to $d_X$. Consequently, we have
	\begin{align*}
		\infty=L^{-1}l_{\hat{X}}(\gamma)=L^{-1}l_{X}(\gamma)\leq l_Y(f\circ \gamma).
	\end{align*}
	This means that the curve $f\circ \gamma$ is not rectifiable with respect to $d_Y$ and thus is not rectifiable with respect to $d_{\hat{Y}}$. So we have $l_{\hat{Y}}(f\circ \gamma)=\infty=l_{\hat{X}}(\gamma)$.
	
	The proof of the reverse direction is entirely the same and thus it is omitted here.
\end{proof}

\subsection{Proof of the main results}

\begin{proof}[Proof of Theorem~\ref{thm:main thm BLD}]
	We only need to show that 4) implies 1) since the other implications were already proved in~\cite[Proposition 3.2]{gw15}. Let us assume that all the conditions in 4) are satisfied and additionally that $X$ and $Y$ are length spaces (this is a harmless assumption since our considerations here are bi-Lipschitzly invariant). Let $f=\pi\circ g$ be the pullback factorization as in Section~\ref{subsec:Canonical factorization}. By our discussions in Section~\ref{subsec:Fine properties of the pullback factorization}, it suffices to show that $g\colon X\to X^f$ is bi-Lipschitz, quantitatively.
	
	Since $f\colon X\to Y$ is metrically $H$-quasiregular, by Theorem~\ref{thm:bounded geometry I}, $H_f^*<\infty$ everywhere in $X$ and $H_f^*\leq H_1$ $\mu$-a.e. in $X$, for some positive constant $H_1$ depending quantitatively on on $H$ and the data of the spaces. By Proposition~\ref{prop:characterization of QR via pullback factorization II}, we also have $H_g^*(x)=H_f^*(x)<\infty$ everywhere in $X$ and $H_g^*(x)\leq H'$ $\mu$-a.e. in $X$ (with some quantitative positive constant $H'$). Since by Lemma~\ref{lemma:Condition N+N inverse}, $f$ satisfies both Condition $N$ and Condition $N^{-1}$, so is $g$. It follows from~\cite[Proof of Theorem 1.6]{w14} that $H_{g^{-1}}(y)<\infty$ for all $y\in X^f$ and $H_{g^{-1}}(y)\leq H'$ for $\lambda$-a.e. $y\in X^f$, or equivalently, $g^{-1}\colon X^f\to X$ is metrically $H'$-quasiconformal. As a consequence, we have
	\begin{align}\label{eq:chain rule}
	J_g\cdot J_{g^{-1}}\circ g=1\qquad \mu\text{-a.e. in } X.
	\end{align}
	On the other hand, since $f$ is locally $M$-Lipschitz and $X$ is a length space, $g$ is locally $M$-Lipschitz as well. In particular, the constant function $M$ is an upper gradient of $g$. By~\eqref{eq:chain rule} and the $\mu$-a.e. lower bound on $J_g=J_f$, we conclude that $J_{g^{-1}}\leq \frac{1}{c}$ $\lambda$-a.e. in $X^f$. 
	Note that by Theorem~\ref{thm:bounded geometry III} and Theorem~\ref{thm:equivalence of G and A}, $g^{-1}$ is also  analytically $K$-quasiconformal with exponent $Q$, quantitatively, i.e., $g^{-1}\in N_{loc}^{1,Q}(X^f,X)$,  $|\nabla g^{-1}|^Q\leq KJ_{g^{-1}}$ $\lambda$-a.e. in $X^f$ (with $K$ depending only on $H$, $c$ and the data of the spaces). Thus, some constant function $c'$, which depends only on $K$, $Q$ and $c$, is a $Q$-weak upper gradient of $g^{-1}$. Hence, by Lemma~\ref{lemma:constant weak ug implies Lipschitz}, $g^{-1}$ is $L'$-Lipschitz, quantitatively. In other words, we have shown that $g$ is $L$-bi-Lipschitz, quantitatively. This completes our proof of the theorem.
\end{proof}


\begin{proof}[Proof of Theorem~\ref{thm:convergence result}]
	We first claim that $f$ is discrete. Since the proof of this claim is very similar to~\cite[Proof of Theorem 1.4]{l15b}, we only point out the difference. The key observation is that under the assumption of Theorem~\ref{thm:convergence result}, it follows from~\cite[Corollary 1.2 and Remark 3.3 i)]{gw15} that $\mathscr{H}^Q(f_i(\mathcal{B}_{f_i}))=0$ for each $i\in \mathbb{N}$. Thus we may apply Theorem~\ref{thm:Heinonen-Rickman} to each of the BLD mappings $f_i$ to run the proof of Theorem 1.4 as in~\cite{l15b}. 
		
	Secondly, by~\cite[Theorem 1.1]{l15b} and the fact that $f$ is a discrete $L$-LQ mapping, $f$ will be $L$-BLD if the metric spaces $X$ and $Y$ were length spaces. Let $\hat{d}_X$ and $\hat{d}_Y$ be the internal metrics on $X$ and $Y$.  Lemma~\ref{lemma:BLD via length metric} implies that a mapping $g\colon (X,d_X)\to (Y,d_Y)$ is $L$-BLD if and only if the mapping $\hat{g}\colon (X,\hat{d}_X)\to (Y,\hat{d}_Y)$, defined by $\hat{g}:=g$, is $L$-BLD. Thus, we have $f_i\colon (X,\hat{d}_X)\to (Y,\hat{d}_Y)$, $i=1,2,\dots$ form a sequence of $L$-BLD mappings between two complete, locally compact length spaces, converging locally uniformly to a discrete $L$-LQ mapping $\hat{f}\colon (X,\hat{d}_X)\to (Y,\hat{d}_Y)$ defined by $\hat{f}:=f$. It follows from~\cite[Theorem 1.1]{l15b} that $\hat{f}\colon (X,\hat{d}_X)\to (Y,\hat{d}_Y)$ is an $L$-BLD mapping, or equivalently, $f\colon X\to Y$ is $L$-BLD. 
\end{proof}

\begin{remark}\label{rmk:on removing quasiconvexity}
i).	It is clear from the proof of Theorem~\ref{thm:convergence result} that we only need the following two facts. 
	
	First of all, a mapping $g\colon (X,d_X)\to (Y,d_Y)$ is $L$-BLD if and only if the mapping $\hat{g}\colon (X,\hat{d}_X)\to (Y,\hat{d}_Y)$, defined by $\hat{g}:=g$, is $L$-BLD. For this, it suffices to assume that $X$ and $Y$ are rectifiably connected, i.e., each two points in $X$ (and $Y$) can be joined by a rectifiable curve in $X$ (and $Y$) and that $(X,\hat{d}_X)$ and $(Y,\hat{d}_Y)$ have the same topology as $(X,d_X)$ and $(Y,d_Y)$, respectively.
	
	Secondly, we need Theorem~\ref{thm:Heinonen-Rickman} to run the proof of Theorem 1.3 in~\cite{l15b}. For this, one essentially needs the fact that the image of the branch set of a BLD mapping has zero Hausdorff measure, and thus by~\cite[Corollary 1.2]{gw15}, it suffices to assume that $X$ is (locally) linearly locally contractiable and that $Y$ has (local) bounded turning. 
	
ii). The proof of Theorem~\ref{thm:convergence result} implies the following stronger statement: \emph{if $(X_j,x_j)$ and $(Y_j,y_j)$ are two sequences of pointed Ahlfors $Q$-regular generalized $n$-manifolds with uniform Ahlfors regularity constant. Assume additionally that all the $X_j$ are uniformly locally linearly locally contractible and all $X_j$ and $Y_j$ are uniformly locally quasiconvex. If the sequence of pointed mapping packages $\big((X_j,x_j),(Y_j,y_j),f_j\big)$ converges locally uniformly to a package $\big((X,x_0),(Y,y_0),f\big)$ with each $f_j$ being $L$-BLD, then $f$ is $L$-BLD.} For this, see~\cite[Proof of Theorem 1.4]{l15b}.
\end{remark}

\newpage
%
%
%



\newpage

\end{document}